\documentclass{article} 
\usepackage{iclr2025,times}


\usepackage{amsmath,amsfonts,bm}









\def\eqref#1{equation~\ref{#1}}









\def\1{\bm{1}}










\DeclareMathAlphabet{\mathsfit}{\encodingdefault}{\sfdefault}{m}{sl}
\SetMathAlphabet{\mathsfit}{bold}{\encodingdefault}{\sfdefault}{bx}{n}













\usepackage{makecell}
\usepackage{setspace} 
\usepackage{tabularx}
\usepackage{amsthm,amsmath,amssymb,dsfont}
\usepackage[hidelinks]{hyperref}
\usepackage{graphicx}
\usepackage{amsmath,amssymb,tikz,stmaryrd}
\usepackage{tikz}
\usepackage{textgreek}
\usepackage{pifont}
\newcommand{\cmark}{\ding{51}}%
\newcommand{\xmark}{\ding{55}}%
\usetikzlibrary{fit, arrows, shapes, positioning, shadows, matrix, calc}
\newcommand{\cA}{\mathcal{A}}

\newcommand{\cS}{\mathcal{S}}
\newcommand{\cF}{\mathcal{F}}
\newcommand{\cM}{\mathcal{M}}

\newcommand{\cP}{\mathcal{P}}

\newcommand{\cX}{\mathcal{X}}

\newcommand{\expec}{\mathbb{E}}

\newcommand{\real}{\mathbb{R}}
\newcommand{\reals}{\mathbb{R}}

\newcommand{\norm}[1]{\left\|{#1}\right\|}
\newcommand{\abs}[1]{\left|{#1}\right|}
\newcommand*{\para}[1]{\left({#1}\right)}
\newcommand*{\infn}[1]{\left\|{#1}\right\|_{\infty}}

\definecolor{commentgreen}{RGB}{24,202,85}

\newtheorem{theorem}{Theorem}
\newtheorem{corollary}{Corollary}
\newtheorem{proposition}{Proposition}
\newtheorem{lemma}{Lemma}
\newtheorem{fact}{Fact}

\usepackage[utf8]{inputenc} 
\usepackage[T1]{fontenc}    
\usepackage{hyperref}       
\usepackage{url}            
\usepackage{booktabs}       
\usepackage{amsfonts}       
\usepackage{nicefrac}       
\usepackage{microtype}      
\usepackage{xcolor}         

\title{{\!\!\!\mbox{Optimal Non-Asymptotic Rates of Value Iteration}}\\
\makebox[0pt][l]{\!\!\!for Average-Reward MDPs}}

\author{Jongmin Lee\\
Seoul National University\\
Department of Mathematical Sciences\\
\texttt{dlwhd2000@snu.ac.kr} \\
\And
Ernest K. Ryu \\
UCLA \\
Department of Mathematics \\
\texttt{eryu@math.ucla.edu} 
}

%

\iclrfinalcopy 
\begin{document}

\maketitle

\vspace{-0.1in}

\begin{abstract}
While there is an extensive body of research on the analysis of Value Iteration (VI) for discounted cumulative-reward MDPs, prior work on analyzing VI for (undiscounted) average-reward MDPs has been limited, and most prior results focus on asymptotic rates in terms of Bellman error. In this work, we conduct refined non-asymptotic analyses of average-reward MDPs, obtaining a collection of convergence results that advance our understanding of the setup. Among our new results, most notable are the $\mathcal{O}(1/k)$-rates of Anchored Value Iteration on the Bellman error under the multichain setup and the span-based complexity lower bound that matches the $\mathcal{O}(1/k)$ upper bound up to a constant factor of $8$ in the weakly communicating and unichain setups.
\end{abstract}

\vspace{-0.06in}

\section{Introduction}

Average-reward Markov decision processes (MDPs) are a fundamental framework for modeling decision-making, where the goal is to maximize long-term, steady-state performance. However, compared to the discounted cumulative-reward counterpart, the average-reward setup is more complex to analyze, and there has been less prior work on it. It is known that while iterates of VI diverge to infinity, the normalized iterates and the Bellman error converge to the optimal average reward under a certain aperiodicity condition. However, despite this understanding of convergence, quantifying the convergence \emph{rates} of such methods for various classes of average-reward MDPs has been open.

In this work, we conduct refined non-asymptotic analyses of average-reward MDPs, obtaining a collection of convergence results advancing our understanding of the setup. Notably, we establish $\mathcal{O}(1/k)$ convergence rates of Anchored Value Iteration on the Bellman error under the multichain setup, and we present a span-based complexity lower bound that matches the $\mathcal{O}(1/k)$-upper bound up to a constant factor of  $8$ in the weakly communicating and unichain setups.

\begin{table}[h]
    \centering
 \begin{tabular}{ |c|c|c|c|c|c|c|c|c| } 
 \hline
 Prior works & \makecell{Non-asym \\ multi MDP}& \makecell{Asym \\ multi MDP}&  \makecell{Non-asym \\ w.c. MDP}& \makecell{Asym \\ w.c. MDP} &
  \makecell{Non-asym  \\ uni MDP}& \makecell{Asym  \\ uni MDP}  \\
  \hline   
  $[1,2] $ & \xmark & \xmark & \xmark & \xmark & \cmark &  \cmark \\
    $[3,4]$  & \xmark & \cmark & \xmark & \cmark & \cmark &  \cmark \\
    $[5]$ & \xmark & \xmark & \xmark & \xmark & \cmark &  \cmark \\
  \hline
  Our work &  \cmark\hspace{0.1cm}{\footnotesize$>$}\hspace{0.1cm}\cmark & \cmark & \cmark $=$ \cmark  & \cmark&  \textcolor{black}{\cmark} $=$ \textcolor{black}{\cmark}  &  \cmark \\
  \hline
\end{tabular}
    \caption{Summary of our contributions. ($1$: \citet{federgruen1978contraction}, $2$: \citet{van1981stochastic}, $3$: \citet{schweitzer1977asymptotic}, $4$: \citet{schweitzer1979geometric}, $5$: \citet{bertsekas1998new}, `Non-asym' stands for non-asymptotic convergence, `Asym' for asymptotic convergence, `multi' for multichain, `w.c.'  for weakly communicating, and `uni' for unichain.)
   One check mark indicates a convergence result (upper bound) and two check marks with a strict inequality sign indicate a convergence result accompanied by a complexity lower bound but they do not match. Two check marks with an equal sign indicate a matching complexity lower bound.
   For multichain MDPs, we present the first non-asymptotic convergence result in Theorem ~\ref{thm:Anc_optimized}. For weakly communicating MDPs, we present the first optimal complexity by matching the non-asymptotic convergence result in Corollary~\ref{cor::anc_bellman_com} with the complexity lower bound in Theorem~\ref{thm::comp_lower_bd_commu}.
    } 
    \label{fig:comp_prior_rates}
\end{table}

\subsection{Notation and preliminaries}

We quickly review basic definitions and concepts of average-reward Markov decision processes (MDPs) and reinforcement learning (RL). For further details, refer to standard references such as \citet{10.5555/528623, bertsekas2015dynamic, sutton2018reinforcement}.

\paragraph{Average-reward Markov decision processes.}
Let $\cM(\cX)$ be the space of probability distributions over $\cX$. Write $(\cS, \cA, P, r)$ to denote the infinite-horizon undiscounted MDP with finite state space $\cS$, finite action space $\cA$, transition matrix $P\colon \cS \times \cA \rightarrow \cM(\cS)$, and bounded reward $r\colon  \cS \times \cA \rightarrow \real$. Denote $\pi\colon \cS \rightarrow \cM(\cA)$ for a policy, $g^{\pi}(s)=\liminf_{T\rightarrow \infty} \frac{1}{T}\expec_{\pi}\left[\sum^{T-1}_{t=0}  r(s_t, a_t) \,|\, s_0=s\right]$  for average-reward of a given policy,  where $\expec_{\pi}$ denotes the expected value over all trajectories $(s_0, a_0, s_1, a_1, \dots, s_{T-1}, a_{T-1})$ induced by $P$ and $\pi$. We say $g^{\star}$ is optimal average reward if $g^{\star}(s)=\max_{\pi}g^{\pi}(s)$ for all $s \in \cS$. We say $\pi$ is an $\epsilon$-optimal policy if $\infn{g^{\star}-g^{\pi}} \le \epsilon$.

 \paragraph{Value Iteration.}
  Let $\cF(\cX)$ denote the space of bounded measurable real-valued functions over $\cX$. With the given undiscounted MDP $(\cS, \cA, P, r)$, for $V \in \cF(\cS)$, define the Bellman consistency operators $T^{\pi}$ as
\begin{align*}
T^{\pi}V(s)&=\mathbb{E}_{a \sim \pi(\cdot\,|\,s), s'\sim P(\cdot\,|\,s,a) }\left[r(s,a)+ V(s')\right]
\end{align*}
for all $s \in \cS$, and the Bellman optimality operators $T$ as
\begin{align*}
TV(s)&=\max_{a \in \cA} \left\{r(s,a)+\mathbb{E}_{s'\sim P(\cdot\,|\,s,a) }\left[V(s')\right]\right\}
\end{align*}
for all $s \in \cS$. For notational conciseness, we write $T^{\pi}V=r^{\pi}+\cP^{\pi}V$, where $r^{\pi}(s)=\mathbb{E}_{a \sim \pi(\cdot\,|\,s) }\left[r(s,a)\right]$ is the reward induced by policy $\pi$ and 
\begin{align*}
    \cP^{\pi}(s\rightarrow s')&=
\mathrm{Prob}(s\rightarrow s'\,|\,
a \sim \pi(\cdot\,|\,s), s'\sim P(\cdot\,|\,s,a))
\end{align*}
is transition matrix induced by policy $\pi$. We define the standard Value Iteration (VI) for the Bellman optimality operator as 
\[ V^{k}=TV^{k-1}, \qquad\text{ for } k=1,2,\dots,\]
 where $V^0$ is an initial point. After executing $K$ iterations, VI returns the near-optimal policy $\pi_K$ as a greedy policy satisfying $T^{\pi_K}V^K= TV^K.$

\paragraph{Fixed-point iterations.}  
Given an operator $T$, 
classical Banach fixed-point theorem \citep{banach1922operations} states that if $T$ is contractive, fixed point of $T$ exists and following \emph{Picard iteration}
\[x^{k}=Tx^{k-1}\qquad\text{ for } k=1,2,\dots,\]
converges to the unique fixed point of $T$. If $T$ is nonexpansive but not contractive such as the rotation operator, Picard iteration may not converge to a fixed point. 
(For undiscounted MDPs, the Bellman optimality operator is nonexpansive but not necessarily contractive.)
In such cases, one may use \emph{Kransnosel'ski\u\i-Mann iteration} \citep{mann1953mean, krasnosel1955two} 
\[ x^{k}=\lambda_{k} x^{k-1}+ (1-\lambda_{k}) Tx^{k-1}
\qquad\text{ for } k=1,2,\dots,\]
where $\{\lambda_k\}_{k \in \mathbf{N}} \in [0,1]$, or  \emph{Halpern iteration}  \citep{halpern1967fixed}
\[ x^{k}=\lambda_{k} x^0+ (1-\lambda_{k}) Tx^{k-1}
\qquad\text{ for } k=1,2,\dots,\]
where $x^0$ is an initial point and $\{\lambda_k\}_{k \in \mathbf{N}} \in [0,1]$, to guarantee convergence.

\begin{figure}
    \centering
\begin{tikzpicture}
    \draw[thick] (0,0.3) ellipse (2.7 and 1.6);
    \node at (0, 1.3) {\fontsize{10.5}{5}\selectfont Multichain $=$ General};
    
    \draw[thick] (0,-0.2) ellipse (2.1 and 1.1);
    \node at (0, 0.10) {\fontsize{10.5}{5}\selectfont Weakly Communicating};
    
    \draw[thick] (0,-0.7) ellipse (0.8 and 0.6);
    \node at (0, -0.7) {\fontsize{10.5}{5}\selectfont Unichain};
\end{tikzpicture}
    \caption{Classification of MDPs:
    Unichain $\subset$ Weakly Communicating $\subset$ Multichain (General)}
    \label{fig:class_MDP}
\end{figure}
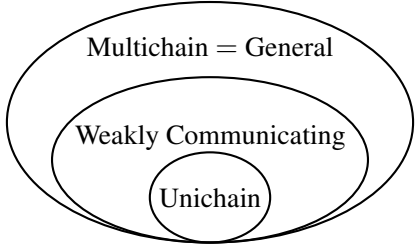

\paragraph{Classification of MDPs.}
  MDPs are classified as follows by the structure of transition matrices. (For definitions of basic concepts of MDPs such as irreducible class, recurrent class, transient states, accessibility, etc., please refer to \citet[Appendix A.2]{10.5555/528623}.)

     MDP is \emph{unichain} if the transition matrix corresponding to every deterministic policy consists of a single irreducible recurrent class plus a possibly empty set of transient states. MDP is \emph{weakly communicating}, if there exists a closed set of states where each state in that set is accessible from every other state in that set under some determinisitc policy, plus a possibly empty set of states which is transient under every policy. MDP is \emph{multichain}, if the transition matrix corresponding to any deterministic policy contains one or more irreducible recurrent classes. 

 MDP is weakly communicating if MDP is unichain, and MDP is multichain if MDP is weakly communicating. Since every MDP is multichain, we use the expressions \emph{multichain} and \emph{general} interchangeably. 
 
 
\paragraph{Modified Bellman equations.}
Following \citet[Section~9.1.1] {10.5555/528623}, we consider the \emph{modified Bellman equations} defined as    
\begin{align*}
  &\max_a \left\{\sum_{s'\in\cS}P(s' \,|\, s,a)g(s')\right\} =g(s) , \qquad \max_{a} \left\{r(s,a)+\sum_{s'\in\cS}P(s' \,|\, s,a)h(s')\right\} =h(s)+g(s)
\end{align*}
for all $s\in \cS$, and we express these more concisely as
\[\max_{\pi }\{\cP^{\pi}g\}=g, \qquad \max_{\pi }\{r^{\pi}+\cP^{\pi}h\}=h+g.\]
 We say $(g^\star,h^\star)$ is a solution of the modified Bellman equations if $(g^\star,h^\star)$ satisfies the two equations and there exists a policy $\pi^{\star}$ attaining maximum simultaneously. It is known that solutions of modified Bellman equations always exist \cite[Proposition~9.1.1]{10.5555/528623}. Furthermore, $g^\star$ is unique and it is equal to the optimal average reward \cite[Theorem~9.1.2, 9.1.6] {10.5555/528623}. Finally, a policy $\pi^{\star}$ simultaneously attaining the maximum in the modified Bellman equations is an optimal policy \cite[Theorem~9.1.7, 9.1.8] {10.5555/528623}. 

If the MDP is weakly communicating or unichain, $g^\star\in \mathbb{R}^d$ is a uniform constant vector, i.e.,
$g^\star = c\mathbf{1}$ for some $c\in \mathbb{R}$, where $\mathbf{1}\in \mathbb{R}^n$ is the vector with entries all $1$ \cite[Theorem~8.3.2, ~8.4.1]{10.5555/528623}. Then, first modified Bellman equations holds automatically, and modified Bellman equations reduce to  
\[\max_{\pi }\{r^{\pi}+\cP^{\pi}h\}=h+g.\]

\subsection{Prior works}
\paragraph{Average-reward MDP.}
The setup of average-reward MDP was first introduced by \citet{howard1960dynamic} in the dynamic programming literature. \citet{blackwell1962discrete} provided a theoretical framework for analyzing average-reward MDP. \citet{yushkevich1974class, denardo1968multichain} studied modified Bellman equations of multichain MDPs and solutions were characterized by \citet{schweitzer1978functional, schweitzer1984existence}. In reinforcement learning (RL), average-reward MDP was mainly considered in the sample-based setup to find optimal policy when the transition matrix and reward are unknown \citep{dewanto2020average}. For this setup, \citet{burnetas1997optimal, Jaksch2010} analyze regret minimization problem for unichain and communicating MDPs. Also, model-based algorithms \citep{ zhang2019regret}, model-free algorithms \citep{wei2020model, wan2021learning}, policy gradient method \citep{kakade2001natural}, and finite time analysis \citep{zhang2021finite} have been studied. 


\paragraph{Convergence of Value Iteration.}

Value iteration (VI) was first introduced in the DP literature \citep{bellman1957markovian} and serves as a fundamental dynamic programming algorithm for computing the value functions. Its approximate and sample-based variants, such as Temporal Different Learning~\citep{Sutton1988}, Fitted Value Iteration~\citep{Ernst05,Munos08JMLR}, Deep Q-Network~\citep{MnihKavukcuogluSilveretal2015}, are the workhorses of modern RL algorithms~\citep{Bertsekas96,SuttonBarto2018,SzepesvariBook10}. VI is also routinely applied in diverse settings, including factored MDPs \citep{rosenberg2021oracle}, robust MDPs \citep{kumarefficient}, MDPs with reward machines \citep{bourel2023exploration}, and MDPs with options \citep{fruit2017regret}.

 The convergence of VI in average-reward MDPs has been extensively studied. For unichain MDPs, delta coefficient and ergodicity coefficient have been considered as the linear rate of VI \citep{seneta2006non, hubner1977improved}, \citep[Theorem~6.6.1] {10.5555/528623}, and the J-stage span contraction demonstrates linear rate of VI for every $J$-th iterations in terms of span seminorm \citep{federgruen1978contraction, van1981stochastic}, \citep[Theorem~8.5.2] {10.5555/528623}. \citet{bertsekas1998new} proposes $\lambda$-SSP, which exhibits non-asymptotic linear convergence under the recurrent assumption. When MDP is multichain, it is known that normalized iterates converge to the optimal average reward \citep[Theorem~9.4.1]{10.5555/528623} while policy error might not converge to zero. \citet{schweitzer1977asymptotic, schweitzer1979geometric} established necessary and sufficient conditions of convergence of VI and established asymptotic linear convergence on Bellman error.  


For convergence of iterates to the $h^{\star}$ solution of modified Bellman equations, \citet{white1963dynamic} introduced Relative Value Iteration (RVI) which subtracts a uniform constant for every iteration. \citet{morton1977discounting} studied sufficient conditions of convergence of RVI. \citet{bravo2024stochastic} studied asymptotic convergence rates of \ref{eq:Rx-RVI} on Bellman error in Q-learning setup, and \citet{bravostochastic} also considered Q-learning version of Halpern iteration in average-reward MDP and study sample complexity. 

In Section~\ref{s::conv_measure} of the appendix, we present several tables that thoroughly compare the our new results with the prior results of the literature, and refer to \citet{della2012illustrated} for further detailed conditions of convergence of VI. 


\paragraph{Fixed point iterations.}
The Banach fixed-point theorem \citep{banach1922operations} establishes the convergence of the standard fixed-point iteration with a contractive operator. 
As a generalization of Picard iteration, Kransnosel'ski\u\i-Mann iteration (KM) \citep{mann1953mean, krasnosel1955two} was introduced, and its convergence with general nonexpansive operators was shown by \citet{Martinet1970ppm}. 
The Halpern iteration \citep{halpern1967fixed} converges for nonexpansive operators on Hilbert spaces \citep{wittmann1992approximation} and uniformly smooth Banach spaces \citep{reich1980strong, xu2002iterative}.

When a nonexpansive operator $T$ is assumed to have a fixed point, the fixed-point residual $\norm{Tx_k-x_k}$ is a commonly used error measure for fixed-point problems. In Hilbert spaces, the KM iteration with nonexpansive operators was shown to exhibit $\mathcal{O}(1/\sqrt{k})$-rate by \citet{ matsushita2017convergence}. 
\citet{sabach2017first} first established an $\mathcal{O}(1/k)$-rate for the Halpern iteration, and the constant was later improved by \citet{Lieder2021halpern, kim2021accelerated}. In general normed spaces, 
KM iteration with nonexpansive opeator was proven to exhibit $\mathcal{O}(1/\sqrt{k})$-rate \citep{baillon1992optimal, cominetti2014rate, bravo2018sharp}. The Halpern iteration was shown to exhibit $\mathcal{O}(1/k)$-rate for (nonlinear) nonexpansive operators \citep{leustean2007rates, sabach2017first, contreras2022optimal}. 

\paragraph{Inconsistent fixed-point iteration.}
A fixed-point iteration for a nonexpansive operator $T$ \emph{without} a fixed point is referred to as the \emph{inconsistent} setup, and it is the analog relevant to the average-reward MDP setup. There exist a line of researches about convergence of inconsistent fixed-point iteration in both Hilbert space \citep{pazy1971asymptotic, applegate2024infeasibility, bauschke2014generalized, liu2019new} and Banach space \citep{Browder1966TheSB, reich1973asymptotic,baillon1978asymptotic, reich1987asymptotic}. Notably, \citet{park2023} studied sublinear convergence rates of KM iteration and Halpern iteration of the inconsistent setup in Hilbert spaces and established optimality by providing complexity lower bound. 


\paragraph{Complexity lower bounds.}
With the information-based
complexity analysis \citep{nemirovski1992information}, complexity lower bound on first-order methods for convex minimization problem has been thoroughly studied \citep{nesterov2003Introductory, drori2017exact, drori2022oracle, carmon2020stationary1, carmon2021stationary2, drori2020stoc-complexity}. If a complexity lower bound
matches an algorithm's convergence rate, it establishes optimality
of the algorithm \citep{nemirovski1992information, kim2016optimized,salim2021optimal, taylor2021optimal, drori2016optimal-kelley, park2022exact}. 
In Hilbert spaces, 
\citet{park2022exact} showed exact complexity lower bound on fixed-point residual for deterministic fixed-point iterations with contractive and nonexpansive operators. In fixed-point problems, \citet{colao2021rate} established $\Omega\big(1/k^{1-\sqrt{2/q}}\big)$ lower bound on distance to solution for Halpern iteration with a nonexpansive operator in $q$-uniformly smooth Banach spaces. In general normed space, \citet{contreras2022optimal} provided $\Omega(1/k)$ lower bound on the fixed-point residual for the general Mann iteration with a nonexpansive linear operator, which includes Picard iteration, KM iteration, and Halpern iteration.
 
In discounted MDPs, \citet{goyal2022first} provided a lower bound on the Bellman error and distance to optimal value function for fixed-point iterations satisfying span condition with $\gamma$-contractive Bellman operators. \citet{lee2024accelerating} improved upon the prior lower bound on Bellman error by a factor $1-\gamma^{k+1}$, and further established $\Omega(1/k)$ bound in undiscounted MDP. However, none of these works consider the average-reward MDP setup. \citet{zurek2023span} studied sample complexity of learning a near-optimal policy in an average-reward MDP under generative model.

\subsection{Contribution}


We summarize the contributions of this work as follows.

\paragraph{Non-asymptotic rates.}
For multichain MDPs, we establish the first non-asymptotic convergence rates on Bellman error. Theorems~\ref{thm:KMm_optimized} and \ref{thm:Anc_optimized} and Corollary \ref{cor::KM_bellman_com} and \ref{cor::anc_bellman_com} present the non-asymptotic sublinear rates on both Bellman and policy errors in multichain MDPs (see Tables~\ref{table_Bellman_error} and ~\ref{table_near-optimal_policy} of the Appendix).
For the Relative Value Iteration (RVI) and its variants as described in Section~\ref{sec::RVI}, Theorems~\ref{thm::Rx-RVI_optimized} and \ref{thm::Anc-RVI_optimized} establish the non-asymptotic sublinear rates on both Bellman and policy errors and point convergence in weakly communicating MDPs (see Tables~\ref{table_Bellman_error_RVI} and ~\ref{table_near-optimal_policy_RVI} of the Appendix).

\paragraph{Complexity lower bound.} Theorems~\ref{thm::comp_lower_bd_commu} and \ref{thm::comp_lower_bd_multi} present the first complexity lower bounds for the average-reward MDP setup, one with a multichain MDP and another with a unichain MDP. These complexity lower bounds apply both to the Bellman error and normalized iterates for value-iteration-type methods satisfying the span condition. 

\paragraph{Characterization of optimal complexity.}
Through our matching the convergence rates (upper bound) and the complexity lower bounds, we first establish the optimal complexity of standard VI in terms the normalized iterates and of \ref{eq:Anc-VI} in terms of the Bellman error.

\section{Performance measures}
We quickly review the standard performance measures used to quantify convergence rates of value-iteration-type methods for average-reward MDPs. Let $T$ be the Bellman optimality operator of the given MDP, and suppose a method generates sequences $\{V^k\}_{k=0,1,\dots}$ and $\{\pi^k\}_{k=0,1,\dots}$. We call $\frac{V^k-V^0}{\alpha_k}$ with an appropriate scaling factor $\alpha_k>0$ for $k=0,1,\dots$ the \emph{normalized iterates}. We call $TV^k-V^k$ the \emph{Bellman error} at $V^k$ for $k=0,1,\dots$. 
We call  $g^{\star}-g^{\pi_k}$ the \emph{policy error} at $\pi_k$ for $k=0,1,\dots$.
Again, we call the $V^{k}=TV^{k-1}$ for $k=1,2,\dots$ standard Value Iteration (VI) with greedy policy $\pi_k$ satisfying $T^{\pi_k}V^k= TV^k.$ 

\begin{fact}[Classical result, \protect{\cite[Theorem~9.4.1]{10.5555/528623}}]\label{fact::vi_normalized_iterate}
Consider a general (multichain) MDP. Then, for $k \ge 1$, the normalized iterates of standard VI with $\alpha_k=k$ exhibit the rate
\[\infn{\frac{V^k-V^0}{k}-g^{\star}} \le \frac{2}{k}\infn{V^0-h^{\star}}.\]
\end{fact}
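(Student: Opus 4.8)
The plan is to exploit the defining property of a solution $(g^\star, h^\star)$ of the modified Bellman equations together with the nonexpansiveness of $T$ in the $\ell_\infty$-norm. The key algebraic fact I would use is that, because there exists a policy $\pi^\star$ attaining the maximum in both modified Bellman equations simultaneously, one has $T(h^\star + j g^\star) = h^\star + (j+1) g^\star$ for every integer $j \ge 0$; that is, the affine sequence $h^\star, h^\star + g^\star, h^\star + 2g^\star, \dots$ is exactly the VI trajectory started from $h^\star$. To see this, note $T(h^\star + j g^\star)(s) = \max_a\{r(s,a) + \sum_{s'} P(s'|s,a)(h^\star(s') + j g^\star(s'))\}$; since $\pi^\star$ attains the max in $\max_\pi\{\cP^\pi g^\star\} = g^\star$ and in $\max_\pi\{r^\pi + \cP^\pi h^\star\} = h^\star + g^\star$, it attains the max in this combined expression too, giving $r^{\pi^\star} + \cP^{\pi^\star} h^\star + j \cP^{\pi^\star} g^\star = (h^\star + g^\star) + j g^\star = h^\star + (j+1) g^\star$, and no other action can beat it (each term is separately bounded by its max).

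Next I would apply the fact that $T$ is nonexpansive in $\infn{\cdot}$ (standard for the Bellman optimality operator). Writing $V^k = T^k V^0$ and comparing with the trajectory $T^k(h^\star) = h^\star + k g^\star$, nonexpansiveness gives $\infn{V^k - h^\star - k g^\star} = \infn{T^k V^0 - T^k h^\star} \le \infn{V^0 - h^\star}$ for all $k \ge 0$. Therefore
\begin{align*}
\infn{\frac{V^k - V^0}{k} - g^\star} &= \frac{1}{k}\infn{V^k - V^0 - k g^\star} \\
&\le \frac{1}{k}\left(\infn{V^k - h^\star - k g^\star} + \infn{h^\star - V^0}\right) \le \frac{2}{k}\infn{V^0 - h^\star},
\end{align*}
which is the claimed bound.

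The main obstacle, and the only step requiring care, is the first one: verifying that the simultaneous-maximizer property of $\pi^\star$ really does propagate to the combined expression $r^\pi + \cP^\pi h^\star + j \cP^\pi g^\star$ for all $j \ge 1$, not just $j = 0$. The subtlety is that in general $\max_\pi\{A^\pi + B^\pi\} \le \max_\pi A^\pi + \max_\pi B^\pi$ with equality only when a common maximizer exists — which is exactly what the definition of a solution $(g^\star, h^\star)$ guarantees (the excerpt states such a $\pi^\star$ exists). Once that identity is in hand the rest is a two-line triangle-inequality argument, so I expect the proof to be short; the conceptual content is entirely in recognizing that $h^\star + k g^\star$ is an exact VI orbit and that contraction-type arguments still work for the merely nonexpansive operator $T$ because we are measuring distance to a genuine orbit rather than to a fixed point.
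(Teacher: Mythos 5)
Your proposal is correct. The identity $T(h^\star+jg^\star)=h^\star+(j+1)g^\star$ is justified exactly as you say: the upper bound follows because each summand $r^\pi+\cP^\pi h^\star$ and $j\,\cP^\pi g^\star$ (with $j\ge 0$) is separately dominated by its own maximum via the two modified Bellman equations, and the lower bound follows from the existence of a simultaneous maximizer $\pi^\star$; combined with $\infn{\cdot}$-nonexpansiveness of $T$ and the triangle inequality this gives the factor $2$.

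The paper does not prove this Fact directly (it cites Puterman) but recovers it as the $\lambda_k=0$ case of its Theorem~\ref{thm::KM_normalized} on \ref{eq:Rx-VI}. That derivation sandwiches $V^k-kg^\star$ elementwise between $h^\star+\Pi_i\cP^{\pi_i}(V^0-h^\star)$ and $h^\star+(\cP^{\pi^\star})^k(V^0-h^\star)$ using monotonicity and the greedy/optimal policies, and then passes to the sup-norm via Proposition~\ref{prop::inf_norm}. Your route replaces the elementwise sandwich by a single application of sup-norm nonexpansiveness against the exact orbit $T^kh^\star=h^\star+kg^\star$; the underlying mechanism (comparison with that orbit) is the same, and both yield the constant $2$ from ``one stochastic matrix minus the identity.'' Your version is shorter for this special case; the paper's elementwise formulation is what it needs to push the argument through for general $\lambda_k$ and for the subsequent Bellman-error lemmas.
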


Fact \ref{fact::vi_normalized_iterate} shows that the normalized iterates converge to optimal average reward in multichain MDPs with a non-asymptotic rate. As we will later show with Theorem~\ref{thm::comp_lower_bd_multi}, the $\mathcal{O}(1/k)$-rate on the normalized iterates of Fact~\ref{fact::vi_normalized_iterate} is exactly optimal. However, it is known that the convergence of normalized iterates \emph{does not} guarantee convergence of policy error \citep[Example~4]{della2012illustrated}. 


\begin{fact}[Classical result, \protect{\cite[Theorem~9.1.7, 8.5.5]{10.5555/528623}}]\label{fact:Bellman-to-policy}
Consider a general (multichain) MDP. If $\infn{TV-V-g^{\star}}=0$, $\infn{g^{\star}-g^{\pi_V}}=0$, where $\pi_V$ is greedy policy satisfying $T^{\pi_V}V= TV$. Furthermore, if MDP is weakly communicating, $\infn{g^{\star}-g^{\pi_V}} \le \infn{TV-V-g^{\star}}$.
\end{fact}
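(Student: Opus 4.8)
\emph{Proof sketch.} The plan is to transfer the bound from the Bellman error $TV-V-g^{\star}$ to the policy error $g^{\star}-g^{\pi_V}$ by averaging along the Markov chain induced by a greedy policy $\pi_V$. The basic tool is the limiting (Ces\`aro) matrix
\[
\overline{\cP}^{\pi}:=\lim_{N\to\infty}\frac1N\sum_{n=0}^{N-1}(\cP^{\pi})^{n},
\]
which is row-stochastic — hence monotone with respect to the entrywise order and fixing constant vectors — and satisfies the standard identities $\overline{\cP}^{\pi}\cP^{\pi}=\overline{\cP}^{\pi}$ (so that $\overline{\cP}^{\pi}(\cP^{\pi}-I)=\mathbf 0$) and $\overline{\cP}^{\pi}r^{\pi}=g^{\pi}$.

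For the first assertion, assume $TV-V=g^{\star}$. Expanding $T^{\pi_V}V=TV$ gives $r^{\pi_V}+\cP^{\pi_V}V=V+g^{\star}$, i.e.\ $g^{\star}-r^{\pi_V}=(\cP^{\pi_V}-I)V$; applying $\overline{\cP}^{\pi_V}$ annihilates the right-hand side and leaves $\overline{\cP}^{\pi_V}g^{\star}=g^{\pi_V}$. Hence it suffices to show $\overline{\cP}^{\pi_V}g^{\star}=g^{\star}$, which in turn follows as soon as $\cP^{\pi_V}g^{\star}=g^{\star}$ (iterate this and Ces\`aro-average). If the MDP is weakly communicating this is automatic, since $g^{\star}=c\mathbf 1$ and every policy fixes constants, so \emph{every} greedy $\pi_V$ is optimal. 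In the general multichain case this last point is the one genuinely delicate step: a policy merely greedy with respect to $V$ need not satisfy $\cP^{\pi_V}g^{\star}=g^{\star}$, so I would instead use that $TV-V=g^{\star}$ lets $V$ serve as an $h^{\star}$-solution of the modified Bellman equations and choose $\pi_V$ to attain the maximum in \emph{both} modified equations simultaneously, which is exactly what \cite[Prop.~9.1.1, Thm.~9.1.7]{10.5555/528623} guarantees.

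For the second assertion the MDP is weakly communicating, so $g^{\star}=c\mathbf 1$; let $V$ be arbitrary, $\eps:=\infn{TV-V-g^{\star}}$, and $\pi_V$ greedy. Then $T^{\pi_V}V=TV$ gives, entrywise,
\[
g^{\star}-\eps\,\mathbf 1\;\le\; r^{\pi_V}+(\cP^{\pi_V}-I)V\;\le\; g^{\star}+\eps\,\mathbf 1,
\]
and applying the monotone, constant-fixing operator $\overline{\cP}^{\pi_V}$ turns the middle term into $g^{\pi_V}$ while leaving the outer constant vectors fixed, so $g^{\star}-\eps\,\mathbf 1\le g^{\pi_V}\le g^{\star}+\eps\,\mathbf 1$; together with the always-valid $g^{\pi_V}\le g^{\star}$ this yields $\infn{g^{\star}-g^{\pi_V}}\le\eps$. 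Apart from the compatibility issue flagged above in the multichain case, every step here is a routine linear-algebra manipulation with $\overline{\cP}^{\pi_V}$, so I expect that multichain compatibility point to be the only real obstacle.
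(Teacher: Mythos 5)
Your argument is essentially the paper's own: the result is cited as classical, but the internal machinery the paper uses for it is Proposition~\ref{prop::Bellman_to_policy}, which applies the Cesàro matrix $(\cP^{\pi_V})^{\star}$ to $r^{\pi_V}+(\cP^{\pi_V}-I)V-g^{\star}$, kills the $(\cP^{\pi_V}-I)V$ term via $(\cP^{\pi_V})^{\star}\cP^{\pi_V}=(\cP^{\pi_V})^{\star}$, and concludes by nonexpansiveness of a stochastic matrix in $\|\cdot\|_\infty$ --- your two-sided monotonicity version is the same computation. Your flagged concern about the multichain case is also well placed: the paper's Proposition~\ref{prop::Bellman_to_policy} carries the explicit hypothesis $\cP^{\pi_V}g^{\star}=g^{\star}$, which an arbitrary greedy policy need not satisfy (this is exactly why Theorems~\ref{thm:KMm_optimized} and~\ref{thm:Anc_optimized} only hold for $k>K$), so the first assertion really does require $\pi_V$ to attain the maximum in both modified equations, as in the cited Theorem~9.1.7; your proposed resolution is the correct reading of the classical statement.
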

\begin{fact}[Classical result,  \protect{\cite[Theorem~9.4.5]{10.5555/528623}}]\label{fact::vi_Bellman_error} 
Consider a general (multichain) MDP. Assume that the transition matrices corresponding to every average-optimal deterministic policy are aperiodic. Then, for standard VI, the Bellman error $\infn{TV^k-V^k-g^{\star}}$ converges to zero. Furthermore, $\infn{g^{\pi_k}-g^{\star}}$ also  converges to zero. 
\end{fact}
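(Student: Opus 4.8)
The statement is \cite[Theorem~9.4.5]{10.5555/528623}, so I would reconstruct its proof. Write $b^k := TV^k - V^k = V^{k+1}-V^k$ for the Bellman error, so the first claim is $b^k\to g^\star$. By Fact~\ref{fact::vi_normalized_iterate} the shifted iterates $w^k := V^k - kg^\star$ are uniformly bounded in $\infn{\cdot}$, hence $\{b^k\}=\{w^{k+1}-w^k+g^\star\}$ is bounded; equivalently, since $T$ is $\infn{\cdot}$-nonexpansive, $\infn{b^k}=\infn{V^{k+1}-V^k}$ is nonincreasing. On the other hand, the Ces\`aro averages $\frac1n\sum_{j=0}^{n-1}b^j=(V^n-V^0)/n$ converge to $g^\star$ by Fact~\ref{fact::vi_normalized_iterate}. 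Therefore it is enough to show that $\{b^k\}$ converges at all: its limit is then pinned down to $g^\star$.

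The engine of the argument is that the boundedness of $\{w^k\}$ forces the greedy actions to eventually respect the ``level-set'' structure of $g^\star$. Let $A^\star(s):=\{a\in\cA:\sum_{s'}P(s'|s,a)g^\star(s')=g^\star(s)\}$, nonempty by the first modified Bellman equation, and let $\delta:=\min_s\min_{a\notin A^\star(s)}\big(g^\star(s)-\sum_{s'}P(s'|s,a)g^\star(s')\big)>0$ (finite since $\cS,\cA$ are finite). Writing $V^k=kg^\star+w^k$ inside $TV^k(s)=\max_a\{r(s,a)+\sum_{s'}P(s'|s,a)V^k(s')\}$, the part linear in $k$ of the $a$-th bracket is $k\sum_{s'}P(s'|s,a)g^\star(s')$, so using $\infn r<\infty$ and $\sup_k\infn{w^k}<\infty$ one obtains an explicit $K_0$ such that for $k>K_0$ the maximizer always lies in $A^\star(s)$. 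Hence for $k>K_0$ one may replace $T$ by the restricted operator $T^\star V(s):=\max_{a\in A^\star(s)}\{r(s,a)+\sum_{s'}P(s'|s,a)V(s')\}$, and since every $a\in A^\star(s)$ fixes $g^\star$ the recursion decouples: $V^{k+1}=kg^\star+T^\star w^k$, i.e. $w^{k+1}=T^\star w^k-g^\star=:\hat Tw^k$. Thus $\{w^k\}_{k>K_0}$ is a Picard iteration of the $\infn{\cdot}$-nonexpansive operator $\hat T$, which has $h^\star$ as a fixed point: the policy $\pi^\star$ simultaneously attaining both modified Bellman maxima satisfies $\pi^\star(s)\in A^\star(s)$ and is greedy for $h^\star$, so $T^\star h^\star=Th^\star=h^\star+g^\star$. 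It remains to show $\infn{\hat Tw^k-w^k}=\infn{b^k-g^\star}\to0$; since $(g^\star,h^\star)$ solves the modified Bellman equations of the restricted MDP and the average-optimal policies of the restricted MDP coincide with those of the original (a policy is average-optimal iff its gain equals $g^\star$, and then $\cP^\pi g^\star=\cP^\pi g^\pi=g^\pi=g^\star$), these are aperiodic by hypothesis, and this is precisely the convergence of relative value iteration under aperiodicity, obtained classically by the data transformation $T^\star\mapsto\tau T^\star+(1-\tau)I$ to an aperiodic model followed by a $J$-stage span-seminorm contraction as in \cite[Theorem~8.5.2]{10.5555/528623}.

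For the policy error, take $k>K_0$, so the greedy policy $\pi_k$ lies in the restricted class, i.e. $\cP^{\pi_k}g^\star=g^\star$; then $(\cP^{\pi_k})^ng^\star=g^\star$ for all $n$, so the Ces\`aro-limit matrix $\cP^{*\pi_k}$ also fixes $g^\star$. Combining $b^k=r^{\pi_k}+(\cP^{\pi_k}-I)V^k$ with the identities $\cP^{*\pi_k}(\cP^{\pi_k}-I)=0$ and $\cP^{*\pi_k}r^{\pi_k}=g^{\pi_k}$ gives $g^{\pi_k}=\cP^{*\pi_k}b^k$, hence $\infn{g^{\pi_k}-g^\star}=\infn{\cP^{*\pi_k}(b^k-g^\star)}\le\infn{b^k-g^\star}\to0$; in fact this yields the explicit bound $\infn{g^{\pi_k}-g^\star}\le\infn{TV^k-V^k-g^\star}$ for $k>K_0$.

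The main obstacle is the last step of the Bellman-error part: showing that the restricted Picard iteration $w^{k+1}=\hat Tw^k$ has vanishing residual. Nonexpansiveness together with the existence of a fixed point does not suffice in a general normed space (rotations are a counterexample), and $\hat T$ still mixes the transition matrices of gain-greedy policies that need not themselves be aperiodic; converting the aperiodicity of the average-optimal policies into a genuine span-seminorm contraction after the data transformation is the technical heart, inherited from \cite[Chapters~8--9]{10.5555/528623}. A secondary point requiring care is the finiteness of $K_0$, which rests on the uniform gap $\delta>0$ and the uniform bound on $\{w^k\}$ from Fact~\ref{fact::vi_normalized_iterate}.
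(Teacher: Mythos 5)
The paper does not actually prove this statement---it is quoted as a classical result from Puterman, Theorem~9.4.5---so what you are reconstructing is the cited classical argument. Most of your architecture is sound and matches both that argument and devices the paper uses elsewhere: the uniform boundedness of $w^k=V^k-kg^\star$ from Fact~\ref{fact::vi_normalized_iterate}; the gap argument forcing the greedy maximizers into $A^\star(s)$ after a finite $K_0$ (the same mechanism behind the constant $K$ in Theorems~\ref{thm:KMm_optimized} and~\ref{thm:Anc_optimized}); the reduction of $TV^k-V^k\to g^\star$ to the vanishing of the fixed-point residual of the nonexpansive operator $\hat T=T^\star(\cdot)-g^\star$ with fixed point $h^\star$; and the policy-error step via $g^{\pi_k}=\cP^{*\pi_k}(TV^k-V^k)$, which is exactly the paper's Proposition~\ref{prop::Bellman_to_policy}.

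The gap is in the one step you yourself flag as the technical heart, and the tool you name there does not close it. For a multichain MDP the $J$-stage span-seminorm contraction of Theorem~8.5.2 fails: if the restricted MDP has two disjoint recurrent classes, every product of $J$ transition matrices preserves that block structure, the $J$-stage contraction coefficient equals $1$ for all $J$, and indeed $\|TV^k-V^k\|_{sp}\to 0$ cannot hold at all when $g^\star$ is non-constant. Moreover, the aperiodicity transformation $T^\star\mapsto\tau T^\star+(1-\tau)I$ changes the algorithm---it turns the Picard iteration into a KM iteration---so residual convergence for the transformed iteration says nothing about the residual of the actual sequence $w^{k+1}=\hat Tw^k$; and for a merely nonexpansive $\hat T$ with a fixed point the Picard residual genuinely need not vanish (a permutation matrix, i.e.\ a periodic chain, is the counterexample, which is precisely why the aperiodicity hypothesis is imposed). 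The classical closing argument (Puterman Theorem~9.4.4, after Schweitzer--Federgruen) is different in kind: it uses $(\cP^{\pi})^n\to(\cP^{\pi})^\star$ for each aperiodic average-optimal $\pi$ together with a $\limsup/\liminf$ sandwich of $w^k$ between the lower bound obtained by iterating one optimal policy and the upper bound obtained from the greedy policies. As written, your proof does not establish the first claim of the Fact; the policy-error claim, which you correctly reduce to the first, is then also left open.
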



Fact \ref{fact:Bellman-to-policy} shows that, unlike normalized iterate, convergence of Bellman error guarantees convergence of policy error. But the classical asymptotic convergence results on the Bellman error of Fact~\ref{fact::vi_Bellman_error} has no quantitative rate (and also additionally requires aperiodicity), so we establish several stronger non-asymptotic rates throughout this paper. 

We also briefly mention another performance measure considered in average-reward MDPs. The \emph{span seminorm} is defined as $\|x\|_{sp} = \max_{i} x_i - \min_{i} x_i$ for $x \in \reals^n$. The span seminorm of the Bellman error $\|TV^k-V^k\|_{sp}$ has been considered for weakly communicating and unichain MDPs because in such setups, the optimal average reward $g^{\star}$ is a uniform constant and $\|TV^k-V^k-g^{\star}\|_{sp} = \|TV^k-V^k\|_{sp}$. Therefore, unlike the $\|\cdot\|_\infty$-norm of the Bellman error, the span seminorm is computable without knowledge of $g^{\star}$. In this work, we primarily focus on convergence rates of the normalized iterates and $\|\cdot\|_\infty$-norm of the Bellman errors. Nevertheless, we point out that our results on the latter measure imply rates on the span seminorm of the Bellman error in weakly communicating and unichain MDP, since $\|TV-V\|_{sp} \le 2\infn{TV-V-g^{\star}}$  \cite[Section 6.6.1]{10.5555/528623} in such setups. 

\section{Relaxed Value Iteration}\label{sec::KM-VI}
The \emph{Relaxed Value Iteration} (\ref{eq:Rx-VI}) is
\begin{align}
    V^{k} &=\lambda_{k}V^{k-1}+(1-\lambda_{k}) TV^{k-1} \tag{Rx-VI}
    \label{eq:Rx-VI}
\end{align}
for $k=1,2,\dots$, where $T$ is the Bellman optimality operator, $V^0\in \mathbb{R}^n$ is a starting point, and $0 \le \lambda_{k}<1$ for $k=0,1,\dots$. $\pi_k$ is a greedy policy satisfying $T^{\pi_k}V^k= TV^k$ for $k=0,1,\dots$. Notably, \ref{eq:Rx-VI} obtains the next iterate as a convex combination between the output of $T$ and the current point $V^{k-1}$. 

We now present our non-asymptotic sublinear converge rates of \ref{eq:Rx-VI} in terms of the Bellman and policy errors while deferring the proofs to Section~\ref{s::omitted-Rx-VI-proofs} of the appendix.


\begin{theorem}\label{thm:KMm_optimized}
Consider a general (multichain) MDP.
Let $(g^{\star},h^{\star})$ be a solution of the modified Bellman equations.
For $k>K$, the Bellman and policy errors of \ref{eq:Rx-VI} with $\lambda_k=1/2$ exhibits the rate
  \begin{align*}
  \infn{g^{\star}-g^{\pi_k}} \le \infn{TV^k-V^k-g^{\star}} &\le   \frac{4\infn{V^0-h^{\star}}}{\sqrt{(3.141592\dots)(k-K)}},
  \end{align*}
  where 
  $K=\para{2\infn{r}+4\infn{V^0}+16\infn{V^0-h^{\star}} +2\infn{g^{\star}}}  /\epsilon$,
  \[0<\epsilon=\inf_{\pi \in S \, \setminus \{\pi \,|\, \cP^{\pi}g^{\star} =  g^{\star}\}} \infn{\cP^{\pi}g^{\star}-g^{\star}},
  \] and $S$ is the set of all deterministic policies. Since $\pi$ denotes the policy in this work, we write $(3.141592\dots)$ to denote the mathematical constant usually written as $\pi$.
\end{theorem}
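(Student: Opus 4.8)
The plan is to reduce the average-reward value iteration to an inconsistent fixed-point iteration in the $\|\cdot\|_\infty$-normed space $\mathbb{R}^n$ and then invoke (a normed-space version of) the known $\mathcal{O}(1/\sqrt{k})$ rate for the Krasnosel'ski\u\i--Mann iteration with nonexpansive operators. The operator $T$ here is the Bellman optimality operator, which is nonexpansive in $\|\cdot\|_\infty$ but has no fixed point (the iterates $V^k$ drift off to infinity at rate roughly $kg^\star$). The first step is therefore a \emph{de-drifting}: introduce the shifted operator or, equivalently, track the quantity $TV^k - V^k$ and compare it against the ``ideal'' drift $g^\star$. Because $(g^\star,h^\star)$ solves the modified Bellman equations, $h^\star$ is an approximate fixed point in the sense that $Th^\star = h^\star + g^\star$, so $T(\,\cdot\,) - g^\star$ behaves like a nonexpansive operator near $h^\star$; more precisely one works with the affine-corrected iteration whose fixed-point residual is exactly the Bellman error $\|TV^k - V^k - g^\star\|_\infty$.

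Concretely, I would proceed in three stages. \textbf{(1) Contraction/monotonicity bookkeeping.} Establish that $\|V^k - V^{k-1} - g^\star\|$-type quantities and the Bellman residuals are monotone nonincreasing along \ref{eq:Rx-VI} with $\lambda_k = 1/2$, using nonexpansiveness of $T$ and the standard telescoping identity for the KM iteration; this is where the constant $4$ and the $\sqrt\pi$ (the $\sqrt{\pi/2}\cdot$ Wallis-type constant from the sharp KM analysis of \citet{cominetti2014rate} / \citet{bravo2018sharp}) enter, since those references give, for nonexpansive $T$ on a normed space, $\|Tx^k - x^k\| \le \frac{\mathrm{diam}}{\sqrt{\pi k}}\cdot(\text{something})$. \textbf{(2) Controlling the drift constant.} The catch is that the ambient iteration is inconsistent, so the sharp KM bound must be applied to the ``residual'' sequence $u^k := TV^k - V^k - g^\star$; one shows $u^k$ itself evolves under a nonexpansive map (this uses $\cP^\pi g^\star = g^\star$ for the optimal $\pi^\star$ and the structure of the modified Bellman equations) and that $\|u^0\| \le \text{const}\cdot\|V^0 - h^\star\|$. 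This gives a bound of the form $\|u^k\| \lesssim \|V^0 - h^\star\|/\sqrt{k}$ but only \emph{after} the iteration has ``locked onto'' the optimal policy. \textbf{(3) Burn-in phase $K$.} The quantity $\epsilon = \inf_{\pi\notin\{\pi:\cP^\pi g^\star = g^\star\}}\|\cP^\pi g^\star - g^\star\|_\infty$ is the spectral-gap-like separation between policies that preserve $g^\star$ and those that do not; I expect the argument to show that for $k > K$ with $K$ as stated, every greedy policy $\pi_k$ must satisfy $\cP^{\pi_k}g^\star = g^\star$ (otherwise the normalized iterate, which by Fact~\ref{fact::vi_normalized_iterate}-type reasoning converges to $g^\star$, would be contradicted by a drift of size $\geq \epsilon$). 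Once $k>K$, the effective iteration is the consistent nonexpansive one and the $\mathcal{O}(1/\sqrt{k-K})$ rate applies with $k$ replaced by $k-K$. The final inequality $\|g^\star - g^{\pi_k}\| \le \|TV^k - V^k - g^\star\|$ is exactly Fact~\ref{fact:Bellman-to-policy} in the general multichain case (it requires $g^\star - g^{\pi_V}$ to be dominated by the Bellman residual, which is the content of \cite[Theorem~9.1.7]{10.5555/528623}).

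The main obstacle I anticipate is stage \textbf{(2)}--\textbf{(3)}: making rigorous that after the burn-in time $K$ the inconsistent iteration genuinely reduces to a consistent nonexpansive KM iteration, and that the residual sequence $\{u^k\}$ is \emph{exactly} a KM orbit of a nonexpansive operator (not merely approximately), so that the sharp constant $4/\sqrt{\pi(k-K)}$ is attained rather than a weaker $\mathcal{O}(\cdot)$ bound. This requires carefully identifying the right auxiliary operator — likely the restriction of $T-g^\star$ to the affine subspace on which the optimal policy's transition kernel acts as identity on the $g^\star$-direction — and verifying its nonexpansiveness and the initialization bound $\|u^0\|\le 4\|V^0-h^\star\|$ (the factor $4$ absorbing both the $\|T(V^0)-T(h^\star)\|\le\|V^0-h^\star\|$ step and the $\|V^0-h^\star\|$ from $TV^0-V^0-g^\star = (TV^0 - Th^\star) - (V^0 - h^\star)$). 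The burn-in estimate $K$ itself should follow from a quantitative version of the normalized-iterate convergence (Fact~\ref{fact::vi_normalized_iterate}) combined with the separation $\epsilon$, tracking all the $\|r\|$, $\|V^0\|$, $\|V^0-h^\star\|$, $\|g^\star\|$ terms that appear in the stated formula for $K$.
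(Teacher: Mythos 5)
Your overall architecture matches the paper's: the burn-in time $K$ is indeed obtained exactly as you describe in stage (3) — writing $\frac{V^k-V^0}{k/2}=g^\star+\epsilon_k$, using the quantitative normalized-iterate rate (Theorem~\ref{thm::KM_normalized} with $\lambda_k=1/2$) to bound $\infn{\epsilon_k}$, and concluding from the recursion that $\infn{\cP^{\pi_k}g^\star-g^\star}\le \frac{1}{k}\para{2\infn{r}+4\infn{V^0}+16\infn{V^0-h^{\star}}+2\infn{g^{\star}}}$, so that for $k>K$ the greedy policy must lie in $\{\pi:\cP^\pi g^\star=g^\star\}$; and the $\sqrt{(3.141592\dots)}$ constant does come from the sharp Krasnosel'ski\u\i--Mann coefficient bound of \citet{cominetti2014rate} (Fact~\ref{fact::coeff}). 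The policy-error step, however, is not Fact~\ref{fact:Bellman-to-policy} ``in the general multichain case'' — that fact only gives the quantitative domination for weakly communicating MDPs. What is actually used is Proposition~\ref{prop::Bellman_to_policy}, whose hypothesis $\cP^{\pi_k}g^\star=g^\star$ is supplied precisely by your burn-in argument; this is recoverable but as written your citation does not support the inequality.

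The genuine gap is in your stage (2), and you have correctly identified it yourself: you propose to exhibit the residual sequence $u^k=TV^k-V^k-g^\star$ as an \emph{exact} KM orbit of a single nonexpansive auxiliary operator and then quote the consistent-case rate. This does not go through, because the greedy policy $\pi_k$ changes from iteration to iteration, so there is no fixed operator governing $u^k$; moreover $T-g^\star$ restricted to any affine subspace still fails to have a fixed point in general multichain MDPs. The paper's resolution is different and is the technical heart of the proof: it runs a two-sided induction (Lemmas~\ref{lem::KM_upper_bd} and \ref{lem::KM_lower_bd}) sandwiching $V^{k_1}-V^{k_2}$ between $\sum_{i=k_2+1}^{k_1}(1-\lambda_i)g^\star + c_{k_1,k_2}(S_1-S_2)(V^0-h^\star)$, where $S_1,S_2$ are stochastic matrices built recursively from the greedy and optimal policies and $c_{k_1,k_2}$ satisfies exactly the Cominetti--Soto--Vaisman coefficient recursion; the upper bound uses monotonicity of $T$ with the greedy policies, while the lower bound only closes \emph{after} the burn-in, because one needs $\cP^{\pi_{k}}g^\star=g^\star$ for the $g^\star$ terms to cancel (this is why the sum in the rate starts at $K+1$, giving $k-K$ rather than $k$). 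Setting $k_1=k+1$, $k_2=k$ and applying $\infn{(S_1-S_2)(V^0-h^\star)}\le 2\infn{V^0-h^\star}$ together with Fact~\ref{fact::coeff} then yields the stated constant. Without this sandwich-by-stochastic-matrices device (or an equivalent substitute), your stage (2) remains an unproven assertion rather than a reduction to a known result.
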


We clarify that for general (multichain) MDPs, the Bellman error does not bound the policy error. However, our analysis shows that the Bellman error does bound the policy error for $k>K$.

The characterization of $K$ in Theorem~\ref{thm:KMm_optimized} is somewhat intricate when considering general MDPs. This is simplified if we focus on specific class of MDPs which includes weakly communicating and unichain MDPs.
\begin{corollary}\label{cor::KM_bellman_com}
Consider a a general (multichain) MDP satsifying $\cP^{\pi}g^{\star}=g^{\star}$ for any policy $\pi$.
Let $(g^{\star},h^{\star})$ be a solution of the modified Bellman equations. For $k\ge 1$, the Bellman and policy errors of \ref{eq:Rx-VI} with $\lambda_k=1/2$ exhibit the rate
        \[\infn{g^{\star}-g^{\pi_k}} \le \infn{TV^k-V^k-g^{\star}} \le \frac{4\infn{V^0-h^{\star}}}{\sqrt{(3.141592\dots) k}}.\]
\end{corollary}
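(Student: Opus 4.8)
The plan is to derive Corollary~\ref{cor::KM_bellman_com} from Theorem~\ref{thm:KMm_optimized} by checking that the extra hypothesis $\cP^{\pi}g^{\star}=g^{\star}$ for every policy $\pi$ forces the ``burn-in'' period $K$ to vanish. Indeed, under this assumption the set $S\setminus\{\pi \mid \cP^{\pi}g^{\star}=g^{\star}\}$ is empty, so the infimum defining $\epsilon$ is over an empty set; by the usual convention $\inf\emptyset = +\infty$, and dividing the numerator of $K$ by $+\infty$ yields $K=0$. Hence the condition $k>K$ in Theorem~\ref{thm:KMm_optimized} becomes simply $k\ge 1$, and the bound reads
\[\infn{g^{\star}-g^{\pi_k}} \le \infn{TV^k-V^k-g^{\star}} \le \frac{4\infn{V^0-h^{\star}}}{\sqrt{(3.141592\dots)(k-0)}} = \frac{4\infn{V^0-h^{\star}}}{\sqrt{(3.141592\dots)\,k}},\]
which is exactly the claimed rate.

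\textbf{Why the hypothesis is natural and why it applies to the named classes.} I would include a short remark that the hypothesis $\cP^{\pi}g^{\star}=g^{\star}$ for all $\pi$ is automatically satisfied whenever $g^{\star}$ is a constant vector $c\mathbf{1}$, since $\cP^{\pi}$ is row-stochastic and hence $\cP^{\pi}(c\mathbf{1})=c\mathbf{1}$. As recalled in the preliminaries (citing \cite[Theorem~8.3.2, 8.4.1]{10.5555/528623}), weakly communicating and unichain MDPs have exactly this property, so the corollary covers those cases. This also explains the phrase ``which includes weakly communicating and unichain MDPs'' in the statement.

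\textbf{Main obstacle.} There is essentially no analytic obstacle here: the corollary is a degenerate special case of the theorem, and the only thing to be careful about is the bookkeeping around the definition of $\epsilon$ and $K$. The one subtle point is making the $\inf\emptyset = +\infty$ convention explicit, so that $K = (\text{finite numerator})/(+\infty) = 0$ is unambiguous, and then observing that the first modified Bellman equation $\max_\pi\{\cP^{\pi}g^{\star}\}=g^{\star}$ holds trivially under the hypothesis (consistent with the reduction of the modified Bellman equations noted in the preliminaries). Once that is in place the inequality chain above is immediate, and one should also remark that the first inequality $\infn{g^{\star}-g^{\pi_k}}\le\infn{TV^k-V^k-g^{\star}}$ now holds for all $k\ge 1$ rather than only for $k>K$, again because the threshold $K$ has collapsed to $0$.
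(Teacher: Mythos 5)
Your proposal is correct and matches the paper's own proof essentially verbatim: the paper likewise notes that the hypothesis makes $S\setminus\{\pi \mid \cP^{\pi}g^{\star}=g^{\star}\}$ empty, hence $\epsilon=\inf_{\pi\in\emptyset}\infn{\cP^{\pi}g^{\star}-g^{\star}}=\infty$ and $K=0$, and then plugs $K=0$ into Theorem~\ref{thm:KMm_optimized}. Your added remark that $g^{\star}=c\mathbf{1}$ implies the hypothesis via row-stochasticity is also exactly the observation the paper makes immediately after the corollary.
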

\begin{proof}[Proof of Corollary \ref{cor::KM_bellman_com}]
We apply Theorem~\ref{thm:KMm_optimized}. By assumption on MDP, $ S / \{\pi \,|\,\cP^{\pi}g^{\star} =  g^{\star}\} = \emptyset$. So $\epsilon=\inf_{\pi \in \emptyset} \infn{\cP^{\pi}g^{\star}-g^{\star}} = \infty$ and $K=0$. Finally, we plug $K=0$ into Theorem~\ref{thm:KMm_optimized}.
\end{proof}
Note that the weakly communicating MDPs satisfy the assumption of Corollary \ref{cor::KM_bellman_com} since  $g^{\star}=c\mathbf{1}$ for some $c \in \mathbb{R}$ \cite[Theorem~8.3.2]{10.5555/528623} and so $\cP^{\pi}c\mathbf{1} =  c\mathbf{1}$ for any policy $\pi$. 
In the next section, we will show that the $\mathcal{O}(1/\sqrt{k})$-rate with \ref{eq:Rx-VI} can be improved to $\mathcal{O}(1/k)$-rate with \ref{eq:Anc-VI}.
Section~\ref{s::omitted-KM-theorems} of Appendix presents more general results establishing convergence rates for arbitrary $\lambda_k$ in terms of both the Bellman error and the normalized iterates.

Broadly speaking, \ref{eq:Rx-VI} is a well-studied algorithm. This averaging mechanism has been widely studied in fixed-point theory literature under the name \emph{Krasnosel'ski\u{\i}--Mann iteration} \citep{mann1953mean, krasnosel1955two, bauschke2017correction, baillon1992optimal, cominetti2014rate}. In the dynamic programming literature, the \emph{aperiodic transformation} \cite[Section 8.5.4]{10.5555/528623}, which averages the transition matrix and identity to make the transition matrix aperiodic, is closely related to this averaging mechanism. In the reinforcement learning literature, TD learning and Q learning use the averaging mechanism to stabilize randomness and ensure convergence \citep{Sutton1988, Watkins1989, bertsekas1995neuro, bravo2024stochastic}, and in tabular setup, \citet{kushner1971accelerated, porteus1978accelerated, goyal2022first, doi:10.1137/20M1367192} studied convergence of \ref{eq:Rx-VI} in discounted MDP setup. However, to the best of our knowledge, no prior work has established non-asymptotic rates of \ref{eq:Rx-VI} or any other value-iteration-type method for multichain MDPs. Only \citet{schweitzer1977asymptotic, schweitzer1979geometric} established asymptotic convergence results for multichain MDPs.

\section{Anchored Value Iteration}\label{sec::Anc-VI}
The \emph{Anchored Value Iteration} is 
\begin{align}
    V^k  =\lambda_k V^0 +(1-\lambda_k)TV^{k-1} \tag{Anc-VI}
    \label{eq:Anc-VI}
\end{align}
for $k=1,2,\dots$, where $T$ is the  Bellman optimality operator, $V^0\in \mathbb{R}^n$ is a starting point, and $0 \le \lambda_{k}<1$ for $k=0,1,\dots$. $\pi_k$ is a greedy policy satisfying $T^{\pi_k}V^k= TV^k$ for $k=0,1,\dots$. Notably, \ref{eq:Anc-VI} obtains the next iterate as a convex combination between the output of $T$ and the \emph{starting point} $V^0$ (note, \ref{eq:Rx-VI} uses $V^{k-1}$ instead of $V^0$). We call the $\lambda_k V_0$ term the \emph{anchor term} since, loosely speaking, it serves to retract the iterates back toward the starting point $V_0$. Generally, $\lambda_k$ is set to be a decreasing sequence, and then the strength of the anchor mechanism diminishes as the iteration progresses.

We now present our non-asymptotic sublinear converge rates of \ref{eq:Anc-VI} in terms of the Bellman and policy errors while deferring the proofs to Section~\ref{s::omitted-Anc-VI-proofs} of the Appendix.

\begin{theorem}\label{thm:Anc_optimized}
Consider a general (multichain) MDP.
Let $(g^{\star},h^{\star})$ be a solution of the modified Bellman equations.
For $k>K$, the Bellman and policy errors of \ref{eq:Anc-VI} with $\lambda_{k}=\frac{2}{k+2}$. exhibits the rate
\vspace{-0.05in}
  \begin{align*}
    \infn{g^{\star}-g^{\pi_k}} \le \infn{TV^k-V^k-g^{\star}} &\le  \frac{8}{k+1}\infn{V^0-h^{\star}}+ \frac{K}{k+1} \infn{g^\star},
  \end{align*}
   where $K=\para{3\infn{r}+12\infn{V^0-h^{\star}} +3\infn{g^{\star}}}  /\epsilon$,
  \[0<\epsilon=\inf_{\pi \in S  \, \setminus \{\pi \,|\, \cP^{\pi}g^{\star} =  g^{\star}\}} \infn{\cP^{\pi}g^{\star}-g^{\star}},
  \] and $S$ is the set of all deterministic policies.
\end{theorem}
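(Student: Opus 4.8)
The plan is to adapt the standard Halpern-iteration $\mathcal{O}(1/k)$ argument to the inconsistent average-reward setting, treating the "shifted" operator $V \mapsto TV - g^\star$ (whose fixed point is $h^\star$, up to the usual additive-constant ambiguity) as the genuinely nonexpansive operator one would run Halpern on. Concretely, observe that $T$ commutes with adding constants in the weakly-communicating-reducible directions but, in the general multichain case, $g^\star$ need not be constant; this is precisely why the extra $\frac{K}{k+1}\infn{g^\star}$ term and the burn-in threshold $K$ appear. I would first record the key identity $T(V + c\mathbf{1}) = TV + c\mathbf{1}$ and, more importantly, that along the iteration the Bellman error decomposes as $TV^k - V^k = (TV^k - V^k - g^\star) + g^\star$, so that controlling $\infn{TV^k - V^k - g^\star}$ is the real goal. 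The operator $\tilde T V := TV - g^\star$ satisfies $\tilde T h^\star = h^\star$ (for a solution $(g^\star,h^\star)$ of the modified Bellman equations) and is nonexpansive in $\infn{\cdot}$; but $\tilde T$ is \emph{not} the operator actually iterated, since \ref{eq:Anc-VI} uses $T$, not $\tilde T$. The gap between one step of $T$ and one step of $\tilde T$ is exactly $g^\star$, and accumulating $K$ such gaps is what the $\frac{K}{k+1}\infn{g^\star}$ term pays for.

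The core of the proof should run as follows. First, establish a \emph{one-step inequality} of Halpern type: using $\lambda_k = \frac{2}{k+2}$ and the nonexpansiveness of $T$, show that the weighted Bellman-error quantity $w_k := (k+1)\infn{TV^k - V^k - g^\star}$ (or a closely related monotone potential, e.g. $a_k\norm{TV^k-V^k-g^\star} + \langle\text{anchor correction}\rangle$, as in Lieder's / Sabach–Shtern's analysis) is non-increasing once $k$ exceeds $K$. This is where the burn-in enters: for the first $K$ iterations the $g^\star$-mismatch between $T$ and $\tilde T$ can grow the potential, and a crude bound shows this growth is at most (something like) $K\infn{g^\star}$ plus a term absorbed into the $\infn{V^0 - h^\star}$ constant; the precise form of $K = (3\infn r + 12\infn{V^0-h^\star} + 3\infn{g^\star})/\epsilon$ comes from the worst-case number of steps before the iterates "align" with the recurrent structure, using the gap $\epsilon = \inf_{\pi\,:\,\cP^\pi g^\star \neq g^\star}\infn{\cP^\pi g^\star - g^\star}$ that separates the "good" policies (those fixing $g^\star$) from the rest. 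Second, for $k > K$, run the telescoping/monotonicity argument exactly as in the consistent Halpern case to get $\infn{TV^k - V^k - g^\star} \le \frac{C}{k+1}\infn{V^0 - h^\star}$ with $C \le 8$ (the constant $8$, rather than the optimal $4$ of Lieder, presumably reflects the use of $\infn{\cdot}$ rather than a Hilbert norm and the extra slack from the shift). Third, invoke Fact~\ref{fact:Bellman-to-policy} for general multichain MDPs — but note that fact only gives $\infn{g^\star - g^{\pi_V}} \le \infn{TV - V - g^\star}$ unconditionally in the weakly communicating case, so for the general multichain bound I would need the additional observation (flagged in the remark after Theorem~\ref{thm:KMm_optimized}) that once $k > K$ the greedy policy $\pi_k$ necessarily satisfies $\cP^{\pi_k} g^\star = g^\star$, because the Bellman error has dropped below $\epsilon$ and hence cannot be realized by a policy in the complement of $\{\pi : \cP^\pi g^\star = g^\star\}$; this reduces the policy-error bound to the Bellman-error bound.

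The main obstacle I anticipate is making the burn-in bookkeeping rigorous: in the consistent Halpern analysis the potential is monotone from $k=0$, but here one must (i) quantify how far the iterates can drift in the "bad" directions during the first $K$ steps, (ii) show that after $K$ steps the greedy policies lock onto the $g^\star$-invariant set so that the effective operator becomes the genuinely nonexpansive $\tilde T$, and (iii) verify that the accumulated error from steps $1,\dots,K$ is bounded by $\frac{K}{k+1}\infn{g^\star}$ plus terms foldable into the leading $\frac{8}{k+1}\infn{V^0-h^\star}$ coefficient — and that the definition of $K$ (the specific constants $3, 12, 3$) is exactly what closes this loop. Bounding $\infn{V^k}$, $\infn{V^k - h^\star}$, and $\infn{TV^k}$ during the burn-in in terms of $\infn r$, $\infn{V^0 - h^\star}$, $\infn{g^\star}$ — using $\norm{T V - TW}_\infty \le \norm{V-W}_\infty$, $\norm{TV^0 - V^0}_\infty \le \infn r + 2\infn{V^0}$-type estimates, and the recursion for $\lambda_k$ — is routine but must be done carefully to land the constant $8$ rather than something larger; I'd expect this to be the most delicate calculation and would model it on the corresponding step in the companion Theorem~\ref{thm:KMm_optimized} proof and on the discounted-MDP Anchored VI analysis of \citet{lee2024accelerating}.
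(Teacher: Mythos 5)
Your high-level skeleton matches the paper's: a burn-in phase of length $K$ after which the greedy policies satisfy $\cP^{\pi_k}g^{\star}=g^{\star}$, an $\mathcal{O}(1/k)$ Halpern-type bound thereafter, and the policy-error bound via the observation that $\cP^{\pi_k}g^{\star}=g^{\star}$ restores the inequality $\infn{g^{\star}-g^{\pi_k}}\le\infn{TV^k-V^k-g^{\star}}$ (this is exactly Proposition~\ref{prop::Bellman_to_policy}). However, your justification for the burn-in — ``once $k>K$ the greedy policy $\pi_k$ necessarily satisfies $\cP^{\pi_k}g^{\star}=g^{\star}$ because the Bellman error has dropped below $\epsilon$'' — is circular and, as stated, not even a valid implication: $\epsilon$ is a gap in $\infn{\cP^{\pi}g^{\star}-g^{\star}}$, not in the Bellman error, and the Bellman-error bound you would invoke is itself only available \emph{after} the burn-in. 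The paper closes this loop with a different, unconditional tool: the normalized-iterate rate of Theorem~\ref{thm::anc_normalized_iterate}, which holds for all $k\ge 1$ with no burn-in. Writing the update $V^{k+1}=\frac{2}{k+3}V^0+\frac{k+1}{k+3}\para{\cP^{\pi_k}V^k+r^{\pi_k}}$ in terms of $\frac{V^k-V^0}{k/3}=g^{\star}+\epsilon_k$ and using $\infn{\epsilon_k}\le\frac{6}{k}\infn{V^0-h^{\star}}$, one isolates $\cP^{\pi_k}g^{\star}-g^{\star}$ on one side and bounds it by $\frac{1}{k}\para{3\infn{r}+12\infn{V^0-h^{\star}}+3\infn{g^{\star}}}$; for $k\ge K$ this falls below $\epsilon$, forcing $\cP^{\pi_k}g^{\star}=g^{\star}$. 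This is where the constants $3,12,3$ come from, and it is the step your proposal leaves unproved.

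A second, smaller issue: the $\mathcal{O}(1/k)$ phase is not a direct application of the consistent Halpern analysis to $\tilde T(\cdot)=T(\cdot)-g^{\star}$, because the algorithm iterates $T$, so the drift $g^{\star}$ is injected at \emph{every} step (the iterates grow like $kg^{\star}$), not only during the first $K$ steps; your claim that ``accumulating $K$ such gaps is what the $\frac{K}{k+1}\infn{g^{\star}}$ term pays for'' misattributes that term. In the paper the drift cancels exactly in the telescoping once $\cP^{\pi_k}g^{\star}=g^{\star}$, and the $\frac{K}{k+1}\infn{g^{\star}}$ term arises from the lower-bound induction (Lemma~\ref{lem::anc_lower_bd}), where the first $K$ greedy policies leave a residual $\para{S^k_{3'}-S^k_{4'}}g^{\star}$ weighted by $\Pi^{k}_{j=K}(1-\lambda_j)\approx\frac{K(K+1)}{(k+1)(k+2)}$. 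Relatedly, the paper does not use a monotone potential $w_k=(k+1)\infn{TV^k-V^k-g^{\star}}$; it sandwiches $V^{k+1}-V^k$ between order-theoretic upper and lower bounds built from products of stochastic matrices (Lemmas~\ref{lem::anc_upper_bd} and \ref{lem::anc_lower_bd}) and then applies Proposition~\ref{prop::inf_norm}, exploiting monotonicity of $T$ and the modified Bellman equations rather than nonexpansiveness alone. Your proposal would need to either reproduce this two-sided induction or supply a genuinely new one-step inequality; as written, both the burn-in threshold and the post-burn-in recursion are asserted rather than established.
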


As before, Theorem~\ref{thm:Anc_optimized} claims that the Bellman error bounds the policy error for $k>K$, and the characterization of $K$ in Theorem~\ref{thm:Anc_optimized} is simplified if we focus on a specific class of MDPs which includes weakly communicating and unichain MDPs.
\begin{corollary}\label{cor::anc_bellman_com}
Consider a general (multichain) MDP satsifying $\cP^{\pi}g^{\star}=g^{\star}$ for any policy $\pi$.
Let $(g^{\star},h^{\star})$ be a solution of the modified Bellman equations.
For $k\ge 1$, the Bellman and policy errors of \ref{eq:Anc-VI} with $\lambda_{k}=\frac{2}{k+2}$ exhibits the rate
\vspace{-0.05in}
     \[    \infn{g^{\star}-g^{\pi_k}} \le \infn{TV^k-V^k -g^{\star}} \le  \frac{8}{k+1}\infn{V^0-h^{\star}}.\]
\end{corollary}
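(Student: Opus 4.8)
The plan is to derive Corollary~\ref{cor::anc_bellman_com} as an immediate specialization of Theorem~\ref{thm:Anc_optimized}, exactly in the spirit of the proof of Corollary~\ref{cor::KM_bellman_com}. The observation is that the extra term $\frac{K}{k+1}\infn{g^\star}$ in Theorem~\ref{thm:Anc_optimized} vanishes under the stated hypothesis because $K=0$, and then the remaining bound is precisely $\frac{8}{k+1}\infn{V^0-h^\star}$, valid for all $k\ge 1$ since the constraint $k>K$ becomes $k>0$.

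Concretely, I would proceed as follows. First, invoke the hypothesis that $\cP^{\pi}g^{\star}=g^{\star}$ for every policy $\pi$. This means the set $S\setminus\{\pi \mid \cP^{\pi}g^{\star}=g^{\star}\}$ over which the infimum defining $\epsilon$ is taken is empty. Second, recall the convention that an infimum over the empty set is $+\infty$, so $\epsilon=\infty$. Third, substitute into the formula $K=(3\infn{r}+12\infn{V^0-h^{\star}}+3\infn{g^{\star}})/\epsilon$ to conclude $K=0$. Fourth, observe that the hypothesis $k>K$ in Theorem~\ref{thm:Anc_optimized} reduces to $k>0$, i.e., $k\ge 1$, matching the range claimed in the corollary. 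Finally, plug $K=0$ into the bound of Theorem~\ref{thm:Anc_optimized}: the term $\frac{K}{k+1}\infn{g^\star}$ becomes $0$, leaving $\infn{g^{\star}-g^{\pi_k}} \le \infn{TV^k-V^k-g^{\star}} \le \frac{8}{k+1}\infn{V^0-h^{\star}}$, which is the desired conclusion.

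There is essentially no obstacle here; the only thing to be careful about is the empty-infimum convention (one should state explicitly that $\inf_{\pi\in\emptyset}(\cdot)=\infty$, as was done in the proof of Corollary~\ref{cor::KM_bellman_com}) and the fact that division by $\infty$ yields $0$, so that $K=0$ is legitimate rather than an indeterminate form. One might also add, as the surrounding text already notes, that weakly communicating and unichain MDPs satisfy the hypothesis: in those cases $g^\star=c\mathbf 1$ for some $c\in\mathbb R$ by \cite[Theorem~8.3.2]{10.5555/528623}, hence $\cP^{\pi}g^\star=\cP^{\pi}(c\mathbf 1)=c\mathbf 1=g^\star$ for every policy $\pi$ since $\cP^{\pi}$ is row-stochastic. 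That remark is not strictly needed for the proof of the corollary itself but situates it properly.
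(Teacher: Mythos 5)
Your proposal is correct and follows exactly the paper's own route: the paper proves Corollary~\ref{cor::anc_bellman_com} by the same argument as Corollary~\ref{cor::KM_bellman_com}, namely noting that the hypothesis makes the infimum defining $\epsilon$ range over the empty set, so $\epsilon=\infty$, $K=0$, the term $\frac{K}{k+1}\infn{g^\star}$ vanishes, and the bound of Theorem~\ref{thm:Anc_optimized} holds for all $k\ge 1$. Your additional remarks on the empty-infimum convention and on weakly communicating MDPs satisfying the hypothesis are consistent with the surrounding text.
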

\begin{proof}[Proof of Corollary \ref{cor::anc_bellman_com}]
Follows from the same line of argument as for Corollary~\ref{cor::KM_bellman_com}.
\end{proof}

Note, the anchoring mechanism allows us to improve the rate to $\mathcal{O}(1/k)$. 
In the next section, we will show that the $\mathcal{O}(1/k)$ rate is optimal in the weakly communicating setup by providing a matching complexity lower bound. 
Section~\ref{s::omitted-Anc-theorems} of Appendix presents more general results establishing convergence rates for arbitrary $\lambda_k$ in terms of both the Bellman error and the normalized iterates.


The anchor mechanism has been widely studied in minimax optimization and fixed-point problems \citep{halpern1967fixed,sabach2017first, Lieder2021halpern, park2022exact, contreras2022optimal, yoon2021accelerated}. In the context of reinforcement learning, \citet{lee2024accelerating} applied the anchoring mechanism to VI to achieve an accelerated convergence rate for cumulative-reward MDPs, and  \citet{bravostochastic} applied the anchoring mechanism to Q-learning for average-reward MDPs. However, to the best of our knowledge, no prior work established a non-asymptotic rate for value-iteration-type methods for multichain MDP. 

We further clarify that our non-asymptotic convergence results in Section~\ref{sec::KM-VI} and \ref{sec::Anc-VI} are neither a direct application nor a direct adaptation of the prior convergence. VI for the average-reward MDP setup can be thought of as a fixed point iteration \emph{without} a fixed point, and so most prior analyses assuming the existence of a fixed point do not apply. \citet{bravostochastic, bravo2024stochastic} study the convergence of \ref{eq:Rx-RVI} and \ref{eq:Anc-RVI} in unichain MDPs by applying results derived from the fixed-point iteration setup, but their analyses do not extend to mulichain MDPs. In the inconsistent fixed point iteration setup, analog relevant to the average-reward MDPs setup, prior analyses for Hilbert space \citep{pazy1971asymptotic, applegate2024infeasibility, bauschke2014generalized, liu2019new, park2023} are not applicable to Bellman operators since $\reals^d$ with $\|\cdot\|_{\infty}$-norm is not Hilbert space. The prior analyses for Banach space assuming uniformly Gateaux differentiable norm  \citep{Browder1966TheSB, reich1973asymptotic, reich1987asymptotic} or uniform convexity \citep{Browder1966TheSB} are not applicable either since $\|\cdot\|_{\infty}$-norm is not uniformly Gateaux differentiable norm and $\reals^d$ with $\|\cdot\|_{\infty}$-norm is not uniformly convex space. 
We note that our analyses specifically utilize the structure of Bellman operators and modified Bellman equation to obtain a non-asymptotic convergence rate on both Bellman and policy errors, adapting proof techiniques from \citet{cominetti2014rate, contreras2022optimal} to the multichain setup.

\section{Complexity lower bound}\label{sec::comp_lower_bd}
We now present complexity lower bounds establishing optimality of \ref{eq:Anc-VI} in terms of the Bellman error and standard VI in terms of the normalized iterates. To the best of our knowledge, Theorems~\ref{thm::comp_lower_bd_commu} and \ref{thm::comp_lower_bd_multi} are the first complexity lower bounds for value-iteration-type methods in the average-reward MDP setup.

Following the information-based complexity framework \citep{nemirovski1992information}, we consider the \emph{span condition}
\begin{align}
V^{k+1} \in V^0+\mathrm{span}\{TV^0-V^0, TV^1-V^1, \dots, TV^{k}-V^{k}\} ,
    \label{eq:span condition}
\end{align}
where $T$ is the Bellman optimality operator and span($A$) is set of all finite linear combinations of the elements of $A$.
Standard VI, \ref{eq:Rx-VI}, and \ref{eq:Anc-VI} all satisfy \eqref{eq:span condition}.


\paragraph{Optimality of \ref{eq:Anc-VI} for Bellman error.}
We now establish the optimality of \ref{eq:Anc-VI} for weakly communicating and unichain MDPs in terms of the Bellman error.
\begin{theorem}\label{thm::comp_lower_bd_commu}
Let $k\ge 0$, $n \ge k+2$, and $V^0 \in \real^n$. Then there exists a unichain MDP with $|\mathcal{S}|=n$ and $|\mathcal{A}|=1$ such that its modified Bellman equations has a solution $(g^\star,h^{\star})$ satisfying
\[ \infn{\sum^{k}_{i=0} a_i (TV^i-V^i)-g^{\star}} \ge \frac{1}{k+1}\infn{V^0-h^{\star}} \]
for any iterates $\{V^i\}^{k}_{i=0}$ satisfying the span condition \eqref{eq:span condition}
     and any choice of real numbers $\{a_i\}^k_{i=0}$ such that $\sum^k_{i=0}a_i=1 $.
\end{theorem}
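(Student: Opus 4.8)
The plan is to turn the statement into a linear-algebra fact by exploiting that $|\cA|=1$: then the Bellman optimality operator is affine, $TV = r + PV$ for a single stochastic matrix $P$ and reward vector $r$. Write $q := P - I$ and $w := TV^0 - V^0 = r + qV^0$. An induction on the span condition gives $TV^i - V^i \in w + \mathrm{span}\{qw,\dots,q^iw\}$ for every $i$: indeed $V^i = V^0 + z$ with $z \in \mathrm{span}\{w,\dots,q^{i-1}w\}$ forces $TV^i - V^i = r + qV^i = w + qz$. Hence, for any $\{a_i\}$ with $\sum_i a_i = 1$, $\sum_{i=0}^k a_i(TV^i - V^i) = \pi(q)w$ for some polynomial $\pi$ with $\deg\pi \le k$ and $\pi(0)=1$. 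So it suffices to build a unichain $P$ and a solution $(g^\star,h^\star)$ of the modified Bellman equations for which $\infn{\pi(q)w - g^\star}$ is large for every such $\pi$.

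Next I would fix the hard instance. Index the states $0,1,\dots,n-1$; let state $i$ move deterministically to $i+1$ for $0\le i\le k$, and let every state $i$ with $k+1\le i\le n-1$ move deterministically to $k+1$. This is a unichain MDP whose only recurrent class is $\{k+1\}$. Put $u := e_{k+1}$ (the $(k+1)$-st coordinate vector), $h^\star := V^0 - u$, $g^\star := 0$, and define the reward by $r := (I-P)h^\star$; then $(g^\star,h^\star)$ solves the modified Bellman equations and $g^\star=0$ is the optimal average reward (the stationary distribution sits on the absorbing state $k+1$, where $r$ vanishes). With these choices $w = r + qV^0 = q(V^0-h^\star) = qu$, so $\sum_{i=0}^k a_i(TV^i - V^i) - g^\star = q\pi(q)u$, and $\infn{V^0 - h^\star} = \infn{e_{k+1}} = 1$.

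The key step is a dual certificate. Take the row vector $\phi := \sum_{i=0}^{k} e_i$, so $\|\phi\|_1 = k+1$. Since state $k+1$ is absorbing, row $k+1$ of $q$ is zero; since states $0,\dots,k$ form a deterministic path, a short computation gives $\phi^\top q = e_{k+1}^\top - e_0^\top$, whence $\phi^\top q^{m}u = [q^{m-1}u]_{k+1} - [q^{m-1}u]_{0}$ for $m\ge 1$. For $m\ge 2$ the first term vanishes (absorbing row), and the second depends only on $u_0,\dots,u_{m-1}$ (it is their $(m-1)$-st forward difference), hence is zero for $2\le m\le k+1$ because $u_0=\cdots=u_k=0$; moreover $\phi^\top qu = u_{k+1}-u_0 = 1$. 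Writing $\pi(x)=1+\sum_{m=1}^k c_m x^m$, we get $\phi^\top(q\pi(q)u) = \phi^\top qu + \sum_{m=1}^k c_m\,\phi^\top q^{m+1}u = 1$, independently of $\pi$. H\"older's inequality then gives $\infn{q\pi(q)u} \ge \abs{\phi^\top(q\pi(q)u)}/\|\phi\|_1 = \tfrac{1}{k+1} = \tfrac{1}{k+1}\infn{V^0-h^\star}$, which is the claim.

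I expect the main obstacle to be discovering the instance and a matching dual certificate at the same time. The residual $q\pi(q)u$ carries $k+1$ free coefficients, so one needs a functional $\phi$ that annihilates all the high-order terms $q^2u,\dots,q^{k+1}u$ yet has $\ell_1$-norm only $\mathcal O(k)$; the naive candidate (a cyclic-shift operator with $u$ a coordinate vector) forces the annihilating functional to have $\ell_1$-norm $\Theta(k^2)$, giving only an $\Omega(1/k^2)$ bound. The absorbing-chain instance is exactly what makes the sparse functional $\phi=\sum_{i\le k}e_i$ admissible, because then $\phi^\top q$ has only two nonzero coordinates and $u=e_{k+1}$ kills the forward differences for free. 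Checking that the MDP is unichain and that $(g^\star,h^\star)$ genuinely solves the modified Bellman equations with $g^\star$ the optimal average reward is routine.
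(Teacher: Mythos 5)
Your proposal is correct, and it reaches the stated bound by a genuinely different construction than the paper's. The paper takes a cyclic MDP (states $s_1,\dots,s_{n-1}$ forming a loop with reward $1$ at a single state, so $g^\star=\frac{1}{n-1}\mathbf{1}\neq 0$ and $\infn{V^0-h^\star}=\tfrac12$), shows by induction that the span condition forces $(V^i)_j=0$ for $j\ge i+1$, observes that the entries of $TV^i-V^i$ telescope to sum to $1$ while being supported on at most $k+1$ coordinates, and then needs a final $\max\{\frac{1}{k+1}-\frac{1}{n-1},\frac{1}{n-1}\}$ case analysis to absorb the nonzero $g^\star$; it also first treats $V^0=0$ and then shifts the reward to handle general $V^0$. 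You instead use an absorbing chain with reward $r=(I-P)h^\star$ engineered so that $g^\star=0$ and $h^\star=V^0-e_{k+1}$, reduce the span condition to the Krylov statement $\sum_i a_i(TV^i-V^i)=\pi(q)w$ with $\pi(0)=1$ (exploiting that $|\cA|=1$ makes $T$ affine), and exhibit an explicit dual certificate $\phi=\sum_{i=0}^k e_i$ of $\ell_1$-norm $k+1$ that annihilates $q^2u,\dots,q^{k+1}u$. Both arguments are at bottom $\ell_1$--$\ell_\infty$ duality against a functional of norm $k+1$ (the paper's "sum of entries equals $1$ over a support of size $k+1$" is the same pairing in disguise), but your version handles arbitrary $V^0$ in one step, makes $g^\star=0$ so no case analysis is needed, and isolates the linear-algebraic content cleanly; the paper's version keeps the recurrent class large and periodic, which ties it to the companion multichain construction in Theorem~\ref{thm::comp_lower_bd_multi}. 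All the steps you flag as routine (unichainness, that $(0,V^0-e_{k+1})$ solves the modified Bellman equations, $P^\star(I-P)=0$ giving optimality of $g^\star=0$, and the support claim for $e_0^\top q^{m-1}$) do check out.
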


If we set $a_k=1$ in Theorem~\ref{thm::comp_lower_bd_commu}, we get $\infn{TV^k-V^k-g^{\star}} \ge \frac{1}{k+1}\infn{V^0-h^{\star}}$. Note that the construction of Theorem~\ref{thm::comp_lower_bd_commu} is a unichain MDP, which is also a weakly communicating MDP. The lower bound matches the $\frac{8}{k+1}\infn{V^0-h^{\star}}$ upper bound of Corollary~\ref{cor::Bellman_weakly_anc}, which applies to both weakly communicating and unichain MDPs. The upper and lower bounds match up to constant of factor $8$, and we therefore conclude optimality for both weakly communicating and unichain MDPs.





\paragraph{Exact optimality of standard VI for normalized iterates.} 
We now establish the optimality of standard VI for general (multichain) MDPs in terms of the normalized iterates.
\begin{theorem}\label{thm::comp_lower_bd_multi}
Let $k\ge 0$, $n \ge k+3$, and $V^0 \in \real^n$. Then there exists a multichain MDP with $|\mathcal{S}|=n$ and $|\mathcal{A}|=1$ such that its modified Bellman equations has a solution $(g^\star,h^{\star})$ satisfying
\[ \infn{\sum^{k}_{i=0} a_i (TV^i-V^i)-g^{\star}} \ge \frac{2}{k+1}\infn{V^0-h^{\star}} \]
for any iterates $\{V^i\}^{k}_{i=0}$ satisfying the span condition \eqref{eq:span condition} and any choice of real numbers $\{a_i\}^k_{i=0}$ such that $\sum^k_{i=0}a_i=1 $.
\end{theorem}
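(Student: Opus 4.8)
The plan is to construct an explicit "hard instance" — a multichain MDP with a single action, so that $T$ is simply an affine map $V \mapsto r + PV$ for a fixed stochastic matrix $P$ — and then exploit the span condition to show that the iterates $V^0, V^1, \dots, V^k$ are forced to live in a low-dimensional subspace, on which the Bellman errors $TV^i - V^i$ cannot fully "align" with $g^\star$. I would model the construction after the unichain lower bound of Theorem~\ref{thm::comp_lower_bd_commu} but modify it so that $g^\star$ is genuinely non-constant (which is what distinguishes the multichain case and is also why the factor improves from $1$ to $2$): the span-condition subspace will be orthogonal, in the relevant coordinate, to the direction in which $g^\star$ varies.

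Concretely, I would take $n \ge k+3$ states and build $P$ as a shift-like / nearly-nilpotent stochastic matrix together with an absorbing structure that creates two recurrent classes with different average rewards, so that $g^\star$ takes two distinct values, say $0$ on one block and some $c \ne 0$ on another, with $\infn{g^\star} = |c|$. Choosing $V^0$ appropriately (WLOG $V^0 = 0$ by translation invariance of the span condition and of all the quantities involved, or absorbing $\infn{V^0-h^\star}$ into the final scaling), one computes $TV^0 - V^0 = r + PV^0$, and then inductively each new Bellman error $TV^i - V^i$ is a fixed affine image of the previous iterates. The key structural claim is: for any iterates satisfying \eqref{eq:span condition}, the vector $\sum_{i=0}^k a_i(TV^i - V^i)$, for \emph{any} coefficients summing to $1$, is constrained to a subspace that misses $g^\star$ by at least $\frac{2}{k+1}\infn{V^0 - h^\star}$ in $\|\cdot\|_\infty$. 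I would isolate one particular coordinate (the one carrying the "$2$") and show that in that coordinate the achievable value of $\sum_i a_i (TV^i - V^i)$ is pinned down by the constraint $\sum a_i = 1$ together with the nilpotent action of $P$ on the span subspace, while $g^\star$ in that coordinate equals $c$; the gap is then a concrete number times $\infn{V^0 - h^\star}$, which I'd arrange to be $2/(k+1)$ by choosing the magnitudes in $r$ and the solution $h^\star$ of the modified Bellman equations.

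The main obstacle I anticipate is the bookkeeping around the modified Bellman equations in the multichain setting: unlike the weakly communicating case where $g^\star = c\mathbf 1$ and the first equation is automatic, here I must exhibit an actual solution pair $(g^\star, h^\star)$ — with $g^\star$ satisfying $\max_\pi\{\cP^\pi g^\star\} = g^\star$ and $h^\star$ satisfying the second equation with a simultaneously-maximizing policy — and then compute $\infn{V^0 - h^\star}$ exactly in terms of the construction parameters so that the final bound is clean. With $|\mathcal A| = 1$ the "max over policies" collapses, so verifying the Bellman equations reduces to two linear identities $Pg^\star = g^\star$ and $r + Ph^\star = h^\star + g^\star$; the work is in choosing $r$ and the block structure so these are solvable, $g^\star$ is non-constant, and the induced subspace geometry yields exactly $2/(k+1)$. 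A secondary subtlety is confirming that the span-condition subspace really has the claimed low dimension after $k$ steps and that $P$ acts nilpotently on it — this is where the requirement $n \ge k+3$ (one extra dimension beyond the $k+2$ of Theorem~\ref{thm::comp_lower_bd_commu}) should enter, to accommodate the second recurrent class. Once the instance is fixed, the lower bound itself should follow from a short explicit computation of the relevant coordinate of $\sum_i a_i(TV^i - V^i) - g^\star$.
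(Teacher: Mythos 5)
Your construction template is the right one and essentially matches the paper's: a single-action MDP on $n\ge k+3$ states with a shift-like transient block feeding one absorbing state and a second absorbing state forming a separate recurrent class, so that $g^\star$ is non-constant (the paper takes $g^\star=[0,\dots,0,1]^\intercal$, $h^\star=[-1/2,1/2,\dots,1/2,0]^\intercal$, $\infn{V^0-h^\star}=1/2$ for $V^0=0$); with $|\cA|=1$ the modified Bellman equations reduce to the two linear identities you name, and the reduction to $V^0=0$ by translating the reward is exactly how the paper handles general $V^0$. The span-condition bookkeeping you anticipate is also correct: one shows inductively that $(V^i)_1=0$ and $(V^i)_j=0$ for $i+2\le j\le n-1$.

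The genuine gap is in your final step. You propose to isolate \emph{one} coordinate in which $\sum_i a_i(TV^i-V^i)$ is pinned away from $g^\star$; no single coordinate can deliver a bound that decays like $1/(k+1)$, and in the paper's instance every individually ``pinned'' coordinate (the first and the last) has zero gap from $g^\star$. The mechanism that actually produces the $1/(k+1)$ is a conservation law plus pigeonhole: by telescoping, the coordinates of $TV^i-V^i-g^\star$ sum to exactly $1$ for every $i$ (the contribution of the second recurrent class to $TV^i-V^i$ cancels against $g^\star$, which is why, unlike the unichain case, subtracting $g^\star$ costs nothing and the constant improves from $1$ to $2$); hence $\sum_l\bigl(\sum_i a_i(TV^i-V^i)-g^\star\bigr)_l=1$ whenever $\sum_i a_i=1$, while the span condition forces all but $k+1$ coordinates of this vector to vanish, so some coordinate has magnitude at least $1/(k+1)=\frac{2}{k+1}\infn{V^0-h^\star}$. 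Without this $\ell^1$-type identity distributed over the $k+1$ admissible coordinates, your argument as sketched does not produce the claimed rate.
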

If we set $a_i=\frac{1}{k+1}$ for all $i=0,\dots,k$ in Theorem~\ref{thm::comp_lower_bd_multi}, we get $\infn{\frac{V^{k+1}-V^0}{k+1}-g^{\star}} \ge \frac{2}{k+1}\infn{V^0-h^{\star}}$. 
This lower bound exactly matches the $\frac{2}{k+1}\infn{V^0-h^{\star}}$ upper bound of Fact~\ref{fact::vi_normalized_iterate}, and we therefore conclude exact optimality of standard VI in terms of the normalized iterates.


\paragraph{Discussion.}
To clarify, the unichain MDP construction of Theorem~\ref{thm::comp_lower_bd_commu} is a multichain MDP, so Theorem~\ref{thm::comp_lower_bd_commu} and Fact~\ref{fact::vi_normalized_iterate} together already establish optimality up to a constant factor of $2$. However, the multichain construction of Theorem~\ref{thm::comp_lower_bd_commu} improves the lower bound by a constant factor of $2$, and this factor of $2$ leads to the exact match.


The span condition used in Theorems~\ref{thm::comp_lower_bd_commu} and \ref{thm::comp_lower_bd_multi} are arguably very natural and is satisfied by 
Standard VI, \ref{eq:Rx-VI}, and \ref{eq:Anc-VI}. The span condition is commonly used in the construction of complexity lower bounds for first-order optimization methods \citep{nesterov2003Introductory,drori2017exact,drori2022oracle,carmon2020stationary1, carmon2021stationary2,park2022exact} and has been used in the lower bound for standard VI and \ref{eq:Anc-VI} \citep{goyal2022first, lee2024accelerating}. However, designing an algorithm that breaks the lower bound of Theorem \ref{thm::comp_lower_bd_commu} and ~\ref{thm::comp_lower_bd_multi} by violating the span condition remains a possibility. In optimization theory, there is precedence of lower bounds being broken by violating seemingly natural and minute conditions \citep{hannah2018,golowich2020last,yoon2021accelerated}.

\section{Relaxed and Anchored Relative Value Iteration}\label{sec::RVI}
The iterates of standard VI, \ref{eq:Rx-VI}, and \ref{eq:Anc-VI}  \emph{diverge}. For example, the iterates of standard VI asymptotically behave as $V^k\sim k g^\star$ as $k\rightarrow\infty$ by Fact~\ref{fact::vi_normalized_iterate}. Of course, the normalized iterates do converge, but if we want the iterates themselves to converge, the algorithm must be modified.

The \emph{Relative Value Iteration} (RVI) subtracts some uniform constant vector at each iteration:
\[h^{k} = Th^{k-1} - f(h^{k-1})\mathbf{1}\] 
for $k=1,2,\dots$, where $T$ is the bellman optimality operator, $h^0\in \mathbb{R}^n$ is a starting point, $\mathbf{1}\in \mathbb{R}^n$ is the uniform constant vector with all entries $1$, and $f\colon  \real^n \rightarrow \real $ is a continuous function satisfying $f(x+c\mathbf{1})=f(x)+c$ for any $c \in \real$. Following is one of known convergence results of RVI.
\begin{fact}[Classical result, \protect{\cite[Theorem~4.3.2]{bertsekas2015dynamic}}]
Consider a unichain MDP. 
Assume that the transition matrices corresponding to every average-optimal deterministic policy are aperiodic, and $f(h) = (Th)_i$ for some fixed $1 \le i \le n$. Then, for some solution of modified Bellman equations $(g^{\star}, h^{\star})$, the iterates of standard RVI converge to $h^{\star}$ and $(Th^k)_i\mathbf{1}$ converges to $g^{\star}$.   
\end{fact}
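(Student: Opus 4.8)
The plan is to reduce RVI to standard VI up to an additive multiple of $\mathbf 1$, recast the resulting sequence as a Picard iteration of a nonexpansive operator that now \emph{does} have a fixed point, and then invoke the classical facts already quoted above to pin down the limit. First I would set $V^0=h^0$ and run standard VI $V^k=TV^{k-1}$. Since $T(x+c\mathbf 1)=Tx+c\mathbf 1$ and $f(x)=(Tx)_i$ satisfies $f(x+c\mathbf 1)=f(x)+c$, a one-line induction gives $h^k=V^k-(V^k)_i\mathbf 1$ for all $k\ge 1$; in particular $(h^k)_i=0$, $Th^k=TV^k-(V^k)_i\mathbf 1$, and hence $(Th^k)_i=(TV^k-V^k)_i$.

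Next, since the MDP is unichain, the optimal average reward is a constant vector $g^{\star}=c\mathbf 1$, so I would set $W^k:=V^k-kc\mathbf 1$. A direct computation gives $W^{k+1}=\widetilde T W^k$ with $\widetilde T x:=Tx-c\mathbf 1$, which is nonexpansive in $\infn{\cdot}$ (as $T$ is a pointwise maximum of affine maps whose linear parts are row-stochastic), continuous, and has fixed-point set exactly $\{h : Th=h+c\mathbf 1\}$ — the nonempty set of $h$-components of solutions of the modified Bellman equations. Fact~\ref{fact::vi_normalized_iterate} gives $\infn{W^k}\le 2\infn{V^0-h^{\star}}$, so $\{W^k\}$ is bounded, and $\infn{W^{k+1}-W^k}=\infn{TV^k-V^k-g^{\star}}$, which tends to $0$ by Fact~\ref{fact::vi_Bellman_error} under the assumed aperiodicity of the average-optimal deterministic policies.

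Then I would run the standard argument for Picard iterations of a nonexpansive map with a fixed point: for any fixed point $p$ of $\widetilde T$, $\infn{W^k-p}$ is nonincreasing, so the bounded sequence has a subsequence $W^{k_j}\to\bar h$; combining $W^{k+1}-W^k\to 0$ with continuity of $\widetilde T$ forces $\widetilde T\bar h=\bar h$; and applying the nonincreasing property at $p=\bar h$ upgrades this to $W^k\to\bar h$. By the reduction step, $h^k=W^k-(W^k)_i\mathbf 1\to\bar h-(\bar h)_i\mathbf 1=:h^{\star}$, which satisfies $Th^{\star}=h^{\star}+c\mathbf 1$; taking a greedy policy of $h^{\star}$ shows $(g^{\star},h^{\star})$ solves the modified Bellman equations, and $(Th^k)_i\mathbf 1=(TV^k-V^k)_i\mathbf 1\to c\mathbf 1=g^{\star}$.

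The only substantive input is the asymptotic regularity $\infn{W^{k+1}-W^k}\to 0$ — equivalently, vanishing of the Bellman error of VI — which is precisely Fact~\ref{fact::vi_Bellman_error}; its proof, resting on Schweitzer's characterization of when VI converges, is the delicate part, whereas everything above it is bookkeeping with additive constants plus a textbook Opial-type convergence argument. If one instead wanted a fully self-contained proof, reproving this asymptotic regularity (i.e., reproving Fact~\ref{fact::vi_Bellman_error}) would be the main obstacle.
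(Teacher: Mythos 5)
Your proposal is correct. Note first that the paper itself offers no proof of this Fact --- it is quoted as a classical result from Bertsekas --- so there is no in-paper argument to compare line by line; but your two devices are precisely the ones the paper deploys when it \emph{does} prove the neighboring RVI results (Theorems~\ref{thm::Rx-RVI} and \ref{thm::Anc-RVI}): there the authors likewise identify the RVI iterates with VI-type iterates up to a multiple of $\mathbf{1}$ ($V^k=h^k+c_k\mathbf{1}$) and run a Fej\'er-monotonicity argument for the shifted nonexpansive operator $T(\cdot)-g^{\star}$, whose fixed points are exactly the $h$-solutions of the modified Bellman equations. Your reduction $h^k=V^k-(V^k)_i\mathbf{1}$ checks out by the stated induction, the identity $W^{k+1}-W^k=TV^k-V^k-g^{\star}$ correctly isolates the asymptotic-regularity input as Fact~\ref{fact::vi_Bellman_error} (whose aperiodicity hypothesis matches the one assumed here, and which the paper also cites without proof, so leaning on it is legitimate), and the subsequence-plus-monotone-distance argument is valid in finite dimensions without any Opial machinery. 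Two cosmetic points: Fact~\ref{fact::vi_normalized_iterate} bounds $\infn{W^k-V^0}$, not $\infn{W^k}$ (boundedness follows anyway, and indeed already from $\infn{W^k-p}\le\infn{W^0-p}$ for a fixed point $p$, so the appeal to Fact~\ref{fact::vi_normalized_iterate} is dispensable); and when verifying that $(g^{\star},h^{\star})$ solves the modified Bellman equations you should say explicitly that a greedy policy for $h^{\star}$ attains both maxima simultaneously because $g^{\star}=c\mathbf{1}$ makes the first equation hold for every policy --- a one-line remark, but it is part of the paper's definition of ``solution.''
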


Like standard VI, iterates of \ref{eq:Rx-VI} and \ref{eq:Anc-VI} also diverge as we show in the Theorems~\ref{thm::KM_normalized} and \ref{thm::anc_normalized_iterate} of the Appendix. To ensure convergence of the iterates, we can also subtract uniform constant vectors from the iterate. The \emph{Relaxed Relative Value Iteration} is
\begin{align}
    h^{k} = \lambda_{k-1} h^{k-1}+ (1- \lambda_{k-1})(Th^{k-1} -f(h^{k-1})\mathbf{1}) \tag{Rx-RVI}
    \label{eq:Rx-RVI}
\end{align}
for $k=1,2,\dots$, where $0 \le \lambda_{k}<1$ and $h^0$ is starting point. $\pi_k$ is a greedy policy satisfying $T^{\pi_k}h^k= Th^k$ for $k=0,1,\dots$. The \emph{Anchored Relative Value Iteration} is
\begin{align}
    h^{k} = \lambda_{k-1} h^0+ (1- \lambda_{k-1})(Th^{k-1} - f(h^{k-1})\mathbf{1}) \tag{Anc-RVI}
    \label{eq:Anc-RVI}
\end{align}
for $k=1,2,\dots$, where $0 \le \lambda_{k}<1$ and $h^0$ is starting point. $\pi_k$ is a greedy policy satisfying $T^{\pi_k}h^k= Th^k$ for $k=0,1,\dots$.

Now we present our non-asymptotic convergence rates of \ref{eq:Rx-RVI} and \ref{eq:Anc-RVI} in terms of Bellman and policy errors while deferring the proofs to Section~\ref{s::omitted-RVI-proofs} in Appendix.

\begin{theorem}\label{thm::Rx-RVI_optimized}
       Consider a weakly communicating MDP. Let $(g^{\star}, h^{\star})$ be a solution of modified Bellman equations. For $k\ge 1$ and , the Bellman and policy errors of \ref{eq:Rx-RVI} with $\lambda_k=1/2$ exhibits the rate
    \begin{align*}
       \infn{g^{\star}-g^{\pi_k}} \le \infn{Th^k-h^k-g^{\star}} \le  \frac{4}{\sqrt{(3.141592\dots) k}}\infn{h^0-h^{\star}}.
    \end{align*}
    Furthermore, $h^k\rightarrow h^\infty$ and $f(h^k)\mathbf{1}\rightarrow g^{\star}$ for some solution of modified Bellman equations  $(g^{\star}, h^\infty)$.  
\end{theorem}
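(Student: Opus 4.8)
The plan is to reduce the analysis of \ref{eq:Rx-RVI} to the already-established analysis of \ref{eq:Rx-VI} in the weakly communicating setup (Corollary~\ref{cor::KM_bellman_com}) by tracking the relationship between the two iterate sequences. First I would observe that in a weakly communicating MDP, $g^\star = c\mathbf{1}$ for a constant $c$, so $\cP^\pi g^\star = g^\star$ for every policy and Corollary~\ref{cor::KM_bellman_com} applies to \ref{eq:Rx-VI}. The key structural fact is that \ref{eq:Rx-RVI} differs from \ref{eq:Rx-VI} only by the subtraction of uniform-constant vectors: writing $V^k$ for the \ref{eq:Rx-VI} iterates and $h^k$ for the \ref{eq:Rx-RVI} iterates started from the same $h^0 = V^0$, an induction shows $h^k = V^k - c_k\mathbf{1}$ for some scalar $c_k$, using the property $f(x+c\mathbf{1}) = f(x)+c$ together with the fact that $T(x - c\mathbf{1}) = Tx - c\mathbf{1}$ (the Bellman optimality operator commutes with adding constants) and that the relaxation step $\lambda_k x + (1-\lambda_k) y$ is affine. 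Concretely, if $h^{k-1} = V^{k-1} - c_{k-1}\mathbf{1}$, then $Th^{k-1} = TV^{k-1} - c_{k-1}\mathbf{1}$, so $Th^{k-1} - f(h^{k-1})\mathbf{1} = TV^{k-1} - (f(V^{k-1})+ \text{const})\mathbf{1}$, and one checks $h^k = V^k - c_k\mathbf{1}$ with an explicit recursion for $c_k$.

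Given this, the Bellman error is invariant under the shift: $Th^k - h^k = (TV^k - c_k\mathbf{1}) - (V^k - c_k\mathbf{1}) = TV^k - V^k$, so $\infn{Th^k - h^k - g^\star} = \infn{TV^k - V^k - g^\star}$, and the $\tfrac{4}{\sqrt{(3.141592\dots)k}}\infn{h^0 - h^\star}$ bound follows immediately from Corollary~\ref{cor::KM_bellman_com} (with $V^0 = h^0$). The policy-error bound $\infn{g^\star - g^{\pi_k}} \le \infn{Th^k - h^k - g^\star}$ follows because the greedy policy for $h^k$ is the same as the greedy policy for $V^k$ (greedy actions are unchanged by adding a constant to the value function), combined with Fact~\ref{fact:Bellman-to-policy} applied in the weakly communicating case.

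For the convergence claim $h^k \to h^\infty$, I would argue as follows. The Bellman error $\infn{Th^k - h^k - g^\star} \to 0$, hence $\infn{Th^k - h^k}_{sp} \to 0$ since $g^\star$ is constant. Combined with nonexpansiveness of $T$ in $\infn{\cdot}$ and the span-seminorm structure, one shows the sequence $\{h^k\}$ modulo constants is Cauchy; the subtraction mechanism together with the normalization built into $f$ (specifically $f(h^k) \to c$ where $g^\star = c\mathbf{1}$, which one extracts from the vanishing Bellman error) then pins down the constant, so $h^k$ itself converges, say to $h^\infty$. Passing to the limit in the defining recursion and using continuity of $T$ and $f$ shows $Th^\infty - f(h^\infty)\mathbf{1} = h^\infty$, i.e. $(g^\star, h^\infty)$ solves the modified Bellman equation, and $f(h^k)\mathbf{1} \to f(h^\infty)\mathbf{1} = g^\star$. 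The main obstacle I anticipate is this last point: proving actual convergence of $\{h^k\}$ (not just of the Bellman error) in the non-Hilbert $\ell_\infty$ geometry, since the usual Hilbert-space tools (Fejér monotonicity, Opial's lemma) are unavailable; I expect the argument to hinge on the span-seminorm contraction properties of $T$ for weakly communicating MDPs and on showing the relaxed iteration damps oscillations enough to force a genuine limit — this is presumably where the adaptation of the techniques of \citet{cominetti2014rate, contreras2022optimal} does the real work.
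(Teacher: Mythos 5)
Your treatment of the Bellman-error and policy-error bounds is correct and is essentially the paper's argument: with $h^0=V^0$, the identity $T(v+c\mathbf{1})=Tv+c\mathbf{1}$ gives $h^k=V^k-c_k\mathbf{1}$ for the \ref{eq:Rx-VI} iterates $V^k$, the Bellman error and greedy policies are shift-invariant, and Corollary~\ref{cor::KM_bellman_com} (equivalently Corollary~\ref{cor::Bellman_weakly_Rx} with $\lambda_i=1/2$) delivers the $4/\sqrt{(3.141592\dots)k}$ rate.

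The gap is in the convergence claim $h^k\to h^\infty$. You correctly identify this as the hard part, but your proposed mechanism --- ``span-seminorm contraction properties of $T$'' forcing the sequence modulo constants to be Cauchy --- is not something the paper establishes and in general requires aperiodicity-type hypotheses that are absent here; vanishing Bellman error alone does not make a nonexpansive iteration convergent. Moreover, your premise that Fej\'er-type monotonicity is ``unavailable'' outside Hilbert space is what leads you astray: the paper's proof is precisely a Fej\'er argument in $\|\cdot\|_\infty$. Concretely, it introduces the auxiliary iteration $V_g^k=\lambda_k V_g^{k-1}+(1-\lambda_k)(TV_g^{k-1}-g^\star)$ with $V_g^0=h^0$; since $g^\star$ is a constant vector in the weakly communicating case, $T(\cdot)-g^\star$ is a nonexpansive operator that genuinely has fixed points (the solutions $h^\star$), so $\infn{V_g^k-h}\le\infn{V_g^{k-1}-h}$ for every such fixed point $h$ by the triangle inequality and nonexpansiveness --- no Hilbert structure needed. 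Boundedness yields a convergent subsequence, the vanishing Bellman error shows its limit $V_g$ is itself a fixed point, and monotonicity of $\infn{V_g^k-V_g}$ then upgrades subsequential to full convergence. Finally $V_g^k=h^k+c_k\mathbf{1}$ with the explicit recursion $c_k=\lambda_k c_{k-1}+(1-\lambda_k)(f(V_g^{k-1})-\hat g)$, and continuity of $f$ plus $\limsup\lambda_k<1$ gives $c_k\to f(V_g)-\hat g$, whence $h^k$ converges and $f(h^k)\mathbf{1}\to g^\star$. Without this auxiliary bounded iteration and the Fej\'er step, your sketch does not close.
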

\begin{theorem}\label{thm::Anc-RVI_optimized}
Consider a weakly communicating MDP. Let $(g^{\star}, h^{\star})$ be a solution of modified Bellman equations. For $k\ge 1$, the Bellman and policy errors of \ref{eq:Anc-RVI} with $\lambda_k = \frac{2}{k+2}$ exhibits the rate 
    \begin{align*}
      \infn{g^{\star}-g^{\pi_k}} \le  \infn{Th^k-h^k-g^{\star}} \le  \frac{8}{k+1}\infn{h^0-h^{\star}}. 
    \end{align*}
    Furthermore, if MDP is unichain, $h^k\rightarrow h^\infty$ and $f(h^k)\mathbf{1}\rightarrow g^{\star}$ for some solution of modified Bellman equations $(g^{\star}, h^\infty)$.  
\end{theorem}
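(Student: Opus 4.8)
The plan is to transfer everything to \ref{eq:Anc-VI} via a constant-shift reduction, and then handle the point convergence separately using the unichain structure. The crucial elementary fact is that the Bellman optimality operator commutes with constant shifts, $T(V+c\mathbf{1})=TV+c\mathbf{1}$ for every $c\in\real$, while $f$ is assumed to satisfy $f(V+c\mathbf{1})=f(V)+c$; hence the operator $\tilde T V:=TV-f(V)\mathbf{1}$ is shift-invariant, $\tilde T(V+c\mathbf{1})=\tilde T V$. Running \ref{eq:Anc-VI} from $V^0=h^0$ with the same coefficient sequence, an easy induction (using the shift property at each step) gives $h^{k}=V^{k-1}-c_{k-1}\mathbf{1}$ for scalars $c_k$, where in fact $c_k=(1-\lambda_k)f(V^{k-1})$, although only the shift relation matters. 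Consequently $Th^k-h^k=T(V^{k-1}-c_{k-1}\mathbf{1})-(V^{k-1}-c_{k-1}\mathbf{1})=TV^{k-1}-V^{k-1}$, so the Bellman errors of \ref{eq:Anc-RVI} coincide exactly with those of \ref{eq:Anc-VI}, and the corresponding greedy policies agree.

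\textbf{The rate.} A weakly communicating MDP has $g^{\star}=c\mathbf{1}$, hence $\cP^{\pi}g^{\star}=g^{\star}$ for every policy $\pi$, so Corollary~\ref{cor::anc_bellman_com}---or, to absorb the index offset between the two iterations cleanly, its arbitrary-$\lambda_k$ generalization in Section~\ref{s::omitted-Anc-theorems}---applies to $\{V^{j}\}$ and yields a bound of the form $\infn{TV^{j}-V^{j}-g^{\star}}\le\tfrac{8}{j+2}\infn{h^0-h^{\star}}$. Substituting $j=k-1$ and using the identification above gives $\infn{Th^k-h^k-g^{\star}}\le\tfrac{8}{k+1}\infn{h^0-h^{\star}}$, and the policy-error bound $\infn{g^{\star}-g^{\pi_k}}\le\infn{Th^k-h^k-g^{\star}}$ follows from Fact~\ref{fact:Bellman-to-policy} in the weakly communicating case.

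\textbf{Point convergence under the unichain hypothesis.} For a unichain MDP the solution set of the modified Bellman equations is the line $\{h^{\star}+c\mathbf{1}:c\in\real\}$, so $\tilde T$ has a \emph{unique} fixed point $\bar h:=h^{\star}+(c^{\star}-f(h^{\star}))\mathbf{1}$, where $c^{\star}$ is the scalar with $g^{\star}=c^{\star}\mathbf{1}$; indeed $\tilde T h=h$ forces $Th-h=f(h)\mathbf{1}$, which by uniqueness of $g^{\star}$ in the modified Bellman equations must equal $g^{\star}$, pinning down $c$. I would then (i) show $\{h^k\}$ is bounded, the anchor term and the $f$-subtraction together cancelling the linear drift that makes \ref{eq:Anc-VI} diverge, so that $\infn{h^k-\bar h}$ can be tracked along the recursion; (ii) use $\infn{Th^k-h^k-g^{\star}}\to 0$ from the rate to deduce that every limit point of $\{h^k\}$ solves the modified Bellman equations, hence lies on the solution line; and (iii) upgrade to convergence of the whole sequence to $\bar h$, exploiting the Halpern-type retraction toward the anchor together with the uniqueness of $\bar h$. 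Then $f(h^k)\mathbf{1}\to f(\bar h)\mathbf{1}=c^{\star}\mathbf{1}=g^{\star}$ by continuity of $f$.

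\textbf{The main obstacle.} I expect the difficulty to be concentrated almost entirely in the convergence argument, and specifically in step (iii): $\tilde T$ is \emph{not} nonexpansive in $\infn{\cdot}$ in general, since the $f(\cdot)\mathbf{1}$ subtraction can double its Lipschitz constant, so the classical Halpern-iteration convergence theorems for nonexpansive maps do not apply directly. The argument must instead exploit the specific structure of $T$ and of the modified Bellman equations in the unichain setting, adapting the techniques of \citet{cominetti2014rate, contreras2022optimal}; this is also exactly where the unichain (rather than merely weakly communicating) hypothesis enters, since it is what forces $\tilde T$ to have a single fixed point toward which the anchored iteration can be steered. A secondary technical point is the bookkeeping of the index shift between \ref{eq:Anc-RVI} (coefficient $\lambda_{k-1}$ at step $k$) and \ref{eq:Anc-VI} (coefficient $\lambda_k$), and making the boundedness of $\{h^k\}$ quantitative rather than merely asymptotic.
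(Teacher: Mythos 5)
Your reduction of the Bellman/policy-error rate to \ref{eq:Anc-VI} via the shift-invariance $\tilde T(V+c\mathbf{1})=\tilde T V$ is exactly the paper's argument (the paper writes $V^k=h^k+c_k\mathbf{1}$ and invokes Corollary~\ref{cor::Bellman_weakly_anc}), and your observation about the one-step offset in the $\lambda$ indices is a legitimate bookkeeping point that the paper itself glosses over. That half of the proposal is fine.

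The gap is in the point-convergence argument, where steps (i) and (iii) are announced rather than proved, and your diagnosis of where the difficulty lies is off. You worry that $\tilde T$ is not nonexpansive and that one must therefore adapt Halpern-type convergence machinery to steer the iterates toward the unique fixed point $\bar h$. Neither is actually needed. For boundedness (i), the paper's device is an auxiliary sequence $V_g^k=\lambda_k h^0+(1-\lambda_k)(TV_g^{k-1}-g^{\star})$, i.e., the Halpern iteration for the genuinely nonexpansive operator $T(\cdot)-g^{\star}$, which has $h^{\star}$ as a fixed point; a one-line induction gives $\infn{V_g^k-h^{\star}}\le\infn{h^0-h^{\star}}$, and since $V_g^k=h^k+c_k\mathbf{1}$ with $c_k=(1-\lambda_k)(f(V_g^{k-1})-\hat g)$ bounded by continuity of $f$ on a bounded set, $\{h^k\}$ is bounded. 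For (iii), no retraction argument is required: once you know every limit point of $\{h^{k-1}\}$ lies on the solution line $\{h^{\star}+c\mathbf{1}\}$ (your step (ii), which follows from the Bellman-error rate plus Fact~\ref{fact::unique_h}), you apply the recursion once more. Along any convergent subsequence of $\{h^{k-1}\}$ with limit $h^{\star}+c\mathbf{1}$, the next iterate tends to $\tilde T(h^{\star}+c\mathbf{1})=Th^{\star}-f(h^{\star})\mathbf{1}$, which is \emph{independent of $c$} precisely because $\tilde T$ is constant on each coset of $\mathbf{1}$. Hence all limit points of the bounded sequence $\{h^k\}$ coincide with the single vector $Th^{\star}-f(h^{\star})\mathbf{1}=h^{\star}+(c^{\star}-f(h^{\star}))\mathbf{1}$, which is your $\bar h$, and the whole sequence converges. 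The unichain hypothesis enters only through Fact~\ref{fact::unique_h} (the solution set is a single line), not through any fixed-point iteration theory for $\tilde T$. Without these two concrete steps your proposal does not yet constitute a proof of the convergence claim.
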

Since \ref{eq:Rx-RVI} and \ref{eq:Anc-RVI} generate same policy as \ref{eq:Rx-VI} and \ref{eq:Anc-VI}, respectively, the rates of Bellman errors of \ref{eq:Rx-RVI} and \ref{eq:Anc-RVI} in Theorem \ref{thm::Rx-RVI_optimized} and \ref{thm::Anc-RVI_optimized} are immediately implied by the rates of \ref{eq:Rx-VI} and \ref{eq:Anc-VI} in Corollary~\ref{cor::KM_bellman_com} and \ref{cor::anc_bellman_com}, respectively. Therefore, the main substance of Corollary~\ref{cor::KM_bellman_com} and \ref{cor::anc_bellman_com} are the convergence results $(h^k,f(h^k)\mathbf{1})\rightarrow(h^\infty,g^{\star})$. Section~\ref{s::omitted-RVI-theorems} of Appendix presents more general results establishing convergence rates for arbitrary $\lambda_k$ in terms of the Bellman error and convergence of iterates. Lastly, we briefly note that for weakly communicating MDP, non-asymptotic rate on Bellman error can be obtained from results in \citet{bravostochastic, bravo2024stochastic} by leveraging their convergence analysis with uniform constant $g^{\star}$ in unichain MDP.

\section{Conclusion}
In this work, we present the first non-asymptotic convergence rates for multichain MDPs in terms of the Bellman error. We also provide complexity lower bounds matching the upper bound of \ref{eq:Anc-VI} in terms of the Bellman error up to a constant factor of $8$ for weakly communicating and unichain MDPs. Finally, we also showed that standard VI is exactly optimal in terms of the normalized iterates for multichain MDPs. 
Our results and proof techniques open the door to future work on non-asymptotic, sublinear, and optimal rates for average-reward MDPs.

One future direction is to fully characterize the optimal non-asymptotic complexity on Bellman error for multichain MDPs, as our current upper bound of Theorem~\ref{thm:Anc_optimized}, with its dependence on $K$, does not exactly match the lower bound of Theorem~\ref{thm::comp_lower_bd_multi}. We aim to achieve this goal by enhancing our lower bound through the consideration of more delicate worst-case multichain MDPs.

Finally, we highlight an observation implied by our results: the ``correct'' rates for (undiscounted) average-reward MDPs are sublinear, i.e., something like $\mathcal{O}(1/k)$. This contrasts with the classical $\gamma$-discounted cumulative-reward MDP setup, where we are accustomed to $\mathcal{O}(\gamma^k)$-rates. We expect future work analyzing other average-reward MDP setups and algorithms to similarly discover optimal sublinear rates. 

\section{Acknowledgments}
This work is supported by the National Research Foundation of Korea (NRF) grant funded by the Korea government (No.RS-2024-00421203). We thank Taeho Yoon for providing valuable feedback.

\bibliography{iclr2025}

@article{goyal2022first,
  title={A first-order approach to accelerated value iteration},
  author={Goyal, Vineet and Grand-Cl\'ement, Julien},
  journal={Operations Research},
 volume= {71},
number={2},
pages= {517--535},
  year={2022}
}

@article{wan2021learning,
  title={Learning and planning in average-reward {M}arkov decision processes},
  author={Wan, Yi and Naik, Abhishek and Sutton, Richard S},
  journal={International Conference on Machine Learning},
  year={2021}
}

@article{wei2020model,
  title={Model-free reinforcement learning in infinite-horizon average-reward {M}arkov decision processes},
  author={Wei, Chen-Yu and Jahromi, Mehdi Jafarnia and Luo, Haipeng and Sharma, Hiteshi and Jain, Rahul},
  journal={International conference on machine learning},
  year={2020}
}

@book{howard1960dynamic,
  title={Dynamic Programming and {M}arkov Processes.},
  author={Howard, Ronald A},
  year={1960},
  publisher={John Wiley and Sons}
}

@article{park2022exact,
  title={Exact optimal accelerated complexity for fixed-point iterations},
  author={Park, Jisun and Ryu, Ernest K},
  journal={International Conference on Machine Learning},
  year={2022}
}

@article{contreras2022optimal,
  title={Optimal error bounds for non-expansive fixed-point iterations in normed spaces},
  author={Contreras, Juan Pablo and Cominetti, Roberto},
  journal={Mathematical Programming},
  volume= {199},
number={1--2},
pages= {343--374},
  year={2022}
}

@book{10.5555/528623,
author = {Puterman, Martin L.},
title = {{M}arkov Decision Processes: Discrete Stochastic Dynamic Programming},
year={2014},
edition={2nd},
publisher = {John Wiley and Sons},
}

@book{sutton2018reinforcement,
  title={Reinforcement Learning: An introduction},
  author={Sutton, Richard S and Barto, Andrew G},
  year={2018},
  publisher={MIT press},
    edition = {2nd}
}

@article{banach1922operations,
  title={Sur les op{\'e}rations dans les ensembles abstraits et leur application aux {\'e}quations int{\'e}grales},
  author={Banach, Stefan},
  journal={Fundamenta Mathematicae},
  volume={3},
  number={1},
  pages={133--181},
  year={1922},
  publisher={Instytut Matematyczny Polskiej Akademii Nauk}
}

@article{mann1953mean,
  title={Mean value methods in iteration},
  author={Mann, W Robert},
  journal={Proceedings of the American Mathematical Society},
  volume={4},
  number={3},
  pages={506--510},
  year={1953}
}

@article{krasnosel1955two,
  title={Two remarks on the method of successive approximations},
  author={Krasnosel'ski{\u\i}, Mark A},
  journal={Uspekhi Matematicheskikh Nauk},
  volume={10},
  number={1},
  pages={123--127},
  year={1955}
}

@article{Martinet1970ppm,
     author = {Martinet, B.},
     title = {R\'egularisation d'in\'equations variationnelles par approximations successives},
     journal = {Revue Fran{\c{c}}aise de Informatique et Recherche Op{\'e}rationnelle},
     pages = {154-158},
     address = {Paris},
     volume = {4},
     number = {R3},
     year = {1970}
}

@article{halpern1967fixed,
  title={Fixed points of nonexpanding maps},
  author={Halpern, Benjamin},
  journal={Bulletin of the American Mathematical Society},
  volume={73},
  number={6},
  pages={957--961},
  year={1967},
  publisher={American Mathematical Society}
}

@article{wittmann1992approximation,
  title={Approximation of fixed points of nonexpansive mappings},
  author={Wittmann, Rainer},
  journal={Archiv der Mathematik},
  volume={58},
  number={5},
  pages={486--491},
  year={1992},
  publisher={Springer}
}

@article{cominetti2014rate,
  title={On the rate of convergence of {K}rasnosel'ski{\u\i}-{M}ann iterations and their connection with sums of {B}ernoullis},
  author={Cominetti, Roberto and Soto, Jos{\'e} A and Vaisman, Jos{\'e}},
  journal={Israel Journal of Mathematics},
  volume={199},
  number={2},
  pages={757--772},
  year={2014}
}

@article{bravo2018sharp,
  title={Sharp convergence rates for averaged nonexpansive maps},
  author={Bravo, Mario and Cominetti, Roberto},
  journal={Israel Journal of Mathematics},
  volume={227},
  pages={163--188},
  year={2018}
}

@article{baillon1992optimal,
  title={Optimal rates of asymptotic regularity for averaged nonexpansive mappings},
  author={Baillon, Jean-Bernard and Bruck, Ronald E},
  journal={Fixed Point Theory and Applications},
  volume={128},
  pages={27--66},
  year={1992}
}

@article{matsushita2017convergence,
  title={On the convergence rate of the {K}rasnosel'ski{\u\i}--{M}ann iteration},
  author={Matsushita, Shin-Ya},
  journal={Bulletin of the Australian Mathematical Society},
  volume={96},
  number={1},
  pages={162--170},
  year={2017}
}

@article{leustean2007rates,
  title={Rates of asymptotic regularity for {H}alpern iterations of nonexpansive mappings.},
  author={Leustean, Laurentiu},
  journal={Journal of Universal Computer Science},
  volume={13},
  number={11},
  pages={1680--1691},
  year={2007}
}

@article{schweitzer1979geometric,
  title={Geometric convergence of value-iteration in multichain {M}arkov decision problems},
  author={Schweitzer, Paul J and Federgruen, Awi},
  journal={Advances in Applied Probability},
  volume={11},
  number={1},
  pages={188--217},
  year={1979},
  publisher={Cambridge University Press}
}

@article{bravo2024stochastic,
  title={Stochastic fixed-point iterations for nonexpansive maps: Convergence and error bounds},
  author={Bravo, Mario and Cominetti, Roberto},
  journal={SIAM Journal on Control and Optimization},
  volume={62},
  number={1},
  pages={191--219},
  year={2024},
  publisher={SIAM}
}

@article{schweitzer1978functional,
  title={The functional equations of undiscounted {M}arkov renewal programming},
  author={Schweitzer, Paul J and Federgruen, Awi},
  journal={Mathematics of Operations Research},
  volume={3},
  number={4},
  pages={308--321},
  year={1978},
  publisher={INFORMS}
}

@article{white1963dynamic,
	author = {White, D. J.},
	journal = {J. Math. Anal. Appl},
	number = {3},
	pages = {373--376},
	title = {Dynamic programming, {M}arkov chains, and the method of successive approximations},
	volume = {6},
	year = {1963}}

@article{sabach2017first,
  title={A first order method for solving convex bilevel optimization problems},
  author={Sabach, Shoham and Shtern, Shimrit},
  journal={SIAM Journal on Optimization},
  volume={27},
  number={2},
  pages={640--660},
  year={2017}
}

@article{
    Lieder2021halpern,
    title={On the convergence rate of the {H}alpern-iteration},
    author={Felix Lieder},
    journal={Optimization Letters},
    volume={15},
    number={2},
    pages={405--418},
    year={2021}
}

@article{reich1980strong,
  title={Strong convergence theorems for resolvents of accretive operators in {B}anach spaces},
  author={Reich, Simeon},
  journal={Journal of Mathematical Analysis and Applications},
  volume={75},
  number={1},
  pages={287--292},
  year={1980}
}

@article{xu2002iterative,
  title={Iterative algorithms for nonlinear operators},
  author={Xu, Hong-Kun},
  journal={Journal of the London Mathematical Society},
  volume={66},
  number={1},
  pages={240--256},
  year={2002}
}

@article{kim2021accelerated,
  title={Accelerated proximal point method for maximally monotone operators},
  author={Kim, Donghwan},
  journal={Mathematical Programming},
  volume={190},
  number={1--2},
  pages={57--87},
  year={2021}
}

@article{bravostochastic,
  title={Stochastic Halpern iteration in normed spaces and applications to reinforcement learning},
  author={Bravo, Mario and Contreras, Juan Pablo},
  journal={arXiv preprint arXiv:2403.12338},
  year={2024}
}

@article{zurek2023span,
  title={Span-Based Optimal Sample Complexity for Average Reward {MDP}s},
  author={Zurek, Matthew and Chen, Yudong},
  journal={arXiv preprint arXiv:2311.13469},
  year={2023}
}

@article{bertsekas1998new,
  title={A new value iteration method for the average cost dynamic programming problem},
  author={Bertsekas, Dimitri P},
  journal={SIAM journal on control and optimization},
  volume={36},
  number={2},
  pages={742--759},
  year={1998},
  publisher={SIAM}
}

@article{morton1977discounting,
  title={Discounting, ergodicity and convergence for {M}arkov decision processes},
  author={Morton, Thomas E and Wecker, William E},
  journal={Management Science},
  volume={23},
  number={8},
  pages={890--900},
  year={1977},
  publisher={INFORMS}
}

@article{schweitzer1977asymptotic,
  title={The asymptotic behavior of undiscounted value iteration in {M}arkov decision problems},
  author={Schweitzer, Paul J and Federgruen, Awi},
  journal={Mathematics of Operations Research},
  volume={2},
  number={4},
  pages={360--381},
  year={1977},
  publisher={INFORMS}
}

@article{van1981stochastic,
  title={Stochastic Dynamic Programming: successive approximations and nearly optimal strategies for {M}arkov decision processes and {M}arkov games},
  author={Van Der Wal, Johannes},
edition ={2nd},
publisher={The Mathematical Centre, Amsterdam},
  year={1981}
}

@article{taylor2021optimal,
  title={An optimal gradient method for smooth strongly convex minimization},
  author={Taylor, Adrien and Drori, Yoel},
  journal={Mathematical Programming},
  volume={199},
  number={1-2},
  pages={557--594},
  year={2023},
  publisher={Springer}
}

@article{salim2021optimal,
  title={An optimal algorithm for strongly convex minimization under affine constraints},
  author={Salim, Adil and Condat, Laurent and Kovalev, Dmitry and Richt{\'a}rik, Peter},
  journal={International Conference on Artificial Intelligence and
Statistics},
  year={2022}
}

@article{yoon2021accelerated,
  title={Accelerated algorithms for smooth convex-concave minimax problems with $\mathcal{O}(1/k^2)$ rate on squared gradient norm},
  author={Yoon, TaeHo and Ryu, Ernest K},
  journal={International Conference on Machine Learning},
  year={2021}
}

@book{bauschke2017correction,
  title={Convex Analysis and Monotone Operator Theory in Hilbert Spaces},
  author={Bauschke, Heinz H and Combettes, Patrick L},
  year={2017},
  publisher={Springer},  
    edition={2th}
}

@article{Sutton1988,
	author = {Sutton, R. S.},
	journal = {Machine Learning},
	title = {Learning to predict by the methods of temporal differences},
	year = {1988}}

@phdthesis{Watkins1989,
	author = {C. J. C. H. Watkins},
	title = {Learning from Delayed Rewards},
	year = {1989}}

@article{doi:10.1137/20M1367192,
author = {Akian, Marianne and Gaubert, St\'{e}phane and Qu, Zheng and Saadi, Omar},
title = {Multiply accelerated value iteration for non-symmetric affine fixed point problems and application to {M}arkov decision processes},
journal = {SIAM Journal on Matrix Analysis and Applications},
volume = {43},
number = {1},
pages = {199-232},
year = {2022}
}

@article{Jaksch2010,
  title={Near-optimal Regret Bounds for Reinforcement Learning},
  author={Jaksch, Thomas and Ortner, Ronald and Auer, Peter},
  journal={Journal of Machine Learning Research},
  volume={11},
  number={51},
  pages={1563-1600},
  year={2010}
}

@article{lee2024accelerating,
  title={Accelerating value iteration with anchoring},
  author={Lee, Jongmin and Ryu, Ernest},
  journal={Neural Information Processing Systems},
  year={2023}
}

@article{bellman1957markovian,
  title={A {M}arkovian decision process},
  author={Bellman, Richard},
  journal={Journal of Mathematics and Mechanics},
  volume={6},
  number={5},
  pages={679--684},
  year={1957}
}

@article{MnihKavukcuogluSilveretal2015,
	author = {Mnih, V. and Kavukcuoglu, K. and Silver, D. and Rusu, A. A. and et al.},
	journal = {Nature},
	number = {7540},
	pages = {529--533},
	title = {Human-level control through deep reinforcement learning},
	volume = {518},
	year = {2015}}

@book{SuttonBarto2018,
	author ={Sutton, R. S. and Barto, A. G.},
	edition = {Second},
	publisher = {{The MIT Press}},
	title = {Reinforcement Learning: An Introduction},
	year = {2018}}

@book{SzepesvariBook10,
	author = {Szepesv\'ari, {\relax C}.},
	publisher = {Morgan Claypool Publishers},
	title = {Algorithms for Reinforcement Learning},
	year = {2010}}

@book{Bertsekas96,
	author = {Bertsekas, D. P. and Tsitsiklis, J. N.},
	publisher = {{Athena Scientific}},
	title = {Neuro-Dynamic Programming},
	year = {1996}}

@article{Munos08JMLR,
	author = {Munos, R. and Szepesv{\'a}ri, C.},
	journal = {Journal of Machine Learning Research },
	title = {Finite-Time Bounds for Fitted Value Iteration},
	year = {2008}}

@article{Ernst05,
	author = {Ernst, D. and Geurts, P. and Wehenkel, L.},
	journal = {Journal of Machine Learning Research },
	
	title = {Tree-based batch mode reinforcement learning},
	year = {2005}}

@article{rosenberg2021oracle,
  title={Oracle-efficient regret minimization in factored mdps with unknown structure},
  author={Rosenberg, Aviv and Mansour, Yishay},
  journal={ Neural Information Processing Systems},
  year={2021}
}

@article{bourel2023exploration,
  title={Exploration in reward machines with low regret},
  author={Bourel, Hippolyte and Jonsson, Anders and Maillard, Odalric-Ambrym and Talebi, Mohammad Sadegh},
  journal={International Conference on Artificial Intelligence and Statistics},
  year={2023}
}

@article{fruit2017regret,
  title={Regret minimization in mdps with options without prior knowledge},
  author={Fruit, Ronan and Pirotta, Matteo and Lazaric, Alessandro and Brunskill, Emma},
  journal={Neural Information Processing Systems},
  year={2017}
}

@article{kumarefficient,
  title={Efficient Value Iteration for s-rectangular Robust Markov Decision Processes},
  author={Kumar, Navdeep and Wang, Kaixin and Levy, Kfir Yehuda and Mannor, Shie},
  journal={ International Conference on Machine Learning},
  year={2024}
}

@article{burnetas1997optimal,
  title={Optimal adaptive policies for Markov decision processes},
  author={Burnetas, Apostolos N and Katehakis, Michael N},
  journal={Mathematics of Operations Research},
  volume={22},
  number={1},
  pages={222--255},
  year={1997}
}

@article{nemirovski1992information,
  title={Information-based complexity of linear operator equations},
  author={Nemirovski, Arkadi Semenovi{\v{c}}},
  journal={Journal of Complexity},
  volume={8},
  number={2},
  pages={153--175},
  year={1992}
}

@article{drori2017exact,
  title={The exact information-based complexity of smooth convex minimization},
  author={Drori, Yoel},
  journal={Journal of Complexity},
  volume={39},
  pages={1--16},
  year={2017}
}

@article{drori2020stoc-complexity,
  title={The complexity of finding stationary points with stochastic gradient descent},
  author={Drori, Yoel and Shamir, Ohad},
  journal={International Conference on Machine Learning},
  year={2020}
}

@article{drori2022oracle,
  title={On the oracle complexity of smooth strongly convex minimization},
  author={Drori, Yoel and Taylor, Adrien},
  journal={Journal of Complexity},
  volume={68},
  pages={},
  year={2022}
}

@article{carmon2020stationary1,
  title={Lower bounds for finding stationary points {I}},
  author={Carmon, Yair and Duchi, John C and Hinder, Oliver and Sidford, Aaron},
  journal={Mathematical Programming},
  volume={184},
  number={1--2},
  pages={71--120},
  year={2020}
}

@article{
    carmon2021stationary2,
    title={Lower bounds for finding stationary points {II}: first-order methods},
    author={Yair Carmon and John C. Duchi and Oliver Hinder and Aaron Sidford},
    journal={Mathematical Programming},
    volume={185},
   number={1--2},
    pages={315--355},
    year={2021}
}

@article{drori2016optimal-kelley,
  title={An optimal variant of {K}elley's cutting-plane method},
  author={Drori, Yoel and Teboulle, Marc},
  journal={Mathematical Programming},
  volume={160},
  number={1--2},
  pages={321--351},
  year={2016}
}

@article{colao2021rate,
  author = {Colao, Vittorio and Marino, Giuseppe},
  year = {2021},
  title = {On the rate of convergence of {H}alpern iterations},
  journal={Journal of Nonlinear and Convex Analysis},
  volume={22},
  number={12},
  pages={2639--2646},
}

@book{bertsekas2015dynamic,
  title={Dynamic Programming and Optimal Control, volume II},
  author={Bertsekas, Dimitri P},
  journal={Athena Scientific},
  year={2012},
  edition={4th}
}

@book{bertsekas1995neuro,
  title={Neuro-Dynamic Programming},
  author={Bertsekas, Dimitri P and Tsitsiklis, John N},
publisher={Athena Scientific}, 
year={1995}
}

@book{nesterov2003Introductory,
	title        = {Lectures on Convex Optimization},
	author       = {Nesterov, Yurii},
	year         = {2018},
	publisher    = {Springer},
	edition      = {2nd}
}

@article{federgruen1978contraction,
  title={Contraction mappings underlying undiscounted {M}arkov decision problems},
  author={Federgruen, Awi and Schweitzer, Paul J and Tijms, Hendrik Cornelis},
  journal={Journal of Mathematical Analysis and Applications},
  volume={65},
  number={3},
  pages={711--730},
  year={1978},
  publisher={Elsevier}
}

@article{hubner1977improved,
  title={Improved procedures for eliminating suboptimal actions in Markov programming by the use of contraction properties},
  author={H{\"u}bner, Gerhard},
  journal={Transactions of the Seventh Prague Conference on Information Theory, Statistical Decision Functions, Random Processes },
  year={1977}
}

@article{della2012illustrated,
  title={Illustrated review of convergence conditions of the value iteration algorithm and the rolling horizon procedure for average-cost {MDP}s},
  author={Della Vecchia, Eugenio and Di Marco, Silvia and Jean-Marie, Alain},
  journal={Annals of Operations Research},
  volume={199},
  pages={193--214},
  year={2012},
  publisher={Springer}
}

@book{seneta2006non,
  title={Non-Negative Matrices and {M}arkov Chains},
  author       = {Seneta, E},
  year={2006},
  publisher={Springer Science \& Business Media},
 edition      = {3th}
}

@article{hannah2018,
title = {Breaking the span assumption yields fast finite-sum minimization},
journal = {Neural Information Processing Systems},
year = {2018},
author = {Robert Hannah and Yanli Liu and Daniel O'Connor and Wotao Yin},
}

@article{pazy1971asymptotic,
  title={Asymptotic behavior of contractions in Hilbert space},
  author={Pazy, A},
  journal={Israel Journal of Mathematics},
  volume={9},
  pages={235--240},
  year={1971},
  publisher={Springer}
}

@article{schweitzer1984existence,
  title={On the existence of relative values for undiscounted {M}arkovian decision processes with a scalar gain rate},
  author={Schweitzer, Paul J},
  journal={Journal of mathematical analysis and applications},
  volume={104},
  number={1},
  pages={67--78},
  year={1984},
  publisher={Elsevier}
}

@article{denardo1968multichain,
  title={Multichain {M}arkov renewal programs},
  author={Denardo, Eric V and Fox, Bennett L},
  journal={SIAM Journal on Applied Mathematics},
  volume={16},
  number={3},
  pages={468--487},
  year={1968},
  publisher={SIAM}
}

@article{yushkevich1974class,
  title={On a class of strategies in general {M}arkov decision models},
  author={Yushkevich, AA},
  journal={Theory of Probability \& Its Applications},
  volume={18},
  number={4},
  pages={777--779},
  year={1974},
  publisher={SIAM}
}

@article{blackwell1962discrete,
  title={Discrete dynamic programming},
  author={Blackwell, David},
  journal={The Annals of Mathematical Statistics},
  pages={719--726},
  year={1962},
    volume={33}
}

@article{dewanto2020average,
  title={Average-reward model-free reinforcement learning: a systematic review and literature mapping},
  author={Dewanto, Vektor and Dunn, George and Eshragh, Ali and Gallagher, Marcus and Roosta, Fred},
  journal={arXiv preprint arXiv:2010.08920},
  year={2020}
}

@article{kakade2001natural,
  title={A natural policy gradient},
  author={Kakade, Sham M},
  journal={Neural Information Processing Systems},
  year={2001}
}

@article{zhang2021finite,
  title={Finite Sample Analysis of Average-Reward TD Learning and $ Q $-Learning},
  author={Zhang, Sheng and Zhang, Zhe and Maguluri, Siva Theja},
  journal={Neural Information Processing Systems},
  year={2021}
}

@article{zhang2019regret,
  title={Regret minimization for reinforcement learning by evaluating the optimal bias function},
  author={Zhang, Zihan and Ji, Xiangyang},
  journal={Neural Information Processing Systems},
  year={2019}
}

@article{porteus1978accelerated,
  title={Accelerated computation of the expected discounted return in a {M}arkov chain},
  author={Porteus, Evan L and Totten, John C},
  journal={Operations Research},
  volume={26},
  number={2},
  pages={350--358},
  year={1978},
  publisher={INFORMS}
}

@article{kushner1971accelerated,
  title={Accelerated procedures for the solution of discrete {M}arkov control problems},
  author={Kushner, Harold and Kleinman, A},
  journal={IEEE Transactions on Automatic Control},
  volume={16},
  number={2},
  pages={147--152},
  year={1971},
  publisher={IEEE}
}

@article{reich1987asymptotic,
  title={The asymptotic behavior of firmly nonexpansive mappings},
  author={Reich, Simeon and Shafrir, Itai},
  journal={Proceedings of the American Mathematical Society},
  volume={101},
  number={2},
  pages={246--250},
  year={1987}
}

@article{baillon1978asymptotic,
  title={On the asymptotic behavior of nonexpansive mappings and semigroups in Banach spaces},
  author={Baillon, Jean-Bernard},
  journal={Houston J. Math.},
  volume={4},
  number={1},
  pages={1--9},
  year={1978}
}

@article{reich1973asymptotic,
  title={Asymptotic behavior of contractions in Banach spaces},
  author={Reich, Simeon},
  journal={Journal of Mathematical Analysis and Applications},
  volume={44},
  number={1},
  pages={57--70},
  year={1973},
  publisher={Academic Press}
}

@article{Browder1966TheSB,
  title={The solution by iteration of nonlinear functional equations in Banach spaces},
  author={Felix Earl Browder and Walter Petryshyn},
  journal={Bulletin of the American Mathematical Society},
  year={1966},
  volume={72},
  pages={571-575},
}

@article{liu2019new,
  title={A new use of {D}ouglas--{R}achford splitting for identifying infeasible, unbounded, and pathological conic programs},
  author={Liu, Yanli and Ryu, Ernest K and Yin, Wotao},
  journal={Mathematical Programming},
  volume={177},
  number={1},
  pages={225--253},
  year={2019},
  publisher={Springer}
}

@article{bauschke2014generalized,
  title={Generalized solutions for the sum of two maximally monotone operators},
  author={Bauschke, Heinz H and Hare, Warren L and Moursi, Walaa M},
  journal={SIAM Journal on Control and Optimization},
  volume={52},
  number={2},
  pages={1034--1047},
  year={2014},
  publisher={SIAM}
}

@article{applegate2024infeasibility,
  title={Infeasibility detection with primal-dual hybrid gradient for large-scale linear programming},
  author={Applegate, David and D{\'\i}az, Mateo and Lu, Haihao and Lubin, Miles},
  journal={SIAM Journal on Optimization},
  volume={34},
  number={1},
  pages={459--484},
  year={2024},
  publisher={SIAM}
}

@article{kim2016optimized,
  title={Optimized first-order methods for smooth convex minimization},
  author={Kim, Donghwan and Fessler, Jeffrey A},
  journal={Mathematical Programming},
  volume={159},
  number={1--2},
  pages={81--107},
  year={2016}
}

@article{park2023,
journal = {International Conference on Machine Learning},
title = {Accelerated infeasibility detection of constrained optimization and fixed-point iterations},
author = {Jisun Park and Ernest K. Ryu},
year = {2023},
}

@article{golowich2020last,
  title={Last iterate is slower than averaged iterate in smooth convex-concave saddle point problems},
  author={Golowich, Noah and Pattathil, Sarath and Daskalakis, Constantinos and Ozdaglar, Asuman},
  year={2020},
  journal={Conference on Learning Theory}
}
\bibliographystyle{iclr2025}

\newpage

\appendix
\section{Comparison with prior convergence results in terms of performance measures}\label{s::conv_measure}
In this section, we present asymptotic and non-asymptotic convergence results of prior works and our work in terms of Bellman error, policy error, span seminorm, and normalized iterates. We denote the correspondence between numbers and prior works as follows. [$1$: \citet{federgruen1978contraction}, $2$: \citet{van1981stochastic}, $3$: \citet{schweitzer1977asymptotic}, $4$: \citet{schweitzer1979geometric}, $5$: \citet{bertsekas1998new}, 6: \citet{10.5555/528623}, 7:  \citet{bravo2024stochastic}, 8: \citet{bravostochastic}, 9: \citet{bertsekas2015dynamic}]

\subsection{Bellman error}\label{table_Bellman_error} 

\begin{center}
\begin{tabular}{ |c|c|c|c|c|c|c|c|c| }
 \hline
 Prior works & \makecell{Non-asym \\ multi MDP}& \makecell{Asym  \\ multi MDP}&  \makecell{Non-asym  \\ w.c. MDP}& \makecell{Asym  \\ w.c. MDP} &
  \makecell{Non-asym  \\ uni MDP}& \makecell{Asym  \\ uni MDP}  \\
  \hline 
  $[1,2]$  & \xmark & \xmark & \xmark & \xmark & \cmark &  \cmark \\
   $[3,4]$& \xmark & \cmark & \xmark & \cmark & \cmark &  \cmark \\ 
    $[5]$& \xmark & \xmark & \xmark & \xmark & \cmark &  \cmark \\
  \hline
  \ref{eq:Rx-VI}  & \cmark & \cmark & \cmark & \cmark & \cmark &  \cmark \\
  \ref{eq:Anc-VI}  & \cmark & \cmark & \cmark & \cmark & \cmark&  \cmark \\
  \hline
\end{tabular}
\end{center}    

\subsection{Policy error}\label{table_near-optimal_policy} 

\begin{center}
\begin{tabular}{ |c|c|c|c|c|c|c|c|c| }
 \hline
 Prior works & \makecell{Non-asym \\ multi MDP}& \makecell{Asym  \\ multi MDP}&  \makecell{Non-asym  \\ w.c. MDP}& \makecell{Asym  \\ w.c. MDP} &
  \makecell{Non-asym  \\ uni MDP}& \makecell{Asym  \\ uni MDP}  \\
  \hline 
  $[1,2]$  & \xmark & \xmark & \xmark & \xmark & \cmark &  \cmark \\
   $[3,4]$  & \xmark & \cmark & \xmark & \cmark & \cmark &  \cmark \\ 
    $[5]$& \xmark & \xmark & \xmark & \xmark & \cmark &  \cmark \\
  \hline
  \ref{eq:Rx-VI}  & \cmark & \cmark & \cmark & \cmark & \cmark &  \cmark \\
  \ref{eq:Anc-VI}  & \cmark & \cmark & \cmark & \cmark & \cmark&  \cmark \\
  \hline
\end{tabular}
\end{center}    

\subsection{Span seminorm}

\begin{center}
\begin{tabular}{ |c|c|c|c|c|c|c|c|c| } 
 \hline
 Prior works & \makecell{Non-asym \\ multi MDP}& \makecell{Asym  \\ multi MDP}&  \makecell{Non-asym  \\ w.c. MDP}& \makecell{Asym  \\ w.c. MDP} &
  \makecell{Non-asym  \\ uni MDP}& \makecell{Asym  \\ uni MDP}  \\
  \hline 
  $[1,2]$  & N/A & N/A & \xmark & \xmark & \cmark &  \cmark \\
   $[3,4]$ & N/A & N/A & \xmark & \cmark & \cmark &  \cmark \\ 
    $[5]$& N/A & N/A & \xmark & \xmark & \cmark &  \cmark \\
  \hline
  \ref{eq:Rx-VI}  & N/A & N/A & \cmark & \cmark & \cmark &  \cmark \\
  \ref{eq:Anc-VI}  & N/A & N/A & \cmark & \cmark & \cmark &  \cmark \\
  \hline
\end{tabular}
\end{center}
\subsection{Normalized iterates}

\begin{center}
\begin{tabular}{ |c|c|c|c|c|c|c|c|c| } 
 \hline
 Prior works & \makecell{Non-asym \\ multi MDP}& \makecell{Asym  \\ multi MDP}&  \makecell{Non-asym  \\ w.c. MDP}& \makecell{Asym  \\ w.c. MDP} &
  \makecell{Non-asym  \\ uni MDP}& \makecell{Asym  \\ uni MDP}  \\
  \hline 
    $[6]$ & \cmark & \cmark & \cmark & \cmark & \cmark &  \cmark \\
  \hline
  \ref{eq:Rx-VI}  & \cmark & \cmark & \cmark & \cmark & \cmark &  \cmark \\
  \ref{eq:Anc-VI}  & \cmark & \cmark & \cmark & \cmark & \cmark &  \cmark \\
  \hline
\end{tabular}
\end{center}

\subsection{Bellman error (RVI)}\label{table_Bellman_error_RVI}
\begin{center}
\begin{tabular}{ |c|c|c|c|c|c|c|c|c| } 
 \hline
 Prior works & \makecell{Non-asym \\ multi MDP}& \makecell{Asym  \\ multi MDP}&  \makecell{Non-asym  \\ w.c. MDP}& \makecell{Asym  \\ w.c. MDP} &
  \makecell{Non-asym  \\ uni MDP}& \makecell{Asym  \\ uni MDP} \\
  \hline 
    $[7]$ & N/A &N/A & \xmark & \xmark & \cmark
    &  \cmark \\    $[8]$& N/A &N/A & \xmark & \xmark & \cmark &  \cmark  \\ 
   $[9]$& N/A &N/A & \xmark & \xmark & \cmark &  \cmark \\
  \hline
  \ref{eq:Rx-RVI}  & N/A &N/A & \cmark & \cmark & \cmark &  \cmark \\
  \ref{eq:Anc-RVI}  & N/A &N/A & \cmark & \cmark & \cmark &  \cmark \\
  \hline
\end{tabular}
\end{center}

\subsection{Policy error (RVI)}\label{table_near-optimal_policy_RVI}
\begin{center}
\begin{tabular}{ |c|c|c|c|c|c|c|c|c| } 
 \hline
  Prior works & \makecell{Non-asym \\ multi MDP}& \makecell{Asym  \\ multi MDP}&  \makecell{Non-asym  \\ w.c. MDP}& \makecell{Asym  \\ w.c. MDP} &
  \makecell{Non-asym  \\ uni MDP}& \makecell{Asym  \\ uni MDP} \\
  \hline 
    $[7]$ & N/A &N/A & \xmark & \xmark & \cmark
    &  \cmark \\    $[8]$& N/A &N/A & \xmark & \xmark & \cmark &  \cmark  \\ 
   $[9]$& N/A &N/A & \xmark & \xmark & \cmark &  \cmark \\
  \hline
  \ref{eq:Rx-RVI}  & N/A &N/A & \cmark & \cmark & \cmark &  \cmark \\
  \ref{eq:Anc-RVI}  & N/A &N/A & \cmark & \cmark & \cmark &  \cmark \\
  \hline
\end{tabular}
\end{center}

\subsection{Span seminorm (RVI)}
\begin{center}
\begin{tabular}{ |c|c|c|c|c|c|c|c|c| } 
 \hline
  Prior works & \makecell{Non-asym \\ multi MDP}& \makecell{Asym  \\ multi MDP}&  \makecell{Non-asym  \\ w.c. MDP}& \makecell{Asym  \\ w.c. MDP} &
  \makecell{Non-asym  \\ uni MDP}& \makecell{Asym  \\ uni MDP} \\
  \hline 
    $[7]$ & N/A &N/A & \xmark & \xmark & \cmark
    &  \cmark \\    $[8]$& N/A &N/A & \xmark & \xmark & \cmark &  \cmark  \\ 
   $[9]$& N/A &N/A & \xmark & \xmark & \cmark &  \cmark \\
  \hline
  \ref{eq:Rx-RVI}  & N/A &N/A & \cmark & \cmark & \cmark &  \cmark \\
  \ref{eq:Anc-RVI}  & N/A &N/A & \cmark & \cmark & \cmark &  \cmark \\
  \hline
\end{tabular}
\end{center}

\vspace{0.1in}

\section{Convergence rates of \ref{eq:Rx-VI} with arbitrary $\lambda_k$}\label{s::omitted-KM-theorems}
In this section, we present the convergence rates of \ref{eq:Rx-VI} for arbitrary $\lambda_k$ in terms of both the Bellman error and the normalized iterates.
\begin{theorem}\label{thm::KM_normalized}
Consider a general (multichain) MDP.
Let $(g^{\star},h^{\star})$ be a solution of the modified Bellman equations.
For $k>K$, the normalized iterates of \ref{eq:Rx-VI} with $\alpha_k = \sum^{k}_{i=1} (1-\lambda_{i})$ exhibits the rate
   \[\infn{\frac{V^k-V^0}{\sum^{k}_{i=1} (1-\lambda_{i})}-g^{\star}} \le \frac{2(1-\Pi^{k}_{i=1}\lambda_{i})}{\sum^{k}_{i=1} (1-\lambda_{i})}\infn{V^0-h^{\star}}.\]
\end{theorem}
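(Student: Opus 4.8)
The plan is to unroll the \ref{eq:Rx-VI} recursion to express $V^k - V^0$ as a weighted combination of the Bellman errors $\{TV^i - V^i\}_{i=0}^{k-1}$, and then compare this against $g^\star$ using the modified Bellman equations. First, I would establish the elementary identity that, for \ref{eq:Rx-VI}, one has $V^k - V^{k-1} = (1-\lambda_k)(TV^{k-1} - V^{k-1})$, so $V^k - V^0 = \sum_{i=1}^{k}(1-\lambda_i)(TV^{i-1} - V^{i-1})$. The normalization by $\alpha_k = \sum_{i=1}^k (1-\lambda_i)$ then makes $\frac{V^k - V^0}{\alpha_k}$ a convex combination of the Bellman residuals $TV^{i-1} - V^{i-1}$, so I want to bound $\infn{(TV^{i-1} - V^{i-1}) - g^\star}$ uniformly and track the telescoping more carefully to extract the $(1 - \Pi_{i=1}^k \lambda_i)$ factor.

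The key quantity to control is $W^k := V^k - V^0 - \alpha_k g^\star$; I would show a recursion for $\infn{W^k}$. Using the modified Bellman relation $\max_\pi\{r^\pi + \cP^\pi h^\star\} = h^\star + g^\star$ (and $\cP^\pi g^\star = g^\star$ issues handled via the $K$-threshold exactly as in Theorem~\ref{thm:KMm_optimized}'s proof), one gets that $TV^{i-1}$ stays close to an affine shift of $V^{i-1}$ by $g^\star$ plus a nonexpansive-in-$h^\star$ term. Concretely, writing $U^i = V^i - i g^\star$ (or the appropriate analogue adapted to multichain via the $K$ burn-in), the operator $V \mapsto TV - g^\star$ behaves like a nonexpansive map with fixed point $h^\star$ once $k > K$, so $\infn{U^i - h^\star} \le \infn{U^{i-1} - h^\star}$-type monotonicity holds, and hence $\infn{TV^{i-1} - V^{i-1} - g^\star} \le 2\infn{V^0 - h^\star}$ for the relevant range. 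Plugging into the convex-combination expression and summing gives $\infn{V^k - V^0 - \alpha_k g^\star} \le 2\alpha_k \infn{V^0 - h^\star}$, which is the claimed bound \emph{without} the $(1-\Pi\lambda_i)$ refinement; to get that factor I would instead telescope the recursion $W^k = \lambda_k W^{k-1} + (1-\lambda_k)(TV^{k-1} - V^0 - \alpha_{k-1}g^\star - g^\star)$ and carefully collect the contribution of the initial term $W^0 = 0$ against the residual terms, noting that the coefficient multiplying $\infn{V^0 - h^\star}$ accumulates as $\sum_{i=1}^k (1-\lambda_i)\Pi_{j>i}\lambda_j \cdot (\text{const})$, and that $\sum_{i=1}^k (1-\lambda_i)\prod_{j=i+1}^k \lambda_j = 1 - \prod_{i=1}^k \lambda_i$ by a standard telescoping identity.

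The main obstacle is the multichain subtlety encoded in the threshold $K$: when $\cP^\pi g^\star \ne g^\star$ for some policies, the clean "nonexpansive map with fixed point $h^\star$" picture breaks down, because the greedy policy selected by $T$ at early iterates need not fix $g^\star$, so $TV^{i-1} - V^{i-1} - g^\star$ is not immediately bounded by $2\infn{V^0 - h^\star}$. Handling this requires the argument (already developed for Theorem~\ref{thm:KMm_optimized}) that after $K$ steps the iterates are large enough in the relevant directions that every greedy policy must satisfy $\cP^\pi g^\star = g^\star$, restoring the contraction-like structure; I would quote or adapt that lemma rather than reprove it, and then the rest is the telescoping bookkeeping described above. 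The specialization to Corollary-type settings ($\cP^\pi g^\star = g^\star$ for all $\pi$, i.e.\ $K=0$) recovers the global $k\ge 1$ statement.
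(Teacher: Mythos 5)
Your starting identity $V^k-V^0=\sum_{i=1}^{k}(1-\lambda_i)(TV^{i-1}-V^{i-1})$ is correct, but the way you propose to control the sum does not deliver the theorem and introduces a dependency the paper's proof deliberately avoids. You reduce the problem to bounding each individual Bellman error $\infn{TV^{i-1}-V^{i-1}-g^\star}$, which in the multichain case genuinely needs the burn-in threshold $K$ (early greedy policies need not satisfy $\cP^{\pi}g^{\star}=g^{\star}$). But the paper proves this normalized-iterate bound with no use of $K$ anywhere in the argument, and --- more seriously --- the lemma establishing the threshold $K$ is itself proved \emph{using} Theorem~\ref{thm::KM_normalized}: one passes to the limit in the normalized recursion to conclude $g^{\star}=\cP^{\pi}g^{\star}$ for infinitely repeated policies. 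Quoting that lemma here is therefore circular. The paper instead sandwiches the cumulative quantity $V^k-\sum_{j=1}^{k}(1-\lambda_j)g^{\star}$ entrywise: from above by $h^{\star}+\Pi_{i=k-1}^{0}(\lambda_{i+1}I+(1-\lambda_{i+1})\cP^{\pi_i})(V^0-h^{\star})$, using only the one-sided relations $r^{\pi}\le g^{\star}+(I-\cP^{\pi})h^{\star}$ and $\cP^{\pi}g^{\star}\le g^{\star}$ valid for \emph{every} policy by the modified Bellman equations, and from below by the analogous expression with $\pi^{\star}$, for which both relations are equalities. No individual Bellman error is ever bounded, which is exactly why no burn-in is needed.

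The extraction of the $(1-\Pi_{i=1}^{k}\lambda_i)$ factor is also not correctly set up. The identity $\sum_{i=1}^{k}(1-\lambda_i)\prod_{j=i+1}^{k}\lambda_j=1-\prod_{i=1}^{k}\lambda_i$ is true, but those are the coefficients of the convex-combination expansion $V^k=\sum_{j=0}^{k}\bigl(\prod_{i=j+1}^{k}\lambda_i\bigr)(1-\lambda_j)\,TV^{j-1}$ (with $TV^{-1}=V^0$), not of the expansion $V^k-V^0=\sum_{i=1}^{k}(1-\lambda_i)(TV^{i-1}-V^{i-1})$ you work with, whose coefficients sum to $\alpha_k$. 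In the paper the factor arises from a different decomposition: after subtracting $V^0$ in the sandwich, one writes $h^{\star}-V^0=-(\Pi_{i}\lambda_i)(V^0-h^{\star})-(1-\Pi_{i}\lambda_i)(V^0-h^{\star})$ and observes that $\Pi_{i}(\lambda_{i+1}I+(1-\lambda_{i+1})\cP^{\pi_i})-(\Pi_{i}\lambda_i)I$ equals $(1-\Pi_{i}\lambda_i)$ times a stochastic matrix, so each side of the sandwich has the form $(1-\Pi_{i}\lambda_i)(S_1-S_2)(V^0-h^{\star})$ with $S_1,S_2$ stochastic, and Proposition~\ref{prop::inf_norm} yields the factor $2(1-\Pi_{i}\lambda_i)$. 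As written, your plan would at best produce the bound without that refinement, and only for $k>K$.
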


 
\begin{theorem}\label{thm::KM_bellman}
Consider a general (multichain) MDP. Let $(g^{\star},h^{\star})$ be a solution of the modified Bellman equations. Let $0<\lambda_j$ for $1 \le j$ and $\limsup \lambda_j <1  $. Then, there exist $ 0<K$ such that for $K < k$, the Bellman and policy errors of \ref{eq:Rx-VI} exhibit the rates
        \begin{align*}
        \infn{g^{\star}-g^{\pi_k}} \le \infn{TV^k-V^k-g^{\star}} &\le \frac{2\infn{V^0-h^{\star}}}{\sqrt{(3.141592\dots)\sum^{k}_{i=K+1} \lambda_i(1-\lambda_{i})}}
    \end{align*}
    Specifically, $K$ is the minimum iteration number satisfying if $K\le k$, $\pi_k$ generated by \ref{eq:Rx-VI} satisfies $\cP^{\pi_k}g^{\star}=g^{\star}$, first modified Bellman equation.
\end{theorem}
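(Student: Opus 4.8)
The plan is to treat \ref{eq:Rx-VI} as a Krasnosel'ski\u\i--Mann iteration for the nonexpansive (in $\|\cdot\|_\infty$) Bellman optimality operator $T$, which, however, has \emph{no} fixed point, and to transport the known $\mathcal O(1/\sqrt k)$ fixed-point-residual estimate of \citet{cominetti2014rate} into this inconsistent regime using the modified Bellman equations as the surrogate for the missing fixed point. Concretely, set $g=g^\star$, $h=h^\star$, and let $\pi^\star$ be a policy attaining the maximum in both modified Bellman equations, so that $r^{\pi^\star}+\cP^{\pi^\star}h^\star = h^\star + g^\star$ and $\cP^{\pi^\star}g^\star = g^\star$. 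The key is the affine shift $W^k := V^k - k g^\star$: one checks that $\|TV^k - V^k - g^\star\|_\infty = \|(TV^k - kg^\star) - W^k - g^\star\|_\infty$ and, crucially, that $T V^k - (k+1)g^\star$ behaves like applying a \emph{nonexpansive} map to $W^k$ that now \emph{does} fix $h^\star$ \emph{provided the greedy policy $\pi_k$ satisfies the first modified Bellman equation} $\cP^{\pi_k}g^\star = g^\star$. This is exactly the role of $K$: I would first prove that for $k>K$ the greedy policies $\pi_k$ do satisfy $\cP^{\pi_k}g^\star = g^\star$ (equivalently, $\pi_k$ lies outside the ``bad'' set whose $\|\cP^\pi g^\star - g^\star\|_\infty$ is bounded below by $\epsilon$), by using the normalized-iterate rate of Fact~\ref{fact::vi_normalized_iterate}/Theorem~\ref{thm::KM_normalized} to show $V^k/k \to g^\star$ fast enough that any greedy policy picked at iteration $k>K$ cannot be $\epsilon$-suboptimal for the ``first'' equation — this is where the explicit $K = (2\|r\|+4\|V^0\|+16\|V^0-h^\star\|+2\|g^\star\|)/\epsilon$ comes from, by tracking $\|V^k\|$ and $\|V^k - kg^\star\|$ bounds.

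Once $k>K$ guarantees $\cP^{\pi_k}g^\star = g^\star$, I would restart the clock at iteration $K$ and run the Baillon--Bruck / Cominetti--Soto--Vaisman argument on the shifted sequence: the Bellman error $e_k := \|TV^k - V^k - g^\star\|_\infty$ is nonincreasing (a one-line consequence of nonexpansiveness of $T$ together with $\cP^{\pi_k}g^\star = g^\star$, which lets the $g^\star$ term pass through), and one has the telescoping/recursive inequality built from the Mann update with $\lambda_k = 1/2$ that produces the combinatorial weight $\sum \lambda_i(1-\lambda_i)$; with $\lambda_i = 1/2$ this sum is $(k-K)/4$. Summing and using the monotonicity of $e_k$ gives $e_k \le \frac{C\,\|V^K - h^\star\|_\infty}{\sqrt{\pi(k-K)/4}}$ for the sharp constant $C$ from \citet{cominetti2014rate}; bounding $\|V^K - h^\star\|_\infty \le 2\|V^0 - h^\star\|_\infty$ (again via the shift and nonexpansiveness, tracked over the first $K$ steps) and collecting constants yields the stated $\tfrac{4\|V^0-h^\star\|_\infty}{\sqrt{\pi(k-K)}}$. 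The policy-error bound $\|g^\star - g^{\pi_k}\|_\infty \le \|TV^k - V^k - g^\star\|_\infty$ then follows for $k>K$ from Fact~\ref{fact:Bellman-to-policy}-type reasoning applied to $\pi_k$: since $\pi_k$ is greedy and (for $k>K$) satisfies the first modified Bellman equation, the span/perturbation estimate bounding $g^{\pi_k}$ in terms of the residual of the \emph{second} modified Bellman equation goes through even without weak communication.

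The main obstacle I anticipate is the $K>k$ bookkeeping: specifically, proving cleanly that for $k>K$ every greedy $\pi_k$ satisfies $\cP^{\pi_k}g^\star = g^\star$, and that this continues to hold for \emph{all} subsequent iterations so that the monotonicity of the Bellman error and the Cominetti--Soto--Vaisman recursion can be applied on the tail $\{V^k\}_{k\ge K}$ uninterrupted. This requires a quantitative argument that once $\|V^k - kg^\star\|_\infty$ is bounded (uniformly in $k$, which itself needs the normalized-iterate estimate), the quantity $\|\cP^{\pi_k}g^\star - g^\star\|_\infty$, which for greedy $\pi_k$ is controlled by how well $V^k$ aligns with the leading-order $k g^\star$ growth, drops below $\epsilon$ and hence must be $0$ by definition of $\epsilon$ as an infimum over the discrete bad set. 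Everything else — nonexpansiveness, the shift trick, the combinatorial weight computation, and the constant chase — is routine adaptation of \citet{cominetti2014rate} and \citet{contreras2022optimal}; the genuinely new ingredient is marrying the multichain structure (via $\epsilon$ and the first modified Bellman equation) to that machinery.
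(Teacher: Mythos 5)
Your proposal follows essentially the same route as the paper: use the modified Bellman equations (via $\pi^\star$ and $h^\star$) as a surrogate fixed point, show that after some finite $K$ every greedy policy satisfies the first modified Bellman equation $\cP^{\pi_k}g^\star=g^\star$ (the paper does this either softly, via finiteness of the policy set and convergence of the normalized iterates, or quantitatively via the $\epsilon$-gap as you describe), restart the analysis at iteration $K$, and transport the Cominetti--Soto--Vaisman coefficient bound $c_{k+1,k}/(1-\lambda_{k+1})\le 2/\sqrt{\pi\sum\lambda_i(1-\lambda_i)}$ into this inconsistent setting, finishing the policy-error claim exactly as you say from $\cP^{\pi_k}g^\star=g^\star$. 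The only caveat is that in $(\reals^n,\|\cdot\|_\infty)$ the $\mathcal O(1/\sqrt{k})$ rate does not follow from ``monotonicity of $e_k$ plus telescoping'' (there is no Hilbert-space energy identity); the actual work is an induction establishing two-sided bounds $V^{k_1}-V^{k_2}\lessgtr \sum(1-\lambda_i)g^\star + c_{k_1,k_2}(S_1-S_2)(V^0-h^\star)$ with $S_1,S_2$ stochastic for \emph{all} pairs $k_2<k_1$ (using the greedy-policy inequalities in both directions), which is the adaptation of the full CSV recursion that your sketch defers to but should be spelled out.
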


We defer the proofs to Appendix \ref{s::omitted-Rx-VI-proofs}. Note that Theorems \ref{thm::KM_normalized} and \ref{thm::KM_bellman} imply the convergence of normalized iterate and Bellman error to $g^{\star}$ respectively when $\sum^{\infty}_{i=1} (1-\lambda_{i}) = \infty$ and $\sum^{\infty}_{i=1} \lambda_i(1-\lambda_{i})=\infty$. 

Interestingly, for normalized iterate, Theorem \ref{thm:KMm_optimized} recovers rate of standard VI in Fact \ref{fact::vi_normalized_iterate}. 
\begin{corollary}
    Consider a general (multichain) MDP. Let $(g^{\star},h^{\star})$ be a solution of the modified Bellman equations. The normalized iterate of \ref{eq:Rx-VI} in Theorem \ref{thm::KM_normalized} is optimized when $\lambda_k=0$ with
  \[\infn{\frac{V^k-V^0}{k}-g^{\star}} \le \frac{2}{k}\infn{V^0-h^{\star}}.\]
\end{corollary}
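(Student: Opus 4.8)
The plan is to derive this as a direct specialization of Theorem~\ref{thm::KM_normalized}, which already gives the rate
\[\infn{\frac{V^k-V^0}{\sum^{k}_{i=1}(1-\lambda_i)}-g^{\star}} \le \frac{2(1-\prod^{k}_{i=1}\lambda_i)}{\sum^{k}_{i=1}(1-\lambda_i)}\infn{V^0-h^{\star}}.\]
First I would observe that the numerator on the right-hand side, $2(1-\prod^{k}_{i=1}\lambda_i)$, is at most $2$ uniformly over all choices $\lambda_i\in[0,1)$, with equality precisely when $\prod^{k}_{i=1}\lambda_i=0$. Meanwhile the denominator $\sum^{k}_{i=1}(1-\lambda_i)$ is maximized, term by term, when every $\lambda_i$ is as small as possible, i.e. $\lambda_i=0$, giving $\sum^{k}_{i=1}(1-\lambda_i)=k$. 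So the choice $\lambda_k\equiv 0$ simultaneously makes the numerator attain its largest-possible-but-still-bounded value $2$ and makes the denominator attain its maximum $k$; but more to the point, it is the choice that makes the \emph{bound} cleanest. I would then simply substitute $\lambda_i=0$ into the displayed inequality: $\prod^{k}_{i=1}\lambda_i = 0$ so the numerator is $2(1-0)=2$, and $\sum^{k}_{i=1}(1-0)=k$, yielding
\[\infn{\frac{V^k-V^0}{k}-g^{\star}} \le \frac{2}{k}\infn{V^0-h^{\star}},\]
which is exactly the claimed bound (and indeed recovers Fact~\ref{fact::vi_normalized_iterate}, since $\lambda_k=0$ reduces \ref{eq:Rx-VI} to standard VI $V^k=TV^{k-1}$).

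The substantive content of the word ``optimized'' deserves a sentence of justification: among all valid parameter sequences, $\lambda_k\equiv 0$ gives the smallest value of the right-hand bound $\frac{2(1-\prod_i\lambda_i)}{\sum_i(1-\lambda_i)}$ for every fixed $k$. To see this, write $s = \sum_{i=1}^k(1-\lambda_i)$ and note the ratio is $\frac{2(1-\prod_i\lambda_i)}{s}$; since $1-\prod_i\lambda_i \ge 0$ always, and since decreasing any $\lambda_i$ weakly increases $s$ and weakly decreases $1-\prod_i\lambda_i$ (for $\lambda_i \in [0,1)$, decreasing $\lambda_i$ decreases $\prod_j \lambda_j$, so $1 - \prod_j \lambda_j$ increases — wait, that goes the wrong way). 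The cleaner argument: $1 - \prod_{i}\lambda_i \le 1$, so the ratio is at most $2/s \le 2/k$, with the bound $2/k$ attained at $\lambda_k\equiv 0$; hence no choice beats $\lambda_k\equiv 0$ \emph{in terms of the proven upper bound}. I would phrase the corollary statement's ``optimized'' in exactly this sense, to avoid claiming optimality of the true convergence rate (which is a separate matter addressed by the lower bounds).

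The only mild obstacle is bookkeeping about the empty-product / degenerate case and the fact that Theorem~\ref{thm::KM_normalized} is stated ``for $k>K$'': with $\lambda_k\equiv 0$, the set $S\setminus\{\pi \mid \cP^\pi g^\star = g^\star\}$ business underlying $K$ should be checked to see that no hidden restriction prevents taking $\lambda_k=0$; but since Theorem~\ref{thm::KM_normalized} as stated imposes no positivity requirement on $\lambda_i$ for the normalized-iterate bound (unlike Theorem~\ref{thm::KM_bellman}, which needs $\lambda_j>0$), the substitution is unconditional and $K$ plays no role here. So the proof is genuinely one line of substitution plus one line of the ``at most $2/s$'' observation, and I would present it as such.

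\begin{proof}
Apply Theorem~\ref{thm::KM_normalized}. For any admissible sequence $\{\lambda_i\}$ with $\lambda_i\in[0,1)$, the proven bound is $\frac{2(1-\prod^{k}_{i=1}\lambda_i)}{\sum^{k}_{i=1}(1-\lambda_i)}\infn{V^0-h^{\star}}$. Since $0\le 1-\prod^{k}_{i=1}\lambda_i\le 1$ and $\sum^{k}_{i=1}(1-\lambda_i)\le k$, this bound is minimized by taking $\lambda_k=0$ for all $k$, which gives $\prod^{k}_{i=1}\lambda_i=0$ and $\sum^{k}_{i=1}(1-\lambda_i)=k$. Substituting yields
\[\infn{\frac{V^k-V^0}{k}-g^{\star}} \le \frac{2}{k}\infn{V^0-h^{\star}},\]
as claimed.
\end{proof}
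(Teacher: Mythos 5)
Your substitution of $\lambda_i\equiv 0$ into Theorem~\ref{thm::KM_normalized} correctly yields the displayed inequality, and that part coincides with the paper. The gap is in your justification of the word ``optimized.'' To show that $\lambda_i\equiv 0$ minimizes the proven bound you must show that for \emph{every} admissible sequence the ratio $\frac{1-\prod_{i=1}^k\lambda_i}{\sum_{i=1}^k(1-\lambda_i)}$ is at least $\frac{1}{k}$. Your ``cleaner argument'' instead asserts the ratio is at most $2/s\le 2/k$; but since $s=\sum_{i=1}^k(1-\lambda_i)\le k$, we have $2/s\ge 2/k$, so that chain is broken at its second link --- and even if it held, ``ratio $\le 2/k$ for all $\lambda$'' would establish that $\lambda\equiv 0$ is the \emph{worst} choice (maximizes the bound), not the best. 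Similarly, in your proof environment the two facts ``numerator $\le 2$'' and ``denominator $\le k$'' do not imply that the ratio is minimized where both are simultaneously maximized; nothing about a quotient forces that.

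The paper closes exactly this gap with AM--GM: since $\lambda_i\in[0,1)$, one has $\prod_{i=1}^k\lambda_i \le \big(\prod_{i=1}^k\lambda_i\big)^{1/k} \le \frac{1}{k}\sum_{i=1}^k\lambda_i$, which rearranges to $k\big(1-\prod_{i=1}^k\lambda_i\big)\ge \sum_{i=1}^k(1-\lambda_i)$, i.e.\ $\frac{1-\prod_{i=1}^k\lambda_i}{\sum_{i=1}^k(1-\lambda_i)}\ge\frac{1}{k}$, with equality when $\lambda_i\equiv 0$. Inserting that one line makes your proof complete; without it, only the displayed inequality, and not the optimality claim, is established. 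Your side remarks (that $\lambda_i\equiv 0$ reduces \ref{eq:Rx-VI} to standard VI and recovers Fact~\ref{fact::vi_normalized_iterate}, and that no positivity restriction on $\lambda_i$ blocks the substitution) are fine.
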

\begin{proof}
By AM-GM inequality, we have $\Pi^{k}_{i=1}\lambda_{i} \le (\Pi^{k}_{i=1}\lambda_{i})^{1/k} \le \frac{\sum^{k}_{i=1} \lambda_{i}}{k}$ since $\lambda_{i} \le 1$. This implies $ \frac{1}{k} \le \frac{1-\Pi^{k}_{i=1}\lambda_{i}}{\sum^{k}_{i=1} (1-\lambda_{i})} $ and if $\lambda_i=0$ for all $i$, equality holds. Therefore, by plugging $\lambda_i=0$ in Theorem \ref{thm::KM_normalized}, we get the desired result.   
\end{proof}

Lastly, we present the non-asymptotic rate of \ref{eq:Rx-VI} with arbitrary $\lambda_k$ in specific class of MDPs which includes weakly communicating.  

\begin{corollary}\label{cor::Bellman_weakly_Rx}
 Consider a a general (multichain) MDP satsifying $\cP^{\pi}g^{\star}=g^{\star}$ for any policy $\pi$. Let $(g^{\star},h^{\star})$ be a solution of the modified Bellman equations. Let $0<\lambda_j<1$ for $1 \le j$. Then, for $1 \le k$, the Bellman error of \ref{eq:Rx-VI} exhibit the rates
    \[\infn{g^{\star}-g^{\pi_k}}  \le \infn{TV^k-V^k-g^{\star}} \le \frac{2\infn{V^0-h^{\star}}}{\sqrt{(3.141592\dots)\sum^{k}_{j=1} \lambda_i(1-\lambda_{i})}}\]
\end{corollary}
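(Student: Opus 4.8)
The plan is to deduce the corollary from Theorem~\ref{thm::KM_bellman}, exactly paralleling the proof of Corollary~\ref{cor::KM_bellman_com}. The only quantity in Theorem~\ref{thm::KM_bellman} that must be pinned down is the threshold $K$, which is characterized there as the minimum iteration index such that the greedy policy $\pi_k$ generated by \ref{eq:Rx-VI} satisfies the first modified Bellman equation $\cP^{\pi_k}g^{\star}=g^{\star}$ for all $k\ge K$. Under the present hypothesis that $\cP^{\pi}g^{\star}=g^{\star}$ holds for \emph{every} policy $\pi$, this condition is met by $\pi_k$ at every iteration, so $K=0$.

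Substituting $K=0$ into the conclusion of Theorem~\ref{thm::KM_bellman} turns the tail sum $\sum_{i=K+1}^{k}\lambda_i(1-\lambda_i)$ into the full sum $\sum_{i=1}^{k}\lambda_i(1-\lambda_i)$ and turns the range $K<k$ into $k\ge 1$, which is precisely the claimed bound (including the inherited policy-error inequality $\infn{g^{\star}-g^{\pi_k}}\le\infn{TV^k-V^k-g^{\star}}$). As a consistency check, specializing to $\lambda_i=1/2$ gives $\sum_{i=1}^{k}\lambda_i(1-\lambda_i)=k/4$ and recovers the $\tfrac{4\infn{V^0-h^{\star}}}{\sqrt{(3.141592\dots)k}}$ rate of Corollary~\ref{cor::KM_bellman_com}.

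The one point requiring care is that Theorem~\ref{thm::KM_bellman} is stated under $\limsup_j\lambda_j<1$, whereas the corollary assumes only $0<\lambda_j<1$ and thus permits $\lambda_j\to 1$. I would argue that the $\limsup$ hypothesis is used in Theorem~\ref{thm::KM_bellman} solely to guarantee that the threshold $K$ is \emph{finite}, i.e., that the greedy policies eventually satisfy the first modified Bellman equation; it plays no role in the finite-horizon estimate once that equation already holds. Since the present MDP assumption forces $K=0$ unconditionally, the $\limsup$ hypothesis is vacuously satisfied for our purposes, and the core recursion from the proof of Theorem~\ref{thm::KM_bellman} — which bounds the Bellman error by the accumulated $\sum_{i=1}^{k}\lambda_i(1-\lambda_i)$ once $\cP^{\pi_i}g^{\star}=g^{\star}$ holds throughout — applies verbatim for every finite $k$ with no restriction on $\limsup_j\lambda_j$.

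The main obstacle I anticipate is precisely this last verification: I would revisit the proof of Theorem~\ref{thm::KM_bellman} in Appendix~\ref{s::omitted-Rx-VI-proofs} and confirm that, with $\cP^{\pi_i}g^{\star}=g^{\star}$ available for all $i\ge 1$, each step of the finite-$k$ estimate — in particular the application of the $\cos$/Gaussian-type lower bound that produces the $\sqrt{(3.141592\dots)\sum_{i=1}^{k}\lambda_i(1-\lambda_i)}$ denominator, adapted from \citet{cominetti2014rate} — goes through without ever invoking $\limsup_j\lambda_j<1$. I expect the $\limsup$ condition to enter only in the auxiliary argument establishing finiteness of $K$, so isolating and discarding that single use should complete the reduction.
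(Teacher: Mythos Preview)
Your approach is essentially identical to the paper's: both argue $K=0$ directly from the hypothesis $\cP^{\pi}g^{\star}=g^{\star}$ for all $\pi$ and then substitute into Theorem~\ref{thm::KM_bellman}. You are in fact more careful than the paper, which simply writes ``plug $K=0$ into Theorem~\ref{thm::KM_bellman}'' without addressing the mismatch between the corollary's hypothesis $0<\lambda_j<1$ and the theorem's stronger $\limsup_j\lambda_j<1$; your observation that the $\limsup$ condition enters only through Lemma~\ref{lem::opt_policy_cond} (to produce a finite $K$) and is therefore unneeded once $K=0$ is given for free is exactly right and fills a small gap the paper leaves implicit.
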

\begin{proof}
We apply Theorem~\ref{thm::KM_bellman}. By assumption on MDP, $ S / \{\pi \,|\,\cP^{\pi}g^{\star} =  g^{\star}\} = \emptyset$. So $\epsilon=\inf_{\pi \in \emptyset} \infn{\cP^{\pi}g^{\star}-g^{\star}} = \infty$ and $K=0$. Finally, we plug $K=0$ into Theorem~\ref{thm::KM_bellman}
\end{proof}

\vspace{0.1in}

\section{Convergence rates of \ref{eq:Anc-VI} with arbitrary $\lambda_k$}\label{s::omitted-Anc-theorems}
In this section, we present the convergence rates of \ref{eq:Anc-VI} for arbitrary $\lambda_k$ in terms of both the Bellman error and the normalized iterates.
\begin{theorem}\label{thm::anc_normalized_iterate}
Consider a general (multichain) MDP. Let $(g^{\star},h^{\star})$ be a solution of the modified Bellman equations. The normalized iterates of  \ref{eq:Anc-VI}  with $\alpha_k = \sum_{i=1}^{k} \Pi^k_{j=i} (1-\lambda_j)$ exhibits the rates 
   \[\infn{\frac{V^k-V^0}{\sum_{i=1}^{k} \Pi^k_{j=i} (1-\lambda_j)}-g^{\star}} \le \frac{2(1-\lambda_k)}{\sum_{i=1}^{k} \Pi^k_{j=i} (1-\lambda_j)}\infn{V^0-h^{\star}}.\]
\end{theorem}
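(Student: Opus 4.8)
The plan is to unroll the Anc-VI recursion $V^k = \lambda_k V^0 + (1-\lambda_k)TV^{k-1}$ to express $V^k - V^0$ as a weighted combination of Bellman-error-type increments, and then exploit the "affine-like" behavior of $T$ on the modified Bellman solution. First I would verify by induction the closed form $V^k - V^0 = \sum_{i=1}^{k} \big(\Pi_{j=i}^{k}(1-\lambda_j)\big)\,(TV^{i-1} - V^{i-1})$; indeed, writing $V^k - V^k \text{(dummy)}$ aside, the recursion gives $V^k - V^0 = (1-\lambda_k)(TV^{k-1} - V^0) = (1-\lambda_k)\big((TV^{k-1}-V^{k-1}) + (V^{k-1}-V^0)\big)$, and unrolling yields exactly the stated sum with $\alpha_k = \sum_{i=1}^k \Pi_{j=i}^k (1-\lambda_j)$. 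The key point is that $\alpha_k$ is precisely the sum of the coefficients of the Bellman increments, so $\frac{V^k-V^0}{\alpha_k}$ is a genuine weighted average of the quantities $TV^{i-1}-V^{i-1}$.

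Next I would bring in the modified Bellman solution $(g^\star,h^\star)$. The crucial structural fact (exactly as in the proof of Fact \ref{fact::vi_normalized_iterate}) is that because $T$ is $\|\cdot\|_\infty$-nonexpansive and $h^\star + g^\star$ is related to $Th^\star$ via the modified Bellman equations, one can control $\|TV^{i-1} - V^{i-1} - g^\star\|_\infty$ by tracking how far the iterates drift from the line $V^0 + t g^\star$. Concretely I expect to show, by induction on $i$, a bound of the form $\|V^i - V^0 - \alpha_i g^\star\|_\infty \le c_i \|V^0 - h^\star\|_\infty$ for an appropriate $c_i$, using nonexpansiveness of $T$ at each step: $\|TV^{i-1} - (h^\star + g^\star) - (\text{shift})\|_\infty \le \|V^{i-1} - h^\star - (\text{shift})\|_\infty$. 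Telescoping the anchored recursion and using $g^\star = \max_\pi \cP^\pi g^\star$ together with $Th^\star = h^\star + g^\star$, the accumulated error collapses to the single residual term $\|V^0 - h^\star\|_\infty$ multiplied by the "fresh" weight, which turns out to be $\lambda_k$ times a telescoped factor — giving the numerator $2(1-\lambda_k)\|V^0-h^\star\|_\infty$ after combining the drift bound with $\|g^\star - (TV^k - V^k)\|$-type estimates.

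Then dividing through by $\alpha_k = \sum_{i=1}^k \Pi_{j=i}^k(1-\lambda_j)$ and rearranging the triangle inequality $\|\frac{V^k-V^0}{\alpha_k} - g^\star\|_\infty \le \frac{1}{\alpha_k}\|V^k - V^0 - \alpha_k g^\star\|_\infty$ yields the claimed rate $\frac{2(1-\lambda_k)}{\sum_{i=1}^{k}\Pi_{j=i}^{k}(1-\lambda_j)}\|V^0-h^\star\|_\infty$. The specialization $\lambda_k \equiv 0$ recovers $\alpha_k = k$ and the constant $2$, consistent with Fact \ref{fact::vi_normalized_iterate}, which is a useful sanity check.

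The main obstacle I anticipate is the bookkeeping in the induction for the drift bound: keeping track of the products $\Pi_{j=i}^k(1-\lambda_j)$ and the anchor contributions $\lambda_i V^0$ simultaneously, and showing that all the "old" error contributions telescope cleanly so that only the factor $2(1-\lambda_k)$ survives in the numerator rather than something like $\sum_i \lambda_i \Pi_{j>i}(1-\lambda_j)$. Handling the multichain case (where $g^\star$ need not be a constant vector) requires care: one must use the full first modified Bellman equation $\max_\pi \cP^\pi g^\star = g^\star$ rather than the trivial identity $\cP^\pi(c\mathbf 1) = c\mathbf 1$, and ensure the greedy policy bookkeeping in $TV^{i-1} - V^{i-1}$ does not break the nonexpansiveness estimates — this is exactly the place where the proof must go beyond a direct translation of Hilbert-space fixed-point arguments.
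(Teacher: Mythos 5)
Your proposal is correct and, once the inductive step is made precise, yields exactly the stated constant; but it takes a genuinely different route from the paper. The paper first expands $V^k$ explicitly as a weighted combination of products $\Pi_l \cP^{\pi_l}$ of the greedy policies' transition matrices applied to $V^0$ and the rewards (Lemma~\ref{lem::anc_expansion}), and then uses \emph{monotonicity} together with the two modified Bellman equations to sandwich $V^k-\alpha_k g^{\star}$ between $h^{\star}+S_1(V^0-h^{\star})$ and $h^{\star}+S_2(V^0-h^{\star})$ for two explicit (sub)convex combinations of stochastic matrices whose coefficients sum to $1-\lambda_k$ after the $V^0$ subtraction (Lemma~\ref{lem::anc_normalized_iterate} plus Proposition~\ref{prop::inf_norm}). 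Your route instead uses one-step $\|\cdot\|_\infty$-nonexpansiveness of $T$ together with the invariance $T(h^{\star}+t g^{\star})=h^{\star}+(t+1)g^{\star}$ for $t\ge 0$ — which holds in the multichain case precisely because the upper bound $\cP^{\pi}g^{\star}\le g^{\star}$ and the simultaneous attainment by $\pi^{\star}$ squeeze $T(h^{\star}+tg^{\star})$ from both sides — to get the drift bound $\infn{V^i-h^{\star}-\alpha_i g^{\star}}\le \infn{V^0-h^{\star}}$ by induction (using $\alpha_i=(1-\lambda_i)(1+\alpha_{i-1})$), and then one final step $V^k-V^0-\alpha_k g^{\star}=(1-\lambda_k)\big(TV^{k-1}-h^{\star}-(1+\alpha_{k-1})g^{\star}\big)+(1-\lambda_k)(h^{\star}-V^0)$ produces the factor $2(1-\lambda_k)$. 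Your version is more elementary and avoids the matrix-product bookkeeping entirely; what the paper's heavier machinery buys is the explicit representation $c_k(S_1-S_2)(V^0-h^{\star})$ of the error, which is then reused verbatim in the much harder Bellman-error analysis (Theorem~\ref{thm::anc_bellman} and its lemmas), where a pure norm argument would not suffice. Two small imprecisions to fix when writing it up: the invariant ray is through $h^{\star}$, not $V^0$ (the two differ by the harmless additive $\infn{V^0-h^{\star}}$), and the opening remark about controlling the individual terms $\infn{TV^{i-1}-V^{i-1}-g^{\star}}$ is a red herring — a weighted average of uniformly bounded Bellman errors would not decay, and it is the cumulative drift bound, not the per-term bound, that carries the argument.
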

\begin{theorem}\label{thm::anc_bellman}
Consider a general (multichain) MDP. Let $(g^{\star},h^{\star})$ be a solution of the modified Bellman equations. Let $\lambda_{k+1} \le \lambda_{k}<1$ for $1 \le k$ and $\lim \lambda_k = 0$. Then, there exist $ 0<K$ such that for $K< k$, the Bellman and policy errors of \ref{eq:Anc-VI} exhibit the rates 
     \begin{align*}
       \infn{g^{\star}-g^{\pi_k}} \le   \infn{TV^k-V^k-g^{\star}} &\le 2\para{1-\sum_{i=1}^{k} \lambda_i \Pi^{k}_{j=i} (1-\lambda_j)}\infn{V^0-h^{\star}} \\& \quad +  2\Pi^{k}_{j=K} (1-\lambda_j) \infn{g^\star}
     \end{align*}
     Specifically, $K$ is the minimum iteration number satisfying if $K\le k$, $\pi_k$ generated by \ref{eq:Anc-VI} satisfies $\cP^{\pi_k}g^{\star}=g^{\star}$, first modified Bellman equation.
\end{theorem}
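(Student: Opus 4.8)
The plan is to proceed in three stages: reduce the policy-error bound to the Bellman-error bound, isolate a finite threshold $K$ after which the greedy policies respect the first modified Bellman equation, and then recognize the tail of the iteration ($k>K$) as a genuine (consistent) Halpern iteration for a shifted nonexpansive operator, to which the analysis of \citet{cominetti2014rate,contreras2022optimal} applies. For the first stage I would use the Cesàro-limit (stationary) matrix $(\cP^{\pi_k})^{\ast}$ of $\cP^{\pi_k}$, for which $g^{\pi_k}=(\cP^{\pi_k})^{\ast}r^{\pi_k}$ and $(\cP^{\pi_k})^{\ast}\cP^{\pi_k}=(\cP^{\pi_k})^{\ast}$. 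Since $TV^k=T^{\pi_k}V^k=r^{\pi_k}+\cP^{\pi_k}V^k$, applying $(\cP^{\pi_k})^{\ast}$ to $TV^k-V^k-g^{\star}$ cancels the $V^k$ terms and, using $\cP^{\pi_k}g^{\star}=g^{\star}$ (hence $(\cP^{\pi_k})^{\ast}g^{\star}=g^{\star}$), leaves exactly $g^{\pi_k}-g^{\star}$; as $(\cP^{\pi_k})^{\ast}$ is row-stochastic, $\infn{g^{\pi_k}-g^{\star}}\le\infn{TV^k-V^k-g^{\star}}$. This is exactly where the hypothesis $k>K$ enters for the policy error.

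For the threshold $K$, I would invoke Theorem~\ref{thm::anc_normalized_iterate} to write $V^k=h^{\star}+\delta_k g^{\star}+u^k$ with $\delta_k=\sum_{i=1}^k\Pi_{j=i}^k(1-\lambda_j)\uparrow\infty$ and $\infn{u^k}\le\infn{V^0-h^{\star}}$. Lower-bounding $TV^k\ge T^{\pi^{\star}}V^k=h^{\star}+(1+\delta_k)g^{\star}+\cP^{\pi^{\star}}u^k$ (using $\cP^{\pi^{\star}}g^{\star}=g^{\star}$ and $r^{\pi^{\star}}+\cP^{\pi^{\star}}h^{\star}=h^{\star}+g^{\star}$) and upper-bounding $TV^k=r^{\pi_k}+\cP^{\pi_k}V^k$ via the modified Bellman equation $r^{\pi_k}+\cP^{\pi_k}h^{\star}\le h^{\star}+g^{\star}$, I get $\delta_k\bigl(g^{\star}-\cP^{\pi_k}g^{\star}\bigr)\le 2\infn{u^k}\,\mathbf{1}$; combined with $g^{\star}-\cP^{\pi_k}g^{\star}\ge 0$ (the first modified Bellman equation), this forces $\cP^{\pi_k}g^{\star}=g^{\star}$ once $\delta_k>2\infn{V^0-h^{\star}}/\epsilon$, since every deterministic policy $\pi$ with $\cP^{\pi}g^{\star}\neq g^{\star}$ has $\infn{\cP^{\pi}g^{\star}-g^{\star}}\ge\epsilon$. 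Because $\delta_k$ is increasing, this defines a finite $K$, and tracking constants under $\lambda_k=2/(k+2)$ recovers the explicit $K$ of Theorem~\ref{thm:Anc_optimized}.

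For the main estimate, let $\tilde T$ be the Bellman optimality operator in which, at each state $s$, the maximization is restricted to actions with $\sum_{s'}P(s'\mid s,a)g^{\star}(s')=g^{\star}(s)$. By the threshold argument $TV^k=\tilde T V^k$ for $k\ge K$, so $V^k=\lambda_kV^0+(1-\lambda_k)\tilde T V^{k-1}$ for $k\ge K+1$; moreover $\tilde T$ is nonexpansive in $\infn{\cdot}$ and $g^{\star}$-equivariant, so $\hat T:=\tilde T-g^{\star}$ is nonexpansive with $\hat T h^{\star}=h^{\star}$ (using $\tilde T h^{\star}=h^{\star}+g^{\star}$, which holds because $\pi^{\star}$ belongs to the restricted class). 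Substituting $Z^k:=V^k-\delta_k g^{\star}$ turns the recursion into the genuine anchored iteration $Z^k=\lambda_kV^0+(1-\lambda_k)\hat T Z^{k-1}$ for $k\ge K+1$, and $g^{\star}$-equivariance gives $\infn{TV^k-V^k-g^{\star}}=\infn{\hat T Z^k-Z^k}$. The first term $2\bigl(1-\sum_{i=1}^k\lambda_i\Pi_{j=i}^k(1-\lambda_j)\bigr)\infn{V^0-h^{\star}}$ then comes from the Halpern fixed-point-residual bound of \citet{contreras2022optimal} (valid in a general normed space, in particular $\ell^{\infty}$) applied to the reference iteration anchored and initialized at $V^0$, while the extra term $2\,\Pi_{j=K}^k(1-\lambda_j)\infn{g^{\star}}$ is the discrepancy $2\infn{Z^k-(\text{reference iterate})}$ generated by the finitely many steps $k\le K$ on which $T\neq\tilde T$, which thereafter contracts with factor $\Pi_{j=K}^k(1-\lambda_j)$. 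Finally, the first stated inequality then follows from the Cesàro-matrix argument above.

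The main obstacle is the bookkeeping for this correction term: one must track precisely how the ``head'' iterates ($k\le K$), which do not follow the $g^{\star}$-equivariant dynamics, perturb the tail Halpern iteration, and show the perturbation attenuates with exactly $\Pi_{j=K}^k(1-\lambda_j)$ and with magnitude controlled by $\infn{g^{\star}}$ rather than by the larger $\infn{V^0-h^{\star}}$. This is the point where the recursive estimates of \citet{cominetti2014rate,contreras2022optimal} must be re-derived in the inconsistent/multichain setting rather than quoted verbatim, exploiting the first modified Bellman equation together with the one-sided identity $T(h^{\star}+cg^{\star})=h^{\star}+(1+c)g^{\star}$ for $c\ge 0$ to control the pre-$K$ defects; everything else is a combination of nonexpansiveness of $T$, the modified Bellman equations, and already-established rates.
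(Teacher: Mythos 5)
Your proposal is correct in its overall architecture but takes a genuinely different route from the paper, and it leaves the exact form of the secondary term unproved. The paper never reduces to a consistent fixed-point problem: it proves everything from scratch by sandwiching $V^{k}-V^{k-1}$ between affine expressions of the form $\para{1-\sum_{i}\lambda_i\Pi_j(1-\lambda_j)}(S_1^k-S_2^k)(V^0-h^{\star})$ with $S_1^k,S_2^k$ stochastic (Lemmas~\ref{lem::anc_upper_bd}, \ref{lem::anc_lower_bound_N}, \ref{lem::anc_lower_bd}), using monotonicity of $T$ and both modified Bellman equations, and then takes $\infn{\cdot}$ via Proposition~\ref{prop::inf_norm}; the threshold $K$ is obtained qualitatively via an infinitely-repeated-policy subsequence argument (Lemma~\ref{lem::opt_policy_cond}) rather than your quantitative $\delta_k\para{g^{\star}-\cP^{\pi_k}g^{\star}}\le 2\infn{u^k}\mathbf{1}$ estimate. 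Your threshold argument is sound (it is essentially what the paper does later, only for $\lambda_k=2/(k+2)$, in the proof of Theorem~\ref{thm:Anc_optimized}) and arguably cleaner; likewise the reduction $Z^k=V^k-\delta_k g^{\star}$, the identity $\delta_k=(1-\lambda_k)(1+\delta_{k-1})$, the $g^{\star}$-equivariance of the restricted operator $\tilde T$, and $\hat T h^{\star}=h^{\star}$ all check out, so the tail really is a Halpern iteration for a nonexpansive map with a fixed point. What your route buys is modularity (the leading coefficient $2\para{1-\sum_{i=1}^{k}\lambda_i\Pi_{j=i}^{k}(1-\lambda_j)}$ can be imported from the normed-space Halpern analysis); what the paper's route buys is the precise shape of the correction term.

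That correction term is where the gap sits. Comparing $Z^k$ against the reference Halpern iterate $W^k$ of $\hat T$ started at $V^0$, nonexpansiveness only gives $\infn{Z^k-W^k}\le \Pi_{j=K+1}^{k}(1-\lambda_j)\infn{Z^K-W^K}$ with $\infn{Z^K-W^K}\le\infn{u^K}+\infn{W^K-h^{\star}}\le 2\infn{V^0-h^{\star}}$, so the residual comparison yields a secondary term of order $\Pi_{j=K+1}^{k}(1-\lambda_j)\infn{V^0-h^{\star}}$ — a perfectly valid bound, but not the stated $2\Pi_{j=K}^{k}(1-\lambda_j)\infn{g^{\star}}$, and the two magnitudes are incomparable in general. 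The paper gets the $\infn{g^{\star}}$ magnitude because its induction never lumps the pre-$K$ defect into the state: Lemma~\ref{lem::anc_lower_bound_N} shows that for $k\le K+1$ the only damage is that the coefficient of $g^{\star}$ in the lower bound appears as $\para{1-\sum_i\lambda_{k-1}\Pi_j(1-\lambda_j)}S_{3'}^k g^{\star}$ with $S_{3'}^k$ stochastic (the $(V^0-h^{\star})$ part is undamaged), and Lemma~\ref{lem::anc_lower_bd} then propagates this defect as a separate term $\Pi_{j=K+1}^{k-1}(1-\lambda_j)\para{1-\sum_{i}\lambda_K\Pi_j(1-\lambda_j)}(S_{3'}^k-S_{4'}^k)g^{\star}$, whose norm is at most $2\Pi_{j=K}^{k}(1-\lambda_j)\infn{g^{\star}}$. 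To finish your proof as stated you would need to replace the black-box comparison with this kind of defect-splitting, which is exactly the bookkeeping you flag as the ``main obstacle'' but do not carry out.
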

We defer the proofs to Appendix \ref{s::omitted-Anc-VI-proofs}. Note that Theorems 3 and 4 imply the convergence of normalized iterate and Bellman error to $g$ respectively when $\lim_{k\rightarrow \infty} \sum_{i=0}^{k} \Pi^{k}_{j=i} (1-\lambda_j) = \infty$ and $\lim_{k\rightarrow \infty}\sum^{k}_{i=1}  \lambda_i \Pi^k_{j=i}(1-\lambda_j)  = 1$. We briefly mention that like \ref{eq:Rx-VI}, convergence rate of normalized iterate of \ref{eq:Anc-VI} is optimized when $\lambda_k=0$ and recover rate of standard VI in Fact \ref{fact::vi_normalized_iterate}. 

Lastly, we present the non-asymptotic rate of \ref{eq:Anc-VI} with arbitrary $\lambda_k$ in specific class of MDPs which includes weakly communicating. 
\begin{corollary}\label{cor::Bellman_weakly_anc}
 Consider a a general (multichain) MDP satsifying $\cP^{\pi}g^{\star}=g^{\star}$ for any policy $\pi$. Let $(g^{\star},h^{\star})$ be a solution of the modified Bellman equations. Let $\lambda_{k+1} \le \lambda_{k}<1$ for $1 \le k$. Then, there exist $ 0<K$ such that for $K < k$, the Bellman and policy errors of \ref{eq:Anc-VI} exhibit the rates 
     \[\infn{g^{\star}-g^{\pi_k}}  \le \infn{TV^k-V^k -g^{\star}} \le 2\para{\sum^k_{i=0} \Pi^k_{j=i+1}(1-\lambda_j) \lambda^2_i }\infn{V^0-h^{\star}}.\]
\end{corollary}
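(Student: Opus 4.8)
The plan is to invoke Theorem~\ref{thm::anc_bellman} and exploit the structural hypothesis $\cP^{\pi}g^{\star}=g^{\star}$ for \emph{every} policy $\pi$. As in the proof of Corollary~\ref{cor::anc_bellman_com} (which follows the same line as Corollary~\ref{cor::KM_bellman_com}), this hypothesis forces $S\setminus\{\pi\mid \cP^{\pi}g^{\star}=g^{\star}\}=\emptyset$, hence $\epsilon=\infty$ and the critical index $K=0$. So the condition ``$\pi_k$ satisfies $\cP^{\pi_k}g^{\star}=g^{\star}$'' holds vacuously from the very first iteration, and Theorem~\ref{thm::anc_bellman} applies with $K=0$ for all $k\ge 1$ (not merely for $k>K$).

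Next I would substitute $K=0$ into the right-hand side of Theorem~\ref{thm::anc_bellman}. The bound there has two terms. The first term is $2\bigl(1-\sum_{i=1}^{k}\lambda_i\Pi^{k}_{j=i}(1-\lambda_j)\bigr)\infn{V^0-h^{\star}}$; the second is $2\,\Pi^{k}_{j=K}(1-\lambda_j)\infn{g^{\star}}$, which at $K=0$ reads $2\,\Pi^{k}_{j=0}(1-\lambda_j)\infn{g^{\star}}$. The key simplification is that under the hypothesis $g^{\star}=c\mathbf 1$ — or more precisely, since the proof techniques track the Bellman error \emph{relative to} $g^{\star}$ — the additive $g^{\star}$-dependent error in Theorem~\ref{thm::anc_bellman} can be absorbed. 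I expect the mechanism is that when $\cP^{\pi}g^{\star}=g^{\star}$ for all $\pi$, the ``inconsistency'' contribution that the $\infn{g^{\star}}$ term was compensating for vanishes, so in fact the refined telescoping identity from the proof of Theorem~\ref{thm::anc_bellman} yields the cleaner coefficient $2\sum^{k}_{i=0}\Pi^{k}_{j=i+1}(1-\lambda_j)\lambda_i^2$ multiplying $\infn{V^0-h^{\star}}$ with no residual $g^{\star}$ term. One should verify the algebraic identity $1-\sum_{i=1}^{k}\lambda_i\Pi^{k}_{j=i}(1-\lambda_j)=\sum_{i=0}^{k}\Pi^{k}_{j=i+1}(1-\lambda_j)\lambda_i^2$ when the boundary/cross terms that would otherwise survive are killed — this is a standard Abel-summation / telescoping manipulation on the products $\Pi^{k}_{j=i}(1-\lambda_j)$. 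Finally, the chain $\infn{g^{\star}-g^{\pi_k}}\le\infn{TV^k-V^k-g^{\star}}$ is exactly the Bellman-error-bounds-policy-error statement already established (valid here for all $k$ since $K=0$, cf. Fact~\ref{fact:Bellman-to-policy} combined with $\cP^{\pi_k}g^{\star}=g^{\star}$).

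The main obstacle is the bookkeeping in re-deriving the exact coefficient $2\sum^{k}_{i=0}\Pi^{k}_{j=i+1}(1-\lambda_j)\lambda_i^2$: one must confirm that, under the extra hypothesis, the term $2\Pi^{k}_{j=0}(1-\lambda_j)\infn{g^{\star}}$ either genuinely disappears or recombines with the first term. The cleanest route is probably not to massage the final inequality of Theorem~\ref{thm::anc_bellman} but to re-run its proof with $K=0$ and the simplification $\cP^{\pi}g^{\star}=g^{\star}$ plugged in from the start, so that the step which introduced the $\infn{g^{\star}}$ slack is simply bypassed; everything else in that proof then goes through verbatim and collapses to the stated clean rate. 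Checking $\lambda_k=\tfrac{2}{k+2}$ recovers the $\tfrac{8}{k+1}$ constant of Corollary~\ref{cor::anc_bellman_com} serves as a useful consistency check on the coefficient.
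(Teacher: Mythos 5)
Your proposal is correct and follows essentially the same route as the paper, whose entire proof is to plug $K=0$ into Theorem~\ref{thm::anc_bellman} after noting $S\setminus\{\pi\mid\cP^{\pi}g^{\star}=g^{\star}\}=\emptyset$. The one point you left open resolves trivially rather than by re-running the proof: under the paper's convention $\lambda_0=1$, the residual term $2\Pi^{k}_{j=0}(1-\lambda_j)\infn{g^{\star}}$ is identically zero, and the same convention yields the coefficient identity $1-\sum_{i=1}^{k}\lambda_i\Pi^{k}_{j=i}(1-\lambda_j)=\Pi^{k}_{j=1}(1-\lambda_j)+\sum_{i=1}^{k}\lambda_i^2\Pi^{k}_{j=i+1}(1-\lambda_j)=\sum_{i=0}^{k}\lambda_i^2\Pi^{k}_{j=i+1}(1-\lambda_j)$ by telescoping $\Pi^{k}_{j=i+1}(1-\lambda_j)-\Pi^{k}_{j=i}(1-\lambda_j)=\lambda_i\Pi^{k}_{j=i+1}(1-\lambda_j)$.
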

\begin{proof}
We apply Theorem~\ref{thm::anc_bellman}. By assumption on MDP, $ S / \{\pi \,|\,\cP^{\pi}g^{\star} =  g^{\star}\} = \emptyset$. So $\epsilon=\inf_{\pi \in \emptyset} \infn{\cP^{\pi}g^{\star}-g^{\star}} = \infty$ and $K=0$. Finally, we plug $K=0$ into Theorem~\ref{thm::anc_bellman}
\end{proof}

\vspace{0.1in}

\section{Convergence rates of \ref{eq:Rx-RVI} and \ref{eq:Anc-RVI} with arbitrary $\lambda_k$}\label{s::omitted-RVI-theorems}
In this section, we present the convergence rates of \ref{eq:Rx-RVI} and \ref{eq:Anc-RVI} for arbitrary $\lambda_k$ in terms of both the Bellman error.
\begin{theorem}\label{thm::Rx-RVI}
Consider a weakly communicating MDP. Let $(g^{\star},h^{\star})$ be a solution of the modified Bellman equations. Let $0<\lambda_j<1$ for $1 \le j$. Then, for $1 \le k$, \ref{eq:Rx-RVI} exhibit the rates
    \[\infn{g^{\star}-g^{\pi_k}} \le \infn{Th^k-h^k-g^{\star}} \le \frac{2\infn{h^0-h^{\star}}}{\sqrt{(3.141592\dots)\sum^{k}_{j=1} \lambda_i(1-\lambda_{i})}}\]
    and if $\limsup \lambda_j <1  $, $h^k$ converges to $h^{\star}$ and $f(h^k)\mathbf{1}$ converges to $g^{\star}$ for some solution of modified Bellman equations  $(g^{\star}, h^\infty)$. .
\end{theorem}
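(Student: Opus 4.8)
The plan is to reduce \ref{eq:Rx-RVI} to \ref{eq:Rx-VI} via a change of variables, then invoke the already-established Bellman error rate, and finally handle the point convergence separately by a compactness-plus-monotonicity argument. First I would observe that if $\{V^k\}$ denotes the \ref{eq:Rx-VI} iterates started at $V^0 = h^0$, then the \ref{eq:Rx-RVI} iterates $\{h^k\}$ differ from $\{V^k\}$ only by a (time-varying) uniform constant vector: writing $h^k = V^k - c_k\mathbf{1}$ for appropriate scalars $c_k$, the subtracted $f(h^{k-1})\mathbf 1$ term and the relation $T(x - c\mathbf 1) = Tx - c\mathbf 1$ together imply that $h^k$ and $V^k$ induce the \emph{same} greedy policy $\pi_k$ and satisfy $Th^k - h^k = TV^k - V^k$. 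Hence the Bellman error $\infn{Th^k-h^k-g^\star} = \infn{TV^k-V^k-g^\star}$, and the policy error bound $\infn{g^\star-g^{\pi_k}}\le\infn{Th^k-h^k-g^\star}$ follows from Fact~\ref{fact:Bellman-to-policy} (using that the MDP is weakly communicating). This reduces the rate claim to Corollary~\ref{cor::Bellman_weakly_Rx} with $V^0 = h^0$, noting $\infn{V^0 - h^\star} = \infn{h^0 - h^\star}$; when $\lambda_k$ is constant at $1/2$ this gives exactly the $\frac{4}{\sqrt{(3.141592\dots)k}}\infn{h^0-h^\star}$ bound after evaluating $\sum_{i=1}^k \lambda_i(1-\lambda_i) = k/4$.

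Next I would address the convergence $h^k\to h^\infty$ and $f(h^k)\mathbf 1\to g^\star$. Since the Bellman error $\infn{Th^k - h^k - g^\star}\to 0$ and $g^\star = c\mathbf 1$ in the weakly communicating case, the residuals $Th^k - h^k$ converge to $g^\star$. The key structural fact to establish is that $\{h^k\}$ stays bounded: because $f(x+c\mathbf 1) = f(x)+c$, the subtraction is exactly calibrated so that $h^k$ lives in a bounded region of the quotient space $\reals^n/\mathbf 1$ — concretely, $\|h^k\|_{sp}$ is controlled, e.g.\ by $\|h^k\|_{sp} \le \|h^0\|_{sp} + \sum \ldots$, and more cleanly one shows $\|h^k - h^\star\|_{sp}$ does not blow up by a nonexpansiveness argument in span seminorm combined with the averaging step. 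Given boundedness in span seminorm, and using $\limsup_j \lambda_j < 1$ to ensure the averaging weights do not degenerate, I would extract a convergent subsequence (in the quotient topology) whose limit, by continuity of $T$ and $f$ and the vanishing Bellman error, is a fixed point of the map $x\mapsto Tx - f(x)\mathbf 1$, i.e.\ an $h^\infty$ with $(g^\star, h^\infty)$ solving the modified Bellman equations. To upgrade subsequential convergence to full convergence, I would adapt the Fejér-type monotonicity argument from \citet{cominetti2014rate, contreras2022optimal}: show that $\|h^k - h^\infty\|_{sp}$ (or an appropriate Lyapunov surrogate) is eventually nonincreasing along the iteration, so the whole sequence converges to $h^\infty$.

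The main obstacle I expect is the last step — establishing \emph{genuine} point convergence (not just subsequential) in the $\|\cdot\|_\infty$ / span-seminorm geometry rather than in a Hilbert space. In Hilbert space one has the classical demiclosedness and Fejér monotonicity toolkit, but here $\reals^n$ with $\|\cdot\|_\infty$ is neither uniformly convex nor uniformly smooth, so the standard Krasnosel'ski\u\i--Mann convergence theory does not directly apply; one must exploit the specific polyhedral structure of the Bellman operator (piecewise affine, nonexpansive in $\|\cdot\|_\infty$) and the fact that the MDP is weakly communicating to pin down a unique limit up to the $\mathbf 1$-direction. A secondary technical point is verifying that the change of variables $h^k = V^k - c_k\mathbf 1$ is consistent — i.e.\ that the scalars $c_k$ accumulate correctly so that $c_k \to$ some finite limit, which is equivalent to $f(h^k)$ converging; this follows once the Bellman error rate and boundedness are in hand, since $f(h^k) = f(h^{k-1}) + (\text{something}\to c)$ telescopes against the normalized-iterate behavior of $V^k$. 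I would present the reduction and the rate first (short, self-contained given earlier results), then devote the bulk of the proof to the boundedness estimate and the Fejér-monotone convergence argument.
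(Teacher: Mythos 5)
Your reduction of the rate claim is exactly the paper's: start \ref{eq:Rx-VI} at $V^0=h^0$, note $V^k=h^k+c_k\mathbf{1}$ so the Bellman errors and greedy policies coincide, and invoke Corollary~\ref{cor::Bellman_weakly_Rx} together with Fact~\ref{fact:Bellman-to-policy}. For the point convergence your route differs in a meaningful way. The paper introduces the auxiliary iteration $V_g^k=\lambda_k V_g^{k-1}+(1-\lambda_k)(TV_g^{k-1}-g^{\star})$, observes that every solution $h^{\star}$ is a genuine fixed point of the nonexpansive map $T(\cdot)-g^{\star}$ in the $\|\cdot\|_\infty$-norm, gets Fej\'er monotonicity $\infn{V_g^k-h^{\star}}\le\infn{V_g^{k-1}-h^{\star}}$, extracts a subsequential limit which must be a solution (by the vanishing Bellman error), upgrades to full convergence via the same monotonicity applied to that limit, and only then recovers $h^k=V_g^k-c_k\mathbf{1}$ by a Toeplitz-averaging argument showing $c_k\to f(V_g)-\hat g$ (this is where $\limsup\lambda_j<1$ is used). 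You instead work directly with $h^k$ in the span seminorm, using that the Rx-RVI map is nonexpansive there and that solutions are fixed points modulo $\mathbf{1}$. That is equally valid and arguably more intrinsic, but note two things. First, your worry about the absence of Hilbert-space machinery is unfounded: in finite dimensions, boundedness plus Fej\'er monotonicity with respect to the (fixed) subsequential limit already forces full convergence, and the monotonicity $\|h^k-h^{\star}\|_{sp}\le\|h^{k-1}-h^{\star}\|_{sp}$ is exact, not merely eventual. Second, your quotient-space approach does not avoid the scalar argument you call a ``secondary technical point'': after span-seminorm convergence you still must show that the mean (equivalently $f(h^k)$, equivalently the $c_k$) converges, and this is precisely the same Toeplitz computation $c_k=\sum_{j=1}^k(\Pi_{i=j+1}^k\lambda_i)(1-\lambda_j)(f(V_g^{j-1})-\hat g)$ the paper carries out, again requiring $\limsup\lambda_j<1$. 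So the two proofs spend their effort in the same places; the paper's auxiliary sequence buys a cleaner $\|\cdot\|_\infty$ argument at the cost of an extra change of variables, while yours keeps a single sequence at the cost of quotient-space bookkeeping. Your sketch would be complete once the span-seminorm nonexpansiveness of $x\mapsto Tx-f(x)\mathbf{1}$ and the final scalar limit are written out explicitly.
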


\begin{theorem}\label{thm::Anc-RVI}
    Consider a weakly communicating MDP. Let $(g^{\star},h^{\star})$ be a solution of the modified Bellman equations. Let $\lambda_{k+1} \le \lambda_{k} <1$ for $1 \le k$. Then, \ref{eq:Anc-RVI} exhibit the rates 
   \[ \infn{g^{\star}-g^{\pi_k}} \le \infn{Th^k-h^k-g^{\star}} \le 2\para{\sum^k_{i=0} \Pi^k_{j=i+1}(1-\lambda_j) \lambda^2_i }\infn{h^0-h^{\star}}\]
   and if $\lim \lambda_k = 0$ and MDP is unichain, $h^k$ converges to $h^{\star}$ and $f(h^k)\mathbf{1}$ converges to $g^{\star}$ for some solution of modified Bellman equations  $(g^{\star}, h^\infty)$.
\end{theorem}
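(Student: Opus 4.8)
The two assertions are handled separately. For the Bellman and policy error rate I would reduce to the already-proven rate for \ref{eq:Anc-VI}. Since the Bellman optimality operator is shift-equivariant, $T(x+c\mathbf{1})=Tx+c\mathbf{1}$, and $f$ satisfies $f(x+c\mathbf{1})=f(x)+c$ by hypothesis, a one-line induction shows that the \ref{eq:Anc-RVI} iterates $h^k$ coincide, up to an additive constant vector $d_k\mathbf{1}$, with the \ref{eq:Anc-VI} iterates $V^k$ generated from $V^0=h^0$ with the matching step-size sequence: $h^k=V^k-d_k\mathbf{1}$. Hence $Th^k-h^k=TV^k-V^k$ and the greedy policies of the two methods agree, so the Bellman error and the policies of \ref{eq:Anc-RVI} equal those of \ref{eq:Anc-VI}. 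In a weakly communicating MDP $g^\star=c\mathbf{1}$, so $\cP^{\pi}g^\star=g^\star$ for every $\pi$ and Corollary~\ref{cor::Bellman_weakly_anc} applies, giving $\infn{Th^k-h^k-g^\star}\le 2\big(\sum^k_{i=0}\Pi^k_{j=i+1}(1-\lambda_j)\lambda^2_i\big)\infn{h^0-h^\star}$; the bound $\infn{g^\star-g^{\pi_k}}\le\infn{Th^k-h^k-g^\star}$ is then Fact~\ref{fact:Bellman-to-policy}. Only routine index bookkeeping between the two step-size conventions is involved here.

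\textbf{Convergence of the iterates.} I would measure distances by the span seminorm (equivalently, work modulo constant vectors), on which $T$ is nonexpansive \citep[Section~6.6.1]{10.5555/528623}, and projecting \ref{eq:Anc-RVI} modulo $\mathbf{1}$ turns it into the anchored iteration $[h^k]=\lambda_{k-1}[h^0]+(1-\lambda_{k-1})[Th^{k-1}]$ (the $f$-term drops out). Because $Th^\star=h^\star+g^\star$ with $g^\star$ constant, $h^\star$ is a fixed point modulo constants, and convexity of the norm together with nonexpansiveness of $T$ in $\|\cdot\|_{sp}$ give $\|h^k-h^\star\|_{sp}\le\|h^0-h^\star\|_{sp}$ for all $k$, so the iterates are bounded modulo constants. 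From the rate above and $\lambda_k\to0$ (which makes the prefactor vanish, using $\sum_k\lambda_k=\infty$, as holds for $\lambda_k=2/(k+2)$), $\infn{Th^k-h^k-g^\star}\to0$, hence $\|Th^k-h^k\|_{sp}\to0$: the iterates are asymptotically fixed points of $T$ modulo constants. I would then invoke the classical fact that in a \emph{unichain} MDP the bias part of the modified Bellman equations is unique up to an additive constant vector \citep[Section~8.4]{10.5555/528623}, so that solution is the only fixed point of $T$ modulo constants. A compactness argument closes it: closed bounded sets modulo constants are compact and $T$, $\|\cdot\|_{sp}$ are continuous, so every subsequential limit of $\{h^k\}$ modulo constants is a fixed point of $T$ modulo constants, hence equals $h^\star$ modulo constants; therefore $\|h^k-h^\star\|_{sp}\to0$.

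\textbf{Upgrading to convergence of $h^k$ and $f(h^k)\mathbf{1}$.} Write $h^k=u^k+m_k\mathbf{1}$ with $m_k=\min_i h^k_i$, so $u^k$ is the min-zero representative; then $\|h^k-h^\star\|_{sp}\to0$ forces $u^k\to u^\star$ in $\infn{\cdot}$, where $u^\star$ is the min-zero representative of $h^\star$. Substituting $h^{k-1}=u^{k-1}+m_{k-1}\mathbf{1}$ into \ref{eq:Anc-RVI} and using shift-equivariance of $T$ and the defining property of $f$, the $m_{k-1}$ terms cancel, leaving $h^k=\lambda_{k-1}h^0+(1-\lambda_{k-1})\big(Tu^{k-1}-f(u^{k-1})\mathbf{1}\big)$. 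Letting $k\to\infty$, with $\lambda_k\to0$ and continuity of $T$ and $f$, gives $h^k\to h^\infty:=Tu^\star-f(u^\star)\mathbf{1}$. Since $u^\star=h^\star-c\mathbf{1}$ one checks $h^\infty=h^\star+(\text{const})\mathbf{1}$, so $(g^\star,h^\infty)$ is again a solution of the modified Bellman equations; passing to the limit in the recursion once more gives $h^\infty=Th^\infty-f(h^\infty)\mathbf{1}$, and combined with $Th^\infty=h^\infty+g^\star$ this yields $f(h^\infty)\mathbf{1}=g^\star$, whence $f(h^k)\mathbf{1}\to g^\star$ by continuity of $f$.

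\textbf{Main obstacle.} The technically heavy ingredient is the quantitative Bellman-error rate, but it is already available from Corollary~\ref{cor::Bellman_weakly_anc}/Theorem~\ref{thm::anc_bellman}, so within the present argument the delicate point is the convergence of the iterates. It genuinely relies on the statement that an approximate fixed point of the span-nonexpansive $T$ must lie near $h^\star$ (modulo constants), which in turn rests on uniqueness of the bias modulo constants — a property that holds for unichain but fails for general weakly communicating MDPs, and this is precisely why the point-convergence conclusion is stated only under the unichain hypothesis. One should also note that it is the anchor term $\lambda_{k-1}h^0$ (rather than $\lambda_{k-1}h^{k-1}$) together with $\lambda_k\to0$ that suppresses the possible periodicity of $T$ modulo constants; the compactness argument absorbs this automatically once the rate is in hand, so no separate ``aperiodic transformation'' is needed.
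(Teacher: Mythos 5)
Your proposal is correct, and its first half (reducing the Bellman and policy errors of \ref{eq:Anc-RVI} to those of \ref{eq:Anc-VI} via shift-equivariance of $T$ and the property $f(x+c\mathbf{1})=f(x)+c$, then invoking Corollary~\ref{cor::Bellman_weakly_anc}) is exactly what the paper does. For the convergence of the iterates the two arguments share the same skeleton --- boundedness, vanishing Bellman error, uniqueness of the bias up to additive constants in the unichain case, and a subsequence/compactness step --- but the implementations differ. The paper stays in $\reals^n$ with $\|\cdot\|_\infty$: it introduces the auxiliary ``oracle'' iteration $V_g^k=\lambda_k V_g^0+(1-\lambda_k)(TV_g^{k-1}-g^\star)$, shows $\infn{V_g^k-h^\star}\le\infn{V_g^0-h^\star}$ because $T(\cdot)-g^\star$ is nonexpansive with fixed point $h^\star$, writes $V_g^k=h^k+c_k\mathbf{1}$ with $c_k=(1-\lambda_k)(f(V_g^{k-1})-\hat g)$ to transfer boundedness to $h^k$, and then argues that every subsequential limit is a solution whose image under $h\mapsto Th-f(h)\mathbf{1}$ is the single vector $Th^\star-f(h^\star)\mathbf{1}$. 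You instead pass to the quotient $\reals^n/\reals\mathbf{1}$ with the span seminorm, where the $f$-term disappears and $T$ is span-nonexpansive, get $\|h^k-h^\star\|_{sp}\le\|h^0-h^\star\|_{sp}$ directly, conclude $\|h^k-h^\star\|_{sp}\to0$ by compactness plus the unichain uniqueness of the bias class, and then lift back via the min-zero representative to identify $h^\infty=Tu^\star-f(u^\star)\mathbf{1}$. Your route buys a cleaner limit identification with no explicit bookkeeping of the scalar offsets $c_k$; the paper's route avoids introducing the quotient-space machinery. One point worth keeping: both proofs need the Bellman-error prefactor $2\big(\sum^k_{i=0}\Pi^k_{j=i+1}(1-\lambda_j)\lambda^2_i\big)$ to actually vanish in order for subsequential limits to be solutions, and the hypothesis $\lim\lambda_k=0$ alone does not guarantee this (take $\lambda_k$ decreasing and summable, so the $i=0$ term stays bounded away from zero); you correctly flag $\sum_k\lambda_k=\infty$ as the needed extra condition, which the paper leaves implicit. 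This is a defect of the theorem's stated hypotheses rather than of your argument.
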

We defer the proofs to Appendix \ref{s::omitted-RVI-proofs}. We note that rates of Bellman errors of \ref{eq:Rx-RVI} and \ref{eq:Anc-RVI} in Theorem \ref{thm::Rx-RVI} and \ref{thm::Anc-RVI} are exactly match to the rates of \ref{eq:Rx-VI} and \ref{eq:Anc-VI} in Corollary \ref{cor::Bellman_weakly_Rx} and \ref{cor::Bellman_weakly_anc}, respectively.

\vspace{0.1in}

\section{Preliminaries}
    
In this section, we define some notations and introduce elementary propositions used in proofs. 


\subsection{Notations}
We denote $V \le \tilde{V}$ if $V(s)\le \tilde{V}(s)$ for all $s \in \cS$ and $V, \tilde{V} \in \real^n$. 

We denote $\Pi^k_{i=j}A_i =A_jA_{j+1}\dots A_k$ (ascending order) and $\Pi^j_{i=k}A_i =A_kA_{k-1}\dots A_j$ (descending order) where $ 0 \le j \le k $ and $A_i \in \real^{n\times n}$ for $j \le i \le k$. We define $\Pi^{k}_{i=j}A_i=1$ and $\sum^{k}_{i=j}A_i=0$ if $ 0 \le k < j$.      

We denote $P^{\star}= 
\lim_{k\rightarrow \infty}\frac{1}{k} \sum^k_{i=0}P^i$ for Cesaro limit of stochastic matrix $P$ (Cesaro limit of stochastic matrix always exist \citep[Theorem A.6]{10.5555/528623}).

\subsection{Propositions}
\begin{proposition}\label{prop::inf_norm}
$B_1 \le A \le B_2$ implies $ \infn{A} \le \max \{ \infn{B_1}, \infn{B_2}\}$    
\end{proposition}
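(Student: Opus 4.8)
The plan is to reduce the statement to a coordinatewise estimate and then take a maximum over coordinates. Recall that the order $\le$ here is the entrywise order on $\real^n$ introduced in the Notations subsection, and $\infn{\cdot}$ is the $\ell^\infty$-norm $\infn{x}=\max_{s}\abs{x(s)}$.

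First I would fix an arbitrary coordinate $s$ and observe that the hypothesis $B_1 \le A \le B_2$ gives $B_1(s)\le A(s)\le B_2(s)$, i.e. $A(s)$ lies in the closed interval $[B_1(s),B_2(s)]$. The elementary fact to invoke is that the map $t\mapsto \abs{t}$ on a closed bounded interval attains its maximum at one of the endpoints, so $\abs{A(s)}\le \max\{\abs{B_1(s)},\abs{B_2(s)}\}$. (If one prefers to avoid even this phrasing, one can split into cases $A(s)\ge 0$, where $\abs{A(s)}=A(s)\le B_2(s)\le\abs{B_2(s)}$, and $A(s)<0$, where $\abs{A(s)}=-A(s)\le -B_1(s)\le\abs{B_1(s)}$.)

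Next I would bound each of the two endpoint quantities by the corresponding norm: $\abs{B_1(s)}\le\infn{B_1}$ and $\abs{B_2(s)}\le\infn{B_2}$, hence $\abs{A(s)}\le\max\{\infn{B_1},\infn{B_2}\}$. Since the right-hand side does not depend on $s$, taking the maximum over all coordinates $s$ yields $\infn{A}=\max_s\abs{A(s)}\le\max\{\infn{B_1},\infn{B_2}\}$, which is the claim.

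There is essentially no hard step here; the only point requiring a word of justification is the interval estimate $\abs{A(s)}\le\max\{\abs{B_1(s)},\abs{B_2(s)}\}$, which is the case split described above. The proposition is stated as an elementary tool for later proofs (e.g. to control $\infn{\cdot}$ of iterates sandwiched between two explicit bounds), so the write-up should be kept to a couple of lines.
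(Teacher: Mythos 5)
Your proof is correct and is exactly the elementary coordinatewise argument the paper intends; the paper's own proof is just the one-line remark ``By definition of $\infn{\cdot}$, we get the desired result,'' so your write-up simply spells out the same reasoning in full. No issues.
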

\begin{proof}
    By definition of $\infn{\cdot}$, we get the desired result.
\end{proof}
\begin{proposition}
If $P_1, P_2$ are stochastic matrices and $0 < a,b$, there exist stochastic matrix $P$ such that $aP_1+bP_2 = (a+b)P$.     
\end{proposition}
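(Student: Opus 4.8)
The statement is a completely elementary fact about convex combinations of stochastic matrices, so the plan is simply to exhibit the matrix $P$ explicitly and check its entries. Define
\[
P = \frac{a}{a+b}P_1 + \frac{b}{a+b}P_2.
\]
Since $0 < a, b$, both coefficients $\frac{a}{a+b}$ and $\frac{b}{a+b}$ are strictly positive and sum to $1$, so $P$ is a convex combination of $P_1$ and $P_2$. First I would verify that $P$ is indeed stochastic: each entry $P(s \to s') = \frac{a}{a+b}P_1(s \to s') + \frac{b}{a+b}P_2(s \to s')$ is a nonnegative combination of nonnegative numbers, hence nonnegative; and for each fixed $s$, summing over $s'$ gives $\frac{a}{a+b}\sum_{s'}P_1(s\to s') + \frac{b}{a+b}\sum_{s'}P_2(s\to s') = \frac{a}{a+b}\cdot 1 + \frac{b}{a+b}\cdot 1 = 1$, using that $P_1, P_2$ are stochastic (each row sums to $1$). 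So each row of $P$ sums to $1$ and $P$ is a stochastic matrix.

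Then I would close the argument by observing that, by construction,
\[
(a+b)P = (a+b)\left(\frac{a}{a+b}P_1 + \frac{b}{a+b}P_2\right) = aP_1 + bP_2,
\]
which is exactly the claimed identity. This completes the proof.

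There is no real obstacle here — the only thing to be slightly careful about is the hypothesis $0 < a, b$, which guarantees $a + b > 0$ so that the division is legitimate and the coefficients are genuine convex weights; if one merely had $a, b \ge 0$ with $a + b > 0$ the same argument works, and the case $a = b = 0$ is vacuous since then $aP_1 + bP_2 = 0 = 0\cdot P$ for any stochastic $P$. I expect the entire proof to be two or three lines in the final write-up.
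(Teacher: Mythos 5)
Your proposal is correct and matches the paper's own proof, which also defines $P = (a+b)^{-1}(aP_1 + bP_2)$ and appeals to a direct calculation. You simply spell out the row-sum and nonnegativity checks that the paper leaves implicit.
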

\begin{proof}
    Define $P(i,j) = (a+b)^{-1} (aP_1(i,j)+bP_2(i,j)) $. Then, by simple calculation, we get the desired result.
\end{proof}
\begin{proposition}\label{prop::monotonicity}{\protect{\cite[Lemma~1.1.1]{bertsekas2015dynamic}}} If $V\le \tilde{V}$, then $T^{\pi} U\le T^{\pi}\tilde{V}, T^{\star}V \le T^{\star}\tilde{V}$.
\end{proposition}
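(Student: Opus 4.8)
The plan is to unfold the definitions of the two operators and use two elementary facts: (i) a nonnegative-coefficient linear map is monotone with respect to the entrywise order, and (ii) the pointwise maximum is monotone. Throughout I read $T^\star$ as the Bellman optimality operator $T$ and the hypothesis as $V\le\tilde V$ (i.e.\ $V(s)\le\tilde V(s)$ for all $s\in\cS$).

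First I would handle $T^\pi$. Writing $T^\pi V = r^\pi + \cP^\pi V$ as in the preliminaries, it suffices to show $\cP^\pi V \le \cP^\pi \tilde V$, since adding the fixed vector $r^\pi$ to both sides preserves the entrywise inequality. For each $s\in\cS$ we have $(\cP^\pi V)(s) = \sum_{s'\in\cS}\cP^\pi(s\to s')\,V(s')$, and because $\cP^\pi(s\to s')\ge 0$ and $V(s')\le\tilde V(s')$ for every $s'$, each summand only increases when $V$ is replaced by $\tilde V$; summing over $s'$ gives $(\cP^\pi V)(s)\le(\cP^\pi\tilde V)(s)$. Since $s$ was arbitrary, $T^\pi V\le T^\pi\tilde V$.

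Next I would handle $T$. For a fixed $s$ and a fixed action $a\in\cA$, the distribution $P(\cdot\,|\,s,a)$ has nonnegative entries, so by the same averaging argument $r(s,a)+\sum_{s'}P(s'\,|\,s,a)V(s') \le r(s,a)+\sum_{s'}P(s'\,|\,s,a)\tilde V(s')$. Taking the maximum over $a\in\cA$ on both sides and using that $\max$ is monotone — if $\phi(a)\le\psi(a)$ for all $a$ then $\max_a\phi(a)\le\max_a\psi(a)$ — yields $TV(s)\le T\tilde V(s)$, and since $s$ was arbitrary, $TV\le T\tilde V$. (Alternatively, one can note $T V = \max_\pi T^\pi V$ over deterministic policies and invoke the $T^\pi$ case together with monotonicity of $\max$.)

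There is no real obstacle here; the only point requiring a moment's care is the step with the maximum, namely that monotonicity of the scalar $\max$ is what lets the inequality pass through the Bellman optimality operator. Everything else is a direct consequence of the transition kernels being stochastic (in particular nonnegative). The result is exactly \cite[Lemma~1.1.1]{bertsekas2015dynamic}, so I would keep the write-up to these few lines.
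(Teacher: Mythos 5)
Your proof is correct and is exactly the standard monotonicity argument that the paper's citation to \cite[Lemma~1.1.1]{bertsekas2015dynamic} points to (the paper itself gives no proof, only the reference): nonnegativity of the transition kernel handles $T^\pi$, and monotonicity of the pointwise maximum handles $T$. You also correctly read the statement's typo $T^\pi U$ as $T^\pi V$, so nothing further is needed.
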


\begin{proposition}\label{prop::nonexp_prob_kernel}
  For any policy $\pi$, $\cP^{\pi}$ is a nonexpansive linear operator such that if $V \le \tilde{V}$, $\cP^{\pi} V\le \cP^{\pi}\tilde{V}$.
\end{proposition}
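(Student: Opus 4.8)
\textbf{Proof proposal for Proposition~\ref{prop::nonexp_prob_kernel}.}

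The plan is to verify the two claimed properties of $\cP^{\pi}$ directly from its definition as a row-stochastic matrix. Recall that $\cP^{\pi}$ is the transition matrix induced by the policy $\pi$, so each row of $\cP^{\pi}$ is a probability distribution over $\cS$: every entry satisfies $\cP^{\pi}(s\rightarrow s') \ge 0$ and $\sum_{s'\in\cS}\cP^{\pi}(s\rightarrow s') = 1$ for each $s$. Linearity is immediate since $\cP^{\pi}$ acts on $V\in\real^n$ by matrix multiplication.

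For nonexpansiveness in the $\|\cdot\|_\infty$-norm, I would fix $V,\tilde V\in\real^n$ and estimate, for each state $s$,
\[
\bigl|(\cP^{\pi}V)(s) - (\cP^{\pi}\tilde V)(s)\bigr|
= \Bigl|\sum_{s'\in\cS}\cP^{\pi}(s\rightarrow s')\bigl(V(s')-\tilde V(s')\bigr)\Bigr|
\le \sum_{s'\in\cS}\cP^{\pi}(s\rightarrow s')\,\bigl|V(s')-\tilde V(s')\bigr|
\le \infn{V-\tilde V},
\]
where the first inequality is the triangle inequality together with nonnegativity of the entries, and the second uses that the row sums equal $1$. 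Taking the maximum over $s$ gives $\infn{\cP^{\pi}V-\cP^{\pi}\tilde V}\le\infn{V-\tilde V}$, which is nonexpansiveness. For monotonicity, suppose $V\le\tilde V$, i.e. $V(s')\le\tilde V(s')$ for all $s'$. Then for each $s$, since every coefficient $\cP^{\pi}(s\rightarrow s')$ is nonnegative, $\sum_{s'}\cP^{\pi}(s\rightarrow s')V(s') \le \sum_{s'}\cP^{\pi}(s\rightarrow s')\tilde V(s')$, i.e. $(\cP^{\pi}V)(s)\le(\cP^{\pi}\tilde V)(s)$; hence $\cP^{\pi}V\le\cP^{\pi}\tilde V$.

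There is no real obstacle here: the statement is an elementary consequence of $\cP^{\pi}$ being a nonnegative matrix with unit row sums, and the only things to be careful about are invoking the triangle inequality correctly and using the row-sum normalization in the right place. The entire argument is a couple of lines once the stochasticity of $\cP^{\pi}$ is recalled from its definition earlier in the paper.
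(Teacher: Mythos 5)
Your proof is correct, but it takes a different route from the paper's. The paper's proof is a one-line reduction: it observes that if one sets $r(s,a)=0$ for all $s,a$ then $T^{\pi}=\cP^{\pi}$, and then invokes Proposition~\ref{prop::monotonicity} (the cited monotonicity of $T^{\pi}$ from \citet{bertsekas2015dynamic}) to conclude. That reduction cleanly delivers the monotonicity claim, but it does not obviously address nonexpansiveness, since Proposition~\ref{prop::monotonicity} as stated is only about order preservation; the paper leaves that part implicit. Your argument instead verifies both properties directly from the fact that $\cP^{\pi}$ is a nonnegative matrix with unit row sums: the triangle-inequality estimate $\bigl|(\cP^{\pi}V)(s)-(\cP^{\pi}\tilde V)(s)\bigr|\le\infn{V-\tilde V}$ gives nonexpansiveness in $\|\cdot\|_{\infty}$, and nonnegativity of the entries gives monotonicity. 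This is more self-contained and actually more complete than the paper's proof, at the cost of a few extra lines; the paper's reduction is shorter but leans on an external reference and silently skips the nonexpansiveness half. No gap in your proposal.
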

\begin{proof} If $r(s,a)=0$ for all $s \in \cS$ and $a \in \cA$, $T^{\pi}=\cP^{\pi}$. Then by Proposition \ref{prop::monotonicity}, we have the desired result.  

\end{proof}

\begin{proposition}\label{prop::Cesaro}
For a stochastic matrix $P$, $P^{\star}P=PP^{\star}=P^{\star}$. 
\end{proposition}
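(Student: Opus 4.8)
```latex
\noindent\textbf{Proof proposal for Proposition~\ref{prop::Cesaro}.}
The plan is to use the defining property of the Ces\`aro limit $P^{\star}=\lim_{k\to\infty}\frac{1}{k}\sum_{i=0}^{k}P^i$ together with a simple telescoping/index-shift argument. The key algebraic observation is that multiplying the partial average $\frac{1}{k}\sum_{i=0}^{k}P^i$ by $P$ on either side simply shifts the summation index: $\left(\frac{1}{k}\sum_{i=0}^{k}P^i\right)P = \frac{1}{k}\sum_{i=1}^{k+1}P^i$, and likewise $P\left(\frac{1}{k}\sum_{i=0}^{k}P^i\right) = \frac{1}{k}\sum_{i=1}^{k+1}P^i$ as well, since powers of $P$ commute with $P$. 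So both $P^{\star}P$ and $PP^{\star}$ are limits of the same sequence $\frac{1}{k}\sum_{i=1}^{k+1}P^i$.

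First I would note that the sequence $\frac{1}{k}\sum_{i=1}^{k+1}P^i$ differs from $\frac{1}{k}\sum_{i=0}^{k}P^i$ by $\frac{1}{k}(P^{k+1}-P^0)=\frac{1}{k}(P^{k+1}-I)$, whose entries are bounded in absolute value by $\frac{2}{k}\to 0$ (since $P^{k+1}$ is stochastic, hence has entries in $[0,1]$). Therefore $\lim_{k\to\infty}\frac{1}{k}\sum_{i=1}^{k+1}P^i$ exists and equals $P^{\star}$. Next, since matrix multiplication by the fixed matrix $P$ is continuous, I can pass $P$ through the limit: $P^{\star}P = \left(\lim_{k\to\infty}\frac{1}{k}\sum_{i=0}^{k}P^i\right)P = \lim_{k\to\infty}\frac{1}{k}\sum_{i=0}^{k}P^{i+1} = \lim_{k\to\infty}\frac{1}{k}\sum_{i=1}^{k+1}P^i = P^{\star}$, and symmetrically $PP^{\star}=P^{\star}$ using the same shift on the left.

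There is essentially no serious obstacle here; the only thing to be careful about is justifying that the limit defining $P^{\star}$ exists in the first place (which is cited as \citet[Theorem A.6]{10.5555/528623}) and that continuity of multiplication by a fixed matrix lets us interchange limit and product, which is immediate in finite dimensions. So the ``hard part'' is merely bookkeeping the index shift and the vanishing boundary term $\frac{1}{k}(P^{k+1}-I)$; everything else is routine.
```
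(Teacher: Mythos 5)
Your proof is correct and is essentially the argument the paper intends: the paper's proof is simply ``By definition of $P^{\star}$, we get the desired result,'' and your index-shift plus vanishing boundary term $\frac{1}{k}(P^{k+1}-I)$ is the standard way to fill in that definitional claim. No gaps.
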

\begin{proof}
    By definition of $P^{\star}$, we get the desired result. 
\end{proof}

\begin{proposition}\label{prop::Bellman_to_policy}
    If $\cP^{\pi_V}g^{\star} =g^{\star}$, $\infn{g^{\star}-g^{\pi_V}} \le \infn{TV-V-g^{\star}}$ where $TV=T^{\pi_V}V$.
\end{proposition}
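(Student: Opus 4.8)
The plan is to express the gain $g^{\pi_V}$ of the greedy policy and the optimal gain $g^{\star}$ both in terms of the Cesàro-limit matrix of $\cP^{\pi_V}$ applied to the Bellman error $TV-V$, and then invoke nonexpansiveness of a stochastic matrix in $\infn{\cdot}$.

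First I would recall the standard characterization of the gain of a stationary policy: writing $(\cP^{\pi_V})^{\star}=\lim_{T\to\infty}\tfrac1T\sum_{t=0}^{T-1}(\cP^{\pi_V})^t$ for the Cesàro limit (which exists and is stochastic), the definition $g^{\pi_V}(s)=\lim_{T\to\infty}\tfrac1T\sum_{t=0}^{T-1}\big((\cP^{\pi_V})^t r^{\pi_V}\big)(s)$ gives $g^{\pi_V}=(\cP^{\pi_V})^{\star}r^{\pi_V}$. Next, since $\pi_V$ is greedy for $V$, we have $TV=T^{\pi_V}V=r^{\pi_V}+\cP^{\pi_V}V$, hence $r^{\pi_V}=TV-\cP^{\pi_V}V$. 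Substituting and using Proposition~\ref{prop::Cesaro} in the form $(\cP^{\pi_V})^{\star}\cP^{\pi_V}=(\cP^{\pi_V})^{\star}$ yields $g^{\pi_V}=(\cP^{\pi_V})^{\star}\big(TV-\cP^{\pi_V}V\big)=(\cP^{\pi_V})^{\star}(TV-V)$.

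Second, I would use the hypothesis $\cP^{\pi_V}g^{\star}=g^{\star}$: iterating gives $(\cP^{\pi_V})^t g^{\star}=g^{\star}$ for every $t\ge 0$, and averaging over $t$ gives $(\cP^{\pi_V})^{\star}g^{\star}=g^{\star}$. Subtracting the two displayed identities then gives
\[
g^{\star}-g^{\pi_V}=(\cP^{\pi_V})^{\star}g^{\star}-(\cP^{\pi_V})^{\star}(TV-V)=-(\cP^{\pi_V})^{\star}\big(TV-V-g^{\star}\big).
\]
Finally, $(\cP^{\pi_V})^{\star}$ is a stochastic matrix and hence nonexpansive in $\infn{\cdot}$ (Proposition~\ref{prop::nonexp_prob_kernel}), so taking $\infn{\cdot}$ of both sides gives $\infn{g^{\star}-g^{\pi_V}}\le\infn{TV-V-g^{\star}}$, as claimed.

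I do not expect a serious obstacle here; the only points needing care are the two standard facts — that $g^{\pi_V}=(\cP^{\pi_V})^{\star}r^{\pi_V}$ and that $(\cP^{\pi_V})^{\star}g^{\star}=g^{\star}$ follows from $\cP^{\pi_V}g^{\star}=g^{\star}$ — after which the argument is a short algebraic manipulation using the Cesàro identities already recorded among the Propositions together with monotone/nonexpansive properties of stochastic matrices.
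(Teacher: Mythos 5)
Your proposal is correct and follows essentially the same route as the paper's proof: both derive $g^{\pi_V}=(\cP^{\pi_V})^{\star}(TV-V)$ via the Cesàro-limit characterization of the gain together with $(\cP^{\pi_V})^{\star}\cP^{\pi_V}=(\cP^{\pi_V})^{\star}$, use the hypothesis to get $(\cP^{\pi_V})^{\star}g^{\star}=g^{\star}$, and conclude by nonexpansiveness of the stochastic matrix $(\cP^{\pi_V})^{\star}$ in $\infn{\cdot}$. The only difference is cosmetic bookkeeping in how the two identities are combined.
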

\begin{proof}
    Since $\cP^{\pi_V}g^{\star} =g^{\star}$, $\para{\cP^{\pi_V}}^{\star}g^{\star} = g^{\star}$. Then $g^{\pi_V}-g^{\star}=\para{\cP^{\pi_V}}^{\star}(r^{\pi_V}-g^{\star})=\para{\cP^{\pi_V}}^{\star}(r^{\pi_V}+\cP^{\pi_V}V-V-g^{\star})=\para{\cP^{\pi_V}}^{\star}(TV-V-g^{\star})$, where second equality is from Proposition~\ref{prop::Cesaro}.
Therefore, we have $\infn{g^{\star}-g^{\pi_k}}\le \infn{TV-V-g^{\star}}$.
\end{proof}

\vspace{0.1in}

\section{Omitted proofs of Theorems for Section \ref{sec::KM-VI} and \ref{s::omitted-KM-theorems}}\label{s::omitted-Rx-VI-proofs}
In this section, we present omitted proofs convergence theorems of \ref{eq:Rx-VI}. We prove Theorems \ref{thm::KM_normalized}, \ref{thm::KM_bellman}, and \ref{thm:KMm_optimized} in turn.

\subsection{Proof of Theorem \ref{thm::KM_normalized}}
First, we prove the following lemma by induction.
\begin{lemma}\label{lem::KM_expansion}
For the iterates $\{V^k\}_{k=1,2,\dots}$ of \ref{eq:Rx-VI}, 
 \[V^k= \Pi^{0}_{i=k-1}(\lambda_{i+1}I+(1-\lambda_{i+1}) \cP^{\pi_i}) V^0+ \sum^{k-1}_{j=0}\para{\Pi_{i=k-1}^{j+1}(\lambda_{i+1}I+(1-\lambda_{i+1}) \cP^{\pi_i})}(1-\lambda_{j+1}) r^{\pi_j}.\]
\end{lemma}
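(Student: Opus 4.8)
\textbf{Proof plan for Lemma~\ref{lem::KM_expansion}.}
The statement is a closed-form unrolling of the \ref{eq:Rx-VI} recursion, so the natural approach is induction on $k$. The key observation that makes everything work is that each application of $T$ to the current iterate can be replaced by $T^{\pi_{k-1}}$ acting on that iterate, where $\pi_{k-1}$ is the greedy policy at $V^{k-1}$: by definition of the greedy policy, $TV^{k-1} = T^{\pi_{k-1}}V^{k-1} = r^{\pi_{k-1}} + \cP^{\pi_{k-1}}V^{k-1}$. This turns the nonlinear recursion $V^k = \lambda_k V^{k-1} + (1-\lambda_k)TV^{k-1}$ into the affine (in $V^{k-1}$) recursion
\[
V^k = \bigl(\lambda_k I + (1-\lambda_k)\cP^{\pi_{k-1}}\bigr)V^{k-1} + (1-\lambda_k) r^{\pi_{k-1}},
\]
at which point the formula is just the standard solution of an affine difference equation with time-varying coefficients.

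\textbf{Base case.} For $k=1$, the claimed formula reads $V^1 = (\lambda_1 I + (1-\lambda_1)\cP^{\pi_0})V^0 + (1-\lambda_1)r^{\pi_0}$ (the product $\Pi^{0}_{i=0}$ has the single factor $i=0$, and the sum over $j$ has only the term $j=0$ with an empty product $\Pi^{1}_{i=0}(\cdots)=I$ by the convention in the Preliminaries). This is exactly $V^1 = \lambda_1 V^0 + (1-\lambda_1)TV^0$ after substituting $TV^0 = T^{\pi_0}V^0 = r^{\pi_0} + \cP^{\pi_0}V^0$, so the base case holds.

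\textbf{Induction step.} Assume the formula holds for $k-1$. Apply one more step of \ref{eq:Rx-VI}: $V^k = \lambda_k V^{k-1} + (1-\lambda_k)TV^{k-1}$, and rewrite $TV^{k-1} = r^{\pi_{k-1}} + \cP^{\pi_{k-1}}V^{k-1}$, giving $V^k = (\lambda_k I + (1-\lambda_k)\cP^{\pi_{k-1}})V^{k-1} + (1-\lambda_k)r^{\pi_{k-1}}$. Now substitute the inductive expression for $V^{k-1}$ and distribute $(\lambda_k I + (1-\lambda_k)\cP^{\pi_{k-1}})$ across it. The leading term becomes $(\lambda_k I + (1-\lambda_k)\cP^{\pi_{k-1}})\,\Pi^{0}_{i=k-2}(\lambda_{i+1}I+(1-\lambda_{i+1})\cP^{\pi_i})\,V^0 = \Pi^{0}_{i=k-1}(\lambda_{i+1}I+(1-\lambda_{i+1})\cP^{\pi_i})V^0$, matching the first term of the target. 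For the reward terms: each summand $\para{\Pi_{i=k-2}^{j+1}(\lambda_{i+1}I+(1-\lambda_{i+1})\cP^{\pi_i})}(1-\lambda_{j+1})r^{\pi_j}$ for $0 \le j \le k-2$ gets left-multiplied by $(\lambda_k I + (1-\lambda_k)\cP^{\pi_{k-1}})$, which is the $i=k-1$ factor, so the product $\Pi_{i=k-2}^{j+1}$ becomes $\Pi_{i=k-1}^{j+1}$; together with the freshly added $j=k-1$ term $(1-\lambda_k)r^{\pi_{k-1}} = \para{\Pi_{i=k-1}^{k}(\cdots)}(1-\lambda_k)r^{\pi_{k-1}}$ (empty product $=I$), these reassemble exactly into $\sum^{k-1}_{j=0}\para{\Pi_{i=k-1}^{j+1}(\lambda_{i+1}I+(1-\lambda_{i+1})\cP^{\pi_i})}(1-\lambda_{j+1})r^{\pi_j}$. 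This completes the induction.

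\textbf{Main obstacle.} There is no conceptual difficulty here; the only thing requiring care is bookkeeping of the product index conventions — distinguishing the descending-order product $\Pi^{j+1}_{i=k-1}$ from an ascending one, and correctly handling the empty-product and empty-sum conventions ($\Pi^k_{i=j}A_i = I$ and $\sum^k_{i=j}A_i = 0$ when $k<j$) established in the Preliminaries, so that the $j=k-1$ term and the shift of the running index from $k-2$ to $k-1$ line up. I would write the index ranges out explicitly at each step to avoid off-by-one errors.
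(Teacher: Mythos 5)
Your proof is correct and follows essentially the same route as the paper: induction on $k$, rewriting $TV^{k-1}$ as $r^{\pi_{k-1}}+\cP^{\pi_{k-1}}V^{k-1}$ via the greedy policy, and distributing the affine map $\lambda_k I+(1-\lambda_k)\cP^{\pi_{k-1}}$ over the inductive expression. The paper's proof is the same induction (written as a step from $k$ to $k+1$), so there is nothing to add.
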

\begin{proof}
       If $k=1$, $V^1= \lambda_1 V^0+(1-\lambda_1)TV^0=(\lambda_1 I +(1-\lambda_1)\cP^{\pi_0})V^0+(1-\lambda_1)r^{\pi_0}$. 

       By induction,
       \begin{align*}
           &V^{k+1} \\&= \lambda_{k+1}\Bigg(\Pi^{0}_{i=k-1}(\lambda_{i+1}I+(1-\lambda_{i+1}) \cP^{\pi_i}) V^0 
           \\& \quad+ \sum^{k-1}_{j=0}\para{\Pi_{i=k-1}^{j+1}(\lambda_{i+1}I+(1-\lambda_{i+1}) \cP^{\pi_i})}(1-\lambda_{j+1}) r^{\pi_j}\Bigg)
           \\ & \quad+(1-\lambda_{k+1})\Bigg(\cP^{\pi_k}\Bigg(\Pi^{0}_{i=k-1}(\lambda_{i+1}I+(1-\lambda_{i+1}) \cP^{\pi_i}) V^0 
           \\& \quad + \sum^{k-1}_{j=0} \para{\Pi_{i=k-1}^{j+1}(\lambda_{i+1}I+(1-\lambda_{i+1}) \cP^{\pi_i})}(1-\lambda_{j+1}) r^{\pi_j}\Bigg)
           +r^{\pi_k} \Bigg)
           \\& = \Pi^{0}_{i=k}(\lambda_{i+1}I+(1-\lambda_{i+1}) \cP^{\pi_i}) V^0+ \sum^{k}_{j=0}\para{\Pi_{i=k}^{j+1}(\lambda_{i+1}I+(1-\lambda_{i+1}) \cP^{\pi_i})}(1-\lambda_{j+1}) r^{\pi_j}.
       \end{align*}
\end{proof}
 Now, we prove following key lemma. 
\begin{lemma}\label{lem::KM_normalized_iterate}
For the iterates $\{V^k\}_{k=0, 1,2,\dots}$ of \ref{eq:Rx-VI},
\begin{align*} 
     & V^k- \sum^{k}_{j=1} (1-\lambda_{j})g^{\star} \le \Pi^{0}_{i=k-1}(\lambda_{i+1}I+(1-\lambda_{i+1}) \cP^{\pi_i}) (V^0-h^{\star})+h^{\star} \\ &   h^{\star}+\Pi^{0}_{i=k-1}(\lambda_{i+1}I+(1-\lambda_{i+1}) \cP^{\pi_{\star}})(V_0-h^{\star})  \le V^k- \sum^{k}_{j=1} (1-\lambda_{j})g^{\star}.
\end{align*}
\end{lemma}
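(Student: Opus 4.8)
The plan is to prove the two bounds of Lemma~\ref{lem::KM_normalized_iterate} by induction on $k$, using the closed-form expansion of Lemma~\ref{lem::KM_expansion} together with the modified Bellman equations. The key algebraic fact is that the ``reward-and-transition'' operator appearing in Lemma~\ref{lem::KM_expansion}, namely $\Phi_j := \lambda_{j+1}I + (1-\lambda_{j+1})\cP^{\pi_j}$, is a nonnegative linear operator that maps the constant/affine structure of $h^\star$ nicely: from the second modified Bellman equation $\max_\pi\{r^\pi + \cP^\pi h^\star\} = h^\star + g^\star$, the greedy policy $\pi_j$ (which attains $Th^\star \le$ something, or more precisely satisfies $r^{\pi_j} + \cP^{\pi_j}V^{j} = TV^j$) satisfies $r^{\pi_j} + \cP^{\pi_j}h^\star \le h^\star + g^\star$, while the optimal policy $\pi_\star$ satisfies $r^{\pi_\star} + \cP^{\pi_\star}h^\star = h^\star + g^\star$ (equality). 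These inequalities are exactly what will drive the upper and lower bounds respectively.

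For the \textbf{upper bound}, I would start from $V^k = \Phi_{k-1}\cdots\Phi_0 V^0 + \sum_{j=0}^{k-1}(\Phi_{k-1}\cdots\Phi_{j+1})(1-\lambda_{j+1})r^{\pi_j}$ and rewrite $V^0 = (V^0 - h^\star) + h^\star$. The first term becomes $\Pi^{0}_{i=k-1}\Phi_i (V^0-h^\star) + \Pi^{0}_{i=k-1}\Phi_i h^\star$. The goal is to show $\Pi^0_{i=k-1}\Phi_i h^\star + \sum_{j=0}^{k-1}(\Pi^{j+1}_{i=k-1}\Phi_i)(1-\lambda_{j+1})r^{\pi_j} \le h^\star + \sum_{j=1}^k (1-\lambda_j)g^\star$. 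This I would establish by a telescoping/induction argument: using $\Phi_j h^\star = \lambda_{j+1}h^\star + (1-\lambda_{j+1})\cP^{\pi_j}h^\star$ and the greedy inequality $r^{\pi_j} + \cP^{\pi_j}h^\star \le h^\star + g^\star$, one gets $(1-\lambda_{j+1})r^{\pi_j} + \Phi_j h^\star \le h^\star + (1-\lambda_{j+1})g^\star$, and since each $\Phi_i$ is monotone (Proposition~\ref{prop::nonexp_prob_kernel} extended to convex combinations with $I$, which is immediate), applying $\Pi^{j+1}_{i=k-1}\Phi_i$ preserves the inequality; summing over $j$ and telescoping yields the claim. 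Monotonicity is used to push the bounds through the operator products, and I should note $\Phi_i \mathbf{1} = \mathbf{1}$ (these are convex combinations of stochastic matrices and identity) so that $\Phi_i (h^\star + c\mathbf{1}) = \Phi_i h^\star + c\mathbf{1}$, which keeps the telescoping clean.

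For the \textbf{lower bound}, the argument is the mirror image but now using the \emph{optimal} policy $\pi_\star$: replace every $\pi_j$ in the $\Phi$-products by $\pi_\star$. Because $\pi_j$ is greedy for $V^j$ we have $TV^j = r^{\pi_j} + \cP^{\pi_j}V^j \ge r^{\pi_\star} + \cP^{\pi_\star}V^j$, and more carefully one wants to peel off the expansion of $V^k$ and at each stage replace $r^{\pi_j} + \cP^{\pi_j}(\cdot)$ by $r^{\pi_\star} + \cP^{\pi_\star}(\cdot)$, each replacement only decreasing the value (using monotonicity of the remaining operator products). After fully replacing, the expansion with $\pi_\star$ everywhere combined with the \emph{equality} $r^{\pi_\star} + \cP^{\pi_\star}h^\star = h^\star + g^\star$ telescopes exactly to $h^\star + \Pi^0_{i=k-1}(\lambda_{i+1}I + (1-\lambda_{i+1})\cP^{\pi_\star})(V^0 - h^\star) + \sum_{j=1}^k(1-\lambda_j)g^\star$, giving the stated lower bound.

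The main obstacle I anticipate is getting the bookkeeping of the nested operator products exactly right — in particular, correctly handling the order of composition (ascending vs descending indices in $\Pi$), making sure that when I ``replace $\pi_j$ by $\pi_\star$ one at a time'' the remaining operators that get applied to the inequality are genuinely monotone products (which they are, being compositions of monotone $\Phi$'s), and verifying the base case $k=1$ and the inductive step align with the $\sum_{j=1}^k (1-\lambda_j)g^\star$ offset. A secondary subtlety is justifying $r^{\pi_j} + \cP^{\pi_j}h^\star \le h^\star + g^\star$: this follows because $\pi_j$ is greedy w.r.t.\ $TV^j$, hence $T^{\pi_j} h^\star \le T h^\star$, and $Th^\star \le h^\star + g^\star$ is one direction of the modified Bellman equation — I would want to state this as a small sub-claim. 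Once these inequalities and the monotonicity of the $\Phi$-products are in hand, both bounds follow by a routine (if notation-heavy) induction.
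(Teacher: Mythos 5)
Your overall strategy is the same as the paper's: expand $V^k$ via Lemma~\ref{lem::KM_expansion}, bound the rewards through the second modified Bellman equation ($r^{\pi_j}\le g^\star+(I-\cP^{\pi_j})h^\star$ for the greedy policies, with equality for $\pi^\star$), push these bounds through the monotone operator products $\Phi_i=\lambda_{i+1}I+(1-\lambda_{i+1})\cP^{\pi_i}$, and telescope using $(1-\lambda_{j+1})(I-\cP^{\pi_j})=I-\Phi_j$; the lower bound is obtained by replacing every greedy $\pi_j$ with $\pi^\star$ exactly as you describe.

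There is one genuine omission: you never invoke the \emph{first} modified Bellman equation $\max_\pi\{\cP^\pi g^\star\}=g^\star$, and without it your telescoping does not close in the multichain case. After you apply $\Pi^{j+1}_{i=k-1}\Phi_i$ to $(1-\lambda_{j+1})r^{\pi_j}+\Phi_j h^\star\le h^\star+(1-\lambda_{j+1})g^\star$ and sum over $j$, the right-hand side contains $\sum_j(1-\lambda_{j+1})\,\Pi^{j+1}_{i=k-1}\Phi_i\, g^\star$, not $\sum_{j=1}^k(1-\lambda_j)g^\star$. For a general multichain MDP $g^\star$ is \emph{not} a constant vector, so $\Phi_i g^\star\ne g^\star$ in general; your remark that $\Phi_i\mathbf 1=\mathbf 1$ ``keeps the telescoping clean'' suggests you are implicitly treating $g^\star$ as a multiple of $\mathbf 1$, which holds only in the weakly communicating/unichain setting. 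The fix is exactly what the paper does: from the first modified Bellman equation, $\cP^{\pi}g^\star\le g^\star$ for every policy, hence $\Pi^{j+1}_{i=k-1}\Phi_i\, g^\star\le g^\star$, which yields the upper bound; and $\pi^\star$ attains equality, $\cP^{\pi^\star}g^\star=g^\star$, so the corresponding products fix $g^\star$ exactly, which yields the lower bound. With that one-line addition your argument matches the paper's proof.
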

\begin{proof}
For the first inequality, we have
\begin{align*}
    &V^k 
    \\&=\Pi^{0}_{i=k-1}(\lambda_{i+1}I+(1-\lambda_{i+1}) \cP^{\pi_i}) V^0+ \sum^{k-1}_{j=0}\para{\Pi_{i=k-1}^{j+1}(\lambda_{i+1}I+(1-\lambda_{i+1}) \cP^{\pi_i})}(1-\lambda_{j+1}) r^{\pi_j}\\& \le  \Pi^{0}_{i=k-1}(\lambda_{i+1}I+(1-\lambda_{i+1}) \cP^{\pi_i}) V^0\\& \quad + \sum^{k-1}_{j=0}\para{\Pi_{i=k-1}^{j+1}(\lambda_{i+1}I+(1-\lambda_{i+1}) \cP^{\pi_i})}(1-\lambda_{j+1}) (g^{\star}+(I-\cP^{\pi_{j}})h^{\star}) \\& \le \Pi^{0}_{i=k-1}(\lambda_{i+1}I+(1-\lambda_{i+1}) \cP^{\pi_i}) V^0\\& \quad + \sum^{k-1}_{j=0}\Pi_{i=k-1}^{j+1}(\lambda_{i+1}I+(1-\lambda_{i+1}) \cP^{\pi_i}) (I-\lambda_{j+1} I-(1-\lambda_{j+1})\cP^{\pi_{j}}) +\sum^{k}_{j=1} (1-\lambda_{j})g^{\star}\\& =\Pi^{0}_{i=k-1}(\lambda_{i+1}I+(1-\lambda_{i+1}) \cP^{\pi_i}) V^0+(I-\Pi_{i=k-1}^{0}(\lambda_{i+1}I+(1-\lambda_{i+1}) \cP^{\pi_i}))h^{\star}\\& \quad + \sum^{k}_{j=1} (1-\lambda_{i})g^{\star}\\& =\Pi^{0}_{i=k-1}(\lambda_{i+1}I+(1-\lambda_{i+1}) \cP^{\pi_i}) (V^0-h^{\star})+h^{\star} + \sum^{k}_{j=1} (1-\lambda_{i})g^{\star},
\end{align*}
where first equality comes form Lemma \ref{lem::KM_expansion}, first inequality follows from second Bellman equation, second inequality follows from first Bellman equation, and second equality is from telescoping-sum argument. 

We now prove the second inequality. 
    \begin{align*}
        &V^k 
        \\&=\Pi^{0}_{i=k-1}(\lambda_{i+1}I+(1-\lambda_{i+1}) \cP^{\pi_i}) V^0+ \sum^{k-1}_{j=0}\para{\Pi_{i=k-1}^{j+1}(\lambda_{i+1}I+(1-\lambda_{i+1}) \cP^{\pi_i})}(1-\lambda_{j+1}) r^{\pi_j}\\
        &\ge \Pi^{0}_{i=k-1}(\lambda_{i+1}I+(1-\lambda_{i+1}) \cP^{\pi_{\star}}) V^0+ \sum^{k-1}_{j=0}\Pi_{i=k-1}^{j+1}(\lambda_{i+1}I+(1-\lambda_{i+1}) \cP^{\pi_{\star}})(1-\lambda_{j+1}) r^{\pi_{\star}} \\&= \Pi^{0}_{i=k-1}(\lambda_{i+1}I+(1-\lambda_{i+1}) \cP^{\pi_{\star}}) V^0\\&+ \sum^{k-1}_{j=0}\Pi_{i=k-1}^{j+1}(\lambda_{i+1}I+(1-\lambda_{i+1}) \cP^{\pi_{\star}})(1-\lambda_{j+1}) (g^{\star}+(I-\cP^{\pi_{\star}})h^{\star})
        \\&= \sum^{k}_{j=1} (1-\lambda_{i})g^{\star}+  h^{\star}+\Pi^{0}_{i=k-1}(\lambda_{i+1}+(1-\lambda_{i+1}) \cP^{\pi_{\star}})(V_0-h^{\star}) 
    \end{align*}
where first inequality follows from Lemma \ref{prop::monotonicity} and the fact that $\{\pi_l\}_{l=0,1,\dots,k}$ are greedy policies, first equality comes from second Bellman equation, and second equality is from first Bellman equation. 
\end{proof}

We are now ready to prove Theorem \ref{thm::KM_normalized} .
\begin{proof}[Proof of Theorem \ref{thm::KM_normalized} ]
By Lemma \ref{lem::KM_normalized_iterate}, we have 
\begin{align*}
    V^k-V^0- \sum^{k}_{j=1} (1-\lambda_{j})g^{\star} &\le \Pi^{0}_{i=k-1}(\lambda_{i+1}I+(1-\lambda_{i+1}) \cP^{\pi_i}) (V^0-h^{\star})+ h^{\star}-V^0
    \\& = \para{\Pi^{0}_{i=k-1}(\lambda_{i+1}I +(1-\lambda_{i+1}) \cP^{\pi_i})-(\Pi^{0}_{i=k-1}\lambda_{i+1})I} (V^0-h^{\star})
    \\& \quad -(1-\Pi^{0}_{i=k-1}\lambda_{i+1})I(V^0-h^{\star}), 
\end{align*}
and
\begin{align*}
  V^k-V^0-\sum^{k}_{j=1} (1-\lambda_{j})g^{\star} &\ge \Pi^{0}_{i=k-1}(\lambda_{i+1}+(1-\lambda_{i+1}) \cP^{\pi_{\star}})(V_0-h^{\star})+ h^{\star}-V^0     
    \\& = \para{\Pi^{0}_{i=k-1}(\lambda_{i+1}I +(1-\lambda_{i+1}) \cP^{\pi_{\star}})-(\Pi^{0}_{i=k-1}\lambda_{i+1})I} (V^0-h^{\star})
    \\& \quad -(1-\Pi^{0}_{i=k-1}\lambda_{i+1})I(V^0-h^{\star}).
\end{align*}
By Proposition \ref{prop::inf_norm}, this implies 
   \begin{align*}
       &\infn{V^k-V^0- \sum^{k}_{j=1} (1-\lambda_{j})g^{\star} } \le 2(1-\Pi^{k}_{i=1}\lambda_{i})\infn{V^0-h^{\star}}.
   \end{align*}

  Hence, we conclude 
    \[\infn{\frac{V^k-V^0}{\sum^{k}_{j=1} (1-\lambda_{j})}-g^{\star}} \le \frac{2(1-\Pi^{k}_{i=1}\lambda_{i})}{\sum^{k}_{j=1} (1-\lambda_{j})}\infn{V^0-h^{\star}}.\]
\end{proof}

\subsection{Proof of Theorem \ref{thm::KM_bellman}}

First, define  $a^k_{j} = \para{\Pi^{k}_{i=j+1}\lambda_{i}}(1-\lambda_{j})$ for $ 0 \le  j \le k$ and $a^0_0=1$, where $\{\lambda_k\}_{k \in \mathbf{N}} \in [0,1]$. Let $\lambda_0=0$ for computational conciseness. Following lemma will simplify calculation in later proof. 
\begin{lemma}\label{lem::KM_coeff}
   For $0 \le k_2 < k_1$ , 
    \begin{align*}
        &\sum^{l}_{j=i} a^{k}_j = \Pi^{k}_{s=l+1}\lambda_{s}-\Pi^{k}_{s=i}\lambda_{s},  \qquad  a^{k_1}_i-a^{k_2}_i = -\sum^{k_1}_{j=k_2+1} a^{k_1}_ja^{k_2}_i, 
        \\& \qquad \sum^{k_1}_{i=k_2+1}\sum^{k_2}_{j=0} a^{k_2}_j  a^{k_1}_i \para{ \sum^{i-1}_{l=j} (1-\lambda_{l})}=\sum^{k_1}_{i=k_2+1}(1-\lambda_{l}).
    \end{align*}
\end{lemma}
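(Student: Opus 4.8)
The plan is to reduce all three identities to the single elementary observation that, for $0 \le j \le k$, the coefficient $a^k_j = \left(\prod_{s=j+1}^{k}\lambda_s\right)(1-\lambda_j)$ admits the product-difference form $a^k_j = \prod_{s=j+1}^{k}\lambda_s - \prod_{s=j}^{k}\lambda_s$, where empty products are read as $1$ (this is consistent with $a^0_0 = 1$ since $\lambda_0 = 0$, and with $a^k_k = 1-\lambda_k$). Granting this, the first identity is a pure telescoping sum: summing the product-difference form over $j = i,\dots,l$, the term $+\prod_{s=j+1}^{k}\lambda_s$ cancels the term $-\prod_{s=j}^{k}\lambda_s$ coming from index $j+1$, leaving $\sum_{j=i}^{l} a^k_j = \prod_{s=l+1}^{k}\lambda_s - \prod_{s=i}^{k}\lambda_s$.

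For the second identity I would factor directly: since $a^{k_1}_i = (1-\lambda_i)\prod_{s=i+1}^{k_1}\lambda_s$ and $a^{k_2}_i = (1-\lambda_i)\prod_{s=i+1}^{k_2}\lambda_s$, one gets $a^{k_1}_i - a^{k_2}_i = a^{k_2}_i\left(\prod_{s=k_2+1}^{k_1}\lambda_s - 1\right)$. Applying the first identity with lower limit $k_2+1$ and both upper limits equal to $k_1$ gives $\sum_{j=k_2+1}^{k_1} a^{k_1}_j = 1 - \prod_{s=k_2+1}^{k_1}\lambda_s$, and substituting yields $a^{k_1}_i - a^{k_2}_i = -a^{k_2}_i\sum_{j=k_2+1}^{k_1} a^{k_1}_j$, which is the claim.

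The third identity is where the real work lies; I would also read its right-hand side as $\sum_{i=k_2+1}^{k_1}(1-\lambda_i)$, the stray dummy index in the stated form being a typo. The plan is to interchange the order of summation so that the index $l$ of the innermost sum becomes outermost. For fixed $l$, the constraints $j \le l \le i-1$, $0 \le j \le k_2$, $k_2+1 \le i \le k_1$ split into two regimes: for $l \le k_2$ the $j$-sum runs over $0 \le j \le l$ and the $i$-sum over the full range $k_2+1 \le i \le k_1$; for $l \ge k_2+1$ the $j$-sum runs over the full range $0 \le j \le k_2$ (so $\sum_j a^{k_2}_j = 1$, using $\lambda_0 = 0$) and the $i$-sum over $l+1 \le i \le k_1$. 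Evaluating the resulting partial sums of $a^{k_2}_j$'s and of $a^{k_1}_i$'s by the first identity, and using $\sum_{l=0}^{k_2} a^{k_2}_l = 1$, the $l \le k_2$ regime contributes $1 - \prod_{s=k_2+1}^{k_1}\lambda_s$ and the $l \ge k_2+1$ regime contributes $\sum_{l=k_2+1}^{k_1-1}(1-\lambda_l) - \sum_{l=k_2+1}^{k_1-1} a^{k_1}_l$; a final use of the first identity together with $a^{k_1}_{k_1} = 1-\lambda_{k_1}$ collapses this last sum, the $\prod_{s=k_2+1}^{k_1}\lambda_s$ terms cancel, and what remains is exactly $\sum_{i=k_2+1}^{k_1}(1-\lambda_i)$.

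I expect the third identity to be the only genuine obstacle: the care goes into getting the summation-order interchange right, correctly delineating the two regimes in $l$, and tracking the empty-product and boundary edge cases (at $l = k_2$, at $l = k_1-1$, and the values $a^{k_1}_{k_1} = 1-\lambda_{k_1}$ and $\prod_{s=0}^{k_2}\lambda_s = 0$). The first two identities fall out immediately once the product-difference rewriting of $a^k_j$ is in hand.
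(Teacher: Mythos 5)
Your proposal is correct and follows essentially the same route as the paper: the first two identities via the product-difference rewriting $a^k_j = \prod_{s=j+1}^{k}\lambda_s - \prod_{s=j}^{k}\lambda_s$ and telescoping (which the paper dismisses as ``simple calculation''), and the third via the same interchange of summation order, the same split into the regimes $l \le k_2$ and $l \ge k_2+1$, and the same use of $\sum_{j=0}^{k_2} a^{k_2}_j = 1$ and the first identity to collapse the partial sums. Your reading of the dummy-index mismatch on the right-hand side of the third identity as a typo for $\sum_{i=k_2+1}^{k_1}(1-\lambda_i)$ is also consistent with the paper's own computation.
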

\begin{proof}
    First and second equality can be proved by simple calculation. With first and second equality, we prove third equality as follows.
    \begin{align*}
        \sum^{k_1}_{i=k_2+1}\sum^{k_2}_{j=0} a^{k_2}_j  a^{k_1}_i \para{ \sum^{i-1}_{l=j} (1-\lambda_{l})}& =        \sum^{k_1}_{i=k_2+1}\sum^{k_2}_{l=0}  a^{k_1}_i (1-\lambda_{l})\para{ \sum^{l}_{j=0} a^{k_2}_j}
        \\& \quad+        \sum^{k_1}_{i=k_2+2}\sum^{i-1}_{l=k_2+1}  a^{k_1}_i (1-\lambda_{l})\para{ \sum^{k_2}_{j=0} a^{k_2}_j}
        \\& = \sum^{k_1}_{i=k_2+1}\sum^{k_2}_{l=0}  a^{k_1}_i a^{k_2}_l+\sum^{k_1}_{i=k_2+2}\sum^{i-1}_{l=k_2+1}  a^{k_1}_i (1-\lambda_{l})
        \\& = \sum^{k_1}_{i=k_2+1}  a^{k_1}_i+\sum^{k_1-1}_{l=k_2+1}  \sum^{k_1}_{i=l+1}(1-\lambda_{l}) a^{k_1}_i
        \\& =1-\Pi^{k_1}_{l=k_2+1}\lambda_{l}+\sum^{k_1-1}_{l=k_2+1}  (1-\lambda_{l})(1-\Pi^{k_1}_{j=l+1}\lambda_{j})
        \\& =\sum^{k_1-1}_{l=k_2+1}  (1-\lambda_{l})+1-\Pi^{k_1}_{l=k_2+1}\lambda_{l}-\sum^{k_1-1}_{l=k_2+1}  a^{k_1}_l
         \\& =\sum^{k_1}_{l=k_2+1}  (1-\lambda_{l}).
    \end{align*}
\end{proof}

By simple calculation, for the iterates $\{V^k\}_{k=0, 1,2,\dots}$ of \ref{eq:Rx-VI},
\begin{align*}
   V^k = \sum^{k}_{j=0} \para{\Pi^{k}_{i=j+1}\lambda_{i}}(1-\lambda_{j}) TV^{j-1},
\end{align*}
where $TV^{-1}=V^0$, and we have following lemma.
\begin{lemma}\label{lem::KM_diff}
 For the iterates $\{V^k\}_{k=0, 1,2,\dots}$ of \ref{eq:Rx-VI},
    \[V^{k_1}-V^{k_2} = \sum^{k_2}_{j=0}\sum^{k_1}_{i=k_2+1} a^{k_2}_j  a^{k_1}_i  (TV^{i-1}-TV^{j-1})\]
for $0 \le k_2 \le k_1$.
\end{lemma}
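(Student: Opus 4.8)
The plan is to prove Lemma~\ref{lem::KM_diff} by a direct algebraic manipulation starting from the expansion $V^k = \sum^{k}_{j=0} a^k_j\, TV^{j-1}$ recorded just before the lemma statement, where $a^k_j = \para{\Pi^{k}_{i=j+1}\lambda_{i}}(1-\lambda_{j})$ and $TV^{-1}=V^0$. First I would write $V^{k_1}-V^{k_2} = \sum^{k_1}_{i=0} a^{k_1}_i\, TV^{i-1} - \sum^{k_2}_{j=0} a^{k_2}_j\, TV^{j-1}$ and split the first sum at index $k_2$, so that $V^{k_1}-V^{k_2} = \sum^{k_2}_{i=0}\para{a^{k_1}_i - a^{k_2}_i} TV^{i-1} + \sum^{k_1}_{i=k_2+1} a^{k_1}_i\, TV^{i-1}$.

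Next I would use the second identity of Lemma~\ref{lem::KM_coeff}, namely $a^{k_1}_i-a^{k_2}_i = -\sum^{k_1}_{j=k_2+1} a^{k_1}_j a^{k_2}_i$, to rewrite the first group of terms as $-\sum^{k_2}_{i=0}\sum^{k_1}_{j=k_2+1} a^{k_1}_j a^{k_2}_i\, TV^{i-1}$. For the second group I would insert the normalization $\sum^{k_2}_{j=0} a^{k_2}_j = 1$ (which is the first identity of Lemma~\ref{lem::KM_coeff} specialized to $i=0$, $l=k_2$, using $\lambda_0=0$ so $\Pi^{k_2}_{s=1}\lambda_s - \Pi^{k_2}_{s=0}\lambda_s = \Pi^{k_2}_{s=1}\lambda_s$; one should double-check this normalization carefully since it is the linchpin), so that $\sum^{k_1}_{i=k_2+1} a^{k_1}_i\, TV^{i-1} = \sum^{k_1}_{i=k_2+1}\sum^{k_2}_{j=0} a^{k_2}_j a^{k_1}_i\, TV^{i-1}$. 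Adding the two regrouped expressions and relabeling the dummy index in the first ($j \leftrightarrow i$ appropriately) yields $\sum^{k_2}_{j=0}\sum^{k_1}_{i=k_2+1} a^{k_2}_j a^{k_1}_i\para{TV^{i-1}-TV^{j-1}}$, which is exactly the claim.

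The main obstacle I anticipate is purely bookkeeping: getting the index ranges and the roles of $i$ versus $j$ consistent between the two double sums so they combine cleanly, and verifying the normalization $\sum^{k_2}_{j=0} a^{k_2}_j = 1$ holds with the convention $\lambda_0 = 0$ and the empty-product convention $\Pi^{k}_{i=j}A_i = 1$ for $k<j$. There is no analytic difficulty here — no norms, no monotonicity, no Bellman structure is needed; it is entirely a consequence of the coefficient identities in Lemma~\ref{lem::KM_coeff} and the linear expansion of the \ref{eq:Rx-VI} iterates. One mild care point is the edge case $k_1 = k_2$, where both sides are $0$ by the empty-sum convention $\sum^{k}_{i=j}A_i = 0$ for $k<j$, so the statement holds trivially and the manipulation above degenerates correctly.
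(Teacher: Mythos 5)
Your proposal is correct and follows essentially the same route as the paper's proof: split $\sum^{k_1}_{i=0}a^{k_1}_i TV^{i-1}$ at index $k_2$, apply the identity $a^{k_1}_i-a^{k_2}_i=-\sum^{k_1}_{j=k_2+1}a^{k_1}_j a^{k_2}_i$ from Lemma~\ref{lem::KM_coeff}, insert the normalization $\sum^{k_2}_{j=0}a^{k_2}_j=1$, and recombine. The only nit is your intermediate expression for that normalization (it should read $\Pi^{k_2}_{s=k_2+1}\lambda_s-\Pi^{k_2}_{s=0}\lambda_s=1-0=1$ with $\lambda_0=0$), but the conclusion you rely on is correct.
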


\begin{proof}
By definition, we have
\begin{align*}
    V^{k_1}-V^{k_2} &=  \sum^{k_1}_{i=0} a^{k_1}_i TV^{i-1}- \sum^{k_2}_{i=0} a^{k_2}_iTV^{i-1}
    \\& =   \sum^{k_1}_{i=k_2+1} a^{k_1}_i TV^{i-1}+\sum^{k_2}_{i=0} (a^{k_1}_i-a^{k_2}_i)TV^{i-1}
    \\& =  \sum^{k_2}_{j=0}\sum^{k_1}_{i=k_2+1} a^{k_2}_j  a^{k_1}_i TV^{i-1}- \sum^{k_2}_{i=0} \sum^{k_1}_{j=k_2+1} a^{k_1}_ja^{k_2}_iTV^{i-1}
    \\& =  \sum^{k_2}_{j=0}\sum^{k_1}_{i=k_2+1} a^{k_2}_j  a^{k_1}_i  (TV^{i-1}-TV^{j-1}). \end{align*}
where third equality is from Lemma \ref{lem::KM_coeff}. 
\end{proof}

Following lemma will be used in proof in later proof. 
\begin{lemma}\label{lem::KM_expansion_2}
For the iterates $\{V^k\}_{k=0, 1,2,\dots}$ of \ref{eq:Rx-VI},
\begin{align*}
    & TV^k-V^0 \le \sum^{k}_{j=0} (1-\lambda_{j})g^{\star}+h^{\star}-V^0+\cP^{\pi_k}\para{ \Pi^{0}_{i=k-1}(\lambda_{i+1}I+(1-\lambda_{i+1}) \cP^{\pi_i}) (V^0-h^{\star})}, 
    \\&TV^k-V^0\ge \sum^{k}_{j=0} (1-\lambda_{j})g^{\star} +h^{\star}-V^0+\cP^{\pi_{\star}}\para{ \Pi^{0}_{i=k-1}(\lambda_{i+1}I+(1-\lambda_{i+1}) \cP^{\pi_{\star}}) (V^0-h^{\star})}.
\end{align*}
\end{lemma}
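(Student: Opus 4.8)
The plan is to derive both inequalities directly from the bounds on the iterates already established in Lemma~\ref{lem::KM_normalized_iterate}, by expressing $TV^k$ through $r$ and $\cP$ and then invoking the two modified Bellman equations. Recall that $\pi_k$ is greedy for $V^k$, so $TV^k = T^{\pi_k}V^k = r^{\pi_k} + \cP^{\pi_k}V^k$, whereas for any other policy, in particular an optimal policy $\pi_{\star}$ attaining the maximum in the modified Bellman equations, $TV^k \ge T^{\pi_{\star}}V^k = r^{\pi_{\star}} + \cP^{\pi_{\star}}V^k$. These two observations feed the upper and lower bounds respectively.

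For the upper bound I would first use the second modified Bellman equation $\max_{\pi}\{r^{\pi} + \cP^{\pi}h^{\star}\} = h^{\star} + g^{\star}$, which gives $r^{\pi_k} \le g^{\star} + h^{\star} - \cP^{\pi_k}h^{\star}$, hence $TV^k \le g^{\star} + h^{\star} + \cP^{\pi_k}(V^k - h^{\star})$. Then I would invoke the first inequality of Lemma~\ref{lem::KM_normalized_iterate}, rearranged as $V^k - h^{\star} \le \sum_{j=1}^{k}(1-\lambda_j)g^{\star} + \Pi^{0}_{i=k-1}(\lambda_{i+1}I + (1-\lambda_{i+1})\cP^{\pi_i})(V^0 - h^{\star})$, apply the monotone linear operator $\cP^{\pi_k}$ to both sides (Proposition~\ref{prop::nonexp_prob_kernel}), and use the first modified Bellman equation $\max_{\pi}\{\cP^{\pi}g^{\star}\} = g^{\star}$ to bound $\cP^{\pi_k}g^{\star} \le g^{\star}$; since each $1-\lambda_j \ge 0$, this yields $\cP^{\pi_k}(V^k - h^{\star}) \le \sum_{j=1}^{k}(1-\lambda_j)g^{\star} + \cP^{\pi_k}\Pi^{0}_{i=k-1}(\lambda_{i+1}I + (1-\lambda_{i+1})\cP^{\pi_i})(V^0 - h^{\star})$. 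Combining with the bound on $TV^k$, absorbing the extra $g^{\star}$ into the sum via the convention $\lambda_0 = 0$ (so $g^{\star} + \sum_{j=1}^{k}(1-\lambda_j)g^{\star} = \sum_{j=0}^{k}(1-\lambda_j)g^{\star}$), and subtracting $V^0$ gives the first claimed inequality.

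The lower bound is the symmetric argument with $\pi_{\star}$ replacing $\pi_k$ throughout. Because $\pi_{\star}$ attains the maximum in both modified Bellman equations, they hold with equality along $\pi_{\star}$: $r^{\pi_{\star}} = g^{\star} + h^{\star} - \cP^{\pi_{\star}}h^{\star}$ and $\cP^{\pi_{\star}}g^{\star} = g^{\star}$. Using $TV^k \ge T^{\pi_{\star}}V^k$ gives $TV^k \ge g^{\star} + h^{\star} + \cP^{\pi_{\star}}(V^k - h^{\star})$, and applying $\cP^{\pi_{\star}}$ to the second inequality of Lemma~\ref{lem::KM_normalized_iterate}, rearranged as $V^k - h^{\star} \ge \sum_{j=1}^{k}(1-\lambda_j)g^{\star} + \Pi^{0}_{i=k-1}(\lambda_{i+1}I + (1-\lambda_{i+1})\cP^{\pi_{\star}})(V^0 - h^{\star})$, together with the exact identity $\cP^{\pi_{\star}}g^{\star} = g^{\star}$, produces the desired bound after the same $\lambda_0 = 0$ bookkeeping and subtracting $V^0$.

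This lemma is essentially routine once Lemma~\ref{lem::KM_normalized_iterate} is available; the only points requiring care are (i) keeping the two modified Bellman equations straight — the second controls $r^{\pi}$, the first controls $\cP^{\pi}g^{\star}$ — and (ii) the asymmetry between $\pi_k$ and $\pi_{\star}$: for the greedy policy $\pi_k$ we only have the one-sided bound $\cP^{\pi_k}g^{\star} \le g^{\star}$, which is precisely the direction needed for the upper bound, while for $\pi_{\star}$ we have the equality, which is what makes the lower bound go through in the reverse direction. The convention $\lambda_0 = 0$ fixed earlier is what makes the $g^{\star}$-coefficients telescope into $\sum_{j=0}^{k}(1-\lambda_j)$.
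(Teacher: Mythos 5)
Your proposal is correct and follows essentially the same route as the paper's proof: write $TV^k=T^{\pi_k}V^k$ (resp.\ bound $TV^k\ge T^{\pi_\star}V^k$), apply the monotone operator $\cP^{\pi_k}$ (resp.\ $\cP^{\pi_\star}$) to the corresponding bound on $V^k$ from Lemma~\ref{lem::KM_normalized_iterate}, and clean up with the two modified Bellman equations and the convention $\lambda_0=0$. The only difference is cosmetic ordering of the steps, and your remark about the one-sided inequality $\cP^{\pi_k}g^{\star}\le g^{\star}$ versus the equality along $\pi_\star$ correctly identifies why each direction works.
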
 
\begin{proof}
We have
         \begin{align*}
           TV^{k} & \le \cP^{\pi_k}\para{\sum^{k}_{j=1} (1-\lambda_{j})g^{\star}+ h^{\star}+\Pi^{0}_{i=k-1}(\lambda_{i+1}I+(1-\lambda_{i+1}) \cP^{\pi_i}) (V^0-h^{\star})} +r^{\pi_k}
           \\& \le \sum^{k}_{j=1} (1-\lambda_{j})g^{\star}+\cP^{\pi_k}\para{ \Pi^{0}_{i=k-1}(\lambda_{i+1}I+(1-\lambda_{i+1}) \cP^{\pi_i}) (V^0-h^{\star})} \\& \quad +\cP^{\pi_k}h^{\star}+g^{\star}+(I-\cP^{\pi_k})h^{\star}\\& =\sum^{k}_{j=0} (1-\lambda_{j})g^{\star} +\cP^{\pi_k}\para{ \Pi^{0}_{i=k-1}(\lambda_{i+1}I+(1-\lambda_{i+1}) \cP^{\pi_i}) (V^0-h^{\star})}+h^{\star},
       \end{align*}
       where first inequality is from Lemma \ref{prop::monotonicity} and \ref{lem::KM_normalized_iterate}, second inequality comes from second Bellman equation, and last equality follows from first Bellman equation. 

       Also, we have
                \begin{align*}
           TV^{k} & \ge \cP^{\pi_{\star}}\para{\sum^{k}_{j=1} (1-\lambda_{j})g^{\star}+ h^{\star}+\Pi^{0}_{i=k-1}(\lambda_{i+1}I+(1-\lambda_{i+1}) \cP^{\pi_{\star}}) (V^0-h^{\star})} +r^{\pi_{\star}}
           \\& = \sum^{k}_{j=1} (1-\lambda_{j})g^{\star}+\cP^{\pi_{\star}}\para{ \Pi^{0}_{i=k-1}(\lambda_{i+1}I+(1-\lambda_{i+1}) \cP^{\pi_{\star}}) (V^0-h^{\star})}
           \\ &\quad +\cP^{\pi_{\star}}h^{\star}+g^{\star}+(I-\cP^{\pi_{\star}})h^{\star}
           \\& =\sum^{k}_{j=0} (1-\lambda_{j})g^{\star} +\cP^{\pi_{\star}}\para{ \Pi^{0}_{i=k-1}(\lambda_{i+1}I+(1-\lambda_{i+1}) \cP^{\pi_{\star}}) (V^0-h^{\star})}+h^{\star},
       \end{align*}
       where first inequality is from Lemma \ref{prop::monotonicity} and \ref{lem::KM_normalized_iterate} and the fact that $\{\pi_l\}_{l=0,1,\dots,k}$ are greedy policies, first equality comes from second Bellman equation, and last equality follows from first Bellman equation. 
\end{proof} 

We now prove one of key lemmas for Theorem \ref{thm::KM_bellman}. For that, define 
\[c_{k_1,k_2} = \sum^{k_2}_{j=0}\sum^{k_1}_{i=k_2+1} a^{k_2}_j  a^{k_1}_i c_{i-1,j-1}\]
for $0 \le k_2 <k_1 $ and $c_{n,-1}=1, c_{k,k}=0$ for all $0 \le k$. Note that  $a^k_{j} = \para{\Pi^{k}_{i=j+1}\lambda_{i}}(1-\lambda_{j})$ and $a^0_0=1$ for $ 0 \le  j \le k$. 
Then, we have following lemma. 
\begin{lemma}\label{lem::KM_upper_bd}
For the iterates $\{V^k\}_{k=0, 1,2,\dots}$ of \ref{eq:Rx-VI} and $0 \le k_2 \le k_1$,
    \begin{align*}
       &V^{k_1}-V^{k_2}  \le  \sum^{k_1}_{i=k_2+1} (1-\lambda_{i})g^{\star}+c_{k_1, k_2}(S^{k_1,k_2}_{1}-S^{k_1,k_2}_{2})(V^0-h^{\star}) ,
    \end{align*}
\end{lemma}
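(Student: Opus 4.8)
The plan is to derive the bound from the exact expansion of $V^{k_1}-V^{k_2}$ in Lemma~\ref{lem::KM_diff} together with the two-sided estimate of Lemma~\ref{lem::KM_expansion_2}. Concretely, Lemma~\ref{lem::KM_diff} writes $V^{k_1}-V^{k_2}$ as the double sum $\sum^{k_2}_{j=0}\sum^{k_1}_{i=k_2+1} a^{k_2}_j a^{k_1}_i (TV^{i-1}-TV^{j-1})$, so it suffices to upper-bound each difference $TV^{i-1}-TV^{j-1}$ and then re-sum the coefficients.

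First I would rewrite Lemma~\ref{lem::KM_expansion_2} in the compact form $TV^{m}-V^0 \le \sum^{m}_{l=0}(1-\lambda_l)g^{\star}+(S^{m}_{+}-I)(V^0-h^{\star})$ and $TV^{m}-V^0 \ge \sum^{m}_{l=0}(1-\lambda_l)g^{\star}+(S^{m}_{-}-I)(V^0-h^{\star})$, where $S^{m}_{+}=\cP^{\pi_{m}}\Pi^{0}_{i=m-1}(\lambda_{i+1}I+(1-\lambda_{i+1})\cP^{\pi_i})$ and $S^{m}_{-}=\cP^{\pi_{\star}}\Pi^{0}_{i=m-1}(\lambda_{i+1}I+(1-\lambda_{i+1})\cP^{\pi_{\star}})$, using $-(V^0-h^{\star})+S(V^0-h^{\star})=(S-I)(V^0-h^{\star})$ to fold in the $h^{\star}-V^0$ term, together with the conventions $TV^{-1}=V^0$ and $S^{-1}_{\pm}=I$. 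Each $S^{m}_{\pm}$ is a product of stochastic matrices (every factor $\lambda I+(1-\lambda)\cP^{\pi}$ is a convex combination of stochastic matrices), hence itself stochastic. Subtracting the lower estimate at $m=j-1$ from the upper estimate at $m=i-1$, and observing that the two $-I$ contributions cancel, I obtain for $i>j$ the clean bound $TV^{i-1}-TV^{j-1}\le \sum^{i-1}_{l=j}(1-\lambda_l)g^{\star}+(S^{i-1}_{+}-S^{j-1}_{-})(V^0-h^{\star})$.

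Then I would substitute this into the double sum and collect terms. The $g^{\star}$-coefficient is handled exactly by the third identity of Lemma~\ref{lem::KM_coeff}, which yields $\sum^{k_1}_{i=k_2+1}(1-\lambda_i)$. For the $(V^0-h^{\star})$-part, separating the $i$- and $j$-summations and using $\sum^{k_2}_{j=0}a^{k_2}_j=1$ (since $\lambda_0=0$) together with $\sum^{k_1}_{i=k_2+1}a^{k_1}_i=1-\Pi^{k_1}_{s=k_2+1}\lambda_s$ (first identity of Lemma~\ref{lem::KM_coeff}) gives $\sum_{i,j}a^{k_2}_j a^{k_1}_i(S^{i-1}_{+}-S^{j-1}_{-})=c_{k_1,k_2}(S^{k_1,k_2}_{1}-S^{k_1,k_2}_{2})$, where $c_{k_1,k_2}=1-\Pi^{k_1}_{s=k_2+1}\lambda_s$, $S^{k_1,k_2}_{1}=c_{k_1,k_2}^{-1}\sum^{k_1}_{i=k_2+1}a^{k_1}_i S^{i-1}_{+}$, and $S^{k_1,k_2}_{2}=\sum^{k_2}_{j=0}a^{k_2}_j S^{j-1}_{-}$ are again stochastic, being convex combinations of stochastic matrices. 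A short induction using the first identity of Lemma~\ref{lem::KM_coeff} confirms that this closed form for $c_{k_1,k_2}$ agrees with its recursive definition (matching the base cases $c_{k,k}=0$, $c_{n,-1}=1$ and the recursion term by term). Adding the two parts gives the stated inequality.

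I expect the main obstacle to be purely the bookkeeping: tracking the ascending/descending matrix products defining $S^{m}_{\pm}$, checking that these and the averaged matrices $S^{k_1,k_2}_{1},S^{k_1,k_2}_{2}$ really are stochastic, handling the edge cases $j=0$, empty products, and $k_2=k_1$ correctly, and reconciling the recursive and closed-form descriptions of $c_{k_1,k_2}$. The one conceptual step that makes everything work is the cancellation of the $h^{\star}-V^0$ terms when the upper and lower estimates of Lemma~\ref{lem::KM_expansion_2} are subtracted: this leaves only a difference of two stochastic matrices acting on $V^0-h^{\star}$, which (after taking $\infn{\cdot}$ and invoking Proposition~\ref{prop::inf_norm}) will ultimately produce the desired small error term in the subsequent Bellman-error estimate.
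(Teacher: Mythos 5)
Your overall strategy---expand $V^{k_1}-V^{k_2}$ via Lemma~\ref{lem::KM_diff}, bound each $TV^{i-1}-TV^{j-1}$, and re-sum---does produce a valid inequality, and the cancellation of the $h^{\star}-V^0$ terms when the two one-sided estimates of Lemma~\ref{lem::KM_expansion_2} are subtracted is correctly observed. But there is a genuine gap: the coefficient you obtain in front of $(V^0-h^{\star})$ is $1-\Pi^{k_1}_{s=k_2+1}\lambda_s$, and your claim that this closed form agrees with the recursive definition $c_{k_1,k_2}=\sum^{k_2}_{j=0}\sum^{k_1}_{i=k_2+1}a^{k_2}_j a^{k_1}_i c_{i-1,j-1}$ is false. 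For $k_1=2$, $k_2=1$ the recursion gives $c_{2,1}=(1-\lambda_2)\bigl(\lambda_1+(1-\lambda_1)^2\bigr)$, which equals $1-\lambda_2$ only when $\lambda_1\in\{0,1\}$; with $\lambda_i=1/2$ one gets $3/8$ versus your $1/2$. This is not cosmetic: the lemma is invoked in Theorem~\ref{thm::KM_bellman} with $(k_1,k_2)=(k+1,k)$, where Fact~\ref{fact::coeff} gives $(1-\lambda_{k+1})^{-1}c_{k+1,k}=\mathcal{O}\bigl(1/\sqrt{\sum_i\lambda_i(1-\lambda_i)}\bigr)$, a decaying quantity, whereas your coefficient yields $(1-\lambda_{k+1})^{-1}(1-\lambda_{k+1})=1$. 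The resulting Bellman-error bound would then be a non-vanishing constant and the $\mathcal{O}(1/\sqrt{k})$ rate would be lost.

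The missing idea is the induction on $k_2$ that the paper's proof is built on. Comparing every $TV^{i-1}$ all the way back to $V^0$ through Lemma~\ref{lem::KM_expansion_2}, as you do, exploits only one level of cancellation. The paper instead keeps the $j\ge 1$ terms of the double sum as $TV^{i-1}-TV^{j-1}\le \cP^{\pi_{i-1}}(V^{i-1}-V^{j-1})$ (using that $\pi_{i-1}$ is greedy for $V^{i-1}$ and $TV^{j-1}\ge T^{\pi_{i-1}}V^{j-1}$) and then applies the inductive hypothesis to $V^{i-1}-V^{j-1}$, reserving Lemma~\ref{lem::KM_expansion_2} only for the $j=0$ terms; this is exactly what generates the recursion defining $c_{k_1,k_2}$ and hence its decay. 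Your argument cannot be repaired by bookkeeping alone; it needs this recursive step.
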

where $S^{k_1,k_2}_1,S^{k_1,k_2}_2$ are stochastic matrices.

\begin{proof}
We use induction on $k_2$. Let $ k_2=0$. Then, $c_{k,0}=\sum^{k}_{i=1}  a^{k}_i=1-\Pi^{k}_{i=1}\lambda_{i} \,(a^0_0=1)$ by Lemma \ref{lem::KM_coeff}. Also, by Lemma \ref{lem::KM_normalized_iterate}, we have 
\begin{align*}
    &V^k-V^0- \sum^{k}_{j=1} (1-\lambda_{j})g^{\star} \\&\le h^{\star}-V^0+\Pi^{0}_{i=k-1}(\lambda_{i+1}I+(1-\lambda_{i+1}) \cP^{\pi_i}) (V^0-h^{\star})
    \\& = (1-\Pi^{0}_{i=k-1}\lambda_{i+1})I(V^0-h^{\star})+(\Pi^{0}_{i=k-1}(\lambda_{i+1}I+(1-\lambda_{i+1}) \cP^{\pi_i})-(\Pi^{0}_{i=k-1}\lambda_{i+1})I) (V^0-h^{\star})
    \\& = c_{k, 0}(S^{k,0}_{1}-S^{k,0}_{2})(V^0-h^{\star})
    \end{align*}
where $S^{k,0}_{1} = I$ and $ S^{k,0}_{2} = (1-\Pi^{0}_{i=k-1}\lambda_{i+1})^{-1}\para{\Pi^{0}_{i=k-1}(\lambda_{i+1}I+(1-\lambda_{i+1}) \cP^{\pi_i})-(\Pi^{0}_{i=k-1}\lambda_{i+1})I}$.

By induction,
    \begin{align*}
        &V^{k_1}-V^{k_2} 
        \\&=\sum^{k_1}_{i=k_2+1} \sum^{k_2}_{j=0}a^{k_2}_j  a^{k_1}_i (TV^{i-1}-TV^{j-1})
         \\& = \sum^{k_1}_{i=k_2+1} \sum^{k_2}_{j=1} a^{k_2}_j  a^{k_1}_i(TV^{i-1}-TV^{j-1})+\sum^{k_1}_{i=k_2+1} a^{k_2}_{0}  a^{k_1}_i (TV^{i-1}-V^{0})
        \\& \le \sum^{k_1}_{i=k_2+1} \sum^{k_2}_{j=1}a^{k_2}_j  a^{k_1}_i (\cP^{\pi_{i-1}}V^{i-1}+r^{\pi_{i-1}}-\cP^{\pi_{i-1}}V^{j-1}-r^{\pi_{i-1}})+\sum^{k_1}_{i=k_2+1} a^{k_2}_{0}  a^{k_1}_i (TV^{i-1}-V^{0})
        \\& = \sum^{k_1}_{i=k_2+1} \sum^{k_2}_{j=1} a^{k_2}_j  a^{k_1}_i\cP^{\pi_{i-1}} (V^{i-1}-V^{j-1})+\sum^{k_1}_{i=k_2+1} a^{k_2}_{0}  a^{k_1}_i (TV^{i-1}-V^{0})
        \\& \le \sum^{k_1}_{i=k_2+1} \sum^{k_2}_{j=1} a^{k_2}_j  a^{k_1}_i\cP^{\pi_{i-1}} \para{ \sum^{i-1}_{l=j} (1-\lambda_{l})g^{\star}+c_{i-1, j-1}(S^{i-1,j-1}_{1}-S^{i-1,j-1}_{2})(V^0-h^{\star}) }
        \\&\quad + \sum^{k_1}_{i=k_2+1} a^{k_2}_{0}  a^{k_1}_i \para{\sum^{i-1}_{l=0} (1-\lambda_{l})g^{\star}+\para{\cP^{\pi_{i-1}}\para{ \Pi^{0}_{l=i-2}(\lambda_{l+1}I+(1-\lambda_{l+1}) \cP^{\pi_l})-I} (V^0-h^{\star})}}
        \\& =\sum^{k_1}_{i=k_2+1} \sum^{k_2}_{j=0} a^{k_2}_j  a^{k_1}_i c_{i-1,j-1}(S^{i,j}_{x}-S^{i,j}_{y}) (V^0-h^{\star}) +\sum^{k_1}_{i=k_2+1} \sum^{k_2}_{j=0} a^{k_2}_j  a^{k_1}_i \para{\sum^{i-1}_{l=j} (1-\lambda_{l})g^{\star}}
        \\& = c_{k_1,k_2}(S^{k_1,k_2}_1-S^{k_1,k_2}_2) (V^0-h^{\star})+ \sum^{k_1}_{i=k_2+1} (1-\lambda_{i})g^{\star}.
    \end{align*}
    where first equality is from Lemma \ref{lem::KM_diff}, first inequality comes from the fact that $\{\pi_l\}_{l=0,1,\dots,k_1}$ are greedy policies, second inequality follows from induction and Lemma \ref{lem::KM_expansion_2}, last equality is from Lemma \ref{lem::KM_coeff}, and 
    \begin{align*}
     S^{i,j}_{x}=\left\{
    \begin{array}{ll}
    \cP^{\pi_{i-1}}\Pi^{0}_{l=i-2}(\lambda_{l+1}I+(1-\lambda_{l+1}) \cP^{\pi_l}) & j=0,\\
    \cP^{\pi_{i-1}}S^{i-1,j-1}_{1} & \text{else},
    \end{array}\right. \qquad      S^{i,j}_{y}=\left\{
    \begin{array}{ll}
    I & j=0,\\
    \cP^{\pi_{i-1}}S^{i-1,j-1}_{2} & \text{else},
    \end{array}\right. 
    \end{align*}
and $S^{k_1,k_2}_1 =c_{k_1,k_2}^{-1}\sum^{k_1}_{i=k_2+1} \sum^{k_2}_{j=0} a^{k_2}_j  a^{k_1}_i c_{i-1,j-1}S^{i,j}_{x}$, and $S^{k_1,k_2}_2 =c_{k_1,k_2}^{-1}\sum^{k_1}_{i=k_2+1} \sum^{k_2}_{j=0} a^{k_2}_j  a^{k_1}_i c_{i-1,j-1}S^{i,j}_{y}$.

\end{proof}
To obtain lower bound of $V^{k_1}-V^{k_2}$, we need more sophisticated consideration, and following lemma is necessary for later argument.
\begin{lemma}\label{lem::opt_policy_cond}
Let $\{V^k\}_{k=0, 1,2,\dots}$ be the iterates of \ref{eq:Rx-VI}. Let $\limsup \lambda_k <1  $.  Let $E = \{\pi: \cP^{\pi}g^{\star}= g^{\star}\}$. Then there exist $K$  such that if $K \le k$,
    \begin{align*}
        TV^k=\max_{\pi \in E}T^{\pi}V^k. 
    \end{align*}
\end{lemma}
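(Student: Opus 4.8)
The plan is to show that for all sufficiently large $k$ the iterate $V^k$ equals a scalar multiple of $g^\star$ with the scalar tending to $+\infty$, plus a uniformly bounded perturbation; this forces the greedy selection defining $TV^k$ to be governed by the \emph{first} modified Bellman equation, which in turn puts every greedy policy into $E$.

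First I would record a decomposition of the iterates. Writing $\alpha_k=\sum_{j=1}^k(1-\lambda_j)$ and $w^k:=V^k-\alpha_k g^\star$, observe that each factor $\lambda_{i+1}I+(1-\lambda_{i+1})\cP^{\pi_i}$ appearing in Lemma~\ref{lem::KM_normalized_iterate} is a convex combination of $I$ and a stochastic matrix and hence is $\|\cdot\|_\infty$-nonexpansive, so the products in that lemma are $\|\cdot\|_\infty$-nonexpansive as well; feeding the two-sided bound of Lemma~\ref{lem::KM_normalized_iterate} into Proposition~\ref{prop::inf_norm} then yields $\infn{w^k}\le \infn{V^0-h^\star}+\infn{h^\star}=:C$ for every $k$. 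The hypothesis $\limsup_k\lambda_k<1$ supplies $\rho<1$ and $k_0$ with $\lambda_k\le\rho$ for $k\ge k_0$, so $\alpha_k\ge(1-\rho)(k-k_0)\to\infty$.

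Next I would localize the defining property of $E$ to a per-state condition. Setting $\delta(s,a):=g^\star(s)-\sum_{s'}P(s'\mid s,a)g^\star(s')$, the first modified Bellman equation gives $\delta(s,a)\ge 0$ for all $(s,a)$ and $\delta(s,a)=0$ for at least one action at each state; with $A_E(s):=\{a:\delta(s,a)=0\}\ne\emptyset$, a deterministic policy lies in $E$ iff $\pi(s)\in A_E(s)$ for all $s$, and in general $\max_{\pi\in E}(T^\pi V^k)(s)=\max_{a\in A_E(s)}\bigl\{r(s,a)+\sum_{s'}P(s'\mid s,a)V^k(s')\bigr\}$. (If $\delta(s,a)=0$ for every $(s,a)$ then $E$ is the set of all policies and the claim holds with $K=0$, so assume $\delta:=\min\{\delta(s,a):\delta(s,a)>0\}>0$.) Substituting $V^k=\alpha_k g^\star+w^k$ with $\infn{w^k}\le C$, a one-line estimate gives, for any state $s$, any $a^\star\in A_E(s)$ and any $a\notin A_E(s)$,
\[
\Bigl(r(s,a^\star)+\sum_{s'}P(s'\mid s,a^\star)V^k(s')\Bigr)-\Bigl(r(s,a)+\sum_{s'}P(s'\mid s,a)V^k(s')\Bigr)\ \ge\ \alpha_k\delta-2\infn{r}-2C,
\]
so once $K$ is chosen with $\alpha_K\delta>2\infn{r}+2C$, no action outside $A_E(s)$ is greedy at $s$ for $k\ge K$; hence $\max_{\pi\in E}T^\pi V^k=\max_\pi T^\pi V^k=TV^k$.

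The main obstacle is bookkeeping rather than conceptual: making the decomposition $V^k=\alpha_k g^\star+w^k$ precise with a $k$-independent bound on $w^k$ — which is exactly what Lemma~\ref{lem::KM_normalized_iterate} delivers — and tracking how $\limsup_k\lambda_k<1$ forces $\alpha_k\to\infty$, so that the linear-in-$k$ advantage $\alpha_k\delta$ enjoyed by actions in $A_E(s)$ eventually dominates the $O(1)$ contributions of the rewards and of $w^k$.
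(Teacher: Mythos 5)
Your proof is correct, and it takes a genuinely different route from the paper's. The paper argues softly: it shows that any deterministic policy appearing infinitely often among the greedy policies $\{\pi_k\}$ must satisfy $\cP^{\pi}g^{\star}=g^{\star}$ (by normalizing the recursion by $\sum_{i}(1-\lambda_i)$, passing to a subsequence, and invoking the convergence of the normalized iterates from Theorem~\ref{thm::KM_normalized}), and then uses finiteness of the policy set to conclude that eventually every greedy policy lies in $E$. Your argument instead extracts from Lemma~\ref{lem::KM_normalized_iterate} the uniform bound $\infn{V^k-\alpha_k g^{\star}}\le \infn{V^0-h^{\star}}+\infn{h^{\star}}$ (the products of the matrices $\lambda_{i+1}I+(1-\lambda_{i+1})\cP^{\pi_i}$ are indeed row-stochastic, so this step is sound), localizes membership in $E$ to the per-state action sets $A_E(s)$ via the first modified Bellman equation, and shows that the linear-in-$\alpha_k$ advantage $\alpha_k\delta$ of actions in $A_E(s)$ eventually dominates the $O(1)$ reward and perturbation terms. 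Both proofs are valid; yours buys an explicit, constructive threshold $K$ (determined by $\delta$, $\infn{r}$, $\infn{V^0-h^{\star}}$, and $\infn{h^{\star}}$) directly at the level of this lemma, which is essentially the quantitative information the paper only recovers later, in the proof of Theorem~\ref{thm:KMm_optimized}, by a separate calculation; the paper's subsequence argument is shorter but yields only existence of $K$. Your handling of the degenerate case $\delta(s,a)\equiv 0$ (where $E$ is all policies and $K=0$ works) and your identification of $\max_{\pi\in E}T^{\pi}V$ with the statewise maximum over $A_E(s)$ are both correct, and there is no circularity since Lemma~\ref{lem::KM_normalized_iterate} is established independently of the present lemma.
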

\begin{proof}
 Suppose $\pi$ is infinitely often repeated deterministic policy among $\{\pi_k\}_{k=0, 1,2,\dots}$. Then there exist increasing sequence $k_n$ such that $\pi_{k_n}=\pi$ and $\lambda_{k_n}$ converge to some $\lambda <1$. Then, since $  V^{k_K+1} = \lambda_{k_n+1}V^{n_k}+(1-\lambda_{k_n+1})TV^{k_n}$, we have
   \begin{align*}
        \frac{V^{k_n+1}}{\sum_{i=1}^{k_n+1} (1-\lambda_i)} &= \lambda_{k_n+1}\frac{V^{k_n}}{\sum_{i=1}^{k_n+1} (1-\lambda_i)}+(1-\lambda_{k_n+1})\cP^{\pi}\frac{V^{k_n}}{\sum_{i=1}^{k_n+1} (1-\lambda_i)} 
        \\& \quad + (1-\lambda_{k_n+1})\frac{r^{\pi}}{{\sum_{i=1}^{k_n+1} (1-\lambda_i)}}.
    \end{align*}
  If $k_n \rightarrow \infty$,  $\limsup \lambda_k <1$ implies $\sum^{\infty}_{j=1} (1-\lambda_{j}) = \infty$. Then, by Theorem \ref{thm::KM_normalized}, we have 
  \[g^{\star} = \lambda g^{\star}+(1-\lambda)\cP^{\pi}g^{\star}.\]
  Thus $g^{\star} = \cP^{\pi}g^{\star}$ and this implies $\pi \in E$. By finiteness of action and state space, number of infinitely repeated policy $\pi$ is also finite. Therefore there exist $K$ such that $TV^k=\max_{\pi \in E}T^{\pi}V^k$ for $K \le k$.
\end{proof}

We are now ready to prove left key lemma. To obtain proper lower bound of $V^{k_1}-V^{k_2}$, roughly speaking, we need to consider $V^K$ as initial point where $N$ is iteration number in Lemma \ref{lem::opt_policy_cond}. For that, define $\{\lambda'_k\}_{ K \le k}$ such that $\lambda'_k= \lambda_k$ for $K+1 \le k$ and $\lambda'_K= 0$, $b^k_{j} = \para{\Pi^{k}_{i=j+1}\lambda'_{i}}(1-\lambda'_{j})$ for $ K \le  j \le k$, and  $b^K_K=1$. Also, define 
\[c^K_{k_1,k_2} = \sum^{k_2}_{j=N}\sum^{k_1}_{i=k_2+1} b^{k_2}_j  b^{k_1}_i c^K_{i-1,j-1}\]
for $0 \le k_2 <k_1 $ and $c^K_{k,K-1}=1$ for all $K \le k$. Note that if $K=0$,  $\lambda'_k = \lambda_k$, $b^k_{j} =a^k_{j}$, and $c^K_{k_1,k_2}= c_{k_1,k_2}$  for all $0 \le k_1,k_2,k,j,k$.

\begin{lemma}\label{lem::KM_lower_bd}
Let $\{V^k\}_{k=0, 1,2,\dots}$ be the iterates of \ref{eq:Rx-VI}. Suppose there exist $K$ such that if $K \le k$,
 $TV^k=\max_{\pi \in E}T^{\pi}V^k $ where $E = \{\pi: \cP^{\pi}g^{\star}= g^{\star}\}$. Then, for $ K \le k'_2 \le k'_1$,
       \begin{align*}
      \sum^{k_1}_{i=k_2+1} (1-\lambda'_{i})g^{\star}+c^K_{k_1, k_2}(S^{k_1,k_2}_{1'}-S^{k_1,k_2}_{2'})(V^{0}-h^{\star}) \le V^{k'_1}-V^{k'_2}
    \end{align*}
where $S^{k_1,k_2}_{1'},S^{k_1,k_2}_{2'}$ are stochastic matrices.
\end{lemma}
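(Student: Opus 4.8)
The plan is to reproduce the argument of Lemma~\ref{lem::KM_upper_bd} with every inequality reversed, but now restricted to iteration indices $\ge K$, where the hypothesis supplies the missing structure. First I would record the shifted form of the recursion: unrolling \ref{eq:Rx-VI} for $k\ge K$ and using $b^K_K = 1$ (which holds since $\lambda'_K=0$) gives $V^k=\sum_{j=K}^{k}b^k_j\,TV^{j-1}$ under the convention $TV^{K-1}:=V^K$, hence, exactly as in Lemma~\ref{lem::KM_diff},
\[
V^{k_1}-V^{k_2}=\sum_{j=K}^{k_2}\sum_{i=k_2+1}^{k_1}b^{k_2}_j\,b^{k_1}_i\,(TV^{i-1}-TV^{j-1}),\qquad K\le k_2\le k_1,
\]
and the coefficients $b^k_j,\,c^K_{k_1,k_2}$ satisfy the same telescoping identities as in Lemma~\ref{lem::KM_coeff} with $\lambda'$ and lower limit $K$ throughout. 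The new ingredient is the hypothesis $TV^k=\max_{\pi\in E}T^{\pi}V^k$ for $k\ge K$: it lets us pick the greedy policy $\pi_k\in E$, so that $\cP^{\pi_k}g^{\star}=g^{\star}$ for every index $\ge K$ entering the computation — the role played by $\pi_{\star}$ in the upper bound, but now with the two-sided equality that a lower bound needs.

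I would then induct on $k_2$, starting at $k_2=K$. For the base case, subtract the upper bound of Lemma~\ref{lem::KM_normalized_iterate} at iteration $K$ from its lower bound at iteration $k_1$: the common shift $\sum_{j=1}^{K}(1-\lambda_j)g^{\star}$ cancels, $\sum_{i=K+1}^{k_1}(1-\lambda'_i)g^{\star}$ remains, and the operator part is a difference of two products of scalar-coefficient mixtures of stochastic matrices applied to $V^0-h^{\star}$, which equals $c^K_{k_1,K}(S^{k_1,K}_{1'}-S^{k_1,K}_{2'})$ for stochastic $S^{k_1,K}_{1'},S^{k_1,K}_{2'}$, just as in the $k_2=0$ base case of Lemma~\ref{lem::KM_upper_bd} (here $c^K_{k_1,K}=1-\Pi_{i=K+1}^{k_1}\lambda_i$ by the shifted Lemma~\ref{lem::KM_coeff}). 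For the inductive step I would expand $V^{k_1}-V^{k_2}$ via the shifted difference identity; for $j\ge K+1$ use greediness in the mirrored form $TV^{i-1}-TV^{j-1}\ge\cP^{\pi_{j-1}}(V^{i-1}-V^{j-1})$, where the earlier-index policy $\pi_{j-1}$ now plays the role $\pi_{i-1}$ played in Lemma~\ref{lem::KM_upper_bd}; for the single $j=K$ term use the shifted lower analog of Lemma~\ref{lem::KM_expansion_2} (obtained by combining Lemma~\ref{lem::KM_normalized_iterate} at $i-1$ and at $K$ with one greedy $\pi_{\star}$-step); then feed $V^{i-1}-V^{j-1}$ into the inductive hypothesis and reassemble. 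Since $j-1\ge K$ forces $\pi_{j-1}\in E$, pushing $\cP^{\pi_{j-1}}$ through the accumulated $g^{\star}$-terms is an \emph{equality}, so the identities of Lemma~\ref{lem::KM_coeff} collapse them precisely to $\sum_{i=k_2+1}^{k_1}(1-\lambda'_i)g^{\star}$, while the operator acting on $V^0-h^{\star}$ becomes a $b^{k_2}_j b^{k_1}_i c^K_{i-1,j-1}$-weighted combination of stochastic-minus-stochastic terms, i.e.\ $c^K_{k_1,k_2}(S^{k_1,k_2}_{1'}-S^{k_1,k_2}_{2'})$ by the defining recursion of $c^K$.

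I expect the main obstacle to lie in the base case and the $j=K$ terms: there $V^K$ must act as a surrogate initial point, yet the conclusion must be phrased in $V^0-h^{\star}$, which forces routing through the pre-$K$ transition matrices $\cP^{\pi_i}$, $i<K$, which need not fix $g^{\star}$. The resolution is that these appear only inside products $\Pi(\lambda_{i+1}I+(1-\lambda_{i+1})\cP^{\pi_i})$ of stochastic matrices, which are again stochastic (hence nonexpansive) and get absorbed into $S^{k_1,k_2}_{1'}$ and $S^{k_1,k_2}_{2'}$; the $g^{\star}$-shift accumulated over the first $K$ steps cancels against the identical shift in the upper estimate for $V^K$, leaving the clean term $\sum_{i=k_2+1}^{k_1}(1-\lambda'_i)g^{\star}$. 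Confirming that all scalar weights aggregate \emph{exactly} to $c^K_{k_1,k_2}$ rather than merely being bounded by it is the most delicate accounting, but it follows line for line the pattern already carried out in Lemma~\ref{lem::KM_upper_bd}.
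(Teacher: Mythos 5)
Your overall route is the paper's: restart the weighted expansion at $V^K$ (with $\lambda'_K=0$, $b^K_K=1$, $TV^{K-1}:=V^K$), induct on $k_2$, use the mirrored greediness $TV^{i-1}-TV^{j-1}\ge \cP^{\pi_{j-1}}(V^{i-1}-V^{j-1})$ with the \emph{earlier}-index policy, invoke the hypothesis only to get $\cP^{\pi_{j-1}}g^{\star}=g^{\star}$ for $j-1\ge K$ so the $g^{\star}$-terms telescope via Lemma~\ref{lem::KM_coeff}, and handle the $j=K$ terms by a shifted lower analogue of Lemma~\ref{lem::KM_expansion_2} routed back to $V^0-h^{\star}$ through the pre-$K$ stochastic products. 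All of that matches the paper's proof.

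The one step that does not work as written is the base case. Literally subtracting the $V^0$-based upper bound of Lemma~\ref{lem::KM_normalized_iterate} at iteration $K$ from its $V^0$-based lower bound at iteration $k_1$ gives $V^{k_1}-V^{K}-\sum_{j=K+1}^{k_1}(1-\lambda'_j)g^{\star}\ge (A-B)(V^0-h^{\star})$ with $A=\Pi^{0}_{i=k_1-1}(\lambda_{i+1}I+(1-\lambda_{i+1})\cP^{\pi_{\star}})$ and $B=\Pi^{0}_{i=K-1}(\lambda_{i+1}I+(1-\lambda_{i+1})\cP^{\pi_i})$ both stochastic, i.e.\ a difference with scalar weight $1$, and this cannot in general be rewritten as $c^K_{k_1,K}(S_{1'}-S_{2'})$ with $c^K_{k_1,K}=1-\Pi^{k_1}_{i=K+1}\lambda'_i<1$ and $S_{1'},S_{2'}$ stochastic: take $K=1$, $\lambda_1=0$, $k_1=2$, $\lambda_2=1/2$, and $\cP^{\pi_0}$ placing unit mass on an entry where $\cP^{\pi_{\star}}$ and $(\cP^{\pi_{\star}})^2$ both vanish; then any candidate $S_{1'}$ would need a negative entry. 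Since the exact coefficient $c^K_{k_1,k_2}$ is the whole content of the lemma (it is what Fact~\ref{fact::coeff} converts into the rate), the base case must instead be done as in the paper: write $V^{k_1}-V^{K}-\sum_{j}(1-\lambda'_j)g^{\star}\ge (C-I)(V^{K}-h^{\star})$ with $C=\Pi^{K}_{i=k_1-1}(\lambda'_{i+1}I+(1-\lambda'_{i+1})\cP^{\pi_{\star}})$, split $C-I=\bigl[C-(\Pi^{k_1}_{i=K+1}\lambda'_i)I\bigr]-(1-\Pi^{k_1}_{i=K+1}\lambda'_i)I$, and only then substitute the \emph{lower} bound of Lemma~\ref{lem::KM_normalized_iterate} on $V^{K}-h^{\star}$ into the nonnegative bracket and the \emph{upper} bound into the negated identity part; the two $\sum_{j=1}^{K}(1-\lambda_j)g^{\star}$ shifts then cancel because $Cg^{\star}=g^{\star}$. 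Your closing paragraph gestures at exactly this surrogate-initial-point mechanism, but the concrete recipe you give for the base case is the wrong one and would propagate an incorrect coefficient through the induction.
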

\begin{proof}
 For $K \le k$, by simple calculation, we have
 \[V^k= \Pi^{K}_{i=k-1}(\lambda'_{i+1}I+(1-\lambda'_{i+1}) \cP^{\pi_i}) V^{K}+ \sum^{k-1}_{j=K}\Pi_{i=k-1}^{j+1}(\lambda'_{i+1}I+(1-\lambda'_{i+1}) \cP^{\pi_i})(1-\lambda'_{j+1}) r^{\pi_j}\]
 and 
 \begin{align*}
     V^k &= \Pi^{K}_{i=k-1}(\lambda'_{i+1}I+(1-\lambda'_{i+1}) \cP^{\pi_i}) V^{K}+ \sum^{k-1}_{j=K}\Pi_{i=k-1}^{j+1}(\lambda'_{i+1}I+(1-\lambda'_{i+1}) \cP^{\pi_i})(1-\lambda'_{j+1}) r^{\pi_j} 
     \\& \ge \Pi^{K}_{i=k-1}(\lambda'_{i+1}I+(1-\lambda'_{i+1}) \cP^{\pi_{\star}}) V^K + \sum^{k-1}_{j=K}\Pi_{i=k-1}^{j+1}(\lambda'_{i+1}I+(1-\lambda'_{i+1}) \cP^{\pi_{\star}})(1-\lambda'_{j+1}) r^{\pi_{\star}}
     \\& = \Pi^{K}_{i=k-1}(\lambda'_{i+1}I+(1-\lambda'_{i+1}) \cP^{\pi_{\star}}) V^K 
     \\&+ \sum^{k-1}_{j=K}\Pi_{i=k-1}^{j+1}(\lambda'_{i+1}I+(1-\lambda'_{i+1}) \cP^{\pi_{\star}})(1-\lambda'_{j+1}) (g^{\star}+(I-\cP^{\pi_{\star}})h^{\star})
      \\& =  \sum^{k}_{j=K+1}(1-\lambda'_j)g^{\star}+h^{\star} +\Pi_{i=k-1}^{K}(\lambda'_{i+1}I+(1-\lambda'_{i+1}) \cP^{\pi_{\star}})(V^K-h^{\star})
 \end{align*}
 where first equality comes from previous equality, first inequality follows the fact that $\{\pi_l\}_{l=K,K+1,\dots,k}$ are greedy policies, second equality comes from second Bellman equation, and last equality is from first Bellman equation.

Now, we use induction on $k_2$. If $k_2=K$,  by previous inequality,
\begin{align*}
    &V^{k_1}-V^K- \sum^{k_1}_{j=K+1} (1-\lambda'_{j})g^{\star} 
    \\&\ge h^{\star}-V^K+\Pi^{K}_{i=k_1-1}(\lambda'_{i+1}I+(1-\lambda'_{i+1}) \cP^{\pi_{\star}}) (V^K-h^{\star})
    \\& = (\Pi^{K}_{i=k_1-1}(\lambda'_{i+1}I+(1-\lambda'_{i+1}) \cP^{\pi_{\star}})-(\Pi^{K}_{i=k_1-1}\lambda'_{i+1})I) (V^K-h^{\star})-(1-\Pi^{K}_{i=k_1-1}\lambda'_{i+1})(V^K-h^{\star})
    \\& \ge (\Pi^{K}_{i=k_1-1}(\lambda'_{i+1}I+(1-\lambda'_{i+1}) \cP^{\pi_{\star}})-(\Pi^{K}_{i=k_1-1}\lambda'_{i+1})I)(\Pi^{0}_{i=K-1}(\lambda_{i+1}I+(1-\lambda_{i+1}) \cP^{\pi_{\star}})(V^0-h^{\star})
    \\& \quad -(1-\Pi^{K}_{i=k_1-1}\lambda'_{i+1}) ( \Pi^{0}_{i=K-1}(\lambda_{i+1}I+(1-\lambda_{i+1}) \cP^{\pi_i}) (V_0-h^{\star})
    \\& = c^K_{k_1, K}(S^{k_1,K}_{1'}-S^{k_1,K}_{2'})(V^0-h^{\star})
    \end{align*}
where second inequality comes from Lemma \ref{lem::KM_normalized_iterate} and first Bellman equation (note that $g^{\star}$ terms cancel out), and
\begin{align*}
&S^{k_1,K}_{1'} = (1-\Pi^{K}_{i=k_1-1}\lambda'_{i+1})^{-1}\para{\Pi^{K}_{i=k_1-1}(\lambda'_{i+1}I+(1-\lambda'_{i+1}) \cP^{\pi_{\star}})-(\Pi^{K}_{i=k_1-1}\lambda'_{i+1})I}
\\& \quad 
\times (\Pi^{0}_{i=K-1}(\lambda_{i+1}I+(1-\lambda_{i+1}) \cP^{\pi_{\star}}),
\\&S^{k_1,K}_{2'} =  \Pi^{0}_{i=K-1}(\lambda_{i+1}I+(1-\lambda_{i+1}) \cP^{\pi_i}),    
\end{align*}
$c^K_{k_1, K}=1-\Pi^{K}_{i=k_1-1}\lambda'_{i+1}$.

By induction,
    \begin{align*}
        &V^{k_1}-V^{k_2} 
        \\&=\sum^{k_1}_{i=k_2+1} \sum^{k_2}_{j=K}b^{k_2}_j  b^{k_1}_i (TV^{i-1}-TV^{j-1})
         \\& = \sum^{k_1}_{i=k_2+1} \sum^{k_2}_{j=K+1} b^{k_2}_j  b^{k_1}_i(TV^{i-1}-TV^{j-1})+\sum^{k_1}_{i=k_2+1} b^{k_2}_{K}  b^{k_1}_i (TV^{i-1}-V^{K})
        \\& \ge \sum^{k_1}_{i=k_2+1} \sum^{k_2}_{j=K+1}b^{k_2}_j  b^{k_1}_i (\cP^{\pi_{j-1}}V^{i-1}+r^{\pi_{j-1}}-\cP^{\pi_{j-1}}V^{j-1}-r^{\pi_{j-1}})+\sum^{k_1}_{i=k_2+1} b^{k_2}_{K}  b^{k_1}_i (TV^{i-1}-V^{K})
        \\& = \sum^{k_1}_{i=k_2+1} \sum^{k_2}_{j=K+1} b^{k_2}_j  b^{k_1}_i\cP^{\pi_{j-1}} (V^{i-1}-V^{j-1})+\sum^{k_1}_{i=k_2+1} b^{k_2}_{K}  b^{k_1}_i (TV^{i-1}-V^{K})
        \\& \ge \sum^{k_1}_{i=k_2+1} \sum^{k_2}_{j=K+1} b^{k_2}_j  b^{k_1}_i\cP^{\pi_{j-1}} \para{ \sum^{i-1}_{l=j} (1-\lambda'_{l})g^{\star}+c_{i-1, j-1}(S^{i-1,j-1}_{1'}-S^{i-1,j-1}_{2'})(V^0-h^{\star}) }
        \\&\quad + \sum^{k_1}_{i=k_2+1} b^{k_2}_{K}  b^{k_1}_i \para{\sum^{i-1}_{l=K} (1-\lambda'_{l})g^{\star}+\para{\cP^{\pi_{\star}}\para{ \Pi^{K}_{l=i-2}(\lambda'_{l+1}I+(1-\lambda'_{l+1}) \cP^{\pi_{\star}})-I} (V^K-h^{\star})}}
               \end{align*}
        \begin{align*}
        & \ge \sum^{k_1}_{i=k_2+1} \sum^{k_2}_{j=K+1} b^{k_2}_j  b^{k_1}_i\cP^{\pi_{j-1}} \para{ \sum^{i-1}_{l=j} (1-\lambda'_{l})g^{\star}+c_{i-1, j-1}(S^{i-1,j-1}_{1'}-S^{i-1,j-1}_{2'})(V^0-h^{\star}) }
        \\&\quad + \sum^{k_1}_{i=k_2+1} b^{k_2}_{K}  b^{k_1}_i \Bigg(\sum^{i-1}_{l=K} (1-\lambda'_{l})g^{\star}+\bigg(\cP^{\pi_{\star}}\bigg( \Pi^{K}_{l=i-2}(\lambda'_{l+1}I+(1-\lambda'_{l+1}) \cP^{\pi_{\star}})
        \\& \quad \times (\Pi^{0}_{i=K-1}(\lambda_{i+1}I+(1-\lambda_{i+1}) \cP^{\pi_{\star}})\bigg)-( \Pi^{0}_{i=K-1}(\lambda_{i+1}I+(1-\lambda_{i+1}) \cP^{\pi_i})\bigg)(V^0-h^{\star}) \Bigg)
        \\& =\sum^{k_1}_{i=k_2+1} \sum^{k_2}_{j=K} b^{k_2}_j  b^{k_1}_i c^K_{i-1,j-1}(S^{i,j}_{x'}-S^{i,j}_{y'}) (V^0-h^{\star}) +\sum^{k_1}_{i=k_2+1} \sum^{k_2}_{j=K} b^{k_2}_j  b^{k_1}_i \para{\sum^{i-1}_{l=j} (1-\lambda'_{l})g^{\star}}
        \\& = c^K_{k_1,k_2}(S^{k_1,k_2}_{1'}-S^{k_1,k_2}_{2'}) (V^0-h^{\star})+ \sum^{k_1}_{i=k_2+1} (1-\lambda_{i})g^{\star}.
    \end{align*}
    where first equality is from similar argument in the proof of Lemma \ref{lem::KM_diff}, first inequality comes from the fact that $\{\pi_l\}_{l=K,K+1,\dots,k_1}$ are greedy policies, second inequality follows from induction and simlilar argument in the proof of Lemma \ref{lem::KM_expansion_2}, last inequality is from Lemma \ref{lem::KM_normalized_iterate} and first Bellman equation
    (note that $g^{\star}$ terms cancel out), second from the last equality is from same argument in the proof of Lemma \ref{lem::KM_coeff}, and
    \begin{align*}
     S^{i,j}_{x'}=\left\{
    \begin{array}{ll}
    \cP^{\pi_{\star}}\Pi^{K}_{l=i-2}(\lambda'_{l+1}I+(1-\lambda'_{l+1}) \cP^{\pi_{\star}})(\Pi^{0}_{i=K-1}(\lambda_{i+1}I+(1-\lambda_{i+1}) \cP^{\pi_{\star}})\bigg) & j=K,\\
    \cP^{\pi_{j-1}}S^{i-1,j-1}_{1'} & \text{else},
    \end{array}\right. 
    \end{align*}
    \begin{align*}
        \qquad      
    S^{i,j}_{y'}=\left\{
    \begin{array}{ll}
    \Pi^{0}_{i=K-1}(\lambda_{i+1}I+(1-\lambda_{i+1}) \cP^{\pi_i}) & j=K,\\
    \cP^{\pi_{j-1}}S^{i-1,j-1}_{2'} & \text{else},
    \end{array}\right. 
    \end{align*}
\begin{align*}
    &S^{k_1,k_2}_{1'} =(c^K_{k_1,k_2})^{-1}\sum^{k_1}_{i=k_2+1} \sum^{k_2}_{j=K} b^{k_2}_j  b^{k_1}_i c^K_{i-1,j-1}S^{i,j}_{x'},
    \\& S^{k_1,k_2}_2 =(c^K_{k_1,k_2})^{-1}\sum^{k_1}_{i=k_2+1} \sum^{k_2}_{j=0} b^{k_2}_j  b^{k_1}_i c^K_{i-1,j-1}S^{i,j}_{y'}.
\end{align*}     
\end{proof}

For the explicit convergence rate of Theorem  \ref{thm::KM_bellman}, we will use the following Fact. 
\begin{fact}\protect{\cite[Section~2.3]{cominetti2014rate}}\label{fact::coeff}
For $0 < k$ and $K \le k'$,
    \begin{align*}
        (1-\lambda_{k+1})^{-1}c_{k+1,k} &\le  \frac{2}{\sqrt{\pi\sum^{k}_{i=1} \lambda_i(1-\lambda_{i})}},
        \\ (1-\lambda'_{k'+1})^{-1}c^K_{k'+1,k'} &\le  \frac{2}{\sqrt{\pi\sum^{k'}_{i=K+1} \lambda'_i(1-\lambda'_{i})}}.
    \end{align*} 
\end{fact}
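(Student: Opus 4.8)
The estimate is cited to \citet{cominetti2014rate}, whose analysis connects Krasnosel'ski\u\i--Mann coefficients to sums of Bernoulli random variables, and I sketch how I would reconstruct it. First I would dispose of the second inequality by a shift of index: by construction, $b^{k}_{j}$ and $c^{K}_{k_1,k_2}$ are built from the sequence $(\lambda'_{K},\lambda'_{K+1},\dots)$ with $\lambda'_{K}=0$ by exactly the same formulas that build $a^{k}_{j}$ and $c_{k_1,k_2}$ from $(\lambda_{0},\lambda_{1},\dots)$ with $\lambda_{0}=0$, so the second bound is the first bound applied to this re-indexed finite sequence. Hence it suffices to prove $(1-\lambda_{k+1})^{-1}c_{k+1,k}\le \frac{2}{\sqrt{\pi\sum_{i=1}^{k}\lambda_i(1-\lambda_i)}}$.

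Next I would unwind the recursion into a probabilistic identity. Setting $k_1=k+1$, $k_2=k$ in the definition of $c_{k_1,k_2}$ leaves only the term $i=k+1$, for which $a^{k+1}_{k+1}=1-\lambda_{k+1}$, so $(1-\lambda_{k+1})^{-1}c_{k+1,k}=\sum_{j=0}^{k}a^{k}_{j}\,c_{k,j-1}$; iterating this and bookkeeping with the identities of Lemma~\ref{lem::KM_coeff} identifies the quantity with a collision probability of a sum of Bernoullis, which is the content of \citet[Section~2]{cominetti2014rate}. Concretely, let $B_1,\bar B_1,B_2,\bar B_2,\dots$ be independent with $\mathbb{P}(B_i=1)=\lambda_i$ and $\mathbb{P}(\bar B_i=1)=1-\lambda_i$, and put $W_k=\sum_{i=1}^{k}(B_i+\bar B_i)$; using that $a^{k}_{j}$ is the probability that the last $0$ among $B_1,\dots,B_k$ occurs at coordinate $j$, together with the convolution structure of the $B_i$, one obtains
\[(1-\lambda_{k+1})^{-1}c_{k+1,k}=\mathbb{P}(W_k=k)+\mathbb{P}(W_k=k+1).\]
(Alternatively one can verify this directly by checking that both sides satisfy the same recursion and base cases.) Establishing this identity — matching the double-index bookkeeping of $c_{k_1,k_2}$ with the convolution of Bernoulli laws — is the step I expect to be the main obstacle.

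Finally I would close with an anti-concentration estimate. The law of $W_k$ is a convolution of Bernoulli laws, hence log-concave and in particular unimodal, so $\mathbb{P}(W_k=k)+\mathbb{P}(W_k=k+1)\le 2\max_{\ell}\mathbb{P}(W_k=\ell)$. By Fourier inversion, $\mathbb{P}(W_k=\ell)=\frac{1}{2\pi}\int_{-\pi}^{\pi}e^{-i\ell t}\phi(t)\,dt$ with $\phi(t)=\prod_{i=1}^{k}\big[(1-\lambda_i+\lambda_i e^{it})(\lambda_i+(1-\lambda_i)e^{it})\big]$, and since $|1-\lambda_i+\lambda_i e^{it}|\,|\lambda_i+(1-\lambda_i)e^{it}|=1-2\lambda_i(1-\lambda_i)(1-\cos t)\le e^{-2\lambda_i(1-\lambda_i)(1-\cos t)}$, we get $|\phi(t)|\le e^{-\sigma^{2}(1-\cos t)}$ with $\sigma^{2}=\mathrm{Var}(W_k)=2\sum_{i=1}^{k}\lambda_i(1-\lambda_i)$; a standard bound on the resulting modified-Bessel integral $\frac{1}{2\pi}\int_{-\pi}^{\pi}e^{-\sigma^{2}(1-\cos t)}\,dt$ then gives $\max_{\ell}\mathbb{P}(W_k=\ell)\le \frac{1}{\sqrt{\pi\sigma^{2}}}$. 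Combining, $(1-\lambda_{k+1})^{-1}c_{k+1,k}\le \frac{2}{\sqrt{\pi\sigma^{2}}}=\frac{2}{\sqrt{2\pi\sum_{i=1}^{k}\lambda_i(1-\lambda_i)}}\le \frac{2}{\sqrt{\pi\sum_{i=1}^{k}\lambda_i(1-\lambda_i)}}$, as claimed; for the full details I would refer to \citet[Section~2.3]{cominetti2014rate}.
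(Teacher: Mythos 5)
The paper supplies no proof of this Fact---it is imported verbatim from the cited reference---and your reconstruction faithfully follows that reference's argument: the reduction of the second inequality to the first by reindexing the sequence at $K$ with $\lambda'_K=0$ is exactly right, and the chain ``recursive coefficient $=$ Bernoulli collision probability $\le 2\max_\ell \mathbb{P}(W_k=\ell) \le$ Fourier/Bessel bound'' is precisely the mechanism of the source. The identity you flag as the main obstacle, $(1-\lambda_{k+1})^{-1}c_{k+1,k}=\mathbb{P}(W_k=k)+\mathbb{P}(W_k=k+1)$, is indeed the content of the cited lemma and checks out against the recursion here (e.g., for $k=1$ both sides equal $1-\lambda_1+\lambda_1^2$, and for $k=2$ both equal $1-u_1-u_2+2u_1u_2$ with $u_i=\lambda_i(1-\lambda_i)$), and your argument in fact delivers the sharper constant $2/\sqrt{2\pi\sum_i\lambda_i(1-\lambda_i)}$, of which the stated bound is a weakening.
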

Now, we are ready to prove Theorem \ref{thm::KM_bellman}.
\begin{proof}[Proof of Theorem \ref{thm::KM_bellman}]
    First, by Lemma \ref{lem::KM_upper_bd}, we have 
    \begin{align*}
        &TV^k-V^k-g^{\star} 
        \\&= (1-\lambda_{k+1})^{-1}(V^{k+1}-V^k)-g^{\star} 
        \\& \le  (1-\lambda_{k+1})^{-1}(c_{k+1,k}(S^{k+1,k}_1-S^{k+1,k}_2) (V^0-h^{\star})+(1-\lambda_{k+1})g^{\star})-g^{\star}
        \\& =  (1-\lambda_{k+1})^{-1}(c_{k+1,k}(S^{k+1,k}_1-S^{k+1,k}_2) (V^0-h^{\star}).
    \end{align*}
Similarly, by Lemma \ref{lem::KM_lower_bd}, we have 
    \begin{align*}
        &TV^k-V^k-g^{\star} 
        \\&= (1-\lambda_{k+1})^{-1}(V^{k+1}-V^k)-g^{\star}  
        \\&\ge  (1-\lambda_{k+1})^{-1}c^K_{k+1,k}(S^{k+1,k}_{1'}-S^{k+1,k}_{2'}) (V^0-h^{\star}).
    \end{align*}
Thus, this two inequality implies that
    \begin{align*}
        \infn{TV^k-V^k-g^{\star}} \le \frac{\infn{V^0-h^{\star}}}{\sqrt{\sum^{k}_{i=K+1} \lambda_i(1-\lambda_{i})}}
    \end{align*}
    by Fact \ref{fact::coeff} and $\lambda_k = \lambda'_k$ for $K+1 \le k$. Finally, by applying the Proposition~\ref{prop::Bellman_to_policy}, we conclude proof.
\end{proof}

\subsection{Proof of Theorem \ref{thm:KMm_optimized} }

 Let $S$ be set of all deterministic policies and $\epsilon=\inf_{\pi \in S/\{\pi \,|\, \cP^{\pi}g^{\star} =  g^{\star}\}} \infn{\cP^{\pi}g^{\star}-g^{\star}}$ (note that if $S/\{\pi \,|\, \cP^{\pi}g^{\star} =  g^{\star}\} = \emptyset $ , $\epsilon =\infty$). By definition of Bellman optimality operator, there exist deterministic policy $\pi_k$ such that. 
 By definition of Bellman optimality operator, there exist deterministic $\pi$ such that 
  \begin{align*}
        V^{k+1} &= \frac{1}{2}V^{k}+\frac{1}{2}\cP^{\pi}V^{k} + \frac{1}{2}r^{\pi}.
    \end{align*}
    for all $k$. By simple calculation, this is equivalent to
           \begin{align*}
        -\frac{r^{\pi}}{{\frac{k}{2}}}+\frac{2V^0}{k}-\frac{2\cP^{\pi}V^0}{k} &= \cP^{\pi}\para{\frac{V^{k}-V^0}{\frac{k}{2}} }-2\frac{k+1}{k}\para{\frac{V^{k+1}-V^0}{\frac{k+1}{2}}}+\para{\frac{V^{k}-V^0}{\frac{k}{2}}}\\
        \end{align*}
        Let $\frac{V^k-V^0}{k/2} = g^{\star}+ \epsilon_k$. By Theorem \ref{thm::KM_normalized} with $\lambda_k =\frac{1}{2}$, we have
    \[\infn{\frac{V^k-V^0}{k/2}-g^{\star}} \le \frac{\infn{V^0-h^{\star}}}{k/4},\]
    and this implies 
        \[\infn{\epsilon_k} \le \frac{\infn{V^0-h^{\star}}}{k/4}.\]
        Then, we have
        \begin{align*}
        &\cP^{\pi}\para{\frac{V^{k}-V^0}{\frac{k}{2}} }-2\frac{k+1}{k}\para{\frac{V^{k+1}-V^0}{\frac{k+1}{2}}}+\para{\frac{V^{k}-V^0}{\frac{k}{2}}} 
        \\&= \cP^{\pi}\para{g^{\star}+\epsilon_k }-\frac{2(k+1)}{k}\para{g^{\star}+\epsilon_{k+1} }+\para{g^{\star}+\epsilon_k }\\
             &= \cP^{\pi}g^{\star}-g^{\star}+\cP^{\pi}\epsilon_k -\frac{2}{k}g^{\star}-\frac{2(k+1)}{k}\epsilon_{k+1} +\epsilon_k.
    \end{align*}
    This implies
    \begin{align*}
        \cP^{\pi}g^{\star}-g^{\star} =  -\frac{r^{\pi}}{{\frac{k}{2}}}+\frac{2V^0}{k}-\frac{2\cP^{\pi}V^0}{k}-\cP^{\pi}\epsilon_k +\frac{2}{k}g^{\star}-\frac{2(k+1)}{k}\epsilon_{k+1}-\epsilon_k . 
    \end{align*}
 Then, if we take $\infn{\cdot}$ in both sides of previous equality,
        \begin{align*}
        \infn{\epsilon} \le  \frac{1}{k}\para{2\infn{r}+4\infn{V^0}+16\infn{V^0-h^{\star}} +2\infn{g^{\star}}}  
    \end{align*}
    Thus, if  $k \ge \para{2\infn{r}+4\infn{V^0}+16\infn{V^0-h^{\star}} +2\infn{g^{\star}}}\epsilon^{-1}$, $\cP^{\pi_k}g^{\star} =g^{\star}$.

Thus, if we set $K=\para{2\infn{r}+4\infn{V^0}+16\infn{V^0-h^{\star}} +2\infn{g^{\star}}}\epsilon^{-1}$, $K$ satisfied conditions of Theorem \ref{thm::KM_bellman}. Therefore, by  Theorem \ref{thm::KM_bellman} with $\lambda_i=1/2$ for all $i$, we obtain desired rate of Bellman and policy errors. 


\section{Omitted proofs of Section \ref{sec::Anc-VI} and \ref{s::omitted-Anc-theorems}}\label{s::omitted-Anc-VI-proofs}

In this section, we present omitted proofs convergence theorems of \ref{eq:Anc-VI}. We prove Theorem \ref{thm::anc_normalized_iterate}, \ref{thm::anc_bellman}, and \ref{thm:Anc_optimized} in turn.

\subsection{Proof of Theorem \ref{thm::anc_normalized_iterate}}
Define $\lambda_0 =1$ as coefficient of \ref{eq:Anc-VI} for computational conciseness.

First, we prove the following lemma by induction.
 \begin{lemma}\label{lem::anc_expansion} For the iterates $\{V^k\}_{k=0,1,\dots}$ of \ref{eq:Anc-VI},
  \begin{align*}
  &V^k= \sum_{i=0}^{k} (\Pi^k_{j=i+1} (1-\lambda_j)) \lambda_{i}\para{\Pi^{i}_{l=k-1}\cP^{\pi_l}}V^0+\sum_{i=0}^{k-1} (\Pi^k_{j=i+1} (1-\lambda_j)) \para{\Pi^{i+1}_{l=k-1}\cP^{\pi_l}}r^{\pi_{i}},
   \end{align*}
  \end{lemma}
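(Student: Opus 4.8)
The plan is to prove the identity by induction on $k$, unfolding the \ref{eq:Anc-VI} recursion once at each step; the content is entirely bookkeeping of the ascending- and descending-product index conventions.

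For the base case $k=0$, I would use the convention $\lambda_0=1$ and observe that $\Pi^{0}_{j=1}(1-\lambda_j)$ is an empty ascending product (hence $1$), $\Pi^{0}_{l=-1}\cP^{\pi_l}$ is an empty descending product (hence $I$), and the second sum runs from $i=0$ to $-1$ and is therefore empty; the right-hand side thus collapses to $\lambda_0 V^0=V^0$, which is correct since $V^0$ is the given starting point. (Checking $k=1$ against $V^1=\lambda_1 V^0+(1-\lambda_1)\para{r^{\pi_0}+\cP^{\pi_0}V^0}$ is a useful additional sanity check, and also shows why the convention $\lambda_0=1$ is the right normalization for the $i=0$ summand of the first sum.)

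For the inductive step, suppose the claimed formula holds at level $k$. I would start from $V^{k+1}=\lambda_{k+1}V^0+(1-\lambda_{k+1})TV^{k}$, substitute $TV^{k}=T^{\pi_k}V^{k}=r^{\pi_k}+\cP^{\pi_k}V^{k}$ (where $\pi_k$ is the greedy policy with $T^{\pi_k}V^k=TV^k$), plug in the inductive expression for $V^{k}$, and use the linearity of $\cP^{\pi_k}$ from Proposition~\ref{prop::nonexp_prob_kernel} to distribute it through both sums. Three elementary facts then finish the argument: (i) multiplying a coefficient $\Pi^{k}_{j=i+1}(1-\lambda_j)$ by the new factor $(1-\lambda_{k+1})$ produces $\Pi^{k+1}_{j=i+1}(1-\lambda_j)$; (ii) left-multiplying a descending product $\Pi^{i}_{l=k-1}\cP^{\pi_l}$ (respectively $\Pi^{i+1}_{l=k-1}\cP^{\pi_l}$) by $\cP^{\pi_k}$ produces $\Pi^{i}_{l=k}\cP^{\pi_l}$ (respectively $\Pi^{i+1}_{l=k}\cP^{\pi_l}$); and (iii) the two terms left outside the bracket, $\lambda_{k+1}V^0$ and $(1-\lambda_{k+1})r^{\pi_k}$, are exactly the boundary summands of the level-$(k+1)$ formula — the former is its $i=k+1$ summand because $\Pi^{k+1}_{j=k+2}(1-\lambda_j)=1$ and $\Pi^{k+1}_{l=k}\cP^{\pi_l}=I$, and the latter is its $i=k$ summand because $\Pi^{k+1}_{j=k+1}(1-\lambda_j)=1-\lambda_{k+1}$ and $\Pi^{k+1}_{l=k}\cP^{\pi_l}=I$. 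Collecting everything reproduces the asserted expression for $V^{k+1}$.

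I do not expect a genuine obstacle here: the only thing that needs care is keeping the empty-product and empty-sum conventions (in both ascending and descending order) straight at the boundary indices $i=k$ and $i=k+1$ and at index $l$ near $k$, so that the two ``extra'' terms generated by unfolding the recursion land precisely on the boundary summands of the level-$(k+1)$ expression rather than being double-counted or omitted. Once these conventions are fixed, each term on one side of the equality matches a unique term on the other, and the induction closes.
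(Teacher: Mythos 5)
Your proposal is correct and follows essentially the same route as the paper: induction on $k$ with the convention $\lambda_0=1$, unfolding $V^{k+1}=\lambda_{k+1}V^0+(1-\lambda_{k+1})\bigl(r^{\pi_k}+\cP^{\pi_k}V^k\bigr)$, distributing $\cP^{\pi_k}$ through the inductive expression, and absorbing the two leftover terms as the boundary summands of the level-$(k+1)$ formula. The empty-product conventions you invoke match those fixed in the paper's preliminaries, so no further justification is needed.
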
 
\begin{proof}
   If $k=0$, $V^0=V^0$.
   
   By induction,
\begin{align*}
    V^{k+1}  &=(1-\lambda_{k+1})TV^{k}+ \lambda_{k+1} V^0\\
    & =  (1-\lambda_{k+1})\Bigg(\cP^{\pi_k} \Bigg(\sum_{i=0}^{k} (\Pi^k_{j=i+1} (1-\lambda_j)) \lambda_{i}\para{\Pi^{i}_{l=k-1}\cP^{\pi_l}}V^0 \\& \quad +\sum_{i=0}^{k-1} \para{\Pi^k_{j=i+1} (1-\lambda_j)} \para{\Pi^{i+1}_{l=k-1}\cP^{\pi_l}}r^{\pi_{i}}\Bigg) +r^{\pi_k}\Bigg)+ \lambda_{k+1} V^0\\
    & = \sum_{i=0}^{k+1} (\Pi^{k+1}_{j=i+1} (1-\lambda_j)) \lambda_{i}\para{\Pi^{i}_{l=k}\cP^{\pi_l}}V^0+\sum_{i=0}^{k} \Pi^{k+1}_{j=i+1} (1-\lambda_j) \para{\Pi^{i+1}_{l=k}\cP^{\pi_l}}r^{\pi_{i}}.
\end{align*}

\end{proof}  
 Now, we prove following key lemma. 
\begin{lemma}\label{lem::anc_normalized_iterate}
For the iterates $\{V^k\}_{k=0,1,\dots}$ of \ref{eq:Anc-VI},
\begin{align*}
     &V^k-\sum_{i=0}^{k-1} \Pi^k_{j=i+1} (1-\lambda_j)g^{\star} \le h^{\star}+\sum_{i=0}^{k} (\Pi^k_{j=i+1} (1-\lambda_j)) \lambda_{i}\para{\Pi^{i}_{l=k-1}\cP^{\pi_l}}(V^0-h^{\star}), 
     \\& h^{\star}+ \sum_{i=0}^{k} (\Pi^k_{j=i+1} (1-\lambda_j)) \lambda_{i}\para{\Pi^{i}_{l=k-1}\cP^{\pi_{\star}}}(V^0-h^{\star}) \le V^k-\sum_{i=0}^{k-1} \Pi^k_{j=i+1} (1-\lambda_j)g^{\star}.
\end{align*}
\end{lemma}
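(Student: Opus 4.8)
The plan is to follow the template of the proof of Lemma~\ref{lem::KM_normalized_iterate} (the \ref{eq:Rx-VI} analogue), but starting from the closed-form expansion of Lemma~\ref{lem::anc_expansion}, which is specific to \ref{eq:Anc-VI}. Abbreviate $c_i = \Pi^{k}_{j=i+1}(1-\lambda_j)$ and $P_i = \Pi^{i}_{l=k-1}\cP^{\pi_l}$, so that $c_k = 1$, $P_k = I$, $c_{i-1} = (1-\lambda_i)c_i$, and---using the convention $\lambda_0 = 1$---we have $c_{-1}=0$ and the elementary identity $c_i\lambda_i = c_i - c_{i-1}$ for $0 \le i \le k$. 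Both inequalities of the lemma will come from the same three ingredients: decomposing $V^0 = h^{\star} + (V^0 - h^{\star})$; bounding the reward terms $r^{\pi_i}$ by the second modified Bellman equation; reducing the resulting $g^{\star}$-coefficients by the first modified Bellman equation; and collapsing the $h^{\star}$-coefficients by a telescoping rearrangement.

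\emph{Upper bound.} Plug the exact expansion of Lemma~\ref{lem::anc_expansion} into the left-hand side and split the $V^0$-term via $V^0 = h^{\star}+(V^0-h^{\star})$; its $(V^0-h^{\star})$-part is precisely the term $\sum_{i=0}^{k}c_i\lambda_i P_i(V^0-h^{\star})$ appearing on the right-hand side. In the reward sum, apply $r^{\pi_i} \le g^{\star} + (I-\cP^{\pi_i})h^{\star}$, which follows from $\max_{\pi}\{r^{\pi}+\cP^{\pi}h^{\star}\}=g^{\star}+h^{\star}$. Since $\cP^{\pi}g^{\star}\le g^{\star}$ for every $\pi$ and each $\cP^{\pi}$ is monotone (Proposition~\ref{prop::nonexp_prob_kernel}), iterating gives $\bigl(\Pi^{i+1}_{l=k-1}\cP^{\pi_l}\bigr)g^{\star}\le g^{\star}$, so the $g^{\star}$-part of the reward sum is $\le \sum_{i=0}^{k-1}c_i g^{\star}$, matching the term subtracted on the left. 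It remains to verify that the leftover $h^{\star}$-contributions collapse, namely
\[
\sum_{i=0}^{k} c_i\lambda_i\, P_i h^{\star} \;+\; \sum_{i=0}^{k-1} c_i\bigl(\Pi^{i+1}_{l=k-1}\cP^{\pi_l}h^{\star} - P_i h^{\star}\bigr) \;=\; h^{\star},
\]
where we used $\Pi^{i+1}_{l=k-1}\cP^{\pi_l}(I-\cP^{\pi_i})h^{\star} = \Pi^{i+1}_{l=k-1}\cP^{\pi_l}h^{\star}-P_i h^{\star}$. Substituting $c_i\lambda_i = c_i - c_{i-1}$ and reindexing, the sum telescopes to $c_k h^{\star}=h^{\star}$ (the boundary terms drop because $c_{-1}=0$). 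This proves the first inequality.

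\emph{Lower bound.} First, by the same induction as in Lemma~\ref{lem::anc_expansion}, but replacing every greedy identity $TV^{l}=T^{\pi_l}V^{l}$ by the inequality $TV^{l}\ge T^{\pi_{\star}}V^{l}$ (valid because $\pi_{\star}$ is a feasible policy, using monotonicity of $\cP^{\pi_{\star}}$, Proposition~\ref{prop::nonexp_prob_kernel}), one obtains
\[
V^k \;\ge\; \sum_{i=0}^{k} c_i\lambda_i\bigl(\Pi^{i}_{l=k-1}\cP^{\pi_{\star}}\bigr)V^0 \;+\; \sum_{i=0}^{k-1} c_i\bigl(\Pi^{i+1}_{l=k-1}\cP^{\pi_{\star}}\bigr)r^{\pi_{\star}}.
\]
Now substitute $r^{\pi_{\star}} = g^{\star}+(I-\cP^{\pi_{\star}})h^{\star}$ and use $\cP^{\pi_{\star}}g^{\star}=g^{\star}$; because $\pi_{\star}$ attains the maxima in the modified Bellman equations, both relations hold with \emph{equality}, so the $g^{\star}$-part is exactly $\sum_{i=0}^{k-1}c_i g^{\star}$ and the $h^{\star}$-part collapses by the identical telescoping identity to $h^{\star}$. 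Rearranging gives the second inequality.

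The main obstacle is purely the bookkeeping of the nested products $\Pi^{k}_{j=i+1}(1-\lambda_j)$ and $\Pi^{i}_{l=k-1}\cP^{\pi_l}$, and in particular getting the boundary indices right: $i=0$ forces one to invoke the convention $\lambda_0=1$ (so that the $V^0$-coefficient is handled uniformly), and $i=k$ corresponds to empty products. This is also precisely where the anchor term $\lambda_k V^0$ changes the combinatorics relative to the \ref{eq:Rx-VI} case: each iterate now carries a full product of the raw kernels $\cP^{\pi_l}$ back to the starting point rather than a product of averaged operators, so the induction in Lemma~\ref{lem::anc_expansion} must be transcribed carefully before the telescoping goes through. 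Once the identity $c_i\lambda_i = c_i - c_{i-1}$ is available, the remaining computation is short.
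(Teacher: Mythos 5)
Your proposal is correct and follows essentially the same route as the paper's proof: expand $V^k$ via Lemma~\ref{lem::anc_expansion}, bound $r^{\pi_i}\le g^\star+(I-\cP^{\pi_i})h^\star$ by the second modified Bellman equation, use $\cP^{\pi}g^\star\le g^\star$ from the first modified Bellman equation, and collapse the $h^\star$-coefficients by telescoping (with equalities throughout for $\pi_\star$ in the lower bound). Your explicit verification of the telescoping identity via $c_i\lambda_i=c_i-c_{i-1}$ with $\lambda_0=1$ is a slightly more detailed account of what the paper calls the ``telescoping-sum argument,'' but it is the same computation.
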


\begin{proof}
    For the first inequality, we have
    \begin{align*}
        V^k &= \sum_{i=0}^{k} (\Pi^k_{j=i+1} (1-\lambda_j)) \lambda_{i}\para{\Pi^{i}_{l=k-1}\cP^{\pi_l}}V^0+\sum_{i=0}^{k-1} \para{\Pi^k_{j=i+1} (1-\lambda_j)} \para{\Pi^{i+1}_{l=k-1}\cP^{\pi_l}}r^{\pi_{i}}\\
        &\le  \sum_{i=0}^{k} (\Pi^k_{j=i+1} (1-\lambda_j)) \lambda_{i}\para{\Pi^{i}_{l=k-1}\cP^{\pi_l}}V^0 \\& \quad +\sum_{i=0}^{k-1} \para{\Pi^k_{j=i+1} (1-\lambda_j)} \para{\Pi^{i+1}_{l=k-1}\cP^{\pi_l}}(g^{\star}+(I-\cP^{\pi_i})h^{\star})\\
        &\le \sum_{i=0}^{k-1} \Pi^k_{j=i+1} (1-\lambda_j)g^{\star}+h^{\star}+\sum_{i=0}^{k} (\Pi^k_{j=i+1} (1-\lambda_j)) \lambda_{i}\para{\Pi^{i}_{l=k-1}\cP^{\pi_l}}(V^0-h^{\star})
    \end{align*}
        where first equality follows from Lemma \ref{lem::anc_expansion}, first inequality comes from second Bellman equation, and second inequality is from first Bellman equation and telescoping-sum argument.
        
We now prove second inequality.
    \begin{align*}
        V^k & \ge \sum_{i=0}^{k} (\Pi^k_{j=i+1} (1-\lambda_j)) \lambda_{i}\para{\Pi^{i}_{l=k-1}\cP^{\pi_{\star}}}V^0+\sum_{i=0}^{k-1} \Pi^k_{j=i+1} (1-\lambda_j) \para{\Pi^{i+1}_{l=k-1}\cP^{\pi_{\star}}}r^{\pi_{i}}
        \\& =  \sum_{i=0}^{k} (\Pi^k_{j=i+1} (1-\lambda_j)) \lambda_{i}\para{\Pi^{i}_{l=k-1}\cP^{\pi_{\star}}}V^0
        \\& \quad +\sum_{i=0}^{k-1} \Pi^k_{j=i+1} (1-\lambda_j) \para{\Pi^{i+1}_{l=k-1}\cP^{\pi_{\star}}}\para{g^{\star}+(I-\cP^{\pi_{\star}})h^{\star}}
        \\& = \sum_{i=0}^{k-1} \Pi^k_{j=i+1} (1-\lambda_j)g^{\star}+h^{\star}+ \sum_{i=0}^{k} (\Pi^k_{j=i+1} (1-\lambda_j)) \lambda_{i}\para{\Pi^{i}_{l=k-1}\cP^{\pi_{\star}}}(V^0-h^{\star})
    \end{align*}
    where first inequality follows from the Lemma \ref{prop::monotonicity} and fact that $\{\pi_l\}_{l=0,1,\dots,k}$ are greedy policies, first equality comes from second Bellman equation, and second equality is from first Bellman equation. 
\end{proof}

We now prove Theorem \ref{thm::anc_normalized_iterate}.

\begin{proof}[Proof of Theorem \ref{thm::anc_normalized_iterate} ]
By Lemma \ref{lem::anc_normalized_iterate}, we have 
\begin{align*}
    &V^k-V^0-\sum_{i=0}^{k-1} \Pi^k_{j=i+1} (1-\lambda_j)g^{\star} \\& \le (1-\lambda_k) (h^{\star}-V^0) +\sum_{i=0}^{k-1} (\Pi^k_{j=i+1} (1-\lambda_j)) \lambda_{i}\para{\Pi^{i}_{l=k-1}\cP^{\pi_l}}(V^0-h^{\star}), 
    \\&V^k-V^0-\sum_{i=0}^{k-1} \Pi^k_{j=i+1} (1-\lambda_j)g^{\star} \\& \ge (1-\lambda_k) (h^{\star}-V^0) +\sum_{i=0}^{k-1} (\Pi^k_{j=i+1} (1-\lambda_j)) \lambda_{i}\para{\Pi^{i}_{l=k-1}\cP^{\pi_{\star}}}(V^0-h^{\star}).
\end{align*}
If we take $\infn{\cdot}$ right side of first and second inequality, we have 
   \begin{align*}
       &\infn{(1-\lambda_k) (h^{\star}-V^0) +\sum_{i=0}^{k-1} (\Pi^k_{j=i+1} (1-\lambda_j)) \lambda_{i}\para{\Pi^{i}_{l=k-1}\cP^{\pi_l}}(V^0-h^{\star})} \\& \le 2(1-\lambda_k)\infn{V^0-h^{\star}},\\
       & \infn{(1-\lambda_k) (h^{\star}-V^0) +\sum_{i=0}^{k-1} (\Pi^k_{j=i+1} (1-\lambda_j)) \lambda_{i}\para{\Pi^{i}_{l=k-1}\cP^{\pi_{\star}}}(V^0-h^{\star})} \\& \le 2(1-\lambda_k)\infn{V^0-h^{\star}},
   \end{align*}
   and this implies
   \[\infn{\frac{V^k-V^0}{\sum_{i=1}^{k} \Pi^k_{j=i} (1-\lambda_j)}-g^{\star}} \le \frac{2(1-\lambda_k)}{\sum_{i=1}^{k} \Pi^k_{j=i} (1-\lambda_j)}\infn{V^0-h^{\star}}.\]
\end{proof}

\subsection{Proof of Theorem \ref{thm::anc_bellman}}
Following lemma will be used in proof in later proof. 
\begin{lemma}\label{lem::anc_bellman}
For the iterates $\{V^k\}_{k=0,1,\dots}$ of \ref{eq:Anc-VI},
 \begin{align*} 
      TV^k-V^0 &\le \sum_{i=1}^{k+1} \Pi^{k}_{j=i} (1-\lambda_j)g^{\star} +\para{\sum_{i=0}^{k} (\Pi^k_{j=i+1} (1-\lambda_j)) \lambda_{i}\para{\Pi^{i}_{l=k}\cP^{\pi_l}}-I}(V^0-h^{\star})
    \\TV^k-V^0 &\ge \sum_{i=1}^{k+1} \Pi^{k}_{j=i} (1-\lambda_j)g^{\star}+\para{\sum_{i=0}^{k} (\Pi^k_{j=i+1} (1-\lambda_j)) \lambda_{i}\para{\Pi^{i}_{l=k}\cP^{\pi_{\star}}}-I}(V^0-h^{\star})
  \end{align*} 
\end{lemma}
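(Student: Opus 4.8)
The plan is to prove Lemma~\ref{lem::anc_bellman} by the same route used for its \ref{eq:Rx-VI} counterpart, Lemma~\ref{lem::KM_expansion_2}: expand $TV^k$ by one application of the Bellman operator, substitute the two-sided bound on $V^k$ from Lemma~\ref{lem::anc_normalized_iterate}, and then collapse the resulting expression using the two modified Bellman equations. For the upper bound I would write $TV^k = T^{\pi_k}V^k = \cP^{\pi_k}V^k + r^{\pi_k}$, since $\pi_k$ is the greedy policy attaining $T^{\pi_k}V^k = TV^k$. For the lower bound I would instead use $TV^k \ge T^{\pi_{\star}}V^k = \cP^{\pi_{\star}}V^k + r^{\pi_{\star}}$, which holds because $TV^k(s) = \max_a\{\cdots\}$ dominates the value of any fixed policy, in particular the optimal policy $\pi_{\star}$ attaining the maxima in the modified Bellman equations.

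For the upper bound, the steps in order are: (i) apply the first inequality of Lemma~\ref{lem::anc_normalized_iterate} to bound $V^k \le \sum_{i=0}^{k-1}\Pi^{k}_{j=i+1}(1-\lambda_j)g^{\star} + h^{\star} + \sum_{i=0}^{k}(\Pi^{k}_{j=i+1}(1-\lambda_j))\lambda_i(\Pi^{i}_{l=k-1}\cP^{\pi_l})(V^0-h^{\star})$; (ii) apply $\cP^{\pi_k}$ to both sides, using that $\cP^{\pi_k}$ is monotone and linear (Proposition~\ref{prop::nonexp_prob_kernel}), which in particular rewrites $\cP^{\pi_k}(\Pi^{i}_{l=k-1}\cP^{\pi_l})$ as $\Pi^{i}_{l=k}\cP^{\pi_l}$; (iii) use $\cP^{\pi_k}g^{\star} \le g^{\star}$ from the first modified Bellman equation to keep the scalar $g^{\star}$-sum unchanged; (iv) use $\cP^{\pi_k}h^{\star} + r^{\pi_k} \le g^{\star} + h^{\star}$ from the second modified Bellman equation; (v) add $r^{\pi_k}$, subtract $V^0$, and re-index the scalar sum via $i \mapsto i+1$ together with the empty-product convention $\Pi^{k}_{j=k+1}(1-\lambda_j)=1$, so that $\sum_{i=0}^{k-1}\Pi^{k}_{j=i+1}(1-\lambda_j)g^{\star} + g^{\star} = \sum_{i=1}^{k+1}\Pi^{k}_{j=i}(1-\lambda_j)g^{\star}$. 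This yields exactly the claimed upper bound.

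The lower bound has the identical shape but uses the second inequality of Lemma~\ref{lem::anc_normalized_iterate} (where all intermediate policies are $\pi_{\star}$) and exploits that $\pi_{\star}$ attains the maxima in both modified Bellman equations, so steps (iii) and (iv) become equalities $\cP^{\pi_{\star}}g^{\star} = g^{\star}$ and $\cP^{\pi_{\star}}h^{\star} + r^{\pi_{\star}} = g^{\star} + h^{\star}$; the direction $TV^k \ge T^{\pi_{\star}}V^k$ is what reverses the inequality. Everything else — applying $\cP^{\pi_{\star}}$, the identity $\cP^{\pi_{\star}}(\Pi^{i}_{l=k-1}\cP^{\pi_{\star}}) = \Pi^{i}_{l=k}\cP^{\pi_{\star}}$, subtracting $V^0$, and the same re-indexing of the $g^{\star}$-sum — goes through verbatim.

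I do not expect a genuine obstacle here: the content is a single-step propagation of Lemma~\ref{lem::anc_normalized_iterate}, mirroring Lemma~\ref{lem::KM_expansion_2}. The only thing requiring care is the bookkeeping of the ascending and descending products $\Pi^{i}_{l=k}$ and $\Pi^{k}_{j=i}$ together with the empty-product/empty-sum conventions from the Preliminaries, and the alignment of the two $g^{\star}$-sums (the sum in Lemma~\ref{lem::anc_normalized_iterate} starts at $i=0$, whereas the one here starts at $i=1$, with the extra isolated $g^{\star}$ supplying the top term $i=k+1$). I would state these index identities once and then carry out each bound as a single displayed chain of (in)equalities, as in the proof of Lemma~\ref{lem::KM_expansion_2}.
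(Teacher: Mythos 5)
Your proposal is correct and follows essentially the same route as the paper's proof: bound $TV^k$ by $\cP^{\pi_k}V^k+r^{\pi_k}$ (resp.\ from below by $\cP^{\pi_{\star}}V^k+r^{\pi_{\star}}$), substitute the corresponding inequality from Lemma~\ref{lem::anc_normalized_iterate} via monotonicity, and collapse with the two modified Bellman equations and the stated re-indexing. No gaps.
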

\begin{proof}
For the first inequality, We have
         \begin{align*}
           &TV^{k} 
           \\& \le \cP^{\pi_k}\para{\sum_{i=0}^{k-1} \Pi^k_{j=i+1} (1-\lambda_j)g^{\star}+h^{\star}+\sum_{i=0}^{k} (\Pi^k_{j=i+1} (1-\lambda_j)) \lambda_{i}\para{\Pi^{i}_{l=k-1}\cP^{\pi_l}}(V^0-h^{\star})} +r^{\pi_k}
           \\& \le \sum_{i=1}^{k} \Pi^k_{j=i} (1-\lambda_j)g^{\star}+\para{\sum_{i=0}^{k} (\Pi^k_{j=i+1} (1-\lambda_j)) \lambda_{i}\para{\Pi^{i}_{l=k}\cP^{\pi_l}}(V^0-h^{\star})} \\& \quad +\cP^{\pi_k}h^{\star}+g^{\star}+(I-\cP^{\pi_k})h^{\star}\\& =\sum_{i=1}^{k+1} \Pi^{k}_{j=i} (1-\lambda_j)g^{\star} +\sum_{i=0}^{k} (\Pi^k_{j=i+1} (1-\lambda_j)) \lambda_{i}\para{\Pi^{i}_{l=k}\cP^{\pi_l}}(V^0-h^{\star})+h^{\star},
       \end{align*}
       where first inequality is from Lemma \ref{lem::anc_normalized_iterate} and second inequality comes from Bellman equations. 

Now, we prove the second inequality. 
                \begin{align*}
           &TV^{k} 
           \\& \ge \cP^{\pi_{\star}}\para{\sum_{i=0}^{k-1} \Pi^k_{j=i+1} (1-\lambda_j)g^{\star}+h^{\star}+ \sum_{i=0}^{k} (\Pi^k_{j=i+1} (1-\lambda_j)) \lambda_{i}\para{\Pi^{i}_{l=k-1}\cP^{\pi_{\star}}}(V^0-h^{\star})} +r^{\pi_{\star}}
           \\& = \sum_{i=1}^{k} \Pi^k_{j=i} (1-\lambda_j)g^{\star}+\sum_{i=0}^{k} (\Pi^k_{j=i+1} (1-\lambda_j)) \lambda_{i}\para{\Pi^{i}_{l=k}\cP^{\pi_{\star}}}(V^0-h^{\star})
           \\ &\quad +\cP^{\pi_{\star}}h^{\star}+g^{\star}+(I-\cP^{\pi_{\star}})h^{\star}
           \\& =\sum_{i=1}^{k+1} \Pi^k_{j=i} (1-\lambda_j)g^{\star} +\sum_{i=0}^{k} (\Pi^k_{j=i+1} (1-\lambda_j)) \lambda_{i}\para{\Pi^{i}_{l=k}\cP^{\pi_{\star}}}(V^0-h^{\star})+h^{\star}
       \end{align*}
       where first inequality is from Lemma \ref{lem::anc_normalized_iterate} and the fact that $\pi_k$ is greedy policy and first equality comes from Bellman equations.
\end{proof}

We now prove one of key lemmas. 
\begin{lemma}\label{lem::anc_upper_bd}
For the iterates $\{V^k\}_{k=1,2,\dots}$ of \ref{eq:Anc-VI},
\begin{align*}
    &V^k-V^{k-1} \le \para{1-\sum_{i=1}^{k} \lambda_k \Pi^{k-1}_{j=i} (1-\lambda_j)}g^{\star} + \para{1-\sum_{i=1}^{k} \lambda_i \Pi^{k-1}_{j=i} (1-\lambda_j)}(S^{k}_1 -S^{k}_2)(V^0-h^{\star})
\end{align*}   
where $S^{k}_1,S^{k}_2$ are stochastic matrices.
\end{lemma}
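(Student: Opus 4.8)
The plan is to prove the inequality by induction on $k$, using the one-step anchor recursion rather than the fully unrolled expansion. First I would record the scalar bookkeeping. Writing $\sigma_k=\sum_{i=1}^{k}\Pi^{k-1}_{j=i}(1-\lambda_j)$ and $\rho_k=\sum_{i=1}^{k}\lambda_i\Pi^{k-1}_{j=i}(1-\lambda_j)$, so that the two coefficients in the statement are $1-\lambda_k\sigma_k$ and $1-\rho_k$, a direct computation gives the recursions $\sigma_k=1+(1-\lambda_{k-1})\sigma_{k-1}$ and $\rho_k=\lambda_k+(1-\lambda_{k-1})\rho_{k-1}$ (with $\sigma_1=1$, $\rho_1=\lambda_1$), and hence also $1-\rho_k=(\lambda_{k-1}-\lambda_k)+(1-\lambda_{k-1})(1-\rho_{k-1})$. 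Using the monotonicity hypothesis $\lambda_k\le\lambda_{k-1}<1$, a one-line induction then shows $\lambda_k\sigma_k\le1$ and $\rho_k\le1$, so both coefficients are nonnegative --- a fact I will need when replacing $\cP^{\pi}g^\star$ by $g^\star$.

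For the base case $k=1$ I would write $V^1-V^0=(1-\lambda_1)(TV^0-V^0)$ and invoke Lemma~\ref{lem::anc_bellman} at index $0$, which after simplifying the empty products (and $\lambda_0=1$) reads $TV^0-V^0\le g^\star+(\cP^{\pi_0}-I)(V^0-h^\star)$; multiplying by $1-\lambda_1$ gives the claim with $S^1_1=\cP^{\pi_0}$, $S^1_2=I$. For the inductive step ($k\ge2$) the key is the decomposition
\[
V^k-V^{k-1}=(1-\lambda_{k-1})(TV^{k-1}-TV^{k-2})+(\lambda_{k-1}-\lambda_k)(TV^{k-1}-V^0),
\]
in which both prefactors are nonnegative. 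For the first summand I would use $TV^{k-1}-TV^{k-2}\le\cP^{\pi_{k-1}}(V^{k-1}-V^{k-2})$ --- which follows from $TV^{k-1}=T^{\pi_{k-1}}V^{k-1}$, $TV^{k-2}\ge T^{\pi_{k-1}}V^{k-2}$, and cancellation of $r^{\pi_{k-1}}$ --- then apply the inductive hypothesis to $V^{k-1}-V^{k-2}$, carry $\cP^{\pi_{k-1}}$ inside by linearity and monotonicity (Proposition~\ref{prop::nonexp_prob_kernel}), and replace $\cP^{\pi_{k-1}}g^\star$ by the larger $g^\star$ using the first modified Bellman equation $\max_{\pi}\{\cP^{\pi}g^\star\}=g^\star$ (legitimate since its coefficient $1-\lambda_{k-1}\sigma_{k-1}$ is nonnegative). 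For the second summand I would apply Lemma~\ref{lem::anc_bellman} at index $k-1$, which contributes $\sigma_k g^\star+(Q^{k-1}-I)(V^0-h^\star)$ with $Q^{k-1}$ stochastic (its defining weights sum to $1$ by the identity $\sum_{i=0}^{m}(\Pi^{m}_{j=i+1}(1-\lambda_j))\lambda_i=1$).

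Collecting the pieces, the coefficient of $g^\star$ becomes $(1-\lambda_{k-1})(1-\lambda_{k-1}\sigma_{k-1})+(\lambda_{k-1}-\lambda_k)\sigma_k$, which collapses to $1-\lambda_k\sigma_k$ via $\sigma_k=1+(1-\lambda_{k-1})\sigma_{k-1}$; and the $(V^0-h^\star)$-part is $(1-\lambda_{k-1})(1-\rho_{k-1})\bigl(\cP^{\pi_{k-1}}S^{k-1}_1-\cP^{\pi_{k-1}}S^{k-1}_2\bigr)+(\lambda_{k-1}-\lambda_k)(Q^{k-1}-I)$, whose total positive (and total negative) weight equals $(1-\lambda_{k-1})(1-\rho_{k-1})+(\lambda_{k-1}-\lambda_k)=1-\rho_k$. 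Grouping the positive terms into $(1-\rho_k)S^k_1$ and the negative terms into $(1-\rho_k)S^k_2$ realizes $S^k_1,S^k_2$ as convex combinations of products of transition matrices $\cP^{\pi}$, hence stochastic (products of stochastic matrices are stochastic, together with the proposition on convex combinations of stochastic matrices). This is precisely the asserted inequality. I expect the main obstacle to be choosing this particular one-step decomposition: the more obvious split $(\lambda_{k-1}-\lambda_k)(TV^{k-2}-V^0)+(1-\lambda_k)(TV^{k-1}-TV^{k-2})$ still closes the induction but produces $1-\lambda_k$ in place of $1-\lambda_{k-1}$, yielding a valid yet strictly weaker constant; beyond that, the care lies in verifying that the scalar coefficients telescope exactly to $1-\lambda_k\sigma_k$ and $1-\rho_k$ while the accompanying matrices stay stochastic.
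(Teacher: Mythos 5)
Your proposal is correct and follows essentially the same route as the paper's proof: induction on $k$, the one-step decomposition $V^k-V^{k-1}=(1-\lambda_{k-1})(TV^{k-1}-TV^{k-2})+(\lambda_{k-1}-\lambda_k)(TV^{k-1}-V^0)$, the greedy-policy bound $TV^{k-1}-TV^{k-2}\le\cP^{\pi_{k-1}}(V^{k-1}-V^{k-2})$, Lemma~\ref{lem::anc_bellman} for the anchor term, and the same coefficient telescoping. Your explicit verification that the leading coefficients stay nonnegative (needed to replace $\cP^{\pi}g^{\star}$ by $g^{\star}$) is a point the paper only remarks on in passing, so that added care is welcome but does not change the argument.
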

\begin{proof}
We use induction. If $k=1$, 
\begin{align*}
    V^1-V^0 &= (1-\lambda_1) \cP^{\pi_0}V^0 +(1-\lambda_1)r^{\pi_0} -(1-\lambda_1)V^0  
    \\& \le (1-\lambda_1)g^{\star}+(1-\lambda_1) (\cP^{\pi_0}-I)(V^0-h^{\star})
\end{align*}
where inequality follows from second Bellman equation. 

By induction, 
\begin{align*}
    &V^{k+1}-V^{k} 
    \\& = \lambda_{k+1} V^{0}+(1-\lambda_{k+1})TV^{k}-\lambda_{k} V^{0}-(1-\lambda_{k})TV^{k-1}
    \\ &= (\lambda_{k}-\lambda_{k+1})(TV^{k} -V^{0})+(1-\lambda_{k})(TV^{k} -TV^{k-1})
     \\ & \le (\lambda_{k}-\lambda_{k+1})(TV^{k} -V^{0})+(1-\lambda_{k})\cP^{\pi_{k}}(V^{k} -V^{k-1})
         \\ & \le (\lambda_{k}-\lambda_{k+1}) \para{ \sum_{i=1}^{k+1} \Pi^{k}_{j=i} (1-\lambda_j)g^{\star}  +\para{\sum_{i=0}^{k} (\Pi^k_{j=i+1} (1-\lambda_j)) \lambda_{i}\para{\Pi^{i}_{l=k}\cP^{\pi_l}}-I}(V^0-h^{\star})}
    \\& \quad + (1-\lambda_{k})\cP^{\pi_{k}}\Bigg(\para{1-\sum_{i=1}^{k} \lambda_k \Pi^{k-1}_{j=i} (1-\lambda_j)}g^{\star}
    \\& \quad + \para{1-\sum_{i=1}^{k} \lambda_i \Pi^{k-1}_{j=i} (1-\lambda_j)}(S^{k}_1 -S^{k}_2)(V^0-h^{\star})\Bigg)
             \\ & \le \para{\lambda_k+\sum_{i=1}^{k} \lambda_{k}\Pi^{k}_{j=i} (1-\lambda_j)}g^{\star}-\sum_{i=1}^{k+1} \lambda_{k+1}\Pi^{k}_{j=i} (1-\lambda_j)g^{\star}
             \\& \quad +\para{1-\lambda_k-\sum_{i=1}^{k} \lambda_k \Pi^{k}_{j=i} (1-\lambda_j)}g^{\star}+ \Bigg((\lambda_{k}-\lambda_{k+1})\sum_{i=0}^{k} (\Pi^k_{j=i+1} (1-\lambda_j)) \lambda_{i}\para{\Pi^{i}_{l=k}\cP^{\pi_l}}
             \\& \quad +\para{1-\lambda_k-\sum_{i=1}^{k} \lambda_i \Pi^{k}_{j=i} (1-\lambda_j)}\cP^{\pi_{k}}S^k_1\Bigg)(V^0-h^{\star})
    \\& \quad - \para{(\lambda_{k}-\lambda_{k+1})I+\para{1-\lambda_k-\sum_{i=1}^{k}  \Pi^{k}_{j=i} (1-\lambda_j)\lambda_i}\cP^{\pi_{k}}S^{k}_2}(V^0-h^{\star})
    \\& = \para{1-\sum_{i=1}^{k+1} \lambda_{k+1} \Pi^{k}_{j=i} (1-\lambda_j)}g^{\star} + \para{1-\sum_{i=1}^{k+1} \lambda_i \Pi^{k}_{j=i} (1-\lambda_j)}(S^{k+1}_1 -S^{k+1}_2)(V^0-h^{\star})
\end{align*}   
    where first inequality comes from the fact that $\pi_k, \pi_{k-1}$ are greedy policies, second inequality follows from induction and Lemma \ref{lem::anc_bellman}, last inequality is from the second Bellman equation, and 
    \begin{align*}
    & S^{k+1}_1 = \para{1-\sum_{i=1}^{k+1} \lambda_i \Pi^{k}_{j=i} (1-\lambda_j)}^{-1} \bigg((\lambda_{k}-\lambda_{k+1})\sum_{i=0}^{k} (\Pi^k_{j=i+1} (1-\lambda_j)) \lambda_{i}\para{\Pi^{i}_{l=k}\cP^{\pi_l}} 
    \\& \quad +\para{1-\lambda_k-\sum_{i=1}^{k}  \Pi^{k}_{j=i} (1-\lambda_j)\lambda_i}\cP^{\pi_{k}}S^k \bigg), 
    \\&S^{k+1}_2 =\para{1-\sum_{i=1}^{k+1} \lambda_i \Pi^{k}_{j=i} (1-\lambda_j)}^{-1}\bigg((\lambda_{k}-\lambda_{k+1})I+\para{1-\lambda_k-\sum_{i=1}^{k}  \Pi^{k}_{j=i} (1-\lambda_j)\lambda_i}\cP^{\pi_{k}}S^{k}_2 \bigg).    
    \end{align*}
     Note that condition of leading coefficients positive.
\end{proof}

To obtain lower bound of $V^{k}-V^{k-1}$, we need more sophisticated consideration, and following lemma is necessary for later argument.
\begin{lemma}\label{lem::opt_policy_cond}
Let $\{V^k\}_{k=0, 1,2,\dots}$ be the iterates of \ref{eq:Anc-VI}.  Let $\lambda_k \le \lambda_{k-1}$ for $1 \le k$ and $\lim \lambda_k = 0$.  Let $E = \{\pi: \cP^{\pi}g^{\star}= g^{\star}\}$. Then there exist $K$  such that if $K \le k$,
    \begin{align*}
        TV^K=\max_{\pi \in E}T^{\pi}V^K. 
    \end{align*}
\end{lemma}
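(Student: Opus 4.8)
The plan is to combine a pigeonhole argument over the (finite) set of deterministic policies with the normalized-iterate bound of Theorem~\ref{thm::anc_normalized_iterate}. Write $\alpha_k = \sum_{i=1}^{k}\Pi^k_{j=i}(1-\lambda_j)$ for the normalization appearing there. First I would record the recursion $\alpha_k = (1-\lambda_k)(1+\alpha_{k-1})$ with $\alpha_0=0$, which follows by splitting off the $i=k$ term. Together with $\lim\lambda_k=0$ this forces $\alpha_k\to\infty$: for any $\epsilon\in(0,1)$, eventually $\alpha_k\ge(1-\lambda_k)(1+\alpha_{k-1})\ge(1-\epsilon)(1+\alpha_{k-1})$, whose iterates are bounded below by quantities tending to $(1-\epsilon)/\epsilon$, and $\epsilon$ is arbitrary. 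From this I get two facts I will use: Theorem~\ref{thm::anc_normalized_iterate} gives $\tfrac{V^k-V^0}{\alpha_k}\to g^\star$ since its right-hand side is $\tfrac{2(1-\lambda_k)}{\alpha_k}\infn{V^0-h^\star}\to0$; and $\tfrac{\alpha_k}{\alpha_{k+1}}=\tfrac{1}{1-\lambda_{k+1}}\cdot\tfrac{\alpha_k}{1+\alpha_k}\to1$. (Only $\lambda_k\to0$ is used; monotonicity of $\lambda_k$ is not needed here.)

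The main step: since $\cS,\cA$ are finite there are finitely many deterministic policies, so if the conclusion failed there would be a fixed $\pi\notin E$ with $\pi_{k_n}=\pi$ for all $k_n$ in some infinite set. For each such $k_n$, \ref{eq:Anc-VI} reads $V^{k_n+1}=\lambda_{k_n+1}V^0+(1-\lambda_{k_n+1})(r^\pi+\cP^\pi V^{k_n})$, so by linearity of $\cP^\pi$,
\[
\frac{V^{k_n+1}-V^0}{\alpha_{k_n+1}}=(1-\lambda_{k_n+1})\para{\frac{r^\pi-V^0}{\alpha_{k_n+1}}+\frac{\alpha_{k_n}}{\alpha_{k_n+1}}\,\cP^\pi\frac{V^{k_n}-V^0}{\alpha_{k_n}}+\cP^\pi\frac{V^0}{\alpha_{k_n+1}}}.
\]
Letting $n\to\infty$: the left side tends to $g^\star$; $1-\lambda_{k_n+1}\to1$ and $\tfrac{\alpha_{k_n}}{\alpha_{k_n+1}}\to1$; the terms $\tfrac{r^\pi-V^0}{\alpha_{k_n+1}}$ and $\cP^\pi\tfrac{V^0}{\alpha_{k_n+1}}$ vanish since $\alpha_{k_n+1}\to\infty$ and $\cP^\pi$ is bounded; and $\cP^\pi\tfrac{V^{k_n}-V^0}{\alpha_{k_n}}\to\cP^\pi g^\star$ because $\cP^\pi$ is nonexpansive hence continuous (Proposition~\ref{prop::nonexp_prob_kernel}). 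Therefore $g^\star=\cP^\pi g^\star$, i.e.\ $\pi\in E$, contradicting $\pi\notin E$.

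Hence only finitely many iterations have $\pi_k\notin E$; let $K$ exceed all of them. For $k\ge K$ we have $\pi_k\in E$, and since $TV^k=T^{\pi_k}V^k=\max_\pi T^\pi V^k$, this yields $TV^k=\max_{\pi\in E}T^\pi V^k$, as claimed. I expect the only delicate points to be the two limiting facts about $\alpha_k$ deduced from $\lambda_k\to0$, and the bookkeeping that the $\mathcal{O}(1/\alpha_k)$ error of Theorem~\ref{thm::anc_normalized_iterate} survives to the limit — but it is multiplied only by the nonexpansive $\cP^\pi$ and the bounded ratio $\alpha_{k_n}/\alpha_{k_n+1}$, so it still vanishes. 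This parallels the \ref{eq:Rx-VI} analogue, with the normalization $\sum_{i=1}^k(1-\lambda_i)$ there replaced by $\alpha_k$ here.
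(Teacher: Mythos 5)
Your proposal is correct and follows essentially the same route as the paper: a pigeonhole argument over the finitely many deterministic policies, combined with the divergence of $\alpha_k=\sum_{i=1}^{k}\Pi^k_{j=i}(1-\lambda_j)$ (the paper's Lemma~\ref{lem::anc_coeff}) and the normalized-iterate rate of Theorem~\ref{thm::anc_normalized_iterate} to force $\cP^{\pi}g^{\star}=g^{\star}$ for any infinitely repeated greedy policy. Your write-up is in fact slightly more careful than the paper's, since you explicitly center the recursion at $V^0$ and track the ratio $\alpha_{k_n}/\alpha_{k_n+1}\to 1$, steps the paper leaves implicit.
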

\begin{proof}
 Suppose $\pi$ is infinitely often repeated deterministic policy among $\{\pi_k\}_{k=0, 1,2,\dots}$. Then there exist increasing sequence $k_n$ such that $\pi_{k_n}=\pi$. Then, since $  V^{k_n+1} = \lambda_{k_n+1}V^{0}+(1-\lambda_{k_n+1})TV^{k_n}$, we have
   \begin{align*}
        \frac{V^{k_n+1}}{\sum_{i=1}^{k_n+1} \Pi^{k_n}_{j=i} (1-\lambda_j)} &= \lambda_{n_{k}+1}\frac{V^{0}}{\sum_{i=1}^{k_K+1} \Pi^{k_n}_{j=i} (1-\lambda_j)}+(1-\lambda_{n_{k}+1})\cP^{\pi'}\frac{V^{k_n}}{\sum_{i=1}^{k_n+1} \Pi^{k_n}_{j=i} (1-\lambda_j)} 
        \\& \quad + (1-\lambda_{n_{k}+1})\frac{r^{\pi}}{{\sum_{i=1}^{k_n+1} \Pi^{k_n}_{j=i} (1-\lambda_j)}}.
    \end{align*}
  If $k_n \rightarrow \infty$, $\lim \lambda_k = 0$ implies $\lim_{k\rightarrow \infty} \sum_{i=0}^{k} \Pi^{k}_{j=i} (1-\lambda_j) = \infty$ by Lemma \ref{lem::anc_coeff}. Then, by Theorem \ref{thm::KM_normalized}, we have $g^{\star} = \cP^{\pi}g^{\star}$, and this implies $\pi \in E$. By finiteness of action and state space, number of infinitely repeated policy $\pi$ is also finite. Therefore there exist $K$ such that $TV^k=\max_{\pi \in E}T^{\pi}V^k$ for $K \le k$.
\end{proof}
\begin{lemma}\label{lem::anc_coeff}
  If $\lim \lambda_k = 0$, then $\lim_{k\rightarrow \infty} \sum_{i=0}^{k} \Pi^{k}_{j=i} (1-\lambda_j). $  
\end{lemma}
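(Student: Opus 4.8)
The plan is to reduce the statement to a single scalar recursion and then argue by contradiction. Write $S_k := \sum_{i=0}^{k}\Pi^{k}_{j=i}(1-\lambda_j)$, so that the assertion to prove is $S_k\to\infty$. First I would record that $S_k\ge 0$ for all $k$ (every factor $1-\lambda_j\ge 0$ since $\lambda_j<1$; the convention $\lambda_0=1$, if in force, only makes the $i=0$ summand vanish and changes nothing), and then derive the recursion
\[ S_k = (1-\lambda_k)\,(1 + S_{k-1}), \qquad k\ge 1, \]
by peeling off the $i=k$ term, which equals $1-\lambda_k$, and factoring $(1-\lambda_k)$ out of the rest: $\sum_{i=0}^{k-1}\Pi^{k}_{j=i}(1-\lambda_j) = (1-\lambda_k)\sum_{i=0}^{k-1}\Pi^{k-1}_{j=i}(1-\lambda_j) = (1-\lambda_k)S_{k-1}$.

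Next I would flag why a crude estimate does not suffice: once $\lambda_k\le 1/2$, the recursion only gives $S_k\ge \tfrac12(1+S_{k-1})$, whose fixed point is $1$, so bounding $1-\lambda_k$ below by a constant cannot yield divergence. The resolution is to exploit $\lambda_k\to 0$ in full through a $\liminf$ argument. Suppose, for contradiction, that $L:=\liminf_{k\to\infty}S_k$ is finite (it is $\ge 0$). Pick a subsequence $k_m\to\infty$ with $S_{k_m}\to L$. Rearranging the recursion gives $1+S_{k_m-1} = S_{k_m}/(1-\lambda_{k_m})$, and since $\lambda_{k_m}\to 0$ and $S_{k_m}\to L$, the right-hand side tends to $L$; hence $S_{k_m-1}\to L-1$.

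On the other hand, $\liminf_k S_k = L$ means that for every $\epsilon>0$ we have $S_k>L-\epsilon$ for all sufficiently large $k$, so in particular $S_{k_m-1}>L-\epsilon$ for all large $m$, and therefore $\liminf_m S_{k_m-1}\ge L$. This contradicts $S_{k_m-1}\to L-1<L$. Hence $L=\infty$, i.e.\ $S_k\to\infty$, which is exactly the claim. The only genuine subtlety — and the step I would be most careful about — is resisting the crude contraction-type bound and instead using the exact recursion together with $\lambda_k\to 0$; after that observation the remainder is routine, and the same recursion also makes it easy to double-check the degenerate index cases ($k=0$, or the $\lambda_0$ convention).
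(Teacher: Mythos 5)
Your proof is correct, but it takes a genuinely different route from the paper's. The paper argues directly: given $\epsilon>0$, pick $K_\epsilon$ with $1-\lambda_j>1-\epsilon$ for $j\ge K_\epsilon$, drop the (nonnegative) summands with $i<K_\epsilon$, and bound the remaining tail below by the geometric sum $\sum_{m\ge1}(1-\epsilon)^m$, which gives $\liminf_{k}S_k\ge 1/\epsilon-1$ for every $\epsilon>0$ and hence $S_k\to\infty$. You instead isolate the exact recursion $S_k=(1-\lambda_k)(1+S_{k-1})$ and run a $\liminf$ contradiction: if $L=\liminf_k S_k$ were finite, a subsequence realizing $L$ would force $S_{k_m-1}\to L-1<L$, contradicting the definition of $\liminf$. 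Both arguments are sound and elementary; your handling of the $\lambda_0=1$ convention and of nonnegativity is fine, and your observation that a crude bound of the form $1-\lambda_k\ge 1/2$ cannot work (its fixed point being $1$) correctly pinpoints why the full strength of $\lambda_k\to0$ is needed. What each buys: the paper's tail estimate is quantitative, telling you roughly how large $S_k$ eventually is once $\lambda_k$ stays below $\epsilon$, whereas your recursion-plus-subsequence argument is purely qualitative but cleaner, requiring no explicit choice of $K_\epsilon$ and exposing a structural identity for $S_k$ that could be reused.
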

\begin{proof}
    By condition, for any $\epsilon>0 $, there exist $K_{\epsilon}$ such that $1-\lambda_k > 1-\epsilon$ if $ K_\epsilon\le k$. Hence,  
    $\liminf_{k\rightarrow \infty} \sum_{i=0}^{k} \Pi^{k}_{j=i} (1-\lambda_j) \ge 1/\epsilon-1$. This concludes lemma. 
\end{proof}
The following lemma will be used in the proof of key lemma. 
\begin{lemma}\label{lem::anc_lower_bound_N}
Let $\{V^k\}_{k=1,2,\dots}$ be the iterates of \ref{eq:Anc-VI}. For $k \le K+1$, 
\begin{align*}
V^k-V^{k-1} &\ge \para{1-\sum_{i=1}^{k} \lambda_{k-1} \Pi^{k-1}_{j=i} (1-\lambda_j)}S_{3'}^{k}g^{\star} +(\lambda_{k-1}-\lambda_{k})  \sum_{i=1}^{k} \Pi^{k-1}_{j=i} (1-\lambda_j)g^{\star}
\\& \quad + \para{1-\sum_{i=1}^{k} \lambda_i \Pi^{k-1}_{j=i} (1-\lambda_j)}(S^{k}_{1'} -S^{k}_{2'})(V^0-h^{\star}),
\end{align*}
where $S_{1'}^k,S_{2'}^k,S_{3'}^k$ are stochastic matrices. 
\end{lemma}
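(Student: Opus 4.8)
The plan is to prove the lemma by induction on $k$, from $k=1$ through $k=K+1$, following the template of the upper-bound Lemma~\ref{lem::anc_upper_bd} but with every inequality reversed. Reversing the direction changes which policies appear: in place of the greedy policy $\pi_k$ used in Lemma~\ref{lem::anc_upper_bd}, the estimate of $TV^k-V^0$ is supplied by the \emph{lower} bound of Lemma~\ref{lem::anc_bellman} (which is expressed through the optimal policy $\pi_{\star}$), and the ``contraction'' step becomes $TV^k-TV^{k-1}\ge\cP^{\pi_{k-1}}(V^k-V^{k-1})$, valid because $TV^{k-1}=T^{\pi_{k-1}}V^{k-1}$ whereas $TV^k\ge T^{\pi_{k-1}}V^k$. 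The restriction $k\le K+1$ only fixes the regime of interest: it is the range over which we still carry the full recursive structure, before handing off (via a companion lemma analogous to Lemma~\ref{lem::KM_lower_bd}) to the simplified analysis available once $k\ge K$, where all relevant greedy policies lie in $\{\pi:\cP^{\pi}g^{\star}=g^{\star}\}$.

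For the base case $k=1$, I would use $\lambda_0=1$ together with $TV^0\ge T^{\pi_{\star}}V^0=r^{\pi_{\star}}+\cP^{\pi_{\star}}V^0$ and $r^{\pi_{\star}}=g^{\star}+(I-\cP^{\pi_{\star}})h^{\star}$ (from the modified Bellman equations) to obtain $V^1-V^0\ge(1-\lambda_1)g^{\star}+(1-\lambda_1)(\cP^{\pi_{\star}}-I)(V^0-h^{\star})$, which is the asserted inequality with the $S^1_{3'}$ term carrying coefficient $1-\lambda_0=0$ and $S^1_{1'}=\cP^{\pi_{\star}}$, $S^1_{2'}=I$. For the inductive step $k\to k+1$ (with $k\le K$), I would start from the identity $V^{k+1}-V^k=(\lambda_k-\lambda_{k+1})(TV^k-V^0)+(1-\lambda_k)(TV^k-TV^{k-1})$, bound the first summand below using $\lambda_k-\lambda_{k+1}\ge0$ and the lower bound of Lemma~\ref{lem::anc_bellman}, bound the second below by $(1-\lambda_k)\cP^{\pi_{k-1}}(V^k-V^{k-1})$, and then substitute the induction hypothesis for $V^k-V^{k-1}$ --- legitimate because $\cP^{\pi_{k-1}}$ is monotone (Proposition~\ref{prop::nonexp_prob_kernel}) and linear. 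Collecting terms yields an expression of the required shape, with $S^{k+1}_{1'},S^{k+1}_{2'},S^{k+1}_{3'}$ defined as the induced weighted averages of $\cP^{\pi_{k-1}}S^k_{1'}$, $\cP^{\pi_{k-1}}S^k_{2'}$, $\cP^{\pi_{k-1}}S^k_{3'}$, $\cP^{\pi_{k-1}}$, $I$, and $\sum_{i=0}^{k}(\Pi^k_{j=i+1}(1-\lambda_j))\lambda_i(\Pi^i_{l=k}\cP^{\pi_{\star}})$.

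The main obstacle is the coefficient bookkeeping needed to certify that these $S^{k+1}_{\bullet}$ are genuinely stochastic and that the scalar prefactors collapse to the stated closed forms. The $g^{\star}$ contributions separate into a clean scalar term $(\lambda_k-\lambda_{k+1})\sum_{i=1}^{k+1}\Pi^k_{j=i}(1-\lambda_j)\,g^{\star}$ coming from Lemma~\ref{lem::anc_bellman} (whose lower bound carries no kernel on $g^{\star}$) and a bundle of terms of the form (nonnegative scalar) times (stochastic matrix) times $g^{\star}$ inherited through $\cP^{\pi_{k-1}}$ from the hypothesis; the latter reassemble as $\para{1-\sum_{i=1}^{k+1}\lambda_k\Pi^k_{j=i}(1-\lambda_j)}S^{k+1}_{3'}g^{\star}$ via the identity $(1-\lambda_k)\para{1-\lambda_k\sum_{i=1}^{k}\Pi^{k-1}_{j=i}(1-\lambda_j)}=1-\lambda_k\sum_{i=1}^{k+1}\Pi^k_{j=i}(1-\lambda_j)$, while the $(V^0-h^{\star})$ part reassembles as $\para{1-\sum_{i=1}^{k+1}\lambda_i\Pi^k_{j=i}(1-\lambda_j)}(S^{k+1}_{1'}-S^{k+1}_{2'})(V^0-h^{\star})$, using that $\sum_{i=0}^{k}(\Pi^k_{j=i+1}(1-\lambda_j))\lambda_i=1$ (specialize Lemma~\ref{lem::anc_expansion} to an MDP with all transition kernels equal to $I$) so that $\sum_{i=0}^{k}(\Pi^k_{j=i+1}(1-\lambda_j))\lambda_i(\Pi^i_{l=k}\cP^{\pi_{\star}})$ is itself stochastic. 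The only delicate point is nonnegativity of every scalar weight appearing in these convex combinations; this follows from the monotonicity $\lambda_{k+1}\le\lambda_k$ (for instance $\lambda_k\sum_{i=1}^{k}\Pi^{k-1}_{j=i}(1-\lambda_j)\le\sum_{i=1}^{k}\lambda_i\Pi^{k-1}_{j=i}(1-\lambda_j)$) combined with the leading-coefficient positivity already established in the proof of Lemma~\ref{lem::anc_upper_bd}. Everything else is the same mechanical telescoping performed there, now run to produce lower bounds rather than upper bounds.
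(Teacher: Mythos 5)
Your proposal is correct and follows essentially the same route as the paper's proof: the same induction with base case $V^1-V^0=(1-\lambda_1)(TV^0-V^0)\ge(1-\lambda_1)g^{\star}+(1-\lambda_1)(\cP^{\pi_{\star}}-I)(V^0-h^{\star})$, the same decomposition $V^{k+1}-V^k=(\lambda_k-\lambda_{k+1})(TV^k-V^0)+(1-\lambda_k)(TV^k-TV^{k-1})$ with the lower bound of Lemma~\ref{lem::anc_bellman} on the first piece and $TV^k-TV^{k-1}\ge\cP^{\pi_{k-1}}(V^k-V^{k-1})$ on the second, and the same reassembly of the kernel-weighted $g^{\star}$ terms and of the stochastic matrices $S^{k+1}_{1'},S^{k+1}_{2'},S^{k+1}_{3'}$. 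The coefficient identities and nonnegativity claims you cite (using $\lambda_{k+1}\le\lambda_k$ and $\sum_{i=0}^{k}\lambda_i\Pi^k_{j=i+1}(1-\lambda_j)=1$ with $\lambda_0=1$) check out and match the paper's bookkeeping.
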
    
\begin{proof}
We use induction. If $k=1$, $V^1-V^0 = (1-\lambda_1)(TV^0-V^0) \ge (1-\lambda_1)g^{\star}+(1-\lambda_1)(\cP^{\pi_{\star}}-I)(V^0-h^{\star})$.

By induction,
\begin{align*}
    &V^{k+1}-V^{k}
     \\ & \ge (\lambda_{k}-\lambda_{k+1})(TV^{k} -V^{0})+(1-\lambda_{k})\cP^{\pi_{k-1}}(V^{k} -V^{k-1})
         \\ & \ge (\lambda_{k}-\lambda_{k+1}) \para{ \sum_{i=1}^{k+1} \Pi^{k}_{j=i} (1-\lambda_j)g^{\star}  +\para{\sum_{i=0}^{k} (\Pi^k_{j=i+1} (1-\lambda_j)) \lambda_{i}\para{\Pi^{i}_{l=k}\cP^{\pi_{\star}}}-I}(V^0-h^{\star})}
    \\& \quad + (1-\lambda_{k})\cP^{\pi_{k-1}} \Bigg(\para{1-\sum_{i=1}^{k} \lambda_{k-1} \Pi^{k-1}_{j=i} (1-\lambda_j)}S_{3'}^{k}g^{\star} +(\lambda_{k-1}-\lambda_{k})  \sum_{i=1}^{k} \Pi^{k-1}_{j=i} (1-\lambda_j)g^{\star}
    \\& \quad + \para{1-\sum_{i=1}^{k} \lambda_i \Pi^{k-1}_{j=i} (1-\lambda_j)}(S^{k}_{1'} -S^{k}_{2'})(V^0-h^{\star})\Bigg)
    \end{align*}
    \begin{align*}
     & = (\lambda_{k}-\lambda_{k+1}) \para{ \sum_{i=1}^{k+1} \Pi^{k}_{j=i} (1-\lambda_j)}g^{\star}+\para{1-\lambda_k-\sum_{i=1}^{k} \lambda_{k-1} \Pi^{k}_{j=i} (1-\lambda_j)}\cP^{\pi_{k-1}}S^k_{3'}g^{\star}
             \\& \quad +(\lambda_{k-1}-\lambda_{k})  \sum_{i=1}^{k} \Pi^{k}_{j=i} (1-\lambda_j)\cP^{\pi_{k-1}}g^{\star}+ \Bigg((\lambda_{k}-\lambda_{k+1})\sum_{i=0}^{k} (\Pi^k_{j=i+1} (1-\lambda_j)) \lambda_{i}\para{\Pi^{i}_{l=k}\cP^{\pi_l}}
             \\& \quad +\para{1-\lambda_k-\sum_{i=1}^{k} \lambda_i \Pi^{k}_{j=i} (1-\lambda_j)}\cP^{\pi_{k-1}}S^k_{1'}\Bigg)(V^0-h^{\star})
    \\& \quad - \para{(\lambda_{k}-\lambda_{k+1})I+\para{1-\lambda_k-\sum_{i=1}^{k}  \Pi^{k}_{j=i} (1-\lambda_j)\lambda_i}\cP^{\pi_{k-1}}S^{k}_{2'}}(V^0-h^{\star})
    \\& = (\lambda_{k}-\lambda_{k+1}) \para{ \sum_{i=1}^{k+1} \Pi^{k}_{j=i} (1-\lambda_j)}g^{\star}+\para{1-\sum_{i=1}^{k+1} \lambda_{k} \Pi^{k}_{j=i} (1-\lambda_j)}S^{k+1}_{3'}g^{\star} 
    \\& \quad + \para{1-\sum_{i=1}^{k+1} \lambda_i \Pi^{k}_{j=i} (1-\lambda_j)}(S^{k+1}_{1'} -S^{k+1}_{2'})(V^0-h^{\star})
\end{align*}   
    where first inequality comes from the fact that $\pi_k, \pi_{k-1}$ are greedy policies, second inequality follows from induction and Lemma \ref{lem::anc_bellman}, and 
    \begin{align*}
    &S^{k+1}_{1'} = \para{1-\sum_{i=1}^{k+1} \lambda_i \Pi^{k}_{j=i} (1-\lambda_j)}^{-1} \bigg((\lambda_{k}-\lambda_{k+1})\sum_{i=0}^{k} (\Pi^k_{j=i+1} (1-\lambda_j)) \lambda_{i}\para{\Pi^{i}_{l=k}\cP^{\pi_l}}
    \\& \quad +\para{1-\lambda_k-\sum_{i=1}^{k}  \Pi^{k}_{j=i} (1-\lambda_j)\lambda_i}\cP^{\pi_{k-1}}S_{1'}^k \bigg)
    \\&S^{k+1}_{2'} =\para{1-\sum_{i=1}^{k+1} \lambda_i \Pi^{k}_{j=i} (1-\lambda_j)}^{-1}\bigg((\lambda_{k}-\lambda_{k+1})I+\para{1-\lambda_k-\sum_{i=1}^{k}  \Pi^{k}_{j=i} (1-\lambda_j)\lambda_i}\cP^{\pi_{k-1}}S^{k}_{2'} \bigg),
    \\&S^{k+1}_{3'} = \para{1-\sum_{i=1}^{k+1} \lambda_{k} \Pi^{k}_{j=i} (1-\lambda_j)}^{-1} \bigg((\lambda_{k-1}-\lambda_{k})  \sum_{i=1}^{k} \Pi^{k}_{j=i} (1-\lambda_j)\cP^{\pi_{k-1}}
    \\& \quad +\para{1-\lambda_k-\sum_{i=1}^{k} \lambda_{k-1} \Pi^{k}_{j=i} (1-\lambda_j)}\cP^{\pi_{k-1}}S^k_{3'}\bigg).    
    \end{align*}
     (Note that $\lambda_{k}-\lambda_{k+1} \ge 0$ implies $1-\lambda_k-\sum_{i=1}^{k}  \Pi^{k}_{j=i} (1-\lambda_j)\lambda_i \ge 0$. )

\end{proof}
Now, we prove left key lemma.
\begin{lemma}\label{lem::anc_lower_bd}
Let $\{V^k\}_{k=0, 1,2,\dots}$ be the iterates of \ref{eq:Anc-VI}.  Suppose there exist $K$ such that if $K \le k$,
 $TV^k=\max_{\pi \in E}T^{\pi}V^k $ where $E = \{\pi: \cP^{\pi}g^{\star}= g^{\star}\}$. Then, for $ K+1 \le k$, For the iterates $\{V^k\}_{k=K+1,K+2,\dots}$ of \ref{eq:Anc-VI},
\begin{align*}
    &V^k-V^{k-1} 
    \\&\ge \para{1-\sum_{i=1}^{k} \lambda_k \Pi^{k-1}_{j=i} (1-\lambda_j)}g^{\star}+ \para{1-\sum_{i=1}^{k}  \para{\Pi^{k-1}_{j=i} (1-\lambda_j)}\lambda_i}(S^{k}_{1'} -S^{k}_{2'})(V^0-h^{\star})
    \\& \quad +\para{\Pi^{k-1}_{j=K+1} (1-\lambda_j)}\para{1-\sum_{i=1}^{K+1} \lambda_{K} \Pi^{K}_{j=i} (1-\lambda_j)}(S^{k}_{3'}-S^{k}_{4'})g^{\star}
\end{align*}   
where $S^{k}_{1'},S^{k}_{2'}, S^{k}_{3'},S^{k}_{4'}$ are stochastic matrices.
\end{lemma}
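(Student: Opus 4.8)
The plan is to prove Lemma~\ref{lem::anc_lower_bd} by induction on $k\ge K+1$, mirroring the structure of Lemmas~\ref{lem::anc_upper_bd} and~\ref{lem::anc_lower_bound_N}, but now exploiting the hypothesis $TV^k=\max_{\pi\in E}T^{\pi}V^k$ for $k\ge K$: this lets us choose, for every $k\ge K$, the greedy policy $\pi_k$ inside $E$, i.e. with $\cP^{\pi_k}g^{\star}=g^{\star}$, so that whenever a $g^{\star}$-term is hit by $\cP^{\pi_{k-1}}$ (with $k-1\ge K$) it passes through unchanged.

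For the base case $k=K+1$, note $\Pi^{K}_{j=K+1}(1-\lambda_j)=1$, so the claim reduces to the bound supplied by Lemma~\ref{lem::anc_lower_bound_N} after regrouping the $g^{\star}$-terms. That lemma produces a $g^{\star}$-contribution $\para{1-\sum_{i=1}^{K+1}\lambda_{K}\Pi^{K}_{j=i}(1-\lambda_j)}S^{K+1}_{3'}g^{\star}+(\lambda_{K}-\lambda_{K+1})\sum_{i=1}^{K+1}\Pi^{K}_{j=i}(1-\lambda_j)\,g^{\star}$; one peels off a clean $\para{1-\sum_{i=1}^{K+1}\lambda_{K+1}\Pi^{K}_{j=i}(1-\lambda_j)}g^{\star}$ and absorbs the remainder into the definition of $S^{K+1}_{4'}$ as a convex combination of $I$, $\cP^{\pi_{\star}}$ and $S^{K+1}_{3'}$, while $S^{K+1}_{1'},S^{K+1}_{2'}$ and the $(V^0-h^{\star})$-coefficient are inherited verbatim from Lemma~\ref{lem::anc_lower_bound_N}.

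For the inductive step write $V^{k+1}-V^{k}=(\lambda_k-\lambda_{k+1})(TV^k-V^0)+(1-\lambda_k)(TV^k-TV^{k-1})$. Since $\pi_{k-1}$ is greedy for $V^{k-1}$ and $TV^k\ge T^{\pi_{k-1}}V^k$, we get $TV^k-TV^{k-1}\ge\cP^{\pi_{k-1}}(V^k-V^{k-1})$, and $k-1\ge K$ forces $\pi_{k-1}\in E$. Lower-bound $TV^k-V^0$ by Lemma~\ref{lem::anc_bellman} (the $\cP^{\pi_{\star}}$ version) and $V^k-V^{k-1}$ by the induction hypothesis; using $\lambda_k-\lambda_{k+1}\ge0$ and the monotonicity of $\cP^{\pi_{k-1}}$ (Proposition~\ref{prop::nonexp_prob_kernel}), collect the outcome into a $g^{\star}$-part and a $(V^0-h^{\star})$-part. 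The $(V^0-h^{\star})$-coefficient and the matrices $S^{k+1}_{1'},S^{k+1}_{2'}$ come out exactly as in Lemma~\ref{lem::anc_upper_bd}. For the $g^{\star}$-part, the ``restart'' term carried through $\cP^{\pi_{k-1}}$ keeps its coefficient multiplied by the extra factor $(1-\lambda_k)$, turning $\Pi^{k-1}_{j=K+1}(1-\lambda_j)$ into $\Pi^{k}_{j=K+1}(1-\lambda_j)$, whereas the fresh $g^{\star}$ mass from $(\lambda_k-\lambda_{k+1})\sum_{i=1}^{k+1}\Pi^{k}_{j=i}(1-\lambda_j)$ together with the shift $\lambda_k\mapsto\lambda_{k+1}$ recombines into the coefficient $1-\sum_{i=1}^{k+1}\lambda_{k+1}\Pi^{k}_{j=i}(1-\lambda_j)$, via the telescoping identity $\sum_{i=1}^{m}\lambda_{i-1}\Pi^{m-1}_{j=i}(1-\lambda_j)=1$ (with $\lambda_0=1$) and $\lambda_{k+1}\le\lambda_{i-1}$; defining $S^{k+1}_{3'},S^{k+1}_{4'}$ as the corresponding convex combinations of $\cP^{\pi_{k-1}}S^{k}_{3'}$, $\cP^{\pi_{k-1}}S^{k}_{4'}$, $\cP^{\pi_{k-1}}$, $I$ and $\cP^{\pi_{\star}}$ finishes the step.

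The main obstacle is the coefficient bookkeeping for the $g^{\star}$-terms: one must check that the leading coefficients $1-\sum_{i=1}^{k}\lambda_k\Pi^{k-1}_{j=i}(1-\lambda_j)$, $1-\sum_{i=1}^{k}\lambda_i\Pi^{k-1}_{j=i}(1-\lambda_j)$ and $1-\sum_{i=1}^{K+1}\lambda_K\Pi^{K}_{j=i}(1-\lambda_j)$ remain nonnegative — which follows from the telescoping identity above combined with $\lambda_{k+1}\le\lambda_{k}\le\cdots$ — so that each $S^{k+1}_{1'},\dots,S^{k+1}_{4'}$ is an honest convex combination of stochastic matrices and hence stochastic. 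Apart from this, the argument is a mechanical, if lengthy, rearrangement of the same type used in Lemmas~\ref{lem::anc_upper_bd} and~\ref{lem::anc_lower_bound_N}.
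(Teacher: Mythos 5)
Your proposal is correct and follows essentially the same route as the paper: induction from $k=K+1$ with the base case supplied by Lemma~\ref{lem::anc_lower_bound_N} (the paper simply takes $S^{K+1}_{4'}=I$ after peeling off the clean $g^{\star}$ coefficient), and the inductive step via $V^{k+1}-V^k=(\lambda_k-\lambda_{k+1})(TV^k-V^0)+(1-\lambda_k)(TV^k-TV^{k-1})\ge(\lambda_k-\lambda_{k+1})(TV^k-V^0)+(1-\lambda_k)\cP^{\pi_{k-1}}(V^k-V^{k-1})$, Lemma~\ref{lem::anc_bellman}, and the identity $\cP^{\pi_{k-1}}g^{\star}=g^{\star}$ for $k-1\ge K$. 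The coefficient recombination and the nonnegativity argument you flag (from $\lambda_{k+1}\le\lambda_k$) are exactly the bookkeeping the paper carries out.
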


\begin{proof}

We use induction. If $k=K+1$, by Lemma \ref{lem::anc_lower_bound_N}, we have
\begin{align*}
&V^{K+1}-V^{K} 
\\&\ge \para{1-\sum_{i=1}^{K+1} \lambda_{K} \Pi^{K}_{j=i} (1-\lambda_j)}S_{3'}^{K+1}g^{\star} +(\lambda_{K}-\lambda_{K+1})  \sum_{i=1}^{K+1} \Pi^{K}_{j=i} (1-\lambda_j)g^{\star}
\\& \quad + \para{1-\sum_{i=1}^{K+1} \lambda_i \Pi^{K}_{j=i} (1-\lambda_j)}(S^{K+1}_{1'} -S^{K+1}_{2'})(V^0-h^{\star})
\\& = \para{1-\sum_{i=1}^{K+1} \lambda_{K+1} \Pi^{K}_{j=i} (1-\lambda_j)}g^{\star}+ \para{1-\sum_{i=1}^{K+1} \lambda_i \Pi^{K}_{j=i} (1-\lambda_j)}(S^{K+1}_{1'} -S^{K+1}_{2'})(V^0-h^{\star})
\\& \quad \para{1-\sum_{i=1}^{K+1} \lambda_{K} \Pi^{K}_{j=i} (1-\lambda_j)}\para{S_{3'}^{K+1}-S^{K+1}_{4'}}g^{\star}.
\end{align*}
where $S^{K+1}_{4'}= I$.

By induction, for $k\ge K+2$,
\begin{align*}
    &V^{k+1}-V^{k} 
    \\ &= (\lambda_{k}-\lambda_{k+1})(TV^{k} -V^{0})+(1-\lambda_{k})(TV^{k} -TV^{k-1})
     \\ & \ge (\lambda_{k}-\lambda_{k+1})(TV^{k} -V^{0})+(1-\lambda_{k})\cP^{\pi_{k-1}}(V^{k} -V^{k-1})
         \\ & \ge (\lambda_{k}-\lambda_{k+1}) \Bigg(\sum_{i=1}^{k+1} \Pi^{k}_{j=i} (1-\lambda_j)g^{\star}+\para{\sum_{i=0}^{k} (\Pi^k_{j=i+1} (1-\lambda_j)) \lambda_{i}\para{\Pi^{i}_{l=k}\cP^{\pi_{\star}}}-I}(V^0-h^{\star})\Bigg)
    \\& \quad + (1-\lambda_{k})\cP^{\pi_{k-1}}\Bigg(\para{1-\sum_{i=1}^{k} \lambda_k \Pi^{k-1}_{j=i} (1-\lambda_j)}g^{\star} 
    \\& \quad + \para{1-\sum_{i=1}^{k}  \para{\Pi^{k-1}_{j=i} (1-\lambda_j)}\lambda_i}(S^{k}_{1'} -S^{k}_{2'})(V^0-h^{\star})
    \\& \quad +\Pi^{k-1}_{j=K+1} (1-\lambda_j)\para{1-\sum_{i=1}^{K+1} \lambda_{K} \Pi^{K}_{j=i} (1-\lambda_j)}(S^{k}_{3'}-S^{k}_{4'})g^{\star}\Bigg)
    \\ & \ge (\lambda_{k}-\lambda_{k+1})\para{ \sum_{i=1}^{k+1} \Pi^{k}_{j=i} (1-\lambda_j)}g^{\star}+\Bigg(1-\lambda_k-\sum_{i=1}^{k} \lambda_{k}\Pi^{k}_{j=i} (1-\lambda_j)\Bigg)g^{\star}
    \\& \quad +\Pi^{k}_{j=K+1} (1-\lambda_j)\para{1-\sum_{i=1}^{K+1} \lambda_{K} \Pi^{K}_{j=i} (1-\lambda_j)}(\cP^{\pi_{k-1}}S^{k}_{3'}-\cP^{\pi_{k-1}}S^{k}_{4'})g^{\star}
             \\& \quad + \Bigg((\lambda_{k}-\lambda_{k+1})\sum_{i=0}^{k} (\Pi^k_{j=i+1} (1-\lambda_j)) \lambda_{i}\para{\Pi^{i}_{l=k}\cP^{\pi_{\star}}}
             \\& \quad +\para{1-\lambda_k-\sum_{i=1}^{k}  \Pi^{k}_{j=i} (1-\lambda_j)\lambda_i}\cP^{\pi_{k-1}}S^k_{1'}\Bigg)(V^0-h^{\star})
    \\& \quad - \para{(\lambda_{k}-\lambda_{k+1})I+\para{1-\lambda_k-\sum_{i=1}^{k}  \Pi^{k}_{j=i} (1-\lambda_j)\lambda_i}\cP^{\pi_{k-1}}S^{k}_{2'}}(V^0-h^{\star})
    \\& = \para{1-\sum_{i=1}^{k+1} \lambda_{k+1} \Pi^{k}_{j=i} (1-\lambda_j)}g^{\star} + \para{1-\sum_{i=1}^{k+1} \lambda_i \Pi^{k}_{j=i} (1-\lambda_j)}(S^{k+1}_{1 '}-S^{k+1}_{2'})(V^0-h^{\star})
    \\& \quad +\Pi^{k}_{j=K+1} (1-\lambda_j)\para{1-\sum_{i=1}^{K+1} \lambda_{K} \Pi^{K}_{j=i} (1-\lambda_j)}(S^{k}_{3'}-S^{k}_{4'})g^{\star}
\end{align*}   
where first inequality comes from the fact that $\pi_{k}, \pi_{k-1}$ are greedy policies, second inequality follows from Lemma \ref{lem::anc_bellman} and induction, last equality is from first Bellman equation, and 
\begin{align*}
    &S^{k+1}_{1'} = \para{1-\sum_{i=1}^{k+1} \lambda_i \Pi^{k}_{j=i} (1-\lambda_j)}^{-1} \bigg((\lambda_{k}-\lambda_{k+1})\sum_{i=0}^{k} (\Pi^k_{j=i+1} (1-\lambda_j)) \lambda_{i}\para{\Pi^{i}_{l=k}\cP^{\pi_{\star}}} 
    \\& \quad +\para{1-\lambda_k-\sum_{i=1}^{k}  \Pi^{k}_{j=i} (1-\lambda_j)\lambda_i}\cP^{\pi_{k-1}}S^k_{1'} \bigg) 
    \\&S^{k+1}_{2'} =\para{1-\sum_{i=1}^{k+1} \lambda_i \Pi^{k}_{j=i} (1-\lambda_j)}^{-1}\bigg((\lambda_{k}-\lambda_{k+1})I+\para{1-\lambda_k-\sum_{i=1}^{k}  \Pi^{k}_{j=i} (1-\lambda_j)\lambda_i}\cP^{\pi_{k-1}}S^{k}_{2'} \bigg)
    \\&S^{k+1}_{3'}=\cP^{\pi_{k-1}}S^{k}_{3'}
    \\&S^{k+1}_{4'}=\cP^{\pi_{k-1}}S^{k}_{4'}.
\end{align*}
\end{proof}
Now, we prove Theorem \ref{thm::anc_bellman}.
\begin{proof}[Proof of Theorem \ref{thm::anc_bellman}]
    First, we have 
    \begin{align*}
        &TV^k-V^k-g^{\star} 
        \\& \le \lambda_k (TV^k-V^0)+(1-\lambda_k)\cP^{\pi_k}(V^k- V^{k-1})-g^{\star}
        \\& \le \lambda_k\sum_{i=1}^{k+1} \Pi^{k}_{j=i} (1-\lambda_j)g^{\star} +\lambda_k\para{\sum_{i=0}^{k} (\Pi^k_{j=i+1} (1-\lambda_j)) \lambda_{i}\para{\Pi^{i}_{l=k}\cP^{\pi_l}}-I}(V^0-h^{\star})
        \\& \quad + (1-\lambda_k)\cP^{\pi_k}\Bigg(\para{1-\sum_{i=1}^{k} \lambda_k \Pi^{k-1}_{j=i} (1-\lambda_j)}g^{\star} 
        \\& \quad + \para{1-\sum_{i=1}^{k} \lambda_i \Pi^{k-1}_{j=i} (1-\lambda_j)}(S^{k}_1 -S^{k}_2)(V^0-h^{\star})\Bigg)-g^{\star}
        \\& = \para{\lambda_k\para{\sum_{i=0}^{k} (\Pi^k_{j=i+1} (1-\lambda_j)) \lambda_{i}}\Pi^{i}_{l=k}\cP^{\pi_l}+\para{1-\lambda_k-\sum_{i=1}^{k} \lambda_i \Pi^{k}_{j=i} (1-\lambda_j)}\cP^{\pi_k}S^{k}_1}(V^0-h^{\star})
        \\& \quad -  \para{\lambda_k I+\para{1-\lambda_k-\sum_{i=1}^{k} \lambda_i \Pi^{k}_{j=i} (1-\lambda_j)}\cP^{\pi_k}S^{k}_2}(V^0-h^{\star})
    \end{align*}
       where first inequality comes from the fact that $\pi_k, \pi_{k-1}$ are greedy policies, second inequality follows from induction and Lemma \ref{lem::anc_bellman} and \ref{lem::anc_upper_bd}, and last equality is from the second Bellman equation. 
       
Similarly,  
    \begin{align*}
        &TV^k-V^k-g^{\star} 
        \\& \ge \lambda_k (TV^k-V^0)+(1-\lambda_k)\cP^{\pi_{k-1}}(V^k- V^{k-1})-g^{\star}
        \\& \ge \lambda_k\sum_{i=1}^{k+1} \Pi^{k}_{j=i} (1-\lambda_j)g^{\star} +\lambda_k\para{\sum_{i=0}^{k} (\Pi^k_{j=i+1} (1-\lambda_j)) \lambda_{i}\para{\Pi^{i}_{l=k}\cP^{\pi_\star}}-I}(V^0-h^{\star})
        \\& \quad + (1-\lambda_k)\cP^{\pi_{k-1}}\Bigg(\para{1-\sum_{i=1}^{k} \lambda_k \Pi^{k-1}_{j=i} (1-\lambda_j)}g^{\star} 
        \\& \quad + \para{1-\sum_{i=1}^{k} \lambda_i \Pi^{k-1}_{j=i} (1-\lambda_j)}(S^{k}_{1'} -S^{k}_{2'})(V^0-h^{\star})
        \\& \quad + \Pi^{k-1}_{j=K+1} (1-\lambda_j)\para{1-\sum_{i=1}^{K+1} \lambda_{K} \Pi^{K}_{j=i} (1-\lambda_j)}(S^{k}_{3'}-S^{k}_{4'})g^{\star}\Bigg)-g^{\star}\Bigg)-g^{\star}
        \end{align*}
        \begin{align*}
            & = \para{\lambda_k\para{\sum_{i=0}^{k} (\Pi^k_{j=i+1} (1-\lambda_j)) \lambda_{i}}\Pi^{i}_{l=k}\cP^{\pi_\star}+\para{1-\lambda_k-\sum_{i=1}^{k} \lambda_i \Pi^{k}_{j=i} (1-\lambda_j)}\cP^{\pi_{k-1}}S^{k}_{1'}}(V^0-h^{\star})
        \\& \quad -  \para{\lambda_k I+\para{1-\lambda_k-\sum_{i=1}^{k} \lambda_i \Pi^{k}_{j=i} (1-\lambda_j)}\cP^{\pi_{k-1}}S^{k}_{2'}}(V^0-h^{\star})
        \\& \quad +\Pi^{k}_{j=K+1} (1-\lambda_j)\para{1-\sum_{i=1}^{K+1} \lambda_{K} \Pi^{K}_{j=i} (1-\lambda_j)}(\cP^{\pi_{k-1}}S^{k}_{3'}-\cP^{\pi_{k-1}}S^{k}_{4'})g^{\star}\Bigg)
    \end{align*}
    where first inequality comes from the fact that $\pi_k, \pi_{k-1}$ are greedy policies, second inequality follows from Lemma \ref{lem::anc_bellman} and  \ref{lem::anc_lower_bd}, and last equality is from the second Bellman equation. 
    
    If we take $\infn{\cdot}$ right side of first and second inequality, we have 
\begin{align*}
    &2\para{1-\sum_{i=1}^{k} \lambda_i \Pi^{k}_{j=i} (1-\lambda_j)}\infn{V^0-h^{\star}}, 
     \\&2\para{1-\sum_{i=1}^{k} \lambda_i \Pi^{k}_{j=i} (1-\lambda_j)}\infn{V^0-h^{\star}}+ 2\Pi^{k}_{j=K+1} (1-\lambda_j)\para{1-\sum_{i=1}^{K+1} \lambda_{K} \Pi^{K}_{j=i} (1-\lambda_j)}\infn{g^{\star}},
\end{align*}
respectively. Therefore, we get
    \begin{align*}
        \infn{TV^k-V^k-g^{\star}} &\le 2\para{1-\sum_{i=1}^{k} \lambda_i \Pi^{k}_{j=i} (1-\lambda_j)}\infn{V^0-h^{\star}} + 2\Pi^{k}_{j=K} (1-\lambda_j) \infn{g^{\star}}.
    \end{align*}
    since $\Pi^{k}_{j=K+1} (1-\lambda_j)\para{1-\sum_{i=1}^{K+1} \lambda_{K} \Pi^{K}_{j=i} (1-\lambda_j)} \le \Pi^{k}_{j=K} (1-\lambda_j)$. Finally, by applying the Proposition~\ref{prop::Bellman_to_policy}, we conclude proof.
\end{proof}

\subsection{Proof of Theorem \ref{thm:Anc_optimized} }
 Let $S$ be set of all deterministic policies and $\epsilon=\inf_{\pi \in S/\{\pi \,|\, \cP^{\pi}g^{\star} =  g^{\star}\}} \infn{\cP^{\pi}g^{\star}-g^{\star}}$ (note that if $S/\{\pi \,|\, \cP^{\pi}g^{\star} =  g^{\star}\} = \emptyset $ , $\epsilon =\infty$). By definition of Bellman optimality operator, there exist deterministic policy $\pi_k$ such that 
  \begin{align*}
        V^{k+1} &= \frac{2}{k+3}V^{0}+\frac{k+1}{k+3}\cP^{\pi_k}V^{k} + \frac{k+1}{k+3}r^{\pi_k}.
    \end{align*}
    for all $k$. By simple calculation, this is equivalent to
           \begin{align*}
        -\frac{k+1}{k+3}\frac{r^{\pi_k}}{k/3} &= \frac{k+1}{k+3}\cP^{\pi_k}\para{\frac{V^{k}-V^0}{\frac{k}{3}} }-\frac{k+1}{k}\para{\frac{V^{k+1}-V^0}{\frac{k+1}{3}}}
        \end{align*}
        Let $\frac{V^k-V^0}{k/3} = g^{\star}+ \epsilon_k$. By Theorem \ref{thm::anc_normalized_iterate} with $\lambda_k =\frac{2}{k+2}$, we have
    \[\infn{\frac{V^k-V^0}{k/3}-g^{\star}} \le \frac{\infn{V^0-h^{\star}}}{k/6},\]
    and this implies 
        \[\infn{\epsilon_k} \le \frac{\infn{V^0-h^{\star}}}{k/6}.\]
        Then, we have
        \begin{align*}
        &\frac{k+1}{k+3}\cP^{\pi_k}\para{\frac{V^{k}-V^0}{\frac{k}{3}} }-\frac{k+1}{k}\para{\frac{V^{k+1}-V^0}{\frac{k+1}{3}}} 
        \\&= \frac{k+1}{k+3}\cP^{\pi_k}\para{g^{\star}+\epsilon_k }-\frac{k+1}{k}\para{g^{\star}+\epsilon_{k+1} }\\
             &= \cP^{\pi_k}g^{\star}-g^{\star}-\frac{2}{k+3} \cP^{\pi_k}g^{\star}-\frac{1}{k}g^{\star}+\frac{k+1}{k+3}\epsilon_k -\frac{k+1}{k}\epsilon_{k+1}.
    \end{align*}
    This implies
    \begin{align*}
        \cP^{\pi_k}g^{\star}-g^{\star} = -\frac{k+1}{k+3}\frac{r^{\pi_k}}{k/3}+\frac{2}{k+3} \cP^{\pi_k}g^{\star}+\frac{1}{k}g^{\star}-\frac{k+1}{k+3}\epsilon_k +\frac{k+1}{k}\epsilon_{k+1}. 
    \end{align*}
     Then, if we take $\infn{\cdot}$ in both sides of previous equality,
        \begin{align*}
        0 < \epsilon \le  \frac{1}{k}\para{3\infn{r}+12\infn{V^0-h^{\star}} +3\infn{g^{\star}}}  
    \end{align*}
    Thus, if  $k \ge\para{3\infn{r}+12\infn{V^0-h^{\star}} +3\infn{g^{\star}}}\epsilon^{-1}$, $\cP^{\pi_k}g^{\star} =g^{\star}$.

    Thus, if we set $K=\para{3\infn{r}+12\infn{V^0-h^{\star}} +3\infn{g^{\star}}}\epsilon^{-1}$, $K$ satisfied conditions of Theorem \ref{thm::KM_bellman}. Therefore, by  Theorem \ref{thm::KM_bellman} with $\lambda_i=\frac{2}{i+2}$ for all $i$, we have desired rate of Bellman and policy errors.

\section{Omitted proofs in Section \ref{sec::comp_lower_bd}}

\vspace{0.1in}

\subsection{Proof of Theorem \ref{thm::comp_lower_bd_commu}}

First, we prove the case $V^0=0$ for $n\ge k+2$. Consider the MDP $(\cS, \cA, P, r)$ such that  
\begin{align*}
        \cS=\{s_1,\dots, s_{n}\}, \, \cA=\{a_1\}, \, P(s_i\,|\,s_j,a_1)=\mathds{1}_{\{(i,j)=(n-1,1), \,j=i+1 \}}, \, r(s_i, a_1)=\mathds{1}_{\{i=1\}},
\end{align*}
where $\{s_1,\dots, s_{n-1}\}$ is closed irreducible set and $\{s_n\}$ is transient set. Thus, given MDP is unichain. Moreover, $T= \cP^{\pi}U+[1,0, \dots, 0]^{\intercal}$, and since $(\cP^{\pi})^m= (\cP^{\pi})^{m+n}$ for $1 \le m \le n-1$, $g^{\star}= \lim_{k \rightarrow \infty}\frac{\sum^k_{i=0}\para{\cP^{\pi}}^i}{k}r^{\pi}=[1/(n-1), \dots, 1/(n-1)]^{\intercal}$ and $h^{\star} = [(n-1)/(2n-2),(n-3)/(2n-2), \dots, -(n-3)/(2n-2), -(n-1)/(2n-2)]^{\intercal}$ satisfy modified Bellman equation. Therefore, $\infn{V^0-h^{\star}}=1/2$, and under the span condition, we can show following lemma.



\begin{lemma}
   Let $T \colon\real^{n} \rightarrow \real^{n}$ be defined as before. Then, under span condition,  $(V^i)_j = 0$ for $0 \le i \le k$, $i+1 \le j \le n $.
\end{lemma}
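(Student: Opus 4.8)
The plan is to prove the lemma by a (strong) induction on $i$, tracking which coordinates each iterate $V^i$ is permitted to be supported on; the assertion $(V^i)_j = 0$ for $i+1\le j\le n$ is exactly the statement that $V^i$ is supported on the first $i$ coordinates $\{1,\dots,i\}$.

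First I would make the action of $T$ on such vectors completely explicit. Since $|\cA|=1$, there is a single (automatically greedy) policy $\pi$, and by the definition of $P$ the induced chain is deterministic: writing $\sigma\colon\{1,\dots,n\}\to\{1,\dots,n\}$ for the next-state map, one has $\sigma(j)=j-1$ for $2\le j\le n$ and $\sigma(1)=n-1$, so that $(\cP^{\pi}V)(s_j)=V(s_{\sigma(j)})$ and $TV=\cP^{\pi}V+[1,0,\dots,0]^{\intercal}$, as already recorded in the excerpt. The key observation is a support-propagation claim: if $V\in\real^n$ is supported on $\{1,\dots,m\}$ and $m+1\le n-1$, then $\cP^{\pi}V$ is supported on $\{2,\dots,m+1\}$. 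Indeed $(\cP^{\pi}V)(s_j)=V(s_{\sigma(j)})$ vanishes for $j=1$ and for $j=n$ because $\sigma(1)=\sigma(n)=n-1>m$, while for $2\le j\le n-1$ it vanishes unless $\sigma(j)=j-1\le m$, i.e.\ $j\le m+1$. Combining with the formula for $T$, if $V$ is supported on $\{1,\dots,m\}$ and $m+1\le n-1$, then $TV-V=\cP^{\pi}V+[1,0,\dots,0]^{\intercal}-V$ is supported on $\{1,\dots,m+1\}$.

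With this in hand the induction is short. The base case $i=0$ holds since $V^0=0$ is supported on the empty set. Assume $0\le i\le k-1$ and that $V^0,\dots,V^i$ are supported on $\{1,\dots,0\},\dots,\{1,\dots,i\}$ respectively. Because $i\le k-1$ and $n\ge k+2$, every $m\le i$ satisfies $m+1\le i+1\le k\le n-2<n-1$, so the claim applies and each residual $TV^m-V^m$ with $0\le m\le i$ is supported on $\{1,\dots,m+1\}\subseteq\{1,\dots,i+1\}$. The span condition~\eqref{eq:span condition} then yields $V^{i+1}\in V^0+\mathrm{span}\{TV^0-V^0,\dots,TV^i-V^i\}$, a linear combination of vectors supported on $\{1,\dots,i+1\}$ (recall $V^0=0$); hence $V^{i+1}$ is supported on $\{1,\dots,i+1\}$, i.e.\ $(V^{i+1})_j=0$ for $i+2\le j\le n$. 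This closes the induction.

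The only place needing care — and precisely where the hypothesis $n\ge k+2$ enters — is the cyclic ``wrap-around'' together with the transient state $s_n$: a priori $(\cP^{\pi}V)(s_1)=V(s_{n-1})$ and $(\cP^{\pi}V)(s_n)=V(s_{n-1})$ could reintroduce mass at coordinates $1$ and $n$, so one must verify that $s_{n-1}$ never enters the current support, which holds because the support stays inside $\{1,\dots,i+1\}\subseteq\{1,\dots,n-2\}$ at every step. Apart from this index bookkeeping there is no real obstacle; the argument is a routine induction driven entirely by the span condition and the shift structure of $\cP^{\pi}$.
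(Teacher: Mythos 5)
Your proof is correct and follows essentially the same route as the paper's: an induction showing each $V^i$ is supported on $\{1,\dots,i\}$, using the shift structure of $\cP^{\pi}$ to propagate supports of the residuals $TV^l-V^l$ and the span condition to conclude. You are merely more explicit than the paper about the wrap-around at $s_1$ and $s_n$ and about where $n\ge k+2$ is used, which the paper's terse proof leaves implicit; the underlying argument is identical.
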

\begin{proof}
     We use induction. Case $i=0$ is obvious. By induction, $\para{V^l}_j=0$ for $0 \le l \le i-1$, $l+1 \le j \le n$. Then $\para{TV^l}_j=0$ for $0 \le l \le i-1$, $l+2 \le j \le n$ and this implies that 
    $\para{TV^l-V^l}_j=0$ for $0 \le l \le i-1$, $l+2 \le j \le n$. Therefore, $\para{V^i}_{j}=0$ for $ i+1 \le j \le n$.
\end{proof}
Thus, under the span condition, for $i \le k$, we get
\[TV^i-V^i = (1-(V^i)_1, (V^i)_1-(V^i)_2,\dots, (V^i)_{i-1}-(V^i)_{i},  (V^{i})_{i}, \underbrace{0, \dots, 0}_{n-i-1})\]
and this implies that
\[ (TV^i-V^i)_1+ \cdots +(TV^i-V^i)_{n} = 1. \]

Then
\[ a_i\sum^n_{l=1}(TV^i-V^i)_l = a_i \]
for $ 0 \le i \le k$. If $\sum^{k}_{i=0}a_i=1$, we have
\[\sum^{k}_{i=0} a_i\sum^n_{l=1}(TV^i-V^i)_l = 1\]
and taking the absolute value on both sides, 
\[\sum^n_{l=1} \abs{\sum^{k}_{i=0} a_i(TV^i-V^i)_l} \ge 1.\]
Since $(TV^i-V^i)_l=0$ for $k+2 \le l$, we have
\begin{align*}
     (k+1)\max_{1\le l\le n} {\abs{\sum^{k}_{i=0} a_i(TV^i-V^i)_l}} \ge 1.
\end{align*}
Therefore, this implies
\begin{align*}\infn{\sum^{k}_{i=0} a_i(TV^i-V^i)} \ge \frac{1}{k+1}.
\end{align*}
Since $g^{\star} = [1/(n-1),1/(n-1),\dots, 1/(n-1)]$. we conclude
\begin{align*}
\infn{\sum^{k}_{i=0} a_i(TV^i-V^i)-g^{\star}} &\ge \max \left\{\frac{1}{k+1}-  \frac{1}{n-1}  , \frac{1}{n-1} \right\} 
\\ &\ge \frac{1}{k+1}\infn{V^0-h^{\star}} .
\end{align*}

Now, we show that for any initial point $V^0 \in \real^n$, there exists an MDP which exhibits same lower bound with the case $V^0=0$. Denote by MDP($0$) and $T_0$ the worst-case MDP and Bellman optimality operator constructed for $V^0=0$. Define an MDP($V^0$) $(\cS, \cA, P, r)$ for $V^0\neq 0$ as
\begin{align*}
        \cS=\{s_1,\dots, s_{n}\}, &\quad \cA=\{a_1\},  \quad P(s_i\,|\,s_j,a_1)=\mathds{1}_{\{(i,j)=(n-1,1), \,j=i+1 \}}
        \\& r(s_i, a_1)=\para{V^0-\cP^{\pi}V^0}_i+\mathds{1}_{\{i=1\}}.
\end{align*}
Then, Bellman optimality operator $T$ satisfies 
\[TV=T_0(V-V^0)+V^0.\]
Let $\tilde{g}^{\star}$ be average reward of $T_0$ and $\tilde{h}^{\star}$ solution of optimlaity equation. Then, since $\lim_{k \rightarrow \infty}\frac{\sum^k_{i=0}\para{\cP^{\pi}}^i}{k}(I-\cP^{\pi})=0$, $g^{\star}=\tilde{g}^{\star}$ is average reward of $T$ and $h^{\star}=V^0+\tilde{h}^{\star}$ is also solution of Bellman equation.
Furthermore, if $\{V^i\}^k_{i=0}$ satisfies span condition 
\[V^i \in V^0+span\{TV^0-V^0, TV^1-V^1, \dots, TV^{i-1}-V^{i-1} \}, \qquad i=1,\dots, k, \]
$\tilde{V^i}=V^i-V^0$ is a sequence satisfying 
\[\tilde{V}^i \in \underbrace{\,\tilde{V}^0}_{=0}+span\{T_0\tilde{V}^0-\tilde{V}^0, T_0\tilde{V}^1-\tilde{V}^1, \dots, T_0\tilde{V}^{i-1}-\tilde{V}^{i-1} \}, \qquad i=1,\dots, k, \]
which is the same span condition in Theorem \ref{thm::comp_lower_bd_multi} with respect to $T_0$. This is because
\[TV^i-V^i=T_0(V^i-V^0)-(V^i-V^0)=T\tilde{V}^i-\tilde{V}^i\]
for $i=0,\dots,k$.
Thus, $\{\tilde{U}^i\}^k_{i=0}$
is a sequence starting from $0$ and satisfy the span condition for $T_0$. This implies that
\begin{align*}
    \infn{\sum^{k}_{i=0} a_i (TV^i-V^i)-h^{\star}} &=  \infn{\sum^{k}_{i=0} a_i (T\tilde{V}^i-\tilde{V}^i)-h^{\star}} \\&\ge \frac{1}{k+1}\infn{\tilde{V}^0-\tilde{h}^{\star}}\\&= \frac{1}{k+1}\infn{V^0-h^{\star}}. 
\end{align*} 
Hence, MDP($V^0$) is indeed our desired worst-case instance.

\subsection{Proof of Theorem \ref{thm::comp_lower_bd_multi}}
We now present the proof of Theorem \ref{thm::comp_lower_bd_multi}.
\begin{proof}[Proof of Theorem \ref{thm::comp_lower_bd_multi}]
First, we prove the case $V^0=0$ for $n\ge k+3$. Consider the MDP $(\cS, \cA, P, r)$ such that  
\begin{align*}
        \cS=\{s_1,\dots, s_{n}\}, \, \cA=\{a_1\}, \, P(s_i\,|\,s_j,a_1)=\mathds{1}_{\{i=j=1, \,j=i+1 \le n-1, \, i=j=n\}}, \, r(s_i, a_1)=\mathds{1}_{\{i=2, i=n\}}.
\end{align*}

where $\{s_1\}, \{s_n\}$ are closed irreducible sets and $\{s_2, \dots, s_{n-1}\}$ is transient set.  Thus, given MDP is multichain. Morevoer, $T= \cP^{\pi}U+[0,1, 0, \dots, 0, 1]^{\intercal}$, and since $(\cP^{\pi})^m= (\cP^{\pi})^{k+1}$ for $m \ge k+1$, $g^{\star}= \lim_{k \rightarrow \infty}\frac{\sum^k_{i=0}\para{\cP^{\pi}}^i}{k}r^{\pi}=[0, \dots, 0, 1]^{\intercal}$ and $h^{\star} = [-1/2,1/2,1/2, \dots, 1/2, 0]^{\intercal}$ which satisfy Bellman equation. Thus, $\infn{V^0-h^{\star}}=1/2$. Under the span condition, we can show following lemma.

\begin{lemma}
Let $T \colon\real^{n} \rightarrow \real^{n}$ be defined as before. Then, under span condition, $\para{V^i}_{1}=0$ for $0\le i \le k $, and $\para{V^i}_{j}=0$ for $ 0\le i \le k$ and $i+2 \le j \le n-1$.
\end{lemma}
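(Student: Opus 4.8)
The plan is to establish both claims by induction on $i$, in the same spirit as the analogous lemma in the proof of Theorem~\ref{thm::comp_lower_bd_commu}. First I would record the structure of the operator for this particular instance: since $|\cA|=1$ there is a unique policy $\pi$ and $TV=\cP^{\pi}V+r$ with $r=[0,1,0,\dots,0,1]^{\intercal}$, while the prescribed transition kernel gives $\para{\cP^{\pi}V}_1=V_1$, $\para{\cP^{\pi}V}_j=V_{j-1}$ for $2\le j\le n-1$, and $\para{\cP^{\pi}V}_n=V_n$. Note also that every iterate $V^i$ lies in $V^0+\mathrm{span}\{TV^0-V^0,\dots,TV^{i-1}-V^{i-1}\}$ with $V^0=0$, so it suffices to control the relevant coordinates of each generator $TV^l-V^l$.

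For the first claim, $\para{V^i}_1=0$, the key observation is that $\para{TV-V}_1=V_1+0-V_1=0$ for \emph{every} $V$, because $s_1$ is absorbing and carries zero reward. Hence each generator $TV^0-V^0,\dots,TV^{i-1}-V^{i-1}$ has vanishing first coordinate; since $V^0=0$ also has vanishing first coordinate, linearity of the span gives $\para{V^i}_1=0$ for all $0\le i\le k$.

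For the second claim I would argue by induction on $i$. The base case $i=0$ is immediate from $V^0=0$. For the inductive step, I would fix $i\ge 1$ and $j$ with $i+2\le j\le n-1$, and show that each generator $TV^l-V^l$ with $0\le l\le i-1$ has vanishing $j$-th coordinate; linearity then yields $\para{V^i}_j=0$. Concretely, for such $l$ we have $j\ge i+2\ge l+3\ge 3$, so $r_j=0$ and $\para{TV^l-V^l}_j=V^l_{j-1}-V^l_j$; since $j-1\ge l+2$ and $j\ge l+2$, with both $\le n-1$, the inductive hypothesis forces $V^l_{j-1}=V^l_j=0$, hence $\para{TV^l-V^l}_j=0$.

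I do not expect a genuine obstacle: the lemma is elementary, and its only delicate point is the index bookkeeping — one must verify that $i+2\ge 3$ (so the reward coordinate $r_j$ vanishes) and that $j$ and $j-1$ both stay in the range $[\,l+2,\,n-1\,]$ where the inductive hypothesis applies, and both follow from $l\le i-1$ and $i+2\le j\le n-1$. The substantive content comes afterward, when this lemma is combined with the column-sum identity for $TV^i-V^i$ (analogous to the one in the proof of Theorem~\ref{thm::comp_lower_bd_commu}) to extract the $\tfrac{2}{k+1}$ lower bound of Theorem~\ref{thm::comp_lower_bd_multi}.
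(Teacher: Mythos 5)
Your proof is correct and follows essentially the same inductive argument as the paper: show that each generator $TV^l-V^l$, $l\le i-1$, vanishes in the relevant coordinates and invoke the span condition. The only (harmless) difference is that you dispatch the first coordinate directly from the identity $\para{TV-V}_1=0$ for all $V$, whereas the paper folds it into the induction; your index bookkeeping ($j\ge i+2\ge l+3$, so $r_j=0$ and both $j-1,j$ lie in $[l+2,n-1]$) matches the paper's.
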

\begin{proof}
     We use induction. Case $i=0$ is obvious. By induction, $\para{V^l}_1=0$ for $0 \le l \le i-1$. Then $\para{TV^l}_1=0$ for $0 \le l \le i-1$.  This implies that  $\para{TV^l-V^l}_1=0$ for $0 \le l \le i-1$. Hence $\para{V^i}_1=0$. Again, by induction, $\para{V^l}_j=0$ for $0 \le l \le i-1$, $l+2 \le j \le n-1$. Then $\para{TV^l}_j=0$ for $0 \le l \le i-1$, $l+3 \le j \le n-1$ and this implies that $\para{TV^l-V^l}_j=0$ for $0 \le l \le i-1$, $l+3 \le j \le n-1$. Therefore, $\para{V^i}_{j}=0$ for $i+2 \le j \le n-1$. 
\end{proof}
Thus, under the span condition, for $ 0 \le i \le k$, we get
\begin{align*}
    TV^i-V^i =\Big(0, 1-\para{V^i}_2,\para{V^i}_2 -\para{V^i}_3, \dots, \para{V^i}_{i}-\para{V^i}_{i+1}, \para{V^i}_{i+1}, \underbrace{0, \dots, 0}_{n-i-3}, 1\Big),
\end{align*}
and this implies that
\[ (TV^i-V^i-g^{\star})_1 + \cdots +(TV^i-V^i-g^{\star})_{n} = 1, \]
where $g^{\star}=e_n $. Then,
\[ a_i\sum^n_{l=1}(TV^i-V^i-g^{\star})_l = a_i \]
for $ 0 \le i \le k$. If $\sum^{k}_{i=0}a_i=1$, we have
\[\sum^{k}_{i=0} a_i\sum^n_{l=1}(TV^i-V^i-g^{\star})_l = 1\]
and taking the absolute value on both sides, 
\[\sum^n_{l=1} \abs{\sum^{k}_{i=0} a_i(TV^i-V^i-g^{\star})_l} \ge 1.\]
Since $(TV^i-V^i-g^{\star})_l=0$ for $l=1$ and $k+3 \le l$, we have
\begin{align*}
     (k+1)\max_{1\le l\le n} {\abs{\para{\sum^{k}_{i=0} a_i(TV^i-V^i)-g^{\star}}_l}} \ge 1.
\end{align*}
Therefore, we conclude
\begin{align*}\infn{\sum^{k}_{i=0} a_i(TV^i-V^i)-g^{\star}} \ge \frac{2}{k+1} \infn{V^0-h^{\star}}.
\end{align*}
With the same argument in proof of Theorem \ref{thm::comp_lower_bd_commu}, we can extend this result to arbitrary $V^0$.
\end{proof}

\vspace{0.1in}

\section{Omitted proofs in Section \ref{sec::RVI} and \ref{s::omitted-RVI-theorems}}\label{s::omitted-RVI-proofs}
\subsection{Proof of Theorem \ref{thm::Rx-RVI}}
Consider \ref{eq:Rx-VI} and $V^0=h^0$. Let $\{V^k\}_{k=0, 1,2,\dots}$ be the iterates of \ref{eq:Rx-VI}. Then, since $T(v+c\mathbf{1}) = c\mathbf{1}+T(v)$ for arbitrary $v \in \reals^n$ and $c \in \reals$, $V^{k} = h^{k}+c_k \mathbf{1}$ for some $c_k \in \real$ . This implies $TV_k-V_k = Th_k-h_k$ and by Corollary \ref{cor::Bellman_weakly_Rx}, we have 
\[ \infn{g^{\star}-g^{\pi_k}} \le  \infn{Th^k-h^k -g^{\star}} \le  \frac{2\infn{h^0-h^{\star}}}{\sqrt{\pi\sum^{k}_{j=1} \lambda_i(1-\lambda_{i})}}.\]

  Now, consider following iteration 
  \[ V_g^k  =\lambda_k V_g^{k-1} +(1-\lambda_k)(TV_g^{k-1}-g^{\star}) \]
  for $1 \le k$ where $g^{\star}$ is average reward of $T$ and $V_g^0=h^0$. Then, $\infn{V_g^{k}-h^{\star}} \le \infn{V_g^{k-1}-h^{\star}}$ for $1 \le k$ where $h^{\star}$ is solution of Bellman equation. This is because
  \begin{align*}
      \infn{V_g^{k}-h^{\star}} &= \infn{ \lambda_k (V_g^{k-1}-h^{\star}) +(1-\lambda_k)(TV_g^{k-1}-g^{\star}-h^{\star})}
      \\& \le \lambda_k\infn{V_g^{k-1}-h^{\star}} + (1-\lambda_k)\infn{TV_g^{k-1}-g^{\star}-h^{\star}}
      \\& \le \lambda_k\infn{V_g^{k-1}-h^{\star}} + (1-\lambda_k)\infn{V_g^{k-1}-h^{\star}}
    \\& \le \infn{V_g^{k-1}-h^{\star}}
  \end{align*}
  where second inequality is from the fact that $h^{\star}$ is fixed point of nonexpansive operator $T(\cdot)+g^{\star}$. This implies that $V^k_g$ is bounded. Then, there exist convergent subsequence $V_g^{k_n}$ which converges to some  $V_g \in \real^d $. Since $g$ is uniform constant vector, using previous argument and Theorem \ref{thm::KM_bellman}, we have $\infn{TV_g^k-V_g^k -g^{\star}} \le   \frac{2}{\sqrt{\pi\sum^{k}_{j=1} \lambda_i(1-\lambda_{i})}}\infn{V_g^0-h^{\star}}$ by condition of $\lambda_k$. This implies that $V_g$ is solution of Bellman equation, and since $\infn{V_g^{k}-V_g} \le \infn{V_g^{k-1}-V_g}$, we have $V_g^k \rightarrow V_g$. 
  
  By previous argument, there exist $c_k \in \reals$ such that $V_{g}^k =h^k+c_k \mathbf{1}$ for $ 0\le k$ where $c_0=0$. Also, we have 
  \begin{align*}
      V_{g}^k -h^k &=\lambda_k(V_{g}^{k-1} -h^{k-1}) +(1-\lambda_k)(TV^{k-1}_g-g-Th^{k-1}+f(h^{k-1})\mathbf{1})
      \\&   = \lambda_kc_{k-1}\mathbf{1}+(1-\lambda_k)(c_{k-1}\mathbf{1}-g+f(h^{k-1})\mathbf{1})
      \\&   = \lambda_kc_{k-1}\mathbf{1}+(1-\lambda_k)(f(V_g^{k-1})\mathbf{1}-g).
  \end{align*}
 where last equality comes from the property of $f$. This implies $ c_k = \lambda_kc_{k-1}+(1-\lambda_k)(f(V_{g}^{k-1})-\hat{g}) = \sum^{k}_{j=1} (\Pi^k_{i=j+1}\lambda_i) (1-\lambda_j)(f(V_{g}^{j-1})-\hat{g})$ where  $g^{\star}= \hat{g} \mathbf{1}$. We now prove $c_k \rightarrow f(V_{g})-\hat{g}$. Since $f$ is a continuous function and $V_g^k \rightarrow V_g$, $\abs{f(V^k_{g})-\hat{g}} \le M $ for some $0<M$, and there exist $K$ such that $ |f(V_{g})-g- (f(V^k_{g})-\hat{g})| <\epsilon$ for any $0<\epsilon$ and all $ K \le k$. For $K+1\le k$, we have
   \begin{align*}
      \abs{f(V_{g})-\hat{g}-c_k} &= \para{\Pi^k_{i=1}\lambda_i}\abs{f(V_{g})-\hat{g}}+\abs{\sum^{k}_{j=1} (\Pi^k_{i=j+1}\lambda_i) (1-\lambda_j)\para{f(V_{g})-\hat{g}-(f(V_{g}^{j-1})-\hat{g})}}
      \\& \le \para{\Pi^k_{i=1}\lambda_i}\abs{f(V_{g})-\hat{g}}+\sum^{k}_{j=K+1}(\Pi^k_{i=j+1}\lambda_i) (1-\lambda_j)\abs{f(V_{g})-\hat{g}-(f(V_{g}^{j-1})-\hat{g})}
      \\& \quad +\sum^{K}_{j=1}(\Pi^k_{i=j+1}\lambda_i) (1-\lambda_j)\abs{f(V_{g})-\hat{g}-(f(V_{g}^{j-1})-\hat{g})}
      \\& \le \para{\Pi^k_{i=1}\lambda_i}M+\epsilon+ \para{\sum^{K}_{j=1}(\Pi^k_{i=j+1}\lambda_i) (1-\lambda_j)}2M .
  \end{align*}
  Thus, as $k \rightarrow \infty$, $c_k \rightarrow f(V_g)-\hat{g}$ if $\limsup \lambda_j <1 $. This implies $h^k$ converges to $h^{\star}$ solution of Bellman equations, and we have $h^{\star}=Th^{\star}-f(h)\mathbf{1}$. By uniqueness of $g^{\star}$, $f(h^{\star})\mathbf{1}=g^{\star}$. 
\subsection{Proof of Theorem \ref{thm::Rx-RVI_optimized}}
If $\lambda_i=1/2$ for all $i$, it satisfies condition of Theorem \ref{thm::Rx-RVI}. Then, by Theorem \ref{thm::Rx-RVI} with $\lambda_i=1/2$, we get the desired result. 

\subsection{Proof of Theorem \ref{thm::Anc-RVI}}
Consider \ref{eq:Anc-VI} and $V^0=h^0$. Let $\{V^k\}_{k=0, 1,2,\dots}$ be the iterates of \ref{eq:Anc-VI}.  Then, since $T(v+c\mathbf{1}) = c\mathbf{1}+T(v)$ for arbitrary $v \in \reals^n$ and $c \in \reals$, $V^{k} = h^{k}+c_k \mathbf{1}$ for some $c_k \in \real$ . This implies $TV_k-V_k = Th_k-h_k$ and by Corollary \ref{cor::Bellman_weakly_anc}, we have
\[ \infn{g^{\star}-g^{\pi_k}} \le  \infn{Th^k-h^k -g^{\star}} \le  2\para{\sum^k_{i=0} \Pi^k_{j=i+1}(1-\lambda_j) \lambda^2_i }\infn{h^0-h^{\star}}.\]

  Consider following iteration 
  \[ V_g^k  =\lambda_k V_g^0 +(1-\lambda_k)(TV_g^{k-1}-g) \]
  for $1 \le k$ where $g^{\star}$ is average reward of $T$ and $V_g^0=h^0$. Then, $\infn{V_g^{k}-h^{\star}} \le \infn{V_g^{0}-h^{\star}}$. By induction, we have
  \begin{align*}
      \infn{V_g^{k}-h^{\star}} &= \infn{ \lambda_k (V_g^0-h^{\star}) +(1-\lambda_k)(TV_g^{k-1}-g^{\star}-h^{\star})}
      \\& \le \lambda_k\infn{V_g^0-h^{\star}} + (1-\lambda_k)\infn{TV_g^{k-1}-g^{\star}-h^{\star}}
      \\& \le \lambda_k\infn{V_g^0-h^{\star}} + (1-\lambda_k)\infn{V_g^{k-1}-h^{\star}}
    \\& \le \infn{V_g^0-h^{\star}},
  \end{align*}
  where second inequality is from the fact that $h^{\star}$ is fixed point of nonexpansive operator of $T(\cdot)-g^{\star}$ and last inequality comes from induction. This implies that $V^k_g$ is bounded. 
  
 By previous argument, there exist $c_k \in \reals$ such that $V_{g}^k =h^k+c_k \mathbf{1}$ for $ 0\le k$ since $g^{\star}$ is uniform constant vector. Also, we have 
  \begin{align*}
      V_{g}^k -h^k &= (1-\lambda_k)(TV^{k-1}_g-g^{\star}-Th^{k-1}+f(h^{k-1})\mathbf{1})
      \\&   = (1-\lambda_k)(c_{k-1}-g^{\star}+f(h^{k-1})\mathbf{1})
      \\&   = (1-\lambda_k)(f(V_g^{k-1})-g^{\star}).
  \end{align*}
 where last equality comes from the property of $f$. This implies $ c_k = (1-\lambda_k)(f(V_{g}^{k-1})-g^{\star})$. Since $f$ is a continuous function, the boundedness of $V^k_g$ implies the boundedness of $c_k$ and this also implies boundedness of $h^k$. 
 
 Now for convergence of $h^k$, we use folloiwng fact.  
 \begin{fact}[Classical result, \protect{\cite[Remark~3]{schweitzer1978functional}}]\label{fact::unique_h}
    If MDP is unichain and $h$ is solution of modified Bellman equations, $h= h^{\star}+c\mathbf{1}$ for arbitary $c \in \real$ and some fixed solution of modified Bellman equations $h^{\star}$.
 \end{fact}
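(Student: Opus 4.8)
The plan is to show that any two solutions $h_1,h_2$ of the reduced modified Bellman equation $\max_{\pi}\{r^{\pi}+\cP^{\pi}h\}=h+g^{\star}$ (which is the form the equation takes here, since the unichain assumption makes $g^{\star}=\hat{g}\mathbf{1}$ a constant vector) differ by a multiple of $\mathbf{1}$; taking $h^{\star}:=h_2$ then gives the claim. By the definition of a solution, each $h_i$ admits a deterministic policy $\pi_i$ attaining the maximum, i.e. $r^{\pi_i}+\cP^{\pi_i}h_i=h_i+g^{\star}$. Writing $w:=h_1-h_2$, I would first record two one-sided comparisons: evaluating the optimality inequality for $h_2$ at the (generally suboptimal) policy $\pi_1$ gives $r^{\pi_1}+\cP^{\pi_1}h_2\le h_2+g^{\star}$, which subtracted from the exact equation for $h_1$ yields $\cP^{\pi_1}w\ge w$; symmetrically, testing $h_1$ against $\pi_2$ gives $\cP^{\pi_2}w\le w$.

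Next I would pass to the Cesàro limits. Iterating $\cP^{\pi_1}w\ge w$ and using monotonicity of $\cP^{\pi_1}$ (Proposition~\ref{prop::nonexp_prob_kernel}) gives $(\cP^{\pi_1})^m w\ge w$ for all $m$, and averaging over $m$ and letting the horizon tend to infinity yields $(\cP^{\pi_1})^{\star}w\ge w$. Because $\pi_1$ is a deterministic policy and the MDP is unichain, $\cP^{\pi_1}$ has a single recurrent class $R_1$, and its Cesàro limit has all rows equal to the unique stationary distribution $\mu_1$ (supported on $R_1$); hence $(\cP^{\pi_1})^{\star}w=(\mu_1^{\top}w)\mathbf{1}=:\alpha\mathbf{1}$. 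The inequality $\alpha\mathbf{1}\ge w$ forces $w_s\le\alpha$ for every state, and since $\alpha=\mu_1^{\top}w$ with $\mu_1$ a probability vector, equality in $\mu_1^{\top}w=\alpha$ forces $w_s=\alpha$ for every $s\in R_1$. Running the identical argument for $\pi_2$ produces a constant $\beta:=\mu_2^{\top}w$ with $w\ge\beta\mathbf{1}$ everywhere and $w\equiv\beta$ on the recurrent class $R_2$ of $\pi_2$.

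It remains to identify $\alpha$ and $\beta$, which is the crux. I would prove the auxiliary fact that in a unichain MDP the recurrent classes of any two deterministic policies must intersect: if $R_1\cap R_2=\emptyset$, the glued deterministic policy $\tilde{\pi}$ defined by $\tilde{\pi}=\pi_1$ on $R_1$ and $\tilde{\pi}=\pi_2$ elsewhere leaves both $R_1$ and $R_2$ closed and irreducible, so $\cP^{\tilde{\pi}}$ has two disjoint recurrent classes, contradicting the unichain hypothesis. Choosing any $s_0\in R_1\cap R_2$ then gives $\alpha=w_{s_0}=\beta$, and combining $w\le\alpha\mathbf{1}$ with $w\ge\beta\mathbf{1}=\alpha\mathbf{1}$ yields $w\equiv\alpha\mathbf{1}$, i.e. $h_1=h_2+\alpha\mathbf{1}$. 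I expect this last coupling step to be the main obstacle: the maximum-principle extractions only pin down $w$ on each policy's own recurrent set, and without the intersection lemma one obtains merely $\beta\le w_s\le\alpha$. It is precisely the global force of the unichain assumption — that \emph{every} deterministic policy, including artificially glued ones, has a single recurrent class — that collapses this gap and makes $w$ a single constant.
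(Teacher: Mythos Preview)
The paper does not supply its own proof of this statement: Fact~\ref{fact::unique_h} is stated with a bare citation to \cite{schweitzer1978functional} and then invoked as a black box inside the proof of Theorem~\ref{thm::Anc-RVI}. So there is nothing in the paper to compare your argument against beyond the reference.

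Your proof is correct and self-contained. The two one-sided comparisons $\cP^{\pi_1}w\ge w$ and $\cP^{\pi_2}w\le w$ are derived exactly right, the passage to the Cesàro limits is sound (monotonicity of $\cP^{\pi}$ carries the inequality through the averaging), and the unichain structure legitimately forces $(\cP^{\pi_i})^{\star}$ to be rank one with rows equal to a stationary distribution supported on the full recurrent class $R_i$, which is what lets you conclude $w\equiv\alpha$ on $R_1$ and $w\equiv\beta$ on $R_2$. The intersection lemma $R_1\cap R_2\neq\emptyset$ via the glued policy $\tilde{\pi}$ is also valid: since $\tilde{\pi}$ agrees with $\pi_i$ on $R_i$ and each $R_i$ is closed and irreducible under its own $\pi_i$, both remain closed irreducible classes under $\tilde{\pi}$, and two such disjoint classes would violate the unichain hypothesis applied to the deterministic policy $\tilde{\pi}$. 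With $\alpha=\beta$ the sandwich $\beta\mathbf{1}\le w\le\alpha\mathbf{1}$ collapses and you are done. This is essentially the standard argument one finds in the classical references, so you have effectively reproduced what the paper outsources.
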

 Suppose there exist convergent subsequence $h^{k_n}$ of $h^k$. Then, there also exist subsequence $h^{k'_n-1}$ of $h^{k_n-1}$ which converges to some $h$. By the previous convergence result, $h$ must be solution of modified Bellman equation, and by Fact \ref{fact::unique_h}, $h = h^{\star}+ c \mathbf{1}$ for some constant $c \in \real$ and $h^{\star}$ a fixed solution of modified Bellman equation. This implies $\lambda_{k'_n-1}(h^{k'_n-1})+(1-\lambda_{k'_n-1})(Th^{k'_n-1}-f(h^{k'_n-1})\mathbf{1}) \rightarrow Th^{\star}+f(h^{\star})\mathbf{1}$. Since $Th^{\star}+f(h^{\star})\mathbf{1}$ is fixed, $h^k$ is converge to $h$ where $h=Th-f(h)\mathbf{1}$. By uniqueness of $g^{\star}$, $f(h^{\star})\mathbf{1}=g^{\star}$.  

 \subsection{Proof of Theorem \ref{thm::Anc-RVI_optimized}}
If $\lambda_i=2/(i+2)$ for all $i$, it satisfies condition of Theorem \ref{thm::Anc-RVI}. Then, by Theorem \ref{thm::Anc-RVI} with $\lambda_i=2/(i+2)$, we get the desired result.

\end{document}